\newtheorem{theorem}{Theorem}[section] 
\newtheorem{proposition}[theorem]{Proposition}
\newtheorem{corollary}[theorem]{Corollary}
\newtheorem{definition}[theorem]{Definition}
\newtheorem{lemma}[theorem]{Lemma}
\newtheorem{remark}{Remark}
\numberwithin{equation}{section}
\newcommand{\z}{\mathfrak z}
\DeclareMathOperator{\GL}{\rm{GL}}
\begin{document}
\title[Bargmann type transformation]
{Calabi-Yau structure and Bargmann type transformation on the Cayley projective plane}

\author[Kurando Baba]{Kurando Baba}
\author[Kenro Furutani]{Kenro Furutani$^\diamondsuit$}
\address{K.~Baba: Department of Mathematics, Faculty of Science and Technology, 
Tokyo University of Science, 2641 Yamazaki, Noda, Chiba, 278-8510, Japan.}
\email{kurando.baba@rs.tus.ac.jp}
\address{K.~Furutani: Osaka City University Advanced Mathematical Institute (=OCAMI)
3-3-138 Sugimoto, Sumiyoshi-ku, Osaka, 558-8585, Japan.}
\email{furutani$\_$kenro@ma.noda.tus.ac.jp}

\date{\today}
\thanks{$^\diamondsuit$The second author was partially supported by
  JSPS fund 17K05284.
Also this work was partially supported by Osaka City University
Advanced Mathematical Institute (MEXT Joint Usage/Research Center on
Mathematics and Theoretical Physics JPMXP0619217849).}

\subjclass[2020]{53D50}

\keywords{Cayley projective plane, $F_{4}$, Calabi-Yau structure,
  polarization, Bargmann transformation, symplectic manifold, Fock space}

\begin{abstract}
Our purpose is to show the existence
of a Calabi-Yau structure on the punctured cotangent bundle $T^{*}_{0}(P^2\mathbb{O})$
of the Cayley projective plane $P^{2}\mathbb{O}$ and to construct a Bargmann
type transformation from 
a space of holomorphic functions on $T^{*}_{0}(P^2\mathbb{O})$ to
$L_{2}$-space on $P^{2}\mathbb{O}$. 
The space of holomorphic functions
corresponds to the Fock space in the case of the original Bargmann transformation. 
A K\"ahler structure on $T^{*}_{0}(P^{2}\mathbb{O})$ was shown 
by identifying it with a quadrics in the complex space 
$\mathbb{C}^{27}\backslash\{0\}$
and the natural symplectic form of the cotangent bundle
$T^{*}_{0}(P^2\mathbb{O})$ is expressed as a K\"ahler form.
Our method to construct the transformation is the pairing
of polarizations, one is the natural Lagrangian foliation given by the
projection map 
${\bf q}:T^{*}_{0}(P^2\mathbb{O})\longrightarrow P^{2}\mathbb{O}$ 
and the polarization given
by the K\"ahler structure. 

The transformation gives a quantization of the geodesic flow in terms
of one parameter group of elliptic Fourier integral operators whose
canonical relations are defined by the graph of the geodesic flow action at
each time. It turn out that for the Cayley projective
plane the results are not same with other cases of
the original Bargmann transformation for Euclidean space, spheres 
and other projective spaces. 
\end{abstract}

\maketitle

\section{Introduction}

The fundamental and historical problem in the quantization theory
will be how to assign a function on a phase space to
an operator acting on the space of quantum states and 
the assignment satisfies some algebraic condition, like
a Lie algebra homomorphism. The phase space appearing in the theory has a
structure, a symplectic structure. There are many theory relating with this
problem. One method is the
theory of deformation quantization. Also there is the opposite theory,
an assignment of operators to functions,
from an operator to a function. In the (pseudo)differential 
operator theory and Fourier integral operator 
theory, the basic assignment of
operators to their principal symbol (and sub-principal symbol) 
is a fundamental isomorphism between the spaces of operators and
functions on the phase space modulo lower order classes.
 
The famous transformation, called Bargmann transformation was
introduced in \cite{Ba} and gives one aspect of the quantization of the unitary  
representation. The method to 
construct such a transformation 
is given by the pairing of two
polarizations, real polarization and complex polarization,
on $\mathbb{C}^{n}$ interpreted as
$\mathbb{C}^{n}\cong T^{*}(\mathbb{R}^{n})\cong\mathbb{R}^{n}\times\mathbb{R}^{n}$, complex space and fiber
space by Lagrangian fibers $\pi:\mathbb{C}^{n}$ $\to$ $\mathbb{R}^{n}$. 
Under precise treatments of this method it was given a similar operator for
the case of the sphere in \cite{Ra2}, in \cite{FY} for the complex
projective space and for the quaternion projective spaces in \cite{Fu1}.

Among the projective spaces the Cayley projective plane
$P^2\mathbb{O}$ is the exceptional one
and our purpose in this paper is to show that we can also construct such an operator 
for this manifold in the same method. This case  will be
one of the non-trivial examples to which we can apply
this method, "{\it pairing of two polarizations}" (\cite{Ra2},
\cite{Ii1}, \cite{Ii2}, \cite{Fu1}, \cite{FY}). 

In the paper \cite{Fu2} a K\"ahler structure on its
punctured cotangent bundle $T^{*}_{0}(P^2\mathbb{O})$ was constructed by embedding 
it into the complex space $\mathbb{C}^{27}\backslash\{0\}$ as an
intersection of null sets of several quadric polynomials, which gives the realization of 
the natural symplectic form as a K\"ahler form. Here we show the
holomorphic triviality of the canonical line bundle of this complex manifold by
giving a nowhere vanishing global holomorphic $16$-form explicitly.

There are several study of the existence of K\"ahler structure on the
(punctured) cotangent bundle of a certain class of manifolds, like
\cite{Ra1},
\cite{Sz1}, \cite{Sz2}, \cite{Koi}, \cite{Li}, \cite{FT}, also see
\cite{Be}, \cite{So} 
in relation with a special property of the geodesic flow, $SC_{\ell}$-manifolds. 

The classical Bargmann transformation gives a correspondence between
monomials on $\mathbb{C}^{n}$ and Hermite functions on
$\mathbb{R}^{n}$, which are the eigenfunctions of the harmonic
oscillator and this facts were applied to various problems, especially to
T\"oplitz operator theory (there are so many, but here I just cite one book
\cite{BS}). Also there are many precise treatments and modifications of this
transformation (for examples,  \cite{Ii1}, and recent works in \cite{Ch1}, \cite{Ch2} ). 

For our case we show the restrictions of monomials defined on $\mathbb{C}^{27}\backslash\{0\}$
to the embedded punctured cotangent bundle $T^{*}_{0}(P^{2}\mathbb{O})$
are mapped to eigenfunctions of the Laplacian on $P^{2}\mathbb{O}$.

This paper is organized as follows.
In $\S 2$, we explain a realization of quaternion  and octanion number
fields, $\mathbb{H}$ and $\mathbb{O}$, 
in a complex matrix algebra. Multiplication law in the octanion
is interpreted in the two $2\times 2$-complex matrix algebra
$\mathbb{C}(2)\times \mathbb{C}(2)$.

In $\S 3$, we introduce the Jordan algebra $\mathcal{J}(3)$ of
$3\times 3$ octanion matrices. Cayley projective plane
$P^{2}\mathbb{O}$ is realized in this
Jordan algebra. Following an earlier result in \cite{Fu2} we explain
the embedding of the punctured cotangent bundle $T^{*}_{0}(P^2\mathbb{O})$
of the Cayley projective plane
into the complexified Jordan algebra 
$\mathbb{C}\otimes_{\mathbb{R}}\hspace{-0.1cm}\mathcal{J}(3)=:\mathcal{J}(3)^{\mathbb{C}}$
of $3\times 3$ complexified octanion matrices:
\[
\tau_{\mathbb{O}}:T^{*}(P^2\mathbb{O})\longrightarrow \mathcal{J}(3)^{\mathbb{C}}.
\]
We denote the image $\tau_{\mathbb{O}}(T^{*}(P^{2}\mathbb{O}))=\mathbb{X}_{\mathbb{O}}$.
Also we state that the natural symplectic form
$\omega^{P^{2}\mathbb{O}}$ is a K\"ahler form.

In $\S 4$, using the defining equations of the punctured cotangent
bundle of the Cayley projective plane embedded in the complex Jordan algebra $\mathcal{J}(3)^{\mathbb{C}}$,
we give an open covering by complex coordinates neighborhoods and show
by an elementary way that 
the canonical line bundle of the complex structure is holomorphically
trivial by explicitly constructing a nowhere vanishing holomorphic
global section (we put it $\Omega_{\mathbb{O}}$), that is, a $16$-degree holomorphic differential form
which coincides with the restriction of a smooth $16$-degree
differential form on the whole complexified Jordan algebra $\mathcal{J}(3)^{\mathbb{C}}$.

In $\S 5$, we resume a basic fact on symplectic manifolds with
integral symplectic form  and
a method of the geometric quantization. Here we
consider two types of typical polarizations (real and positive
complex). Then we apply the method to our case (= $T^{*}_{0}(P^2\mathbb{O})$)
and give a Bargmann type transformation in the form of a fiber
integration on the punctured cotangent bundle
$T^{*}_{0}(P^2\mathbb{O})$ to the base space $P^{2}\mathbb{O}$.

In $\S 6$, first we show the nowhere vanishing holomorphic global
section $\Omega_{\mathbb{O}}$ constructed in $\S 4$ is
$F_{4}$-invariant. Incidentally, we
determine the product $\Omega_{\mathbb{O}}\wedge \overline{\Omega_{\mathbb{O}}}$
in terms of the Liouville volume form 
$\displaystyle{dV_{T^{*}(P^2\mathbb{O})}:=\frac{1}{16\,!}\left(\omega^{P^{2}\mathbb{O}}\right)^{16}}$ of the
cotangent bundle $T^{*}(P^{2}\mathbb{O})$.

Also we introduce a class of subspaces consisting of holomorphic functions
on $\mathbb{X}_{\mathbb{O}}$ satisfying some $L_{2}$ conditions. These
will correspond to the Fock space in the Euclidean case.

In $\S 7$, we determine the exterior product of the Riemann volume form
pull-backed to the cotangent bundle $T^{*}_{0}(P^2\mathbb{O})$
and the nowhere vanishing global holomorphic section $\Omega_{\mathbb{O}}$ in terms
of the Liouville volume form.
For this purpose we fix a local coordinates
at a point in $P^2\mathbb{O}$ which is also used in the section $\S 9$.

In $\S 8$, we discuss invariant polynomials and a similar feature to harmonic polynomials
with respect to the natural representation of the group $F_{4}$ to the Jordan algebra $\mathcal{J}(3)$
and its extension to the
polynomial algebra. 
Then,  based on a general theorem in \cite{He} (also \cite{HL} and
\cite{Ko}) we state the eigenfunction decomposition  of $L_{2}$ space of $P^{2}\mathbb{O}$.

\bigskip
In $\S 9$, based on the data obtained until $\S 8$ 
we discuss our Bargmann type transformation is a bounded
operator, or isomorphism or unbounded 
according to the Hilbert space structures in the Fock-like space.
Some cases says 
there are quantum states in $L_{2}(P^{2}\mathbb{O})$ which  are
approximated by classical phenomena, but can not be
observed directly by classical mechanical way.  

Finally in $\S 10$ we mention that our Fock-like spaces have the
reproducing kernel and a relation with the geodesic flow action.

\section{Representations of quaternion and octanion algebras by  complex matrix algebras}

First, we fix a representation of quaternion numbers 
$h=h_{0}{\bf 1}+h_{1}{\bf i}+h_{2}{\bf j}+h_{3}{\bf k}$
~({\bf 1}, {\bf i}, {\bf j}, {\bf k} are standard basis of the quaternion number field $\mathbb{H}$~
and $h_{i}\in\mathbb{R}$) 
as a $2\times 2$
complex matrix in the following way:
\begin{equation}\label{quaternion to complex matrix}
\rho_{\mathbb{H}}:\mathbb{H}\ni h\longmapsto
\begin{pmatrix}
h_{0}+h_{1}{\sqrt{-1}}&\,h_{2}+h_{3}\sqrt{-1}\\-h_{2}+h_{3}\sqrt{-1}&\,h_{0}-h_{1}{\sqrt{-1}}\end{pmatrix}
=\begin{pmatrix}\lambda&\,\mu\\-\overline{\mu}&\,\overline{\lambda}\end{pmatrix}\in\mathbb{C}(2),
\end{equation}
where we understand that quaternions 
$h_{0}{\bf 1}+h_{1}{\bf i}~\text{and}~ h_{2}+h_{3}{\bf i}\in\mathbb{H}$
are complex numbers $\lambda=h_{0}+\sqrt{-1}h_{1}~\text{and}~\mu
=h_{2}+h_{3}\sqrt{-1}\in\mathbb{C}$ respectively.
Hence by this representation the complexification
$\mathbb{C}\otimes_{\mathbb{R}}\mathbb{H}$
is isomorphic to the ``algebra'' of the whole  $2\times 2$ complex matrix
algebra $\mathbb{C}(2)$
(we put $z_{i}=x_{i}+\sqrt{-1}y_{i}\in\mathbb{C}$):
\begin{align}
&\mathbb{C}\otimes_{\mathbb{R}}\hspace{-0.1cm}\mathbb{H}\ni h=z_{0}{\bf 1}+z_{1}{\bf i}+z_{2}{\bf
j}+z_{3}{\bf k}
\longmapsto 
\begin{pmatrix}z_{0}+\sqrt{-1}z_{1}&z_{2}+\sqrt{-1}z_{3}\\
\quad &\quad \\
-z_{2}+\sqrt{-1}z_{3}&z_{0}-\sqrt{-1}z_{1}\end{pmatrix}\in\mathbb{C}(2).\label{matrix representation}
\end{align}
We denote this map also by $\rho_{\mathbb{H}}$
and the inverse map is 
\begin{equation}\label{matrix to vector}
{\rho_{\mathbb{H}}}^{-1}:\mathbb{C}(2)\ni 
A=\begin{pmatrix}z_{1}&z_{2}\\ z_{3}&z_{4}\end{pmatrix}
\longmapsto
{\rho_{\mathbb{H}}}^{-1}(A)=\frac{z_{1}+z_{4}}{2}{\bf 1}+\frac{z_{1}-z_{4}}{2\sqrt{-1}}{\bf i}+\frac{z_{2}-z_{3}}{2}{\bf j}+\frac{z_{2}+z_{3}}{2\sqrt{-1}}{\bf k}.
\end{equation}
For $h=h_{0}{\bf 1}+h_{1}{\bf i}+h_{2}{\bf j}+h_{3}{\bf k}\in\mathbb{H}$
(or $\in\mathbb{C}\otimes_{\mathbb{R}}\hspace{-0.08cm}\mathbb{H}$), we denote its conjugation by
$\theta(h)=h_{0}{\bf 1}-h_{1}{\bf i}-h_{2}{\bf j}-h_{3}{\bf k}$, then
for $\rho_{\mathbb{H}}(h)=\begin{pmatrix}w_{1}&w_{2}\\w_{3}&w_{4}\end{pmatrix}$, 
$\rho_{\mathbb{H}}(\theta(h))
=\begin{pmatrix}w_{4}&-w_{2}\\-w_{3}&w_{1}\end{pmatrix}$ and 
the product
$\rho_{\mathbb{H}}(\theta(h))\rho_{\mathbb{H}}(h)
=\rho_{\mathbb{H}}(h)\rho_{\mathbb{H}}(\theta(h))=
(w_{1}w_{4}-w_{2}w_{3})\cdot\text{Id}=\det\, \rho_{\mathbb{H}}(h)\cdot
\text{Id}$, where $\text{Id}$
is $2\times 2$ identity matrix.

Let $\{{\bf e}_{i}\}_{i=0}^{7}$ be the standard basis of the octanion
number field $\mathbb{O}$ such that 
${\bf e}_{0}$ is the basis of the center.
We identify ${\bf e}_{0}={\bf 1}$, ${\bf e}_{1}={\bf i}$, ${\bf e}_{2}={\bf j}$ and
${\bf e}_{3}={\bf k}$ with the basis ${\{\bf 1,i,j,k\}}$ of the quaternion number field.
By the multiplication law
${\bf e}_{i}{\bf e}_{4}={\bf e}_{i+4}$ ($i=0,1,2,3$) we 
express  an (complexified) octanion number $x=\sum x_{i}{\bf e}_{i}$ as
the sum of two quaternion numbers:
\[
x=\sum_{i=0}^3 x_{i}{\bf e}_{i}+\sum_{i=0}^{3}~x_{i+4}{\bf e}_{i}\cdot
{\bf e}_{4}=a+b\cdot{\bf e}_{4}\in\mathbb{H}\oplus \mathbb{H}{\bf e}_{4}~\text{or} ~\in 
\mathbb{C}\otimes_{\mathbb{R}}\hspace{-0.08cm}\mathbb{H}\oplus
\mathbb{C}\otimes_{\mathbb{R}}\hspace{-0.08cm}\mathbb{H}{\bf e}_{4}.
\]
The complexification $\mathbb{C}\otimes_{\mathbb{R}}\hspace{-0.08cm}\mathbb{O}$ is identified as
\[
\mathbb{C}\otimes_{\mathbb{R}}\hspace{-0.08cm}\mathbb{O}\cong \mathbb{C}(2)\oplus\mathbb{C}(2){\bf e}_{4}
\]
through the map $\rho_{\mathbb{H}}\oplus\rho_{\mathbb{H}}=:\rho_{\mathbb{O}}$. 

We define the conjugation operation in $\mathbb{O}$ 
(and also in $\mathbb{C}\otimes_{\mathbb{R}}\mathbb{O}$)
with the same notation $\theta$ for the quaternion case as
\[
\theta:h=\sum h_{i}{\bf e}_{i}\longmapsto \theta(h)=h_{0}{\bf 1}-\sum_{i=1}^{7}
h_{i}{\bf e}_{i}.
\]
\allowdisplaybreaks{
The conjugation $\theta$ 
is interpreted in the matrix representation through the representation
$\rho_{\mathbb{O}}$ as
\begin{align}
&\theta:\mathbb{C}(2)\oplus\mathbb{C}(2){\bf e}_{4}\ni Z+W{\bf e}_{4}
=\begin{pmatrix}z_{1}&z_{2}\\z_{3}&z_{4}\end{pmatrix}+\begin{pmatrix}w_{1}&w_{2}\\w_{3}&w_{4}\end{pmatrix}{\bf
e}_{4}\label{conjugation in matrix form}\\
&\longmapsto
\theta(Z+W{\bf e}_{4})=\theta(Z)-W{\bf e}_{4}=\begin{pmatrix}z_{4}&-z_{2}\\-z_{3}&z_{1}\end{pmatrix}
-\begin{pmatrix}w_{1}&w_{2}\\w_{3}&w_{4}\end{pmatrix}{\bf e}_{4}.\notag
\end{align}
}
\begin{remark}\label{on multiplication}
The multiplication law of the octanions in the matrix form is 
given in \eqref{condition for off diagonal}.
\end{remark}


\begin{remark}\label{remark on conjugation operation}
We use the conjugation $\overline{z}=x-\sqrt{-1}y$ only for the complex number
$z=x+\sqrt{-1}y$ and do not use the operation $\theta$
for the conjugate of complex numbers to avoid confusion. So, for a complex
octanion number
$z=\sum\,\{z\}_{i}{\bf e}_{i}, ~\{z\}_{i}\in\mathbb{C}$, we mean
$\overline{z}=\sum \,\overline{\{z\}_{i}}{\bf e}_{i}$ and it holds  
$\theta(\overline{z})=\overline{\theta(z)}$. Also for an octanion
matrix $A=\Big(\,\,z_{ij}\,\,\Big)$ we mean
$\overline{A}:=\Big(\,\,\overline{z_{ij}}\,\,\Big)$
and $\theta(A):=\Big(\,\,\theta(z_{ij})\,\,\Big)$.
\end{remark}

\section{Cayley projective plane and its punctured cotangent bundle }

In this section, we refer \cite{SV}, \cite{M} and \cite{Yo} 
for all the necessary facts on the exceptional group $F_{4}$ and 
the Cayley projective plane. 

Let $\mathcal{J}(3)$ be a subspace of the $3\times 3$ octanion matrices:
\[
\mathcal{J}(3)=\left\{
\begin{pmatrix}t_{1}&z&\theta(y)\\
\theta(z)&t_{2}&x\\
y&\theta(x)&t_{3}
\end{pmatrix}~\Bigg|~x,y,z\in \mathbb{O}, t_{i}\in\mathbb{R}
\right\}.
\]
We introduce a product in $\mathcal{J}(3)$, called a ``Jordan product'',
by
\[
\mathcal{J}(3)\times \mathcal{J}(3) \ni (A,B)\longmapsto A\circ B:=\frac{AB+BA}{2}\in\mathcal{J}(3).
\]
It is called an exceptional Jordan algebra and of $27$-dimensional
over $\mathbb{R}$.
Then the group of $\mathbb{R}$-linear algebra automorphisms 
is the exceptional Lie group $F_{4}$:
\begin{equation}\label{F4}
F_{4}:=\left\{~g\in GL(\mathcal{J}(3))\cong GL(27,\mathbb{R})~|~g(A\circ
B)=g(A)\circ g(B),~g(Id)=Id,\,~A,B\in\mathcal{J}(3)\right\}.
\end{equation}
There are various characterizations for the group $F_{4}$ (see for
examples, \cite{Yo}, \cite{SV}).

The complexification $\mathbb{C}\otimes_{\mathbb{R}}\hspace{-0.03cm}\mathcal{J}(3)=:\mathcal{J}(3)^{\mathbb{C}}$ consists of
$3\times 3$ matrices with components of the complexified 
octanions of the form:
\[
\mathcal{J}(3)^{\mathbb{C}}
=\left\{
\begin{pmatrix}\xi_{1}&z&\theta(y)\\
\theta(z)&\xi_{2}&x\\
y&\theta(x)&\xi_{3}
\end{pmatrix}~\Bigg|~x,y,z~\in ~\mathbb{C}\otimes_{\mathbb{R}}\hspace{-0.08cm}\mathbb{O}, ~\xi_{i}\in\mathbb{C}
\right\}
\]
and is an exceptional Jordan algebra over $\mathbb{C}$ of
the complex dimension $27$. The complex linear automorphisms 
$\alpha:\mathcal{J}(3)^{\mathbb{C}}\longrightarrow 
\mathcal{J}(3)^{\mathbb{C}}$ 
satisfying the conditions
\[
\alpha(A\circ B)=\alpha(A)\circ \alpha(B), ~\alpha(Id)=Id
\]
is the complex Lie group ${F_{4}}^{\mathbb{C}}$. 
We may regard $F_{4}\subset {F_{4}}^{\mathbb{C}}$ in a natural way.

\begin{definition}\label{Cayley projective plane}
The Cayley projective plane $P^2\mathbb{O}$ is defined as
\[
P^2\mathbb{O}=\left\{~X\in\mathcal{J}(3)~|~X^2=X, ~{\text{\em tr}\,(X)}=\xi_{1}+\xi_{2}+\xi_{3}=1~\right\}.
\]
\end{definition}
It is known that the group 
$F_{4}$ acts on $P^2\mathbb{O}$ in two point homogeneous way. 

Let $X_{1}=\begin{pmatrix}1&~0&~0\\0&~0&~0\\0&~0&~0\end{pmatrix}\in P^{2}\mathbb{O}$, 
then it is known that the stationary subgroup of the
point $X_{1}$ in $F_{4}$ is isomorphic to $Spin(9)$ and 
$F_{4}\ni g\longmapsto g\cdot X_{1}$ gives an isomorphism:
\begin{equation}\label{isomorphism between P2O and F4/Spin(9)}
F_{4}/Spin(9)\cong P^2\mathbb{O}.
\end{equation}
For $X=\begin{pmatrix}
\xi_{1}&x_{3}&\theta(x_{2})\\
\theta(x_{3})&\xi_{2}&x_{1}\\
x_{2}&\theta(x_{1})&\xi_{3}
\end{pmatrix}$,
$Y=\begin{pmatrix}
\eta_{1}&y_{3}&\theta(y_{2})\\
\theta(y_{3})&\eta_{2}&y_{1}\\
y_{2}&\theta(y_{1})&\eta_{3}
\end{pmatrix}\in\mathcal{J}(3),
$
we define their inner product by
\begin{equation}\label{inner product on J(3)}
<X,Y>^{\mathcal{J}(3)}:=\text{tr}(X\circ Y)=\sum\limits_{i=1}^{3}\,\xi_{i}\eta_{i}+2<x_{i},y_{i}>^{\mathbb{R}^{8}},
\end{equation}
where $<\cdot,\cdot>^{\mathbb{R}^{8}}$ denotes the standard Euclidean inner
product of $x_{i}~\text{and}~y_{i}\in \mathbb{O}\cong\mathbb{R}^{8}$.

This inner product has a property
\begin{equation}\label{selfadjointness of Jordan product in the inner product}
<X\circ Y,\,Z>^{\mathcal{J}(3)}=<X,Y\circ Z>^{\mathcal{J}(3)}, ~X,Y,Z\in\mathcal{J}(3).
\end{equation}

In particular, since the trace function 
$\mathcal{J}(3)\ni A\longmapsto \text{tr}\,(A)$ is invariant under the
$F_{4}$ action, that is
\begin{equation}\label{invariance of trace} 
\text{tr}\,(g\cdot A)=\text{tr}\,(A), ~g\in F_{4},~A\in\mathcal{J}(3),
\end{equation}
this inner product is invariant under the action by
$F_{4}$ (hence $F_{4}$ can be seen as $F_{4}\subset SO(27)$):
\begin{equation}\label{invariance of inner product}
<\,g\cdot A\,,\,g\cdot B>^{\mathcal{J}(3)}=\text{tr}(\,g\cdot A\circ g\cdot B\,)
=\text{tr}\,\big(g\cdot(A\circ B)\big)=\text{tr}\,(A\circ B)=<A,B>^{\mathcal{J}(3)}.
\end{equation}

The tangent bundle $T(P^{2}\mathbb{O})$ is identified with a subspace in
$\mathcal{J}(3)\times\mathcal{J}(3)$ such that
\[
T(P^{2}\mathbb{O})=\left\{(X,Y)\in\mathcal{J}(3)\times\mathcal{J}(3)~\Big|~X\in
  P^2\mathbb{O},~X\circ Y=\frac{1}{2}Y\right\}.
\]

{\it We consider the Riemannian metric
$g^{P^2\mathbb{O}}$ on the manifold $P^2\mathbb{O}$
being induced from the
inner product in $\mathcal{J}(3)$ :
$g_{X}^{P^2\mathbb{O}}(Y_{1},Y_{2}):=<Y_{1},Y_{2}>^{\mathcal{J}(3)}$,
$Y_{1},~Y_{2}\in T_{X}(P^{2}\mathbb{O})$}.

{\it Using this metric, 
hereafter we identify
the tangent bundle $T(P^{2}\mathbb{O})$ 
and the cotangent bundle $T^{*}(P^{2}\mathbb{O})$.}
\smallskip

Let
$Y_{1}=\begin{pmatrix}0&\frac{1}{\sqrt{2}}&0\\\frac{1}{\sqrt{2}}&0&0\\0&0&0\end{pmatrix}
\in T_{X_{1}}(P^2\mathbb{O})$. 
The stationary
subgroup at the point $(X_{1},Y_{1})\in T(P^2\mathbb{O})$ is known as being
isomorphic to $Spin(7)$ and the two point homogeneity of the action by
$F_{4}$ gives us the isomorphism 
$F_{4}/Spin(7)\cong S(P^2\mathbb{O})$, 
the unit (co)tangent  sphere bundle of $P^2\mathbb{O}$.

The inner product on $\mathcal{J}(3)$, $<\,\cdot\,,\, \cdot>^{\mathcal{J}(3)}$,
is extended to the complexification $\mathcal{J}(3)^{\mathbb{C}}$
as a {\it complex bi-linear form} in a natural way, which we denote 
by $<\cdot\, ,\,\cdot>^{\mathcal{J}(3)^{\mathbb{C}}}$. 
Then the extension as the {\it Hermitian inner product}
on the complexification $\mathcal{J}(3)^{\mathbb{C}}$ 
is given by $<A,\overline{B}>^{\mathcal{J}(3)^{\mathbb{C}}}$, $A,~B\in
\mathcal{J}(3)^{\mathbb{C}}$ (see Remark \ref{remark on conjugation operation}  
for the matrix $\overline{B}$).

{\it We will denote the norm of $a\in\mathbb{O}$ 
by $|a|=\sqrt{<a,a>^{\mathbb{R}^8}}$ and
by $||X||=\sqrt{<X,X>^{\mathcal{J}(3)}}$ the norm
of $X\in\mathcal{J}(3)$, respectively. Also with the same way 
for elements $a\in\mathbb{C}\otimes_{\mathbb{R}}\hspace{-0.08cm}{\mathbb{O}}$
and $A\in\mathcal{J}(3)^{\mathbb{C}}$, we denote their norms.}

The punctured cotangent bundle
$T^{*}(P^{2}\mathbb{O})\backslash\{0\}=:T^{*}_{0}(P^2\mathbb{O})$
is realized as a subspace in $\mathcal{J}(3)^{\mathbb{C}}$ with the following form:
\begin{theorem}{\em(\cite{Fu2})}
Let $\mathbb{X}_{\mathbb{O}}$ be a subspace in $\mathcal{J}(3)^{\mathbb{C}}$:
\begin{align}
\mathbb{X}_{\mathbb{O}}=\left\{~
A=\begin{pmatrix}
\xi_{1}&z&\theta(y)\\
\theta(z)&\xi_{2}&x\\
y&\theta(x)&\xi_{3}
\end{pmatrix}~\Bigg|~x,y,z\in\mathbb{C}{\otimes_{\mathbb{R}}}\hspace{-0.01cm}{\mathbb{O}},~\xi_{i}\in\mathbb{C},~A^2=0, ~A\not=0
\right\}.\label{condition Cayley cotangent}
\end{align}
Then the  correspondence between $T^{*}_{0}(P^{2}\mathbb{O})$ and $\mathbb{X}_{\mathbb{O}}$
is given by
\begin{equation}\label{complex structure map}
\tau_{\mathbb{O}}:
T^{*}_{0}(P^{2}\mathbb{O})\ni (X,Y)\longmapsto
\tau_{\mathbb{O}}(X,Y)=1\otimes\left(||Y||^2X-Y^{2}\right)+\sqrt{-1}\otimes\frac{||Y||Y}{\sqrt{2}}.
\end{equation}
\end{theorem}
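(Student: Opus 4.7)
The approach is to split $A := \tau_{\mathbb{O}}(X,Y) = R + \sqrt{-1}\,I$ into its real and imaginary parts
\[
R := \|Y\|^{2}X - Y^{2}\in\mathcal{J}(3), \qquad I := \frac{\|Y\|}{\sqrt{2}}\,Y\in\mathcal{J}(3),
\]
and, since $A^{2} = (R^{2}-I^{2}) + \sqrt{-1}(RI+IR)$, to reduce the claim $A^{2}=0$ to the two real identities $R^{2}=I^{2}$ and $RI+IR=0$. The condition $A\neq 0$ is automatic, as $I=0$ would force $Y=0$, which is excluded from $T^{*}_{0}(P^{2}\mathbb{O})$. The whole computation then hinges on two algebraic identities valid for any tangent vector $Y\in T_{X}(P^{2}\mathbb{O})$:
\[
\mathrm{(A)}\ \ Y^{3} = \frac{\|Y\|^{2}}{2}\,Y, \qquad \mathrm{(B)}\ \ XY^{2}+Y^{2}X = \|Y\|^{2}\,X.
\]
Granted (A) and (B), together with the standing relations $X^{2}=X$ and $XY+YX=Y$, a direct expansion gives
\[
RI+IR = \frac{\|Y\|}{\sqrt{2}}\bigl(\|Y\|^{2}(XY+YX)-2Y^{3}\bigr) = 0
\]
and, using $Y^{4}=Y\cdot Y^{3} = \frac{\|Y\|^{2}}{2}Y^{2}$,
\[
R^{2} = \|Y\|^{4}X - \|Y\|^{2}(XY^{2}+Y^{2}X) + Y^{4} = \frac{\|Y\|^{2}}{2}Y^{2} = I^{2}.
\]

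To prove (A) and (B) I would invoke the $F_{4}$-equivariance of $\tau_{\mathbb{O}}$ together with the two-point homogeneity of the $F_{4}$-action on $P^{2}\mathbb{O}$, reducing to the base point $X = X_{1} = \mathrm{diag}(1,0,0)$; the tangent condition $X_{1}\circ Y = Y/2$ then forces $Y$ to take the off-diagonal form
\[
Y = \begin{pmatrix} 0 & z & \theta(y) \\ \theta(z) & 0 & 0 \\ y & 0 & 0 \end{pmatrix}, \qquad y,z\in\mathbb{O},\ \ \|Y\|^{2} = 2(|y|^{2}+|z|^{2}).
\]
A direct matrix computation, using only the octonion identities $a\theta(a) = \theta(a)a = |a|^{2}$, $\theta(ab) = \theta(b)\theta(a)$, $|ab|^{2} = |a|^{2}|b|^{2}$ and the left/right alternative laws, yields $Y^{3} = (|y|^{2}+|z|^{2})Y$ and $XY^{2}+Y^{2}X = 2(|y|^{2}+|z|^{2})X_{1}$, which are precisely (A) and (B).

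For bijectivity onto $\mathbb{X}_{\mathbb{O}}$ I would construct the inverse map explicitly. Given $A = R+\sqrt{-1}\,I \in \mathbb{X}_{\mathbb{O}}$, the identity $I^{2}=R^{2}$ combined with $\|I\|^{2}=\mathrm{tr}(I^{2})$ forces $\|Y\|^{4} = 2\|I\|^{2}$, so $\|Y\|^{2} = \sqrt{2}\|I\|$ is determined by $A$; then $Y := \frac{\sqrt{2}}{\|Y\|}I$ and $X := \frac{R+Y^{2}}{\|Y\|^{2}}$ are the unique candidate preimage. Verifying $X^{2}=X$, $\mathrm{tr}(X)=1$ (which follows from $\mathrm{tr}(A)=0$, a consequence of $A^{2}=0$), and $X\circ Y=Y/2$ then reduces, via $F_{4}$-equivariance and the orbit structure of $\mathbb{X}_{\mathbb{O}}$ (each $F_{4}$-orbit being parameterized by $\|I\|>0$, matching the parameterization of $T^{*}_{0}(P^{2}\mathbb{O})$-orbits by $\|Y\|$), back to the same normal form above.

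The main obstacle throughout is the non-associativity of $\mathbb{O}$: essentially every product occurring in $Y^{2}, Y^{3}, R^{2}, RI, IR$ involves triples of octonions and cannot be freely reassociated. The proof must be organized so that in each individual product at most two independent octonions occur (whose generated subalgebra is then associative by Artin's theorem), or so that the expression reduces to one of the alternative or Moufang forms $\theta(a)(ab) = |a|^{2}b$, $(ab)\theta(b) = |b|^{2}a$. This careful bookkeeping is the main technical care required.
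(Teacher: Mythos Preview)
The paper does not prove this theorem; it is quoted from \cite{Fu2}, and only the explicit inverse $\tau_{\mathbb{O}}^{-1}$ is recorded (equation~\eqref{inverse of tau-O}). Your verification that $\tau_{\mathbb{O}}(X,Y)^{2}=0$ is correct and cleanly organized: the identities (A) and (B) hold, the reduction to $X=X_{1}$ via two-point homogeneity is legitimate, and your handling of the alternative laws is exactly what is needed. Your inverse formulas $Y=\sqrt{2}\,\|Y\|^{-1}I$, $X=\|Y\|^{-2}(R+Y^{2})$ agree with the paper's once one uses $\|A\|=\|Y\|^{2}$ and $A\circ\overline{A}=R^{2}+I^{2}=\|Y\|^{2}Y^{2}$.

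The gap is in surjectivity. You assert that each $F_{4}$-orbit in $\mathbb{X}_{\mathbb{O}}$ is an entire norm-sphere $\{\|I\|=c\}$, and use this to reduce a general $A\in\mathbb{X}_{\mathbb{O}}$ to a normal form. But that transitivity statement is essentially equivalent to what you are proving: it is the known transitivity of $F_{4}$ on $S(T^{*}_{0}P^{2}\mathbb{O})$ transported through $\tau_{\mathbb{O}}$, which presupposes that $\tau_{\mathbb{O}}$ is onto. Without an independent proof that $F_{4}$ acts transitively on $S(\mathbb{X}_{\mathbb{O}})$, the argument is circular. The direct route (and the one taken in \cite{Fu2}) is to start from $A=R+\sqrt{-1}I$ with $R^{2}=I^{2}$, $R\circ I=0$, $\mathrm{tr}\,A=0$, form your candidate $(X,Y)$, and verify $X^{2}=X$, $\mathrm{tr}\,X=1$, $X\circ Y=\tfrac{1}{2}Y$ purely algebraically from those three relations. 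This uses the same Moufang/alternative identities you have already assembled, run in the opposite direction; it is more work than the forward check but involves no new ideas.
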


Then
\begin{theorem}{\em(\cite{Fu2})}\label{Kaehler form}
\begin{equation}\label{Kaehler form and symplectic form}
{\tau_{\mathbb{O}}}^{*}\left(\sqrt{-2}\,\,\overline{\partial}\partial \,||A||^{1/2}\right)={\omega^{P^2\mathbb{O}}},
\end{equation}
where we denote by $\omega^{P^2\mathbb{O}}$ the natural
symplectic form on the cotangent bundle $T^{*}(P^{2}\mathbb{O})$.
\end{theorem}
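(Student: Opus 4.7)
Both $2$-forms in the identity are $F_{4}$-invariant on $T^{*}_{0}(P^{2}\mathbb{O})$: the canonical symplectic form $\omega^{P^{2}\mathbb{O}}$ because the Riemannian metric $g^{P^{2}\mathbb{O}}$ is $F_{4}$-invariant, and the pullback $\tau_{\mathbb{O}}^{*}\bigl(\sqrt{-2}\,\overline{\partial}\partial\,||A||^{1/2}\bigr)$ because $\tau_{\mathbb{O}}$ is $F_{4}$-equivariant and the Hermitian norm $||A||$ is $F_{4}$-invariant by \eqref{invariance of inner product}. The plan is to (i) compute $\tau_{\mathbb{O}}^{*}(||A||^{1/2})$ in closed form as a function of $(X,Y)$, and then (ii) verify the resulting K\"ahler-potential identity pointwise at representatives of the $F_{4}$-orbits in $T^{*}_{0}(P^{2}\mathbb{O})$.

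\textbf{Step 1 (Pullback of the potential).} I would first establish
\[
\tau_{\mathbb{O}}^{*}(||A||^{1/2})(X,Y)=||Y||.
\]
Decompose $\tau_{\mathbb{O}}(X,Y)=B+\sqrt{-1}\,C$ with $B=||Y||^{2}X-Y^{2}$ and $C=||Y||Y/\sqrt{2}$, both in $\mathcal{J}(3)$. Since the complex-conjugate element is $B-\sqrt{-1}\,C$, the squared Hermitian norm splits as $||A||^{2}=||B||^{2}+||C||^{2}$. The term $||C||^{2}=||Y||^{4}/2$ is immediate, and expanding $||B||^{2}$ uses $\langle X,X\rangle^{\mathcal{J}(3)}=\operatorname{tr}(X^{2})=\operatorname{tr}(X)=1$ (from $X^{2}=X$) together with $\langle X,Y^{2}\rangle^{\mathcal{J}(3)}=\langle X\circ Y,Y\rangle^{\mathcal{J}(3)}=||Y||^{2}/2$ (from $X\circ Y=Y/2$ and \eqref{selfadjointness of Jordan product in the inner product}), which yields $||B||^{2}=||Y^{2}||^{2}$. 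The ratio $||Y^{2}||^{2}/||Y||^{4}$ is then $F_{4}$-invariant on the unit cotangent sphere bundle $S(P^{2}\mathbb{O})\cong F_{4}/Spin(7)$ (since $F_{4}$ acts by Jordan automorphisms), hence constant; its value $1/2$ is read off at $Y_{1}$ from $Y_{1}^{2}=\tfrac{1}{2}\operatorname{diag}(1,1,0)$. Summing, $||A||^{2}=||Y||^{4}$, as claimed.

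\textbf{Step 2 (Reduction to a pointwise K\"ahler-potential identity).} Writing $\rho(X,Y):=||Y||$ and $J$ for the complex structure on $T^{*}_{0}(P^{2}\mathbb{O})$ transported from $\mathbb{X}_{\mathbb{O}}$ via $\tau_{\mathbb{O}}$, the theorem becomes $\sqrt{-2}\,\overline{\partial}\partial\rho=\omega^{P^{2}\mathbb{O}}$, an identity between two $F_{4}$-invariant $2$-forms which need only be matched along the one-parameter family $(X_{1},tY_{1})$, $t>0$. I would introduce real linear coordinates $(q^{1},\dots,q^{16},p_{1},\dots,p_{16})$ around this point adapted to the $Spin(7)$-isotropy decomposition of $T_{(X_{1},tY_{1})}(T^{*}_{0}(P^{2}\mathbb{O}))$ into a radial horizontal line $\mathbb{R}\,Y_{1}$, a $15$-dimensional transverse horizontal subspace $Y_{1}^{\perp}\subset T_{X_{1}}(P^{2}\mathbb{O})$, and the two corresponding vertical subspaces. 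In these coordinates $\omega^{P^{2}\mathbb{O}}=\sum dp_{i}\wedge dq^{i}$, and the calculation of $\sqrt{-2}\,\overline{\partial}\partial\rho$ splits along the same block decomposition.

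\textbf{Main obstacle.} The delicate step is the explicit description of the almost-complex structure $J$ at $(X_{1},tY_{1})$ in these coordinates. One must take the differential of $(X,Y)\mapsto (||Y||^{2}X-Y^{2})+\sqrt{-1}\,||Y||Y/\sqrt{2}$ block-by-block (using the octonion matrix identities recorded in Section 2, together with the tangency conditions $X\circ Y=Y/2$ and $X^{2}=X$) and translate multiplication by $\sqrt{-1}$ on $T_{A}\mathbb{X}_{\mathbb{O}}\subset\mathcal{J}(3)^{\mathbb{C}}$ back into an $\mathbb{R}$-linear operator on the source. Once $J$ is written as an explicit block operator with the correct $t$-dependent scaling, the radial $1{+}1$-dimensional block reproduces the Liouville pairing $dp\wedge dq$, and the $Spin(7)$-equivariant pairing of the two $15$-dimensional transverse subspaces forces the remaining components to reassemble into $\sum_{i=2}^{16}dp_{i}\wedge dq^{i}$. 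This explicit block description of $J$ is the technical core of the argument; all other steps are routine, and the pointwise verification propagates to all of $T^{*}_{0}(P^{2}\mathbb{O})$ by $F_{4}$-invariance.
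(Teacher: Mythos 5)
Your Step 1 is correct and agrees with the identity $\|A\|^{2}=\|Y\|^{4}$ quoted from \cite{Fu2} in $\S 6.1$; the computation of $\|B\|^{2}=\|Y^{2}\|^{2}$ via $\langle X,Y^{2}\rangle=\langle X\circ Y,Y\rangle=\tfrac12\|Y\|^{2}$ and the evaluation of $\|Y^{2}\|^{2}/\|Y\|^{4}$ at $Y_{1}$ are sound.

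The difficulty is in Step 2, where the proof is not actually carried out. You correctly identify that the entire content of the theorem lies in the explicit description of the complex structure (equivalently, of $d\tau_{\mathbb{O}}$ and the type decomposition it induces), and then you defer exactly that computation, asserting that the blocks will "reassemble" by $Spin(7)$-equivariance. This does not close the argument, for two reasons. First, equivariance only pins down an invariant pairing up to a scalar on each $Spin(7)$-irreducible summand, and the $15$-dimensional transverse space is not irreducible under $Spin(7)$ (it splits into at least a $7$- and an $8$-dimensional piece), so there are several undetermined constants that must still be computed — and computing them requires precisely the block description of $J$ you have postponed. Second, $\overline{\partial}\partial\rho$ is not determined by the value of $J$ at a single point: it is a $dd^{c}$-type object involving first derivatives of the complex structure, so a "pointwise block description of $J$ at $(X_{1},tY_{1})$" is in principle insufficient; one must either work in the ambient holomorphic coordinates of $\mathcal{J}(3)^{\mathbb{C}}$ (as the paper does) or control $J$ to first order along the ray.

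For comparison, the route of \cite{Fu2}, reproduced in $\S 6.1$ of this paper, avoids any explicit description of $J$: one computes the one-form $\tau_{\mathbb{O}}^{*}\bigl(\sqrt{-2}\,\partial\|A\|^{1/2}\bigr)$ directly in the ambient coordinates, using $\|A\|^{2}=\|Y\|^{4}$, $(da,a)=\|Y\|^{2}(Y,dY)=(db,b)$ and $(dY,Y\circ Y)=0$, and finds that it equals $\theta^{P^{2}\mathbb{O}}+\tfrac{\sqrt{-1}}{\sqrt{2}}\,d\|Y\|$; applying $d$ and using $d\partial=\overline{\partial}\partial$ on functions gives the theorem. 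If you want to salvage your approach you should replace the appeal to equivariance by this (or an equivalent) explicit computation of the potential one-form.
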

The inverse ${\tau_{\mathbb{O}}}^{-1}$ is given by
\[
{\tau_{\mathbb{O}}}^{-1}:\mathbb{X}_{\mathbb{O}}\ni A\longmapsto (X,Y)=(X(A),Y(A))\in\mathcal{J}(3)\times \mathcal{J}(3),
\]
\begin{equation}\label{inverse of tau-O}
\left\{
\begin{array}{l}
X(A)=\frac{1}{2||A||}\cdot \left(A+\overline{A}\right)+\frac{A\circ
  \overline{A}}{||A||^2},\\
\\
Y(A)=-\frac{\sqrt{-1}}{\sqrt{2}}\cdot{||A||^{-1/2}\left(A-\overline{A}\right)}.
\end{array}\right.
\end{equation}

\section{Complex coordinate neighborhoods and Calabi-Yau structure}
{\it We denote the holomorphic part of the complexified cotangent bundle 
$T^{*}(\mathbb{X}_{\mathbb{O}})\otimes\mathbb{C}$ by
$T^{*'}(\mathbb{X}_{\mathbb{O}})^{\mathbb{C}}$ (and likewise
$T^{*''}(\mathbb{X}_{\mathbb{O}})^{\mathbb{C}}$ is the anti-holomorphic subbundle).}
 
In this section we show that the canonical line bundle 
$\stackrel{16}{\bigwedge} T^{*\,'}(\mathbb{X}_{\mathbb{O}})^{\mathbb{C}}$ 
is holomorphically
trivial by explicitly constructing  a nowhere vanishing 
global holomorphic section (Theorem \ref{holomorphic global section}).

For this purpose we consider an open covering by explicit coordinates neighborhoods
and show that the Jacobians of the coordinates transformations is 
a coboundary form of $\mathbb{C}^{*}$-valued zero form.
 
The condition in \eqref{condition Cayley cotangent} 
is expressed in the following six equations in terms of octanions:
\begin{align}
&(\xi_{3}+\xi_{2})x+\theta(yz)=0,
~(\xi_{1}+\xi_{3})y+\theta(zx)=0,
~(\xi_{2}+\xi_{1})z+\theta(xy)=0,\label{2,3}\\
&{\xi_{1}}^2+z\theta(z)+\theta(y)y=0,
~{\xi_{2}}^2+\theta(z)z+x\theta(x)=0,
~{\xi_{3}}^2+\theta(x)x+y\theta(y)=0.\label{3,3}
\end{align}

The condition $0\not=A\in\mathbb{X}_{\mathbb{O}}$ is equivalent to 
one of the components $x,y$, or $z$ being non zero.
Then this implies
\begin{proposition}\label{trace is zero}
\[
\mathbb{X}_{\mathbb{O}}\ni A, ~\text{then}~\text{{\em tr}}\,(A)=\xi_{1}+\xi_{2}+\xi_{3}=0.
\]
\end{proposition}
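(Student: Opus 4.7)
The plan is to invoke the cubic Cayley--Hamilton identity of the exceptional Jordan algebra. Every element $A\in\mathcal{J}(3)$ satisfies the generic minimal equation
\[
A^{3}-t_{1}(A)\,A^{2}+t_{2}(A)\,A-t_{3}(A)\,\mathrm{Id}=0,
\]
where $t_{1}(A)=\text{tr}(A)$, $t_{3}(A)$ is the Freudenthal determinant, and $t_{2}(A)=\tfrac{1}{2}\bigl(t_{1}(A)^{2}-\text{tr}(A^{2})\bigr)$. Since this is a polynomial identity in the entries of $A$, it extends holomorphically to the complexification $\mathcal{J}(3)^{\mathbb{C}}$.

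From the hypothesis $A^{2}=0$ I extract two inputs: first, $A^{3}=A\circ A^{2}=0$ by power-associativity of the Jordan algebra; second, $\text{tr}(A^{2})=0$, which is also visible directly as the sum of the three diagonal equations of \eqref{3,3}. Substituting $A^{3}=0$ into the cubic identity collapses it to $t_{2}(A)\,A=t_{3}(A)\,\mathrm{Id}$.

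Next I argue that $A$ is not a scalar multiple of $\mathrm{Id}$: by the observation immediately preceding the statement, $A\neq 0$ is equivalent to one of the components $x,y,z$ being nonzero, so $A$ has a nonzero off-diagonal entry. Comparing off-diagonal entries in $t_{2}(A)\,A=t_{3}(A)\,\mathrm{Id}$ therefore forces $t_{2}(A)=0$, whence also $t_{3}(A)=0$. Combined with $\text{tr}(A^{2})=0$, Newton's identity $t_{1}(A)^{2}=\text{tr}(A^{2})+2\,t_{2}(A)$ yields $\text{tr}(A)^{2}=0$, i.e., $\text{tr}(A)=\xi_{1}+\xi_{2}+\xi_{3}=0$.

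The main obstacle, if one wishes to avoid invoking the generic minimal polynomial of $\mathcal{J}(3)$ as a black box, is to give a self-contained derivation using only \eqref{2,3}--\eqref{3,3}. A direct route is to assume by symmetry that $x\neq 0$, multiply the third equation of \eqref{2,3} on the left by $y$, and use the left-alternative identity $y\bigl(\theta(y)\theta(x)\bigr)=\bigl(y\theta(y)\bigr)\theta(x)$ valid in the alternative algebra $\mathbb{C}\otimes_{\mathbb{R}}\mathbb{O}$; combining with the conjugate of the first equation of \eqref{2,3} isolates the scalar relation $y\theta(y)=(\xi_{1}+\xi_{2})(\xi_{2}+\xi_{3})$. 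Summing its cyclic analogues together with the diagonal equations \eqref{3,3} then produces $(\xi_{1}+\xi_{2}+\xi_{3})^{2}=0$. The only remaining technicality is to handle the degenerate cases in which one or two of $x,y,z$ vanish, which is straightforward from the reduced system.
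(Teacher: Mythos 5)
Your proposal is correct, but your primary route is genuinely different from the paper's. The paper deliberately gives an elementary computation inside the (complexified) octonions: assuming one off-diagonal component (there, $y$) is nonzero, it multiplies the equations \eqref{2,3} by suitable octonions, uses the alternativity identity $a\cdot(\theta(a)b)=(a\theta(a))\cdot b$ to extract the scalar relations $z\theta(z)=(\xi_{3}+\xi_{2})(\xi_{1}+\xi_{3})$ and $\theta(x)x=(\xi_{2}+\xi_{1})(\xi_{3}+\xi_{1})$, and substitutes into one diagonal equation of \eqref{3,3} to get $(\xi_{1}+\xi_{2}+\xi_{3})^{2}=0$; this is essentially your fallback sketch, up to a cyclic permutation of which component is assumed nonzero (and note that no further ``degenerate cases'' remain, since the three cases $x\neq 0$, $y\neq 0$, $z\neq 0$ are symmetric and at least one must occur). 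Your main argument instead invokes the generic minimal (Cayley--Hamilton) equation $A^{3}-t_{1}A^{2}+t_{2}A-t_{3}\,\mathrm{Id}=0$ of the cubic Jordan algebra, which the paper explicitly avoids in favor of ``permitted regulations in the octanion.'' Granting that standard identity (it is in Springer--Veldkamp and Yokota, both cited, and extends to $\mathcal{J}(3)^{\mathbb{C}}$ by polynomial continuation), your steps are all sound: $A^{2}=0$ gives $A^{3}=0$ and $\mathrm{tr}(A^{2})=0$, the identity collapses to $t_{2}(A)A=t_{3}(A)\mathrm{Id}$, a nonzero off-diagonal entry forces $t_{2}(A)=0$, and $t_{1}(A)^{2}=\mathrm{tr}(A^{2})+2t_{2}(A)=0$. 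What your route buys is brevity and the stronger conclusion $T_{1}(A)=T_{2}(A)=T_{3}(A)=0$ in one stroke (which the paper only assembles later, in Proposition \ref{non-singular part}); what the paper's route buys is self-containedness, using nothing beyond the defining equations and alternativity, which matches its stated intent of giving an elementary proof.
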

This property does not appear in an explicit form in 
{\eqref{2,3} and \eqref{3,3}} 
but plays an important role in $\S 8$. 
Although it is proved in \cite{Fu2}, we give an elementary proof based on the permitted regulations in the octanion. 
\begin{proof}
Since the associativity 
\[
a\cdot\theta(a)b=a\theta(a)\cdot b
\]
holds,
by multiplying $z$ from the left to the equality $(\xi_{3}+\xi_{2})x+\theta(yz)=0$ 
it holds the equality:
\begin{align*}
&z\cdot (\xi_{3}+\xi_{2})x+z\cdot \theta(z)\theta(y)=
(\xi_{3}+\xi_{2})zx+ z\theta(z)\cdot \theta(y)=-(\xi_{3}+\xi_{2})(\xi_{1}+\xi_{3})\theta(y)+ z\theta(z)\cdot \theta(y)=0.
\end{align*}
Hence if we assume $y\not=0$
\begin{align*}
&(\xi_{3}+\xi_{2})(\xi_{1}+\xi_{3})=z\theta(z)\\
\intertext{and by the same way}
&(\xi_{2}+\xi_{1})(\xi_{3}+\xi_1)=\theta(x)x.
\end{align*}
These imply that
\[
(\xi_{3}+\xi_{2})(\xi_{1}+\xi_{3})+(\xi_{2}+\xi_{1})(\xi_{3}+\xi_1)+{\xi_{2}}^2
=(\xi_{1}+\xi_{2}+\xi_{3})^2=0.
\]
and we have
\[
\xi_{1}+\xi_{2}+\xi_{3}=0.
\]
From the arguments above the same holds for other cases of $x\not=0$ or $z\not=0$.
\end{proof}
\begin{remark}\label{remark on trace zero proof}
The property above can be seen easily, if we use the
transitivity 
of the action of the group $F_{4}$ on the {\em (co)}tangent sphere bundle.

Also from the definition of the map $\tau_{\mathbb{O}}$,
$\text{\em tr}(A)=\text{\em tr}(\tau_{\mathbb{O}}(X,Y))=0$ is
equivalent to $\text{\em tr}\,(Y)=0$.
\end{remark}

Here we mention the following fact, which is a special case
described in Proposition \ref{vanishing of singular harmonic functions}.
\begin{lemma}\label{uniqueness of the zero linear form}
Assume that a linear function $f:\mathcal{J}(3)^{\mathbb{C}}\to\mathbb{C}$ 
\[
f(A)=2\sum\limits_{i=0}^{7}\,(a_{i}\{w\}_{i}+b_{i}\{v\}_{i}+ c_{i}\{u\}_{i})+\sum_{i=1}^{3} \alpha_{i}\xi_{i}
\]
vanishes on $\mathbb{X}_{\mathbb{O}}$. 
Then $f$ is a constant multiple of the trace function $A\mapsto
\text{\em tr}\,(A),~ A\in \mathcal{J}(3)^{\mathbb{C}}$. 
\end{lemma}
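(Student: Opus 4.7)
The plan is to show that the $\mathbb{C}$-linear span of $\mathbb{X}_{\mathbb{O}}$ coincides with the traceless subspace $\ker(\mathrm{tr}) \subset \mathcal{J}(3)^{\mathbb{C}}$. Proposition \ref{trace is zero} already gives $\mathrm{span}_{\mathbb{C}}(\mathbb{X}_{\mathbb{O}}) \subseteq \ker(\mathrm{tr})$, and once equality is established, any linear $f$ vanishing on $\mathbb{X}_{\mathbb{O}}$ must be a scalar multiple of $\mathrm{tr}$, proving the lemma.

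To establish the reverse inclusion I would exhibit three one-parameter families of explicit elements of $\mathbb{X}_{\mathbb{O}}$, one supported at each off-diagonal octonion position. For nonzero $t \in \mathbb{C}$ and $z \in \mathbb{C} \otimes_{\mathbb{R}} \mathbb{O}$ with $z\theta(z) = \theta(z)z = -t^2$, set
\[
A_{t,z} = \begin{pmatrix} t & z & 0 \\ \theta(z) & -t & 0 \\ 0 & 0 & 0 \end{pmatrix}.
\]
A direct check of \eqref{2,3}--\eqref{3,3} shows $A_{t,z} \in \mathbb{X}_{\mathbb{O}}$: the three relations in \eqref{2,3} vanish termwise because $x=y=0$ and $\xi_{3}=0$, while the relations in \eqref{3,3} collapse to exactly the prescribed norm condition. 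Two analogous families with diagonal patterns $(t,0,-t)$ (nonzero $y$-entry satisfying $y\theta(y) = -t^2$) and $(0,t,-t)$ (nonzero $x$-entry satisfying $x\theta(x) = -t^2$) provide elements supported at the two remaining off-diagonal positions.

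Evaluating $f$ at $A_{t,z}$ yields
\[
2\sum_{i=0}^{7} c_i \{z\}_i + (\alpha_1 - \alpha_2)\, t = 0
\]
for every $z$ lying on the affine quadric $\{z\theta(z) + t^2 = 0\} \subset \mathbb{C}^8$. The complexified octonion norm $z\theta(z) = \sum \{z\}_i^2$ is a non-degenerate quadratic form in $8$ variables, so $z\theta(z) + t^2$ is irreducible over $\mathbb{C}$ for $t \neq 0$; by the Nullstellensatz its ideal contains no nonzero polynomial of degree $\le 1$, forcing $c_i = 0$ for all $i$ and $\alpha_1 = \alpha_2$. Applied to the other two families the same argument gives $b_i = a_i = 0$, $\alpha_1 = \alpha_3$, and $\alpha_2 = \alpha_3$, whence $f = \alpha_1\,\mathrm{tr}$.

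The main obstacle is the final Nullstellensatz step: one must verify that the non-degenerate octonion-norm form really yields an irreducible affine quadric in $\mathbb{C}^8$ for every nonzero $t$. This is standard but is the sole non-combinatorial ingredient of the argument; the rest is a routine verification of the six defining equations on elements having many zero entries. A conceptually cleaner route would instead exploit the $F_{4}^{\mathbb{C}}$-invariance of $\mathbb{X}_{\mathbb{O}}$ (immediate since $A^2 = A \circ A$ is preserved by Jordan automorphisms) together with the irreducibility of $\ker(\mathrm{tr})$ as the $26$-dimensional fundamental representation of $F_{4}^{\mathbb{C}}$, but the authors' emphasis on an elementary proof favours the direct construction described above.
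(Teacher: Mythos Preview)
Your proof is correct and follows the same core strategy as the paper: exhibit explicit elements of $\mathbb{X}_{\mathbb{O}}$ supported at a single off-diagonal octonion position, evaluate $f$ there, and deduce that the off-diagonal coefficients vanish and the diagonal ones agree. The paper produces these elements via $\tau_{\mathbb{O}}(X_{1},Y)$ with $Y$ having a single nonzero \emph{real} octonion entry; your $A_{t,z}$ with $z\theta(z)=-t^{2}$ are precisely the complexified version of the same family. The two arguments diverge only in the conclusion step: the paper separates the terms $|z|^{2}(\alpha_{1}-\alpha_{2})$ and $2\sqrt{-1}|z|\sum a_{i}z_{i}$ by flipping signs of real coordinates, whereas you invoke irreducibility of the affine quadric $\sum\{z\}_{i}^{2}+t^{2}=0$ in $\mathbb{C}^{8}$ and the Nullstellensatz. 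Your treatment of the third off-diagonal slot is also cleaner---a third symmetric family with diagonal $(0,t,-t)$---while the paper instead reads off the $x$-component from the cross term $-\theta(yz)$ in $\tau_{\mathbb{O}}(X_{1},Y)$ with both $y,z\neq 0$. Both routes are valid; yours is more symmetric and avoids real/imaginary bookkeeping at the price of the (standard) Nullstellensatz input, while the paper's stays entirely elementary over~$\mathbb{R}$.
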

\allowdisplaybreaks{
\begin{proof}
Put $a=\sum \,a_{i}{\bf e}_{i}$, $b=\sum \,b_{i}{\bf e}_{i}$, $c=\sum
\,c_{i}{\bf e}_{i}\in\mathbb{C}\otimes_{\mathbb{R}}\mathbb{O}$ and
$
B=\begin{pmatrix}\alpha_{1}&a&\theta(b)\\\theta(a)&\alpha_{2}&c\\b&\theta(c)&\alpha_{3}\end{pmatrix}
\in \mathcal{J}(3)^{\mathbb{C}}.
$

Then 
\[
f(A)=\text{tr}\,(A\circ B):=f_{B}(A).
\]
Let
$Y=\begin{pmatrix}0&z&\,\theta(y)\\\theta(z)&0&0\\y&0&0\end{pmatrix}\in T_{X_{1}}(P^{2}\mathbb{O})$,
where $z=\sum z_{i}{\bf e}_{i},y=\sum y_{i}{\bf e}_{i}\in\mathbb{O}$,
then
\begin{align*}
&\tau_{\mathbb{O}}(X_{1},Y)=
\begin{pmatrix}|z|^2+|y|^2&0&0\\0&-|z|^2&-\theta(yz)\\0&-yz&-|y|^2\end{pmatrix}
+\sqrt{|z|^2+|y|^2}
\begin{pmatrix}0&\sqrt{-1}z&\sqrt{-1}\theta(y)\\\sqrt{-1}\theta(z)&0&0\\\sqrt{-1}y&0&0\end{pmatrix}\in \mathbb{X}_{\mathbb{O}}.
\end{align*}
Here we put $y=0$, and we may assume for such $A=\tau_{\mathbb{O}}(X_{1},Y)$ 
\[
f_{B}(A)=\text{tr}\,(B\circ A)=(|z|^2)(\alpha_{1}-\alpha_{2})+
2\sum\sqrt{-1}|z|z_{i}a_{i}=0,
\]
for any $\pm z_{i}\in\mathbb{R}$.
Then $\alpha_{1}=\alpha_{2}$ and also $a_{i}=0$ for
$i=0,\cdots,7$. Likewise we have $\alpha_{1}=\alpha_{3}$ and
$b_{i}=0$ for $i=0,\cdots,7$.

Then we may assume
\[
f_{B}(\tau_{\mathbb{O}}(X_{1},Y))= <\sum\,c_{i}{\bf e}_{i},\theta(yz)>^{\mathbb{R}^{8}}=0
~\text{for any}~y,z\in\mathbb{O}.
\]
Hence $c_{i}=0$ for $i=0,\cdots,7$, which shows our assertion, that is
$a=b=c=0$, $\alpha:=\alpha_{1}=\alpha_{2}=\alpha_{3}$ and
\[
f_{B}(A)=\alpha_{1}\xi_{1}+\alpha_{2}\xi_{2}+\alpha_{3}\xi_{3}=\alpha\cdot\text{tr}\,(A).
\]
\end{proof}
}
\begin{corollary}\label{26 dim}
The space spanned by $\mathbb{X}_{\mathbb{O}}$
{\em(}$:=[\,\mathbb{X}_{\mathbb{O}}\,]${\em)} is a $26$-dimensional subspace in $\mathcal{J}(3)^{\mathbb{C}}$. 
\end{corollary}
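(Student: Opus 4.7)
The plan is to deduce the dimension count from the two preceding results combined with elementary linear duality on the 27-dimensional complex vector space $\mathcal{J}(3)^{\mathbb{C}}$.

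First I would observe the upper bound. By Proposition \ref{trace is zero}, every $A\in\mathbb{X}_{\mathbb{O}}$ satisfies $\text{tr}\,(A)=0$, so the trace functional vanishes identically on $[\mathbb{X}_{\mathbb{O}}]$. Consequently $[\mathbb{X}_{\mathbb{O}}]$ is contained in the hyperplane $\ker(\text{tr})\subset\mathcal{J}(3)^{\mathbb{C}}$, which has complex dimension $26$. Thus $\dim_{\mathbb{C}}[\mathbb{X}_{\mathbb{O}}]\le 26$.

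For the lower bound I would argue by contradiction via duality. Suppose $\dim_{\mathbb{C}}[\mathbb{X}_{\mathbb{O}}]\le 25$. Then its annihilator in $(\mathcal{J}(3)^{\mathbb{C}})^{*}$ has dimension at least $2$, so it contains a linear functional $f$ that is linearly independent from the trace functional $\text{tr}$. But $f$ vanishes on $\mathbb{X}_{\mathbb{O}}$, and Lemma \ref{uniqueness of the zero linear form} asserts that every such $f$ must be a scalar multiple of $\text{tr}$, contradicting the linear independence. Hence $\dim_{\mathbb{C}}[\mathbb{X}_{\mathbb{O}}]\ge 26$, and combined with the upper bound we obtain equality, i.e.\ $[\mathbb{X}_{\mathbb{O}}]=\ker(\text{tr})$ is a $26$-dimensional subspace of $\mathcal{J}(3)^{\mathbb{C}}$.

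There is essentially no obstacle here since all the real work has been done in Proposition \ref{trace is zero} and Lemma \ref{uniqueness of the zero linear form}; the only thing to be careful about is the observation that Lemma \ref{uniqueness of the zero linear form} is stated for arbitrary linear functionals written in the specific parametrization $f(A)=2\sum(a_{i}\{w\}_{i}+b_{i}\{v\}_{i}+c_{i}\{u\}_{i})+\sum \alpha_{i}\xi_{i}$, which is nothing but the general linear form on $\mathcal{J}(3)^{\mathbb{C}}$ expressed through the bilinear pairing with a generic element $B\in\mathcal{J}(3)^{\mathbb{C}}$, so the lemma does give the full characterization of the annihilator needed here.
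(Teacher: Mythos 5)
Your proof is correct and follows exactly the route the paper intends: the corollary is an immediate consequence of Proposition \ref{trace is zero} (giving the upper bound $26$ via the trace hyperplane) and Lemma \ref{uniqueness of the zero linear form} (giving the lower bound, since the annihilator of $[\mathbb{X}_{\mathbb{O}}]$ is exactly $\mathbb{C}\cdot\mathrm{tr}$). Your closing remark that the lemma's parametrization of $f$ is the general linear functional via the non-degenerate trace pairing is the right point to verify, and it holds.
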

Let $z,y,x\in\mathbb{C}\otimes_{\mathbb{R}}\hspace{-0.08cm}\mathbb{O}$
and put
\begin{equation}\label{rep of complex octanion by  2 by 2 complex matrix 1}
\left\{\begin{array}{l}
\rho_{\mathbb{\mathbb{O}}}(z)=Z+W{\bf e}_{4}=\begin{pmatrix}z_{1}&z_{2}\\z_{3}&z_{4}\end{pmatrix}+\begin{pmatrix}w_{1}&w_{2}\\w_{3}&w_{4}\end{pmatrix}{\bf e}_{4},\\
\rho_{\mathbb{O}}(y)=Y+V{\bf e}_{4}
=\begin{pmatrix}y_{1}&~y_{2}\\y_{3}&~y_{4}\end{pmatrix}+\begin{pmatrix}v_{1}&~v_{2}\\v_{3}&~v_{4}\end{pmatrix}{\bf e_{4}},\\
\rho_{\mathbb{O}}(x)=X+U{\bf e}_{4}
=\begin{pmatrix}x_{1}&x_{2}\\x_{3}&x_{4}\end{pmatrix}
+\begin{pmatrix}u_{1}&u_{2}\\u_{3}&u_{4}\end{pmatrix}{\bf e}_{4},
~\,\,\text{where}~z_{i},w_{i},y_{i},v_{i},x_{i},u_{i}\in\mathbb{C}.\\
\end{array}\right.
\end{equation}
Then the conditions \eqref{2,3} and \eqref{3,3}
are rewritten in terms of the matrices $Z$, $W$, $Y$, $V$, $X$, $U$ as
\begin{equation}\label{condition for off diagonal}
\left\{\begin{array}{l}
\xi_{1}(\theta(X)-U{\bf e}_{4})
=(Y+V{\bf e}_{4})(Z+W{\bf e}_{4})=YZ-\theta(W)V+(WY+V\theta(Z)){\bf e}_{4},\\
\xi_{2}(\theta(Y)-V{\bf e}_{4})
=(Z+W{\bf e}_{4})(X+U{\bf e}_{4})=ZX-\theta(U)W+(UZ+W\theta(X)){\bf e}_{4},\\
\xi_{3}(\theta(Z)-W{\bf e}_{4})
=(X+U{\bf e}_{4})(Y+V{\bf e}_{4})=XY-\theta(V)U+(VX+U\theta(Y)){\bf e}_{4},\\
\end{array}\right.
\hspace{-8mm}
\end{equation}
\vspace{-0.5cm}
\begin{equation}\label{condition for diagonal}
\left\{
\begin{array}{l}
{\xi_{1}}^2+\det Z+\det W+\det Y+\det V=0,\\
{\xi_{2}}^2+\det Z+\det W+\det X+\det U=0,\\
{\xi_{3}}^3+\det Y+\det V+\det X+\det U=0.
\end{array}\right.
\hspace{47mm}
\end{equation}

{\it Hereafter (until $\S 7$), we denote the matrix $A
=\begin{pmatrix}\xi_{1}&z&\theta(y)\\\theta(z)&\xi_{2}&x\\y&\theta(x)&\xi_{3}\end{pmatrix}
\in\mathcal{J}(3)^{\mathbb{C}}
$
in the form of a vector $\in\mathbb{C}^{27}$:
\begin{align}
A\longleftrightarrow&
(\xi_{1},\xi_{2},\xi_{3},z_{1},\ldots,z_{4},w_{1},\ldots,w_{4},y_{1},\ldots,y_{4},v_{1},\ldots,v_{4},x_{1},\ldots,x_{4},u_{1},\ldots,u_{4})\label{vector expression of A}\\
&=(a_{1},a_{2},a_{3},a_{4},\ldots,\ldots,a_{27})
\in \mathbb{C}^{27},\notag
\end{align}
using the components given in \eqref{rep of complex octanion by  2 by 2
complex matrix 1}  by the map $\rho_{\mathbb{O}}$.}

The conditions for matrices in $\mathbb{X}_{\mathbb{O}}$ require that at least one
of the off-diagonal components in the matrix 
$A$ is non-zero. Hence, for example, 
we assume that there is at least one component in the matrix 
$\rho_{\mathbb{O}}(z)=Z+W{\bf e}_{4}=\begin{pmatrix}z_{1},&z_{2}\\z_{3}&z_{4}\end{pmatrix}
+\begin{pmatrix}w_{1}&w_{2}\\w_{3}&w_{4}\end{pmatrix}{\bf e}_{4}$, say 
$z_{1}\not=0$ and put
$O_{z_{1}}=\{A\in\mathbb{X}_{\mathbb{O}}~|~z_{1}\not=0\}$. Also we
define other open subsets $\{O_{z_{i}}, O_{w_{i}},
O_{y_{i}},O_{v_{i}},O_{x_{i}},O_{u_{i}}\}_{i=1}^{4}$ in a same way like $O_{z_{1}}$.
Then we have
\begin{proposition}\label{open covering}
The $24$ subsets 
\begin{equation}\label{notation of open covering}
\{O_{z_{i}}, O_{w_{i}},
O_{y_{i}},O_{v_{i}},O_{x_{i}},O_{u_{i}}\}_{i=1}^{4}=:\mathcal{U}_{0}
\end{equation} 
are all open coordinate neighborhoods and totally is an open covering of $\mathbb{X}_{\mathbb{O}}$.
\end{proposition}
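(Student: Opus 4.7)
The argument splits into two pieces: that $\mathcal{U}_0$ covers $\mathbb{X}_\mathbb{O}$, and that each of its 24 members is a holomorphic chart onto an open set of $\mathbb{C}^{16}$.

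The covering is immediate. Suppose $A \in \mathbb{X}_\mathbb{O}$ has all three off-diagonal octanions $x$, $y$, $z$ equal to zero. Then $A$ is diagonal and the relation $A^2 = 0$ reduces to $\xi_1^2 = \xi_2^2 = \xi_3^2 = 0$, forcing $A = 0$ and contradicting the definition of $\mathbb{X}_\mathbb{O}$. Hence at least one of $x$, $y$, $z$ is nonzero, and via the identification $\rho_\mathbb{O}$ into $\mathbb{C}(2)\oplus\mathbb{C}(2)\mathbf{e}_4$ at least one of the eight complex components of that octanion is nonzero, placing $A$ in one of the 24 sets of $\mathcal{U}_0$.

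For the chart property, the defining system \eqref{condition for off diagonal}--\eqref{condition for diagonal} is symmetric under cyclic rotation of the three off-diagonal positions (with the corresponding cyclic permutation of the $\xi_i$), so it suffices to establish the claim for one representative, which I take to be $O_{z_1}$; the remaining 23 charts then follow by analogous calculations on permuted indices. On $O_{z_1}$ my plan is to take as local coordinates the 16 complex components of the octanions $z$ and $y$, namely $(z_1, z_2, z_3, z_4, w_1, w_2, w_3, w_4, y_1, y_2, y_3, y_4, v_1, v_2, v_3, v_4)$, and to show that on $O_{z_1}$ the remaining 11 ambient components $\xi_1$, $\xi_2$, $\xi_3$, $x_1, \ldots, x_4$, $u_1, \ldots, u_4$ are uniquely and holomorphically determined by the 16. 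The input is the system \eqref{condition for off diagonal}--\eqref{condition for diagonal} together with the trace-zero relation $\xi_1+\xi_2+\xi_3=0$ from Proposition \ref{trace is zero}. Combining trace-zero with \eqref{2,3} gives
\[
\xi_1\, x = \theta(yz), \qquad \xi_2\, y = \theta(zx), \qquad \xi_3\, z = \theta(xy),
\]
from which I will solve for $x$ in terms of $(y, z)$ along with one of the $\xi_i$, using the non-vanishing $z_1\neq 0$; the remaining two $\xi_i$'s are then fixed by trace-zero, and the diagonal identities \eqref{condition for diagonal} pin down any residual sign or branch ambiguity.

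The main technical obstacle will be executing this inversion inside a non-associative algebra. My strategy is to carry out all multiplications inside the $\rho_\mathbb{O}$-image $\mathbb{C}(2)\oplus\mathbb{C}(2)\mathbf{e}_4$, exploiting the associative matrix product in each $\mathbb{C}(2)$-slot and using the explicit Cayley--Dickson rule recorded in \eqref{condition for off diagonal} only when the two slots interact; this recasts the inversion as a linear-algebra problem whose determinant reduces to a polynomial in the components of $(z, y)$ that does not vanish on $O_{z_1}$. The holomorphic implicit function theorem then produces the local biholomorphism, and the cyclic symmetry transfers the conclusion to all 24 members of $\mathcal{U}_0$.
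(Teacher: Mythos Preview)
Your covering argument is fine. The chart argument, however, has a genuine gap: the projection of $O_{z_1}$ onto the sixteen components $(z_i,w_i,y_i,v_i)$ is not injective, so these cannot serve as coordinates.

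Concretely, there is an involution of $\mathbb{X}_\mathbb{O}$ given by $(\xi_1,\xi_2,\xi_3,x,y,z)\mapsto(-\xi_1,-\xi_2,-\xi_3,-x,y,z)$ that preserves all the defining equations \eqref{2,3}--\eqref{3,3} (each of them is homogeneous of even degree in the block $(\xi_1,\xi_2,\xi_3,x)$), leaves $O_{z_1}$ invariant, and fixes your proposed coordinates $(z,y)$. For instance, both
\[
A_1=\begin{pmatrix}1&\sqrt{-1}\,{\bf e}_0&0\\ \sqrt{-1}\,{\bf e}_0&-1&0\\ 0&0&0\end{pmatrix}
\quad\text{and}\quad
A_1'=\begin{pmatrix}-1&\sqrt{-1}\,{\bf e}_0&0\\ \sqrt{-1}\,{\bf e}_0&1&0\\ 0&0&0\end{pmatrix}
\]
lie in $O_{z_1}$ and have identical $(z,y)$-components. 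Your statement that ``the diagonal identities \eqref{condition for diagonal} pin down any residual sign or branch ambiguity'' is therefore false: the first diagonal equation gives $\xi_1^2=-(\det Z+\det W+\det Y+\det V)$, which determines $\xi_1$ only up to sign, and both signs yield valid points of $\mathbb{X}_\mathbb{O}$.

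The paper avoids this by choosing a different set of sixteen independent variables, one that includes a diagonal entry $\xi_2$ and mixes components from all three off-diagonal octonions:
\[
(\xi_2,\,z_1,z_2,z_3,\,w_1,w_2,w_3,w_4,\,y_2,y_4,\,v_1,v_3,\,x_3,x_4,\,u_2,u_4).
\]
The point of this choice is that, after writing out the $2\times2$ matrix equations \eqref{matrix equation including $z_{1}$}, one can extract ten scalar equations $f_1,\dots,f_{10}$ in which each of the eleven dependent variables $x_1,x_2,u_1,u_3,y_1,y_3,v_2,v_4,z_4,\xi_3,\xi_1$ appears \emph{linearly} with coefficient $z_1$ (together with the linear trace relation $f_{11}=\xi_1+\xi_2+\xi_3=0$). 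Solving this linear system produces the dependent variables as \emph{rational} functions of the sixteen coordinates with denominator $z_1$, hence holomorphic on $O_{z_1}$, with no square roots or branch choices involved. Including $\xi_2$ among the independent variables is exactly what breaks the sign involution above.
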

\begin{proof}
We show a coordinates for the case $O_{z_1}$. Other cases will be shown by the same way.

From the equations in \eqref{condition for off diagonal} we 
select 5 equations expressed in $2\times 2$ complex matrices including the complex variable $z_{1}$
and from the equation \eqref{condition for diagonal}
we select one equation also including the complex variable $z_{1}$: 
\allowdisplaybreaks{
\begin{equation}\label{matrix equation including $z_{1}$}
\left\{
\begin{array}{l}
\xi_{1}\begin{pmatrix}x_{4}&-x_{2}\\-x_{3}&x_{1}\end{pmatrix}
=\begin{pmatrix}y_{1}&y_{2}\\y_{3}&y_{4}\end{pmatrix}
\begin{pmatrix}z_{1}&z_{2}\\z_{3}&z_{4}\end{pmatrix}
-\begin{pmatrix}w_{4}&-w_{2}\\-w_{3}&w_{1}\end{pmatrix}
\begin{pmatrix}v_{1}&v_{2}\\v_{3}&v_{4}\end{pmatrix},\\
-\xi_{1}\begin{pmatrix}u_{1}&~u_{2}\\u_{3}&~u_{4}\end{pmatrix}
=\begin{pmatrix}w_{1}&w_{2}\\w_{3}&w_{4}\end{pmatrix}
\begin{pmatrix}y_{1}&y_{2}\\y_{3}&y_{4}\end{pmatrix}
+\begin{pmatrix}v_{1}&~v_{2}\\v_{3}&~v_{4}\end{pmatrix}
\begin{pmatrix}z_{4}&-z_{2}\\-z_{3}&z_{1}\end{pmatrix},\\
\xi_{2}\begin{pmatrix}y_{4}&-y_{2}\\-y_{3}&y_{1}\end{pmatrix}
=\begin{pmatrix}z_{1}&z_{2}\\z_{3}&z_{4}\end{pmatrix}
\begin{pmatrix}x_{1}&x_{2}\\x_{3}&x_{4}\end{pmatrix}
-\begin{pmatrix}u_{4}&-u_{2}\\-u_{3}&u_{1}\end{pmatrix}
\begin{pmatrix}w_{1}&w_{2}\\w_{3}&w_{4}\end{pmatrix},\\
-\xi_{2}\begin{pmatrix}v_{1}&~v_{2}\\v_{3}&~v_{4}\end{pmatrix}
=\begin{pmatrix}u_{1}&u_{2}\\u_{3}&u_{4}\end{pmatrix}
\begin{pmatrix}z_{1}&z_{2}\\z_{3}&z_{4}\end{pmatrix}
+\begin{pmatrix}w_{1}&w_{2}\\w_{3}&w_{4}\end{pmatrix}
\begin{pmatrix}x_{4}&-x_{2}\\-x_{3}&x_{1}\end{pmatrix},\\
\xi_{3}\begin{pmatrix}z_{4}&-z_{2}\\-z_{3}&z_{1}\end{pmatrix}
=\begin{pmatrix}x_{1}&x_{2}\\x_{3}&x_{4}\end{pmatrix}
\begin{pmatrix}y_{1}&y_{2}\\y_{3}&y_{4}\end{pmatrix}
-\begin{pmatrix}v_{4}&-v_{2}\\-v_{3}&v_{1}\end{pmatrix}
\begin{pmatrix}u_{1}&u_{2}\\u_{3}&u_{4}\end{pmatrix},\\
{\xi_{2}}^2+z_{1}z_{4}-z_{2}z_{3}+w_{1}w_{4}-w_{2}w_{3}+x_{1}x_{4}-x_{2}x_{3}+u_{1}u_{4}-u_{2}u_{3}=0.
\end{array}\right.
\end{equation}
}
\allowdisplaybreaks{
From these we can select $10$ equations including the variable $z_{1}$:
\begin{equation}\label{components equations of A times A}
\left\{
\begin{array}{l}
f_{1}=-\xi_{2}y_{4}+z_{1}x_{1}+z_{2}x_{3}-(u_{4}w_{1}-u_{2}w_{3})=0,\\
f_{2}=\xi_{2}y_{2}+z_{1}x_{2}+z_{2}x_{4}-(u_{4}w_{2}-u_{2}w_{4})=0,\\
f_{3}=\xi_{2}v_{1}+u_{1}z_{1}+u_{2}z_{3}+(w_{1}x_{4}-w_{2}x_{3})=0,\\
f_{4}=\xi_{2}v_{3}+u_{3}z_{1}+u_{4}z_{3}+(w_{3}x_{4}-w_{4}x_{3})=0,\\
f_{5}=-\xi_{1}x_{4}+y_{1}z_{1}+y_{2}z_{3}-(w_{4}v_{1}-w_{2}v_{3})=0,\\
f_{6}=\xi_{1}x_{3}+y_{3}z_{1}+y_{4}z_{3}-(-w_{3}v_{1}+w_{1}v_{3})=0,\\
f_{7}=\xi_{1}u_{2}-v_{1}z_{2}+v_{2}z_{1}+w_{1}y_{2}+w_{2}y_{4}=0,\\
f_{8}=\xi_{1}u_{4}-v_{3}z_{2}+v_{4}z_{1}+w_{3}y_{2}+w_{4}y_{4}=0,\\
f_{9}=-\xi_{3}z_{1}+x_{3}y_{2}+x_{4}y_{4}-(-v_{3}u_{2}+v_{1}u_{4})=0,\\
{f}_{10}={\xi_{2}}^2+z_{1}z_{4}-z_{2}z_{3}+w_{1}w_{4}-w_{2}w_{3}+x_{1}x_{4}-x_{2}x_{3}+u_{1}u_{4}-u_{2}u_{3}=0.
\end{array}\right.
\end{equation}}
The $10$ variables 
\begin{equation}\label{dependent variables}
x_{1},x_{2},u_{1},u_{3},y_{1},y_{3},v_{2},v_{4},\xi_{3},z_{4}
\end{equation}
are coefficients of the variable $z_{1}$, and
can be solved easily. 

In fact, with one more additional equation
\begin{equation}\label{common linear equation}
f_{11}=\xi_{1}+\xi_{2}+\xi_{3}=0,
\end{equation}
we can solve the $11$ variables  
\begin{equation}\label{dependent variables 2}
\{x_{1},x_{2},u_{1},u_{3},y_{1},y_{3},v_{2},v_{4},\,z_{4},\,\xi_{3},\,\xi_{1}\}
\end{equation}
in terms of the remaining $16$ variables
\begin{equation}\label{local coordinates}
\{x_{3},x_{4},u_{2},u_{4},y_{2},y_{4},v_{1},v_{3},\,z_{1},z_{2},z_{3},w_{1},w_{2},w_{3},w_{4}\,,\,\xi_{2}\},
\end{equation}
in which, except $z_{1}\not=0$ other variables 
can take any values in $\mathbb{C}$.

Here, if we choose the equation
\[
{f}_{10}={\xi_{1}}^2+z_{1}z_{4}-z_{2}z_{3}+w_{1}w_{4}-w_{2}w_{3}+y_{1}y_{4}-y_{2}y_{3}+v_{1}v_{4}-v_{2}v_{3}=0,
\]
instead of the tenth equation $f_{10}$ in 
\eqref{components equations of A times A}(= first equation in \eqref{condition for diagonal}),
then the variable $\xi_{1}$ should be chosen as an independent variable.

In any choice(in $O_{z_{1}}$ case, $\xi_{1}$ or $\xi_{2}$) 
once we fix
them (here we choose as above), and 
denote  by $P_{z_{1}}$ the projection map 
\begin{align*}
P_{z_{1}}:O_{z_{1}}\ni
&(\xi_{1},\xi_{2},\xi_{3},z_{1},\ldots,z_{4},w_{1},\ldots,w_{4},y_{1},\ldots,y_{4},v_{1},\ldots,v_{4},x_{1},\ldots,x_{4},u_{1},\ldots,u_{4})\\
&=(a_{1},\ldots,a_{27})
\longmapsto
  (\xi_{2},z_{1},z_{2},z_{3},w_{1},w_{2},w_{3},w_{4},y_{2},y_{4},v_{1},v_{3},x_{3},x_{4},u_{2},u_{4})\\
&=(a_{2},a_{4},a_{5},a_{6},a_{8},a_{9},a_{10},a_{11},a_{13},a_{15},a_{16},a_{18},a_{22},a_{23},a_{25},a_{27})
\in\mathbb{C}\times\mathbb{C}^{*}\times\mathbb{C}^{14}.
\end{align*}
Then, the pair $(O_{z_{1}},P_{z_{1}})$ is a local coordinates neighborhood 
(note that $\dim_{\mathbb{C}} \mathbb{X}_{\mathbb{O}}=16$).
\end{proof}
{\it In any case in $\mathcal{U}_{0}$, once we fix
independent variables, then
we denote the dependent variables as $x_{1}(*), x_{2}(*)\cdots,$ {\em(}or
$a_{1}(*), a_{2}(*),\ldots,${\em)} etc., where $*$ means the independent variables.}

\begin{corollary}\label{codimension of the coordinate neighborhood}
Each coordinate neighborhood ${O}_{\ell}$ in $\mathcal{U}_{0}$
is dense in $\mathbb{X}_{\mathbb{O}}$. 
Hence any number of intersections of open sets in $\mathcal{U}_{0}$
is also open dense.
\end{corollary}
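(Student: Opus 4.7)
The plan is to identify each $O_\ell \in \mathcal{U}_0$ as the non-vanishing locus of a single holomorphic coordinate function on $\mathbb{X}_{\mathbb{O}}$ and then invoke the identity theorem for holomorphic functions on a connected complex manifold. Writing $O_\ell = \{A \in \mathbb{X}_{\mathbb{O}} \mid a_i(A) \neq 0\}$ for one of the twenty-four off-diagonal coordinates $a_i$ in the notation \eqref{vector expression of A}, the restriction $a_i|_{\mathbb{X}_{\mathbb{O}}}$ is holomorphic (since by Proposition \ref{open covering} the variety $\mathbb{X}_{\mathbb{O}} \subset \mathbb{C}^{27}$ is a $16$-dimensional complex submanifold), so $O_\ell$ is automatically open. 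Density then reduces to verifying (i) that $\mathbb{X}_{\mathbb{O}}$ is connected, and (ii) that $a_i$ does not vanish identically on $\mathbb{X}_{\mathbb{O}}$.

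For (i), the biholomorphism $\tau_{\mathbb{O}}^{-1}\colon \mathbb{X}_{\mathbb{O}} \to T^{*}_{0}(P^{2}\mathbb{O})$ of \eqref{inverse of tau-O} exhibits $\mathbb{X}_{\mathbb{O}}$ as the punctured cotangent bundle of a connected manifold, with connected fibers $\mathbb{R}^{16}\setminus\{0\}$ (since $16 \geq 2$); hence the total space is connected. For (ii), I would produce an explicit element of $O_\ell$ from the family $\tau_{\mathbb{O}}(X_1,Y)$ appearing in the proof of Lemma \ref{uniqueness of the zero linear form}: appropriate choices of the parameters $y,z \in \mathbb{O}$ put a non-zero value in any prescribed off-diagonal slot of $z$ or $\theta(y)$, and the two-point homogeneity of the $F_4$-action on $T^{*}_{0}(P^{2}\mathbb{O})$, combined with the $F_4$-equivariance of $\tau_{\mathbb{O}}$ and the linearity of the $F_4$-action on $\mathcal{J}(3)^{\mathbb{C}}$, propagates this to each of the remaining slots; indeed if some coordinate $a_i$ were identically zero on $\mathbb{X}_{\mathbb{O}}$, then by $F_4$-invariance of the zero set every coordinate of an $F_4$-orbit would share that vanishing, forcing $A \equiv 0$ and contradicting the definition of $\mathbb{X}_{\mathbb{O}}$. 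With (i) and (ii), the identity theorem gives that $\{a_i = 0\} \cap \mathbb{X}_{\mathbb{O}}$ has empty interior, so $O_\ell$ is open dense.

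The second claim, that arbitrary finite intersections of members of $\mathcal{U}_0$ are open dense, follows from the Baire property: $\mathbb{X}_{\mathbb{O}}$ is locally Euclidean, hence a Baire space, so finite intersections of open dense sets remain dense. More elementarily, if $U,V \subset \mathbb{X}_{\mathbb{O}}$ are open dense and $W$ is any nonempty open set, then $W \cap U$ is nonempty and open, whereupon density of $V$ yields $W \cap U \cap V \neq \emptyset$; induction handles any number of intersections. The main subtlety throughout is the verification of (ii), i.e., the non-triviality of each coordinate function on $\mathbb{X}_{\mathbb{O}}$; once this is isolated, the rest is formal application of the identity theorem and Baire category.
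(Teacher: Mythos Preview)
Your proof is correct, but it takes a somewhat different and more global route than the paper's argument. The paper's proof is purely local and never appeals to connectedness or the identity theorem: given any point $A \in \mathbb{X}_{\mathbb{O}} \setminus O_{z_1}$, it lies in some other chart, say $O_{x_1}$, and there $z_1$ is expressed explicitly as a rational function of the local coordinates (from the relations \eqref{components equations of A times A}), so the locus $\{z_1=0\}\cap O_{x_1}$ is the zero set of a single nontrivial holomorphic function and hence has codimension one in $O_{x_1}$. Since the charts cover $\mathbb{X}_{\mathbb{O}}$, density follows immediately.

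Your approach instead reduces everything to (i) connectedness of $\mathbb{X}_{\mathbb{O}}$ and (ii) non-vanishing of each coordinate function, then invokes the identity theorem. This is perfectly valid, and step (i) via the diffeomorphism with $T^{*}_{0}(P^2\mathbb{O})$ is clean. Your verification of (ii) through the $F_4$-action is more elaborate than necessary---a single explicit point in each $O_\ell$ would suffice, and in fact the paper's local computation already implicitly supplies such points. The trade-off is that the paper's argument is shorter and exploits the explicit defining equations already on the table, while yours is more conceptual and would transfer verbatim to any connected complex manifold covered by coordinate charts of this type, without needing to inspect the transition relations.
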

\begin{proof}
It will be enough to show the case ${O}_{z_{1}}$. So, let $A\in \mathbb{X}_{\mathbb{O}}\backslash {O}_{z_{1}}$.  
Assume, say $A\in {O}_{x_{1}}$, then the subset $z_{1}=0$ is defined by
an rational equation:
$z_{1}=\frac{\xi_{2}y_{4}-z_{2}x_{3}+u_{4}w_{1}-u_{2}w_{3}}{x_{1}}=0$.
Hence the subset $z_{1}=0$ must be at most codimension $1$ in
$\mathbb{X}_{\mathbb{O}}$. 
\end{proof}

\begin{proposition}\label{coordinates transformation and Jacobian}
Let ${O}_{a_{i}}$ and ${O}_{a_{j}}$ be any of two open coordinate neighborhoods in
$\mathcal{U}_{0}=\{O_{a_{i}}\}_{i=1}^{24}$. Then the Jacobian $J_{a_{j},a_{i}}=\det \,d\big(P_{a_{j}}\circ {P_{a_{i}}}^{\hspace{-0.2cm}-1}\big)$ 
of the coordinate transformation $P_{a_{j}}\circ {P_{a_{i}}}^{\hspace{-0.2cm}-1}$ is given by
\begin{equation}\label{formula of general Jacobian}
J_{a_{j},a_{i}}=\left(\frac{a_{j}}{a_{i}}\right)^{5}~\text{on $P_{a_{i}}({O}_{a_{j}}\cap {O}_{a_{i}})$}.
\end{equation}
\end{proposition}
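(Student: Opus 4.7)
The plan is to compute $J_{a_j,a_i}$ by the implicit function theorem applied to a common local defining system of $\mathbb{X}_{\mathbb{O}}$ on the overlap $O_{a_i}\cap O_{a_j}$. Let $F^{(i)}=(f_1^{(i)},\ldots,f_{11}^{(i)})$ denote the eleven equations adapted to the chart $O_{a_i}$ (as exhibited for $O_{z_1}$ in \eqref{components equations of A times A} and \eqref{common linear equation}), and let $D_{a_k}$ be the ordered eleven-tuple of dependent variables in $O_{a_k}$ (see \eqref{dependent variables 2} for $O_{z_1}$). The Poincar\'e residue formula for a complete intersection yields
\[
J_{a_j,a_i}=\pm\,\frac{\det(\partial F^{(i)}/\partial D_{a_j})}{\det(\partial F^{(i)}/\partial D_{a_i})}
\]
on $O_{a_i}\cap O_{a_j}$, with sign determined by the orderings chosen for the projections $P_{a_i}$ and $P_{a_j}$. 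The task is therefore to evaluate these two determinants.

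First I would handle the denominator. By construction of $F^{(i)}$, each of $f_1^{(i)},\ldots,f_{10}^{(i)}$ contains $a_i$ as the coefficient of a distinct element of $D_{a_i}$, while $f_{11}^{(i)}$ is the linear trace equation. A row operation using the first four simple rows to clear the row coming from the diagonal equation in \eqref{condition for diagonal}, followed by a column operation in the column of the dependent $\xi_1$ variable, turns $\partial F^{(i)}/\partial D_{a_i}$ into the direct sum of an $8\times 8$ block $a_i\,I_8$, a $1\times 1$ block $(a_i)$, and a $2\times 2$ block coupling $\xi_3$ and $\xi_1$ through the trace whose determinant is $-a_i$. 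This gives $\det(\partial F^{(i)}/\partial D_{a_i})=-a_i^{10}$.

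Next I would tackle the numerator. The partitions of the twenty-seven coordinates into dependent and independent variables in $O_{a_i}$ and $O_{a_j}$ share a large common part, so many rows of $\partial F^{(i)}/\partial D_{a_j}$ retain the simple "$a_i$-times-one-variable" structure; the remaining rows (those whose unique $a_i$-coefficient variable happens to be independent in $O_{a_j}$) carry a coefficient $a_j$ attached to a new dependent variable together with various quadratic cross terms. Systematic row and column elimination using the simple rows first reduces the calculation to a small residual minor involving only $a_j$ and the "swap" variables. Expansion of this minor relies on cancellations such as $-u_2\cdot u_4+u_4\cdot u_2=0$ coming from the commutativity of the complex entries of the $2\times 2$ block representation in \eqref{rep of complex octanion by  2 by 2 complex matrix 1}, and produces the clean factorization $\det(\partial F^{(i)}/\partial D_{a_j})=\pm\,a_i^{5}a_j^{5}$. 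Dividing then yields \eqref{formula of general Jacobian} up to the sign absorbed by the ordering.

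The main obstacle will be verifying this factorization uniformly across the twenty-four chart pairs in $\mathcal{U}_0$: the specific swap variables, and consequently the shape of the residual minor, depend on whether $a_i$ and $a_j$ lie in the same octanion component of $A$ or in different ones, and in the latter case on the relative positions of those components (for instance the pair $(O_{z_1},O_{z_2})$ yields a residual minor with five swap variables, while $(O_{z_1},O_{x_1})$ yields a structurally different one with more swap variables). The cocycle identity $J_{a_k,a_i}=J_{a_k,a_j}\cdot J_{a_j,a_i}$ combined with the transitivity of the $F_4$-action on the tangent sphere bundle cuts the bookkeeping down to a short list of representative pairs, but each of these still calls for a hands-on determinant evaluation of the kind indicated above.
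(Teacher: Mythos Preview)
Your approach via the Poincar\'e residue formula is a legitimate and somewhat different route from the paper's. The paper works more directly: it writes out the transition maps $P_{a_j}\circ P_{a_i}^{-1}$ as explicit rational functions (as in \eqref{explicit expression of dependent variables}), reorders both source and target variables by permutations whose parity it tracks, so that the Jacobi matrix becomes block upper triangular $\begin{pmatrix}\mathrm{Id}_{11}&C\\0&D\end{pmatrix}$, and reads off $\det D=(a_j/a_i)^5$ from a $5\times 5$ block. Your ratio $\det(\partial F^{(i)}/\partial D_{a_j})\big/\det(\partial F^{(i)}/\partial D_{a_i})$ is an equivalent but more implicit packaging: the denominator $\pm a_i^{10}$ is clean and uniform, and the substantive work lands in the numerator's factorisation $\pm a_i^5 a_j^5$. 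In either setup the crux is the same residual $5\times 5$ minor case by case, so neither route is essentially shorter; yours has the conceptual advantage of explaining at once why the answer is a pure power ratio.

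One point to tighten: invoking ``the transitivity of the $F_4$-action on the tangent sphere bundle'' to cut down the list of pairs is not the right tool here. Generic elements of $F_4$ do not permute the twenty-four coordinate functions $a_4,\ldots,a_{27}$, so transitivity on $S(\mathbb X_{\mathbb O})$ does not by itself relate Jacobians on different overlaps. What does the job---and what the paper actually uses---is (i) the density of every chart (Corollary~\ref{codimension of the coordinate neighborhood}) together with the cocycle identity, reducing to pairs $(O_{z_1},O_{a_j})$; and (ii) specific discrete automorphisms of $\mathcal J(3)^{\mathbb C}$ that \emph{do} permute the charts, namely the involution $\tilde\sigma$ (conjugation of each $\mathbb C(2)$-block by $\left(\begin{smallmatrix}0&1\\1&0\end{smallmatrix}\right)$) and the $S_3$-permutations of the octonion slots $x,y,z$. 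These bring the list down to the five cases $O_{z_1}\cap O_{z_2}$, $O_{z_1}\cap O_{z_4}$, $O_{z_1}\cap O_{w_1}$, $O_{z_1}\cap O_{w_2}$, $O_{z_1}\cap O_{x_1}$. Replace the $F_4$-transitivity remark by these concrete symmetries and your outline goes through.
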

\begin{proof}
Let $\sigma=\begin{pmatrix}0&~1\\1&~0\end{pmatrix}$ and define
a map
\begin{equation}
\tilde{\sigma}:\mathbb{C}(2)\ni S\longmapsto \tilde{\sigma}(S):=\sigma\cdot S\cdot\sigma\in\mathbb{C}(2),
\end{equation}
then $\theta(\tilde{\sigma}(S))=\tilde{\sigma}(\theta(S))$. This
property of $\tilde{\sigma }$ 
naturally induces an automorphism of $\mathcal{J}(3)^{\mathbb{C}}$,
which we denote by the same notation $\tilde{\sigma}:\mathcal{J}(3)^{\mathbb{C}}\to\mathcal{J}(3)^{\mathbb{C}}$.

By the Lemma \ref{codimension of the coordinate neighborhood}, it will
be enough to determine the Jacobian $J_{z_{1}, a_{i}}$ for the cases of
\[
O_{z_{1}}\cap O_{a_{i}}=O_{a_{4}}\cap O_{a_{i}},~\text{for}~i\geq 5.
\] 
Also by the symmetry of the components $y$ and $x$ it is enough for the
cases of
\[
O_{z_{1}}\cap O_{a_{i}}=O_{a_{4}}\cap O_{a_{i}},~\text{for}~i=17 \sim 24.
\]
Finally, by the automorphism $\tilde{\sigma}$ explained above and the symmetry between $z$ and $x$ 
we see that it is enough to determine them for the $5$ cases
\begin{align*}
O_{z_{1}}\cap O_{z_{2}},
O_{z_{1}}\cap O_{z_{4}},
O_{z_{1}}\cap O_{w_{1}},
O_{z_{1}}\cap O_{w_{2}},
O_{z_{1}}\cap O_{x_{1}}.
\end{align*}

All the determinations can be done by the basic way of the calculation of the
determinants. So we show two cases $O_{z_{1}}\cap O_{z_{2}}$ and 
$O_{z_{1}}\cap O_{w_{1}}$ how they look like. 


{\bf [\,I\,]} $O_{z_{1}}\cap O_{z_{2}}= O_{a_{4}}\cap O_{a_{5}}$ case: For this case we consider the coordinate
transformation $P_{z_{2}}\circ {P_{z_{1}}}^{\hspace{-0.2cm}-1}$, which is given by the
correspondence:
\begin{align*}
&(\xi_{2},z_{1},z_{2},z_{3},w_{1},w_{2},w_{3},w_{4},y_{2},y_{4},v_{1},v_{3},x_{3},x_{4},u_{2},u_{4})\\
&\longmapsto
(\xi_{2},z_{1},z_{2},z_{4},w_{1},w_{2},w_{3},w_{4},y_{2},y_{4},v_{2},v_{4},x_{1},x_{2},u_{2},u_{4})
\end{align*}
where the coordinates 
$(\xi_{2},z_{1},z_{2},z_{4},w_{1},w_{2},w_{3},w_{4},y_{2},y_{4},v_{2},v_{4},x_{1},x_{2},u_{2},u_{4})$
are given by the rational functions:
\allowdisplaybreaks{
\begin{equation}\label{explicit expression of dependent variables}
\left\{\begin{array}{l}
x_{1}=\frac{\xi_{2}y_{4}-z_{2}x_{3}+(u_{4}w_{1}-u_{2}w_{3})}{z_{1}},\,\, 
x_{2}=\frac{-\xi_{2}y_{2}-z_{2}x_{4}+(u_{4}w_{2}-u_{2}w_{4})}{z_{1}},\quad
u_{2}=u_{2},\,\, u_{4}=u_{4},\\
y_{2}=y_{2},\,\,,y_{4}=y_{4},\quad 
v_{2}=\frac{-\xi_{1}u_{2}+v_{1}z_{2}-w_{1}y_{2}-w_{2}y_{4}}{z_{1}},\,\,\,
v_{4}=\frac{-\xi_{1}u_{4}+v_{3}z_{2}-w_{3}y_{2}-w_{4}y_{4}}{z_{1}},\\
z_{1}=z_{1},\,\,\,z_{2}=z_{2},\,\,\,
z_{4}=\frac{-{\xi_{2}}^2+z_{2}z_{3}-w_{1}w_{4}+w_{2}w_{3}-x_{1}x_{4}+x_{2}x_{3}-u_{1}u_{4}+u_{2}u_{3}}{z_{1}},\\
w_{1}=w_{1},\quad w_{2}=w_{2},\quad w_{3}=w_{3},\quad
  w_{4}=w_{4},\quad \xi_{2}=\xi_{2}.
\end{array}
\right.
\end{equation}
}
We change the orderings of the coordinates in $P_{z_{1}}(O_{z_{1}})$
with ``{\it even}'' permutations as
\begin{align*}
&(\xi_{2},z_{1},z_{2},w_{1},w_{2},w_{3},w_{4},y_{2},y_{4},u_{2},u_{4},z_{3},v_{1},v_{3},x_{3},x_{4})
\intertext{and $P_{z_{2}}(O_{z_{2}})$ as}
&(\xi_{2},z_{1},z_{2},w_{1},w_{2},w_{3},w_{4},y_{2},y_{4},u_{2},u_{4},z_{4},v_{2},v_{4},x_{1},x_{2}).
\end{align*}
Then the Jacobi matrix is of the form that
\begin{equation}
\begin{pmatrix}
Id_{11}&~C\\
\vspace{-0.3cm}&\\
0_{5,11}&~D
\end{pmatrix},
\end{equation}
where
$Id_{11}$ is $11\times 11$ identity matrix, $0_{5,11}$ is $5\times 11$ zero matrix
and $D$ is given by
\begin{equation}D=
\begin{pmatrix}
\frac{z_{2}}{z_{1}}&0&0&0&0\\
0&\frac{z_{2}}{z_{1}}&0&0&0\\
0&*&\frac{z_{2}}{z_{1}}&0&0\\
x_{2}&*&*&-\frac{z_{2}}{z_{1}}&0\\
-x_{1}&*&*&*&-\frac{z_{2}}{z_{1}}
\end{pmatrix}
\end{equation}
(the $11\times 5$ matrix $C$ and components $\vspace{0.15cm}*$ are given by some functions).
Hence the Jacobian $J_{z_{2},z_{1}}$ is
\[
J_{z_{2},z_{1}}=\det D=\left(\frac{z_{2}}{z_{1}}\right)^{5}.
\]
\smallskip

{\bf [II]} $O_{w_{1}}\cap O_{z_{1}}= O_{a_{8}}\cap O_{a_{4}}$ case: 
For this case we consider the coordinate
transformation $P_{w_{1}}\circ {P_{z_{1}}}^{\hspace{-0.2cm}-1}$, which is given by the
correspondence:
\begin{align*}
&(\xi_{2},z_{1},z_{2},z_{3},w_{1},w_{2},w_{3},w_{4},y_{2},y_{4},v_{1},v_{3},x_{3},x_{4},u_{2},u_{4})\\
&\longmapsto (\xi_{2},z_{1},z_{2},z_{3},z_{4},w_{1},w_{2},w_{3},y_{3},y_{4},v_{1},v_{2},x_{1},x_{3},u_{1},u_{2}),
\end{align*}
where the coordinates 
$(x_{1},x_{3},u_{1},u_{2},y_{3},y_{4},v_{1},v_{2},z_{1},z_{2},z_{3},z_{4},w_{1},w_{2},w_{3},\xi_{2})$
are given by the rational functions:
\allowdisplaybreaks{
\begin{equation}
\left\{
\begin{array}{l}
x_{1}=\frac{\xi_{2}y_{4}-z_{2}x_{3}+(u_{4}w_{1}-u_{2}w_{3}}{z_{1}},\quad
  x_{3}=x_{3},\quad u_{1}=\frac{-\xi_{2}v_{1}-u_{2}z_{3}-(w_{1}x_{4}-w_{2}x_{3}}{z_{1}},\quad
  u_{2}=u_{2},\\
y_{3}=\frac{-\xi_{1}x_{3}-y_{4}z_{3}+(-w_{3}v_{1}+w_{1}v_{3}}{z_{1}},\quad
  y_{4}=y_{4},\quad v_{1}=v_{1},\quad v_{2}=\frac{-\xi_{1}u_{2}+v_{1}z_{2}-w_{1}y_{2}-w_{2}y_{4}}{z_{1}},\\
z_{1}=z_{1},\quad z_{2}=z_{2},\quad z_{3}=z_{3},\quad z_{4}=\frac{-{\xi_{2}}^2+z_{2}z_{3}-w_{1}w_{4}+w_{2}w_{3}-x_{1}x_{4}+x_{2}x_{3}-u_{1}u_{4}+u_{2}u_{3}}{z_{1}},\\
w_{1}=w_{1},\quad w_{2}=w_{2},\quad w_{3}=w_{3},\quad \xi_{2}=\xi_{2}.
\end{array}
\right.
\end{equation}
}
We change the orderings of the coordinates in $P_{z_{1}}(O_{z_{1}})$
by the ``{\it odd}'' permutation as
\begin{align*}
&(\xi_{2},z_{1},z_{2},z_{3},w_{1},w_{2},w_{3},w_{4},y_{2},y_{4},v_{1},v_{3},x_{3},x_{4},u_{2},u_{4})\\
&\longmapsto (\xi_{2},z_{1},z_{2},z_{3},w_{1},w_{2},w_{3},y_{4},v_{1},x_{3},u_{2},w_{4},y_{2},v_{3},x_{4},u_{4})\\
\intertext{and $P_{w_{1}}(O_{w_{1}})$ by the {\it even} permutation as}
&(\xi_{2},z_{1},z_{2},z_{3},z_{4},w_{1},w_{2},w_{3},y_{3},y_{4},v_{1},v_{2},x_{1},x_{3},u_{1},u_{2})\\
&\longmapsto
(\xi_{2},z_{1},z_{2},z_{3},w_{1},w_{2},w_{3},y_{4},v_{1},x_{3},u_{2}, z_{4},y_{3},v_{2},x_{1},u_{1})\\
\end{align*}
Then the Jacobi matrix is of the form that
\begin{equation}
\begin{pmatrix}
Id_{11}&C'\\
0_{5,11}&D'
\end{pmatrix},
\end{equation}
where the matrix $D'$ is given by
\begin{equation}D'=
\begin{pmatrix}
-\frac{w_{1}}{z_{1}}&0&0&0&0\\
*&0&-\frac{w_{1}}{z_{1}}&0&0\\
*&\frac{w_{1}}{z_{1}}&0&0&0\\
*&0&0&0&-\frac{w_{1}}{z_{1}}\\
*&0&0&\frac{w_{w}}{z_{1}}&0
\end{pmatrix}.
\end{equation}
(the matrix $C'$ and components {$\vspace{0.15cm}*$} are given by some functions)
Hence the Jacobian $J_{w_{1},z_{1}}$ is
\[
J_{w_{1},z_{1}}=\det D'=-\left(\frac{w_{1}}{z_{1}}\right)^{5}.
\]
\end{proof}

From the above Proposition \ref{coordinates transformation and Jacobian}
we can see that the $\mathbb{C}^{*}$-valued $1$-cocycle defined by 
$\{J_{a_{j},a_{i}}\}_{a_{i},a_{j}\in \{z_{1},\cdots,\cdots,u_{4}\}}$
is the coboundary of $\mathbb{C}^{*}$-valued $0$-cochain $\{h_{i}=\frac{1}{a_{i}^{5}}\}$, we have
\begin{theorem}\label{holomorphic global section}
The set of holomorphic sections 
\[
\left\{h_{z_{i}}=\frac{1}{{z_{i}}^5},
h_{w_{i}}=\frac{1}{{w_{i}}^5},h_{y_{i}}=\frac{1}{{y_{i}}^5},h_{v_{i}}=\frac{1}{{v_{i}}^5},
h_{x_{i}}=\frac{1}{{x_{i}}^5},h_{u_{i}}=\frac{1}{{u_{i}}^5}\right\},
\]
each function is defined on the open coordinate neighborhood $O_{z_{i}}$, $O_{w_{i}}$ and so on,
together define a nowhere vanishing global holomorphic section $\Omega_{\mathbb{O}}$
of the canonical line bundle $\stackrel{16}\bigwedge T^{*'}(\mathbb{X}_{\mathbb{O}})^{\mathbb{C}}$.

Here the above local sections, for example $h_{z_{1}}$ defined on $O_{z_{1}}$, should be understood
as the coefficient of a $16$-degree {\em(}= highest degree{\em)} holomorphic differential form:
\begin{align*}
&h_{z_{1}}(\xi_{2},z_{1},z_{2},z_{3},w_{1},w_{2},w_{3},w_{4},y_{2},y_{4},v_{1},v_{3},x_{3},x_{4},u_{2},u_{4})\\
&=\frac{1}{{z_{1}}^5}\,
d\xi_{2}{\wedge dz_{1}}{\wedge dz_{2}}{\wedge dz_{3}}
{\wedge dw_{1}}{\wedge dw_{2}}{\wedge dw_{3}}{\wedge dw_{4}}{\wedge dy_{2}}{\wedge dy_{4}}{\wedge dv_{1}}
{\wedge dv_{3}}{\wedge dx_{3}}{\wedge dx_{4}}{\wedge du_{2}}{\wedge du_{4}}\\
&=\frac{1}{{a_{4}}^{5}}{da_{2}}{\wedge da_{4}}{\wedge
  da_{5}}{\wedge da_{6}}{\wedge da_{8}}
{\wedge da_{9}}{\wedge da_{10}}{\wedge da_{13}}{\wedge da_{15}}{\wedge
  da_{16}}{\wedge da_{18}}
{\wedge da_{22}}{\wedge da_{23}}{\wedge da_{25}}{\wedge da_{27}}.
\end{align*}
\end{theorem}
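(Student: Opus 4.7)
The plan is to view each local function $h_{a_i}=1/a_i^5$ as the coefficient of a top-degree holomorphic form $h_{a_i}\,da_{i_1}\wedge\cdots\wedge da_{i_{16}}$ on its coordinate neighborhood $O_{a_i}$, with the ordering of the sixteen independent coordinates on each patch fixed once and for all so as to be compatible with the permutation conventions used in Proposition \ref{coordinates transformation and Jacobian}. The theorem then reduces to verifying that these local top-forms agree on every pairwise overlap $O_{a_i}\cap O_{a_j}$: once the cocycle condition is satisfied the resulting object is automatically a global holomorphic section of the canonical bundle, and it is nowhere vanishing because each $h_{a_i}$ is nowhere zero on its own chart (where $a_i\neq 0$ by definition) and the charts cover $\mathbb{X}_{\mathbb{O}}$ by Proposition \ref{open covering}.

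The gluing check is essentially one line. On $O_{a_i}\cap O_{a_j}$ the two coordinate wedge products are related by
\begin{equation*}
da_{j_1}\wedge\cdots\wedge da_{j_{16}}=J_{a_j,a_i}\,da_{i_1}\wedge\cdots\wedge da_{i_{16}},
\end{equation*}
so the identification $h_{a_j}\,da_{j_1}\wedge\cdots\wedge da_{j_{16}}=h_{a_i}\,da_{i_1}\wedge\cdots\wedge da_{i_{16}}$ is equivalent to the \v{C}ech coboundary identity $h_{a_i}=h_{a_j}\cdot J_{a_j,a_i}$. Substituting the explicit formulas $h_{a_\ell}=a_\ell^{-5}$ together with $J_{a_j,a_i}=(a_j/a_i)^5$ from Proposition \ref{coordinates transformation and Jacobian} collapses both sides to $a_i^{-5}$, which is exactly the statement that the $\mathbb{C}^{*}$-valued $1$-cocycle $\{J_{a_j,a_i}\}$ is the coboundary of the $0$-cochain $\{h_{a_i}\}$. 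Consequently the local $16$-forms piece together into a well-defined global holomorphic $16$-form $\Omega_{\mathbb{O}}$, and the three displayed expressions in the theorem statement are merely the spelled-out version of this local form in the chart $O_{z_1}$, first in the coordinate names $(\xi_2,z_1,\ldots,u_4)$ and then in the renaming $a_2,a_4,\ldots,a_{27}$.

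The main point requiring care is the bookkeeping of signs and orderings. The proof of Proposition \ref{coordinates transformation and Jacobian} proceeded by rearranging the sixteen independent coordinates on each patch by specified even or odd permutations before extracting the Jacobian; in some of the pair cases (for instance $O_{z_1}\cap O_{w_1}$) the raw determinant came out as $-(a_j/a_i)^5$, and the sign is neutralized only once one fixes coherent orderings across all twenty-four patches. I would handle this by choosing, once for each $O_{a_i}$, a canonical ordering of its independent variables, and then using the symmetries already exploited in Proposition \ref{coordinates transformation and Jacobian} (in particular the automorphism $\widetilde\sigma$ and the obvious permutations among the components $x,y,z$) to propagate the orientation convention from the reference chart $O_{z_1}$ to every other chart, so that the cocycle actually evaluates to $+(a_j/a_i)^5$ on every overlap. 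Once that consistency is arranged, the theorem follows directly from the coboundary computation above, and the global holomorphy and non-vanishing of $\Omega_{\mathbb{O}}$ are immediate.
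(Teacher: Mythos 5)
Your proposal is correct and follows essentially the same route as the paper: the theorem is deduced directly from Proposition \ref{coordinates transformation and Jacobian} by observing that the transition Jacobians $J_{a_j,a_i}=(a_j/a_i)^5$ form the coboundary of the $0$-cochain $\{h_{a_i}=a_i^{-5}\}$, so the local top-forms glue to a global nowhere vanishing holomorphic section. Your explicit attention to fixing coherent orderings of the independent coordinates on each chart is a worthwhile refinement, since the paper's case $O_{z_1}\cap O_{w_1}$ produces a raw determinant $-\left(w_1/z_1\right)^5$ and the sign is only absorbed once such orientation conventions are made uniform across all twenty-four patches.
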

\begin{remark}
As in the case for the sphere, the nowhere vanishing
global holomorphic $16$-form $\Omega_{\mathbb{O}}$ coincides with the
restriction of a smooth $16$-form $\tilde{\Omega}_{\mathbb{O}}$ 
defined on the whole space $\mathcal{J}(3)^{\mathbb{C}}$ and there is a
smooth $11$-form $\mathfrak{n}$ on $\mathcal{J}(3)^{\mathbb{C}}$ with
the property that
\[
\tilde{\Omega}_{\mathbb{O}}\wedge {\mathfrak{n}}=
{da_{1}}{\wedge a_{2}}{\wedge a_{3}}\cdots\cdot{\wedge a_{27}}.
\]
For the description of these smooth forms we need a troublesome
preparation for the coordinates choices and we do not use the forms
later so that we omit the construction. 
\end{remark}


We mention that
since the transition function of the canonical line bundle on
$\mathbb{X}_{\mathbb{O}}$ is invariant under the multiplication
by non-zero complex numbers, it is a pull-back of a complex line bundle
on the quotient space $\overline{\mathbb{X}}_{\mathbb{O}}:=\mathbb{C}^{*}\backslash\mathbb{X}_{\mathbb{O}}$. More precisely

\begin{proposition}
{\em (1)} Interpreting the  
calculations above in terms of the homogeneous coordinates we see that
the canonical line bundle 
$\mathcal{K}^{\overline{\mathbb{X}}_{\mathbb{O}}}=
\stackrel{15}{\bigwedge}T^{* '}(\overline{\mathbb{X}}_{\mathbb{O}})^{\mathbb{C}}$
of the quotient space $\overline{\mathbb{X}}_{\mathbb{O}}$
is isomorphic to
$\stackrel{5}\otimes{\mathcal{L}^{*}}_{\big|\overline{\mathbb{X}}_{\mathbb{O}}}$,
where
$\mathcal{L}$ is the tautological line bundle on the projective
space $P^{26}\mathbb{C}$, $\mathcal{L}\subset P^{26}\mathbb{C}\times\mathbb{C}^{27}$. 

{\em (2)} Let $\mathcal{V}$ be the kernel of the projection map 
$\pi:\mathbb{X}_{\mathbb{O}}\longrightarrow \overline{\mathbb{X}}_{\mathbb{O}}$,
\[
\mathcal{V}:=\ker\,d\pi\subset T(\mathbb{X}_{\mathbb{O}}),
\]
which can be seen naturally as a complex line bundle trivialized by
the holomorphic vector field corresponding to the dilation action 
\[
\mathbb{X}_{\mathbb{O}}\ni A  \longmapsto  t\cdot A\in\mathbb{X}_{\mathbb{O}}.
\]
In this sense we denote it by $\mathcal{V}_{\mathbb{C}}$. 
Then  by the exact sequence
\[
\{0\}\longrightarrow
\pi^{*}(T^{*'}(\overline{\mathbb{X}}_{\mathbb{O}})^\mathbb{C})
\longrightarrow T^{*'}(\mathbb{X}_{\mathbb{O}})^\mathbb{C}
\longrightarrow 
{\mathcal{V}_{\mathbb{C}}}^{*}\longrightarrow\{0\}
\]
we know that the canonical line bundle
$\mathcal{K}^{\mathbb{X}_{\mathbb{O}}}\cong
\pi^{*}(\mathcal{K}^{\overline{\mathbb{X}}_{\mathbb{O}}})
\otimes{\mathcal{V}_{\mathbb{C}}}^{*}$
is holomorphically trivial, since $\pi^{*}(\mathcal{L})$ is
holomorphically trivial.
\end{proposition}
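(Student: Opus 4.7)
The plan splits into the two parts of the statement. Part~(1) is computational: reinterpret the transition Jacobians of Proposition~\ref{coordinates transformation and Jacobian} as cocycles on the quotient $\overline{\mathbb{X}}_{\mathbb{O}}\subset P^{26}\mathbb{C}$. Part~(2) is then essentially formal, combining~(1), the cotangent short exact sequence, and two global trivializations.

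For~(1), observe that the defining equations \eqref{condition for off diagonal}, \eqref{condition for diagonal} are homogeneous of degree~$2$, so the $\mathbb{C}^{*}$-action $A\mapsto tA$ preserves $\mathbb{X}_{\mathbb{O}}$ and each of the $16$ independent coordinates on a chart $O_{a_i}$ scales with weight~$1$. The $15$ ratios $\tilde b_k:=b_k/a_i$ are therefore $\mathbb{C}^{*}$-invariant and give affine coordinates on the quotient chart $\pi(O_{a_i})\subset\overline{\mathbb{X}}_{\mathbb{O}}$. Substituting $b_k=a_i\tilde b_k$ (so $db_k=\tilde b_k\,da_i+a_i\,d\tilde b_k$) into $\Omega_{\mathbb{O}}=a_i^{-5}\,da_i\wedge db_1\wedge\cdots\wedge db_{15}$ factors $\Omega_{\mathbb{O}}$ as a multiple of the vertical connection form $da_i/a_i$ dual to the Euler vector field $E=\sum_k a_k\partial_{a_k}$, wedged with $\pi^{*}\omega_i^{\overline{\mathbb{X}}}$, where $\omega_i^{\overline{\mathbb{X}}}:=d\tilde b_1\wedge\cdots\wedge d\tilde b_{15}$ is the induced local frame for $\mathcal{K}^{\overline{\mathbb{X}}_{\mathbb{O}}}$. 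Equating the expressions for $\Omega_{\mathbb{O}}$ on the overlap $O_{a_i}\cap O_{a_j}$, and using that $da_i/a_i-da_j/a_j=-d\log(a_j/a_i)$ is horizontal so its wedge with the top horizontal form $\pi^{*}\omega^{\overline{\mathbb{X}}}$ vanishes, the vertical factor cancels and one reads off the transition $\omega_i^{\overline{\mathbb{X}}}=(a_i/a_j)^{5}\,\omega_j^{\overline{\mathbb{X}}}$ (up to the sign from the permutation of coordinate orderings already recorded in Proposition~\ref{coordinates transformation and Jacobian}). This cocycle is the fifth power of the cocycle $a_i/a_j$ of $\mathcal{L}^{*}=\mathcal{O}(1)$ trivialized over $U_{a_i}\subset P^{26}\mathbb{C}$ by the homogeneous coordinate $a_i$, which identifies $\mathcal{K}^{\overline{\mathbb{X}}_{\mathbb{O}}}\cong\otimes^{5}\mathcal{L}^{*}|_{\overline{\mathbb{X}}_{\mathbb{O}}}$.

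For~(2), taking determinants of the cotangent exact sequence gives $\mathcal{K}^{\mathbb{X}_{\mathbb{O}}}\cong\pi^{*}\mathcal{K}^{\overline{\mathbb{X}}_{\mathbb{O}}}\otimes\mathcal{V}_{\mathbb{C}}^{*}$. The Euler vector field $E$ is holomorphic on $\mathbb{X}_{\mathbb{O}}$ (its tangency follows from homogeneity of the defining equations), lies in $\mathcal{V}$ by construction, and is nowhere zero on $\mathbb{X}_{\mathbb{O}}\subset\mathbb{C}^{27}\setminus\{0\}$, so it trivializes $\mathcal{V}_{\mathbb{C}}$ and dually $\mathcal{V}_{\mathbb{C}}^{*}$. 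Moreover $\pi^{*}\mathcal{L}|_{\overline{\mathbb{X}}_{\mathbb{O}}}$ is trivialized by the tautological holomorphic section $v\mapsto(\pi(v),v)$ (the fibre of $\mathcal{L}$ over $\pi(v)$ is the line $\mathbb{C}v$, of which $v$ is itself a nonzero element), hence $\pi^{*}(\otimes^{5}\mathcal{L}^{*})|_{\overline{\mathbb{X}}_{\mathbb{O}}}$ is also trivial. Substituting~(1) into the isomorphism writes $\mathcal{K}^{\mathbb{X}_{\mathbb{O}}}$ as a tensor product of trivial holomorphic line bundles, so it is holomorphically trivial, recovering Theorem~\ref{holomorphic global section}. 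The hard part is the exponent bookkeeping in~(1): the Jacobian $(a_j/a_i)^{5}$ is a cocycle of $\mathcal{K}^{\mathbb{X}_{\mathbb{O}}}$ in the $16$-dimensional chart coordinates on $\mathbb{X}_{\mathbb{O}}$, not directly a cocycle of $\mathcal{K}^{\overline{\mathbb{X}}_{\mathbb{O}}}$, and the passage to the quotient requires simultaneously stripping off the vertical contribution (trivialized by $E$) and tracking the extra powers of $a_i$ produced by the substitution $b_k=a_i\tilde b_k$, so that the two effects combine to yield the clean exponent $5$ claimed in~(1).
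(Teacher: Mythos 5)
Your overall strategy---factoring $\Omega_{\mathbb{O}}$ on each chart into a vertical piece $da_{i}/a_{i}$ dual to the Euler field $E$ and a horizontal piece pulled back from $\overline{\mathbb{X}}_{\mathbb{O}}$, then reading the cocycle of $\mathcal{K}^{\overline{\mathbb{X}}_{\mathbb{O}}}$ off the overlaps---is the right way to substantiate the statement, and your part (2) is complete: the determinant of the exact sequence, the trivialization of $\mathcal{V}_{\mathbb{C}}$ by $E$, and the tautological section of $\pi^{*}(\mathcal{L})$ do give the triviality of $\mathcal{K}^{\mathbb{X}_{\mathbb{O}}}$. The problem is in part (1): the one computation you yourself single out as ``the hard part'' is asserted rather than carried out, and carrying it out does not produce the exponent you claim. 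Substituting $b_{k}=a_{i}\tilde b_{k}$ into $a_{i}^{-5}\,da_{i}\wedge db_{1}\wedge\cdots\wedge db_{15}$ gives $a_{i}^{-5}\cdot a_{i}^{15}\,da_{i}\wedge d\tilde b_{1}\wedge\cdots\wedge d\tilde b_{15}=a_{i}^{11}\,\frac{da_{i}}{a_{i}}\wedge\pi^{*}(\omega_{i})$, so equating on overlaps and cancelling the vertical factor as you describe yields $\omega_{i}=(a_{j}/a_{i})^{11}\,\omega_{j}$, not $(a_{i}/a_{j})^{5}\,\omega_{j}$. Measured against the frames $a_{i}^{\,d}$ of the $d$-th power of $\mathcal{L}^{*}$, this cocycle identifies $\mathcal{K}^{\overline{\mathbb{X}}_{\mathbb{O}}}$ with $\mathcal{L}^{\otimes 11}\big|_{\overline{\mathbb{X}}_{\mathbb{O}}}$, i.e.\ with the \emph{eleventh} power of the \emph{tautological} bundle, which is not isomorphic to the fifth power of its dual.

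Several independent checks confirm the exponent $11$ and, more importantly, the sign. The paper's own dilation computation $T_{t}^{*}\big(\Omega_{\mathbb{O}}\wedge\overline{\Omega_{\mathbb{O}}}\big)=t^{22}\,\Omega_{\mathbb{O}}\wedge\overline{\Omega_{\mathbb{O}}}$ says $\Omega_{\mathbb{O}}$ is homogeneous of degree $+11$, which matches $a_{i}^{11}\,\frac{da_{i}}{a_{i}}\wedge\pi^{*}(\omega_{i})$ (both factors after $a_{i}^{11}$ are dilation-invariant), and the homogeneity degree of a nowhere-vanishing global frame of $\pi^{*}(\mathcal{L}^{\otimes m})^{*}\otimes$(trivial) pins down $m$. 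Moreover $\overline{\mathbb{X}}_{\mathbb{O}}$ is a projective homogeneous variety (an $F_{4}^{\mathbb{C}}$-orbit), hence Fano, so its canonical bundle must be a positive power of $\mathcal{L}$, never of $\mathcal{L}^{*}$; concretely it is the hyperplane section $\{\mathrm{tr}=0\}$ of the complexified Cayley plane $E_{6}/P_{1}\subset P^{26}\mathbb{C}$, whose canonical bundle is $\mathcal{L}^{\otimes 12}$, and adjunction gives $\mathcal{L}^{\otimes 11}$ for the section. So part (1) as stated (and as you reproduce it) is incorrect and should read $\mathcal{K}^{\overline{\mathbb{X}}_{\mathbb{O}}}\cong \mathcal{L}^{\otimes 11}\big|_{\overline{\mathbb{X}}_{\mathbb{O}}}$; the source of the discrepancy is exactly the step you flagged, namely that the Jacobian cocycle $(a_{j}/a_{i})^{5}$ of Proposition \ref{coordinates transformation and Jacobian} lives in the $16$ chart coordinates on the cone and only becomes the cocycle of $\mathcal{K}^{\overline{\mathbb{X}}_{\mathbb{O}}}$ after absorbing the extra factor $a_{i}^{16}$ coming from $db_{k}=\tilde b_{k}\,da_{i}+a_{i}\,d\tilde b_{k}$ and the normalization $da_{i}\mapsto da_{i}/a_{i}$. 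None of this affects part (2) or Theorem \ref{holomorphic global section}, since $\pi^{*}$ of any power of $\mathcal{L}$ is holomorphically trivial on the cone.
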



\section{Symplectic manifolds and polarizations}

In this section we review an aspect of a geometric quantization theory
in a restricted framework fitting only to our purpose. 
In the subsections $\S 5. 3$ and $\S 5. 4$ and  
in section $\S 6$ we explain how the framework is adapted to our case.

\subsection{Integral symplectic manifold}\label{integral symplectic manifold}
Let $(M,\omega^{M})$ be a symplectic manifold with the symplectic
form $\omega^{M}$. In this paper we assume that 
\smallskip

\quad [In1] the map $H^{2}(M,\mathbb{Z})\to H^{2}_{dR}(M,\mathbb{R})$ is
injective, or the group $H^{2}(M,\mathbb{Z})$ has no torsion and, 
\smallskip
 
\quad [In2] the de Rham cohomology class $[\omega^{M}]$ of the
symplectic form $\omega^{M}$ is in this image.
\smallskip

Then the complex line bundle $\mathbb{L}_{\omega^{M}}:=\mathbb{L}\in H^{1}(M,\mathbb{C}^{*})\cong
H^{2}(M,\mathbb{Z})$ corresponding to the cohomology class
$[\omega^{M}]$ is unique(of course, up to isomorphism).
The first condition is satisfied, for example if $M$ is simply connected
and our case $M=\mathbb{X}_{\mathbb{O}}$ satisfies both of these
conditions trivially, since $H^{2}(\mathbb{X}_{\mathbb{O}},\mathbb{Z})=\{0\}$.

Under these assumptions, 
the unique complex line bundle $\mathbb{L}$ has the canonically defined
connection $\nabla$, which is defined as follows:

Let $\{U_{i}\}$ be an open covering of $M$ with several ``good'' properties required in the arguments below
(it is always possible for manifolds). 
Then there are one-forms $\{f_{i}\}$, each of
which is defined on $U_{i}$ and $df_{i}=\omega^{M}$.
Then the correction of smooth functions $\{c_{ij}\}$ defined 
by $dc_{ij}=f_{j}-f_{i}$ on
$U_{i}\cap U_{j}$ satisfy that 
$c_{jk}-c_{ik}+c_{ij}$ takes integers on $U_{i}\cap U_{j}\cap U_{k}$, 
and the transition functions $\{g_{ij}=e^{2\pi\sqrt{-1}c_{ij}}\}$ defines 
the line bundle $\pi:\mathbb{L}\to M$.

The connection $\nabla$ on $\mathbb{L}$ is defined as
\[
\nabla_{X}(s_{i})=2\pi\sqrt{-1}<f_{i},X>s_{i}~\text{on $U_{i}$},
~(X~\text{is a vector field})
\]
where $s_{i}$ is a nowhere vanishing section on $U_{i}$ identifying
$U_{i}\times \mathbb{C}$ and $\pi^{-1}(U_{i})\subset \mathbb{L}$ in
such a way that 
\[
U_{i}\times\mathbb{C}\ni(x,z)\mapsto z\cdot s_{i}(x)\in\pi^{-1}(U_{i}).
\]
{\it Here $<f_{i}\,,X>$ denotes the pairing of a one-form $f_{i}$ and a tangent vector $X$.}


If we choose all the functions $c_{ij}$ being real valued,
we may regard that the line bundle $\mathbb{L}$ is equipped with
an Hermitian inner product, which we denote by
$<\cdot\,,\,\cdot>^{\mathbb{L}}_{x}$ at $x\in M$. {\it Hereafter we assume
that the line
bundle $\mathbb{L}$ is equipped with such an Hermitian inner product.}

We may regard that the space $C^{\infty}(M)$ is a Lie algebra by the Poisson
bracket $\{f,g\}:=\omega^{M}(H_{f},H_{g})$, where $H_{f}$ denotes the
Hamilton vector field with the Hamiltonian $f$ defined by $<df,\,\bullet> =\omega^{M}(H_{f},\bullet)$.
The space $\Gamma(\mathbb{L},M)$ is a central object in the
quantization theory.
There is a basic fact
that the correspondence from $g\in C^{\infty}(M)$ to the operator $T_{g}$,
assignment of a function to an operator,
\[
T_{g}:\Gamma(\mathbb{L},M)\ni s
\longmapsto \nabla_{H_{g}}(s)+2\pi{\sqrt{-1}}g\cdot s
\]
is a Lie algebra homomorphism, $[T_{g}\,,\,T_{h}]=T_{\{g\,,\,h\}}$,
and it is the main theme in the
quantization theory how to assign 
a function on a phase
space to an operator on the configuration space.

\subsection{Real and complex polarizations}
Let $(M,\omega^{M})$ be a symplectic manifold with the symplectic
form $\omega^{M}$ ($\dim M=2n$). The skew-symmetric bi-linear form
${\omega_{p}^{M}}$
at each point $p\in M$ is naturally extended to the complexification 
$T(M)\otimes\mathbb{C}:=T(M)^{\mathbb{C}}$
as the skew-symmetric complex bi-linear form which we denote with the same
notation.

Let $F$ be a subbundle of the complex
fiber dimension $n$ 
in $T(M)^{\mathbb{C}}$ satisfying the properties that
\begin{align*}
&(1)~F~\text{is maximal isotropic with respect to the skew-symmetric
  bi-linear form}~\omega^{M},\\
&(2)~F~\text{is integral, that is}~F\cap \overline{F}~\text{has constant
  rank and}~F,~F+\overline{F}~\text{is closed}\\ 
&\text{under bracket operation of vector fields taking values in these subbundles}.\\
\intertext{In this paper 
we only treat two extreme cases,}
&(P1)\qquad F=\overline{F},~\text{and}\\
&(P2)\qquad F+\overline{F}=T(M)^{\mathbb{C}}.
\end{align*}
First one is the complexification of a Lagrangian foliation $L\subset T(M)$,
$F=L\otimes\mathbb{C}$ and
we call it a real polarization. 
The second case is called a complex polarization.

If there is a polarization satisfying the second condition
$F+\overline{F}=T(M)^{\mathbb{C}}$, then $M$ has a
almost complex structure $J$ and 
the subbundle $F$ is identified with $(0,1)$-vectors in $T(M)^{\mathbb{C}}$
(anti-complex subbundle). The integrability condition implies that 
$M$ becomes a complex manifold.
When we put 
\[
g(\alpha,\beta):=\omega^{M}(J(\alpha),\beta),~\alpha,\beta~\text{vector
fields on $M$},
\]
then $g$ is a non-singular symmetric bi-linear form on $T(M)$ and
moreover it defines an Hermitian form on $T(M)^{\mathbb{C}}$.
Under the condition that the form $g$ is positive definite, then it is
equivalent that $M$ has a K\"ahler structure.
We call such a polarization a {\it positive polarization}. 

Hence it is equivalent that if there is a positive complex polarization on the symplectic
manifold $M$, then $M$ is a K\"ahler manifold and the symplectic form $\omega^{M}$ is a
K\"ahler form. Also real polarization is always positive.

{\it In this paper we consider 
two polarizations on the space $\mathbb{X}_{\mathbb{O}}$, 
one is the real
polarization $\mathcal{F}$ naturally defined on the cotangent bundle
and a K\"ahler polarization {\em (}= positive complex polarization{\em)} $\mathcal{G}$ described in 
\eqref{condition Cayley cotangent} and Theorem \eqref{Kaehler form}.}

\subsection{Hilbert space structure on the spaces of polarized sections}

Now let $M$ be a symplectic manifold 
satisfying the conditions [In1] and [In2]
as in the subsection $\S$ \ref{integral symplectic manifold} 
and fix a line bundle
$\mathbb{L}$ corresponding to the cohomology
class $[\omega^{M}]$ with  
the connection $\nabla$ and
the Hermitian inner product 
explained in the above subsections
and assume that there is a polarization $F$ on $M$. 

Let $U$ be an open subset in $M$.
We introduce a space $C_{F}(U)\subset C^{\infty}(U)$
by
\[
C_{F}(U)=\{h\in C^{\infty}(U)~|~X(h)=0,~^{\forall} X\in \Gamma{(F,U),
  ~\text{vector fields taking values in $F$}}\}
\]
and a subspace $\Gamma_{F}(\mathbb{L},U)$ of smooth sections in
$\Gamma(\mathbb{L},U)$
by
\[
\Gamma_{F}(\mathbb{L},U)=\{s\in \Gamma(\mathbb{L},U)~|
~\nabla_{X}(s)=0,~^{\forall}X\in \Gamma(F,U)\}.
\]
Let $U$ be an open subset such that there is an one-form
$\theta$ on $U$ satisfying 
\[
d\theta=\omega^{M},~\text{and}~<\theta,X>=0~\text{for vectors $X\in F$}.
\]
Although it is not canonical,
we may locally identify the spaces 
$C_{F}(U)$ and $\Gamma_{F}(\mathbb{L},U)$
by fixing a nowhere vanishing section $s:U\to \mathbb{L}$ with the property
that $\nabla_{X}(s)=0$ for $X\in F$ 
in such a way that
\[
C_{F}(U)\ni \varphi\mapsto \varphi\cdot s \in\Gamma_{F}(\mathbb{L},U).
\]
Then under this identification, the connection $\nabla$ is 
\[
\nabla_{X}(\varphi\cdot s)=X(\varphi)\cdot
s+2\pi\sqrt{-1}\varphi<\theta, X>\cdot s
=X(\varphi)\cdot s,
\]
for vector field $X$ taking values in $F$. 

If $F$ is a real polarization, 
then the function space $C_{F}(U)$ consists of such functions that are
constant along each leaf~$\cap~U$ of the Lagrangian foliation, and if
$F$ is a complex polarization, then $C_{F}(U)$ consists of holomorphic
functions on $U$.

We call these sections $\in \Gamma_{F}(\mathbb{L},\,{U})$ ``{\it polarized sections}'' 
(with respect to a polarization $F$) and are the main objects in the geometric quantization theory.
We may regard, according to the polarization,
that they express
quantum states in the real polarization case
and that they express good classical observables in the complex polarization. 
The above identification indicates the local nature of the polarized
sections according to the polarization.

{\it One basic problem is to introduce an inner product on the space
$\Gamma_{F}(\mathbb{L},M)$ of $\mathbb{L}$-valued polarized sections
and a related space (which will be explained later) in
a reasonable way (or without additional assumptions) to make it a (pre-)Hilbert space
and the most interesting problem is to see a transformation from 
one space of polarized sections $\Gamma_{G}(\mathbb{L},M)$ (by a polarization $G$)
to another space $\Gamma_{F}(\mathbb{L},M)$ of polarized sections by
another polarization $F$.}
 
We discuss two cases according to the polarizations (real and positive complex) 
how we introduce an inner product below in [RP] (real polarization) and
in [CP] (complex polarization). 

Under our assumptions we work only on density, (partial) half density, or
(partial)$1/4$-density spaces. The meaning of ``partial'' will be
explained in Remark \ref{meaning of partial density}.
\smallskip

[RP]~\quad  Let $F$ be a {\it real polarization}. In this paper, for avoiding unnecessary
generality, we  assume more strongly that 

\qquad (RP1) there is a submersion to an orientable manifold $N$, 
\[
\Phi:M\longrightarrow N
\]
whose fibers are connected Lagrangian
submanifolds.

So, the real polarization $F$ is defined as the kernel
$F=\text{Ker}\, d\Phi$
of a surjective submersion $\Phi:M\longrightarrow N$ and the functions in
$C_{F}(M)$
are naturally descended to the base space $N$, that is $\Phi^{*}(C^{\infty}(N))=C_{F}(M)$.

Let $\alpha,~\beta\in \Gamma_{F}(\mathbb{L},M)$, then
by the equality
\[
0=<\nabla_{X}(\alpha),\beta>^{\mathbb{L}}+<\alpha,\nabla_{X}(\beta)>^{\mathbb{L}}=X(<\alpha,\beta>^{\mathbb{L}}),
~\text{for $X\in F$},
\]
the function $<\,\alpha\,,\,\beta\,>^{\mathbb{L}}$ is constant on each fiber.
Hence it can be naturally identified with a function on
the base manifold $N$. For such functions we need not integrate along the leaves and 
it will be enough to consider the integration to the transversal
direction of the leaves. This is realized by the integration on the base space
to make the space $\Gamma_{F}(\mathbb{L},M)$ into a (pre) Hilbert space. 
There are many way to choose a measure on $N$, say a Riemann volume
form to integrate it.

Instead of the space $\Gamma_{F}(\mathbb{L},M)$, we consider 
{\it $\mathbb{L}$-valued polarized 
{\em (or exactly to say, we call horizontal and partial)} half-densities $\varphi\in
\Gamma_{F}\Big(\mathbb{L}\otimes\big|\stackrel{max}\bigwedge F^{0}\big|,M\Big)$  
and/or {\em horizontal and partial} $\frac{1}{4}$-densities} 
~$\varphi\in \Gamma_{F}\Big (\mathbb{L}\otimes\sqrt{\big|\stackrel{max}\bigwedge F^{0}\big|},M\Big)$,
where $F^{0}$ is the annihilator of $F$,
\[
F^{0}=\{\xi\in T^{*}(M)~|~<\xi,X>=0,~^{\forall}X\in F~\}.
\]

We can introduce a (partial) connection  
$\nabla{\hspace{-0.24cm}\slash}_{X}(\xi):=i_{X}(d\xi)$ 
on $\stackrel{max}\bigwedge F^{0}=\stackrel{\max}\bigwedge(d\Phi)^{*}(\Phi^{*}(T^{*}(N)))$, 
where $\xi$ is a differential form
$\in\Gamma\Big(\stackrel{max}\bigwedge F^{0}, M\Big)$, 
$X\in F$ and $i_{X}$ denotes the interior product with a tangent vector
$X\in F$. 

Note that
\[
i_{X}(d\xi)=i_{X}\,\circ\, d\xi\in 
\Gamma\Big(\stackrel{max}\bigwedge F^{0},M\Big), \quad\text{for $X\in F$ and
  $\xi\in\Gamma\Big(\stackrel{max}\bigwedge F^{0},M\Big)$}.
\]
Since $i_{X}(\xi)=0$ 
for $\xi\in \Gamma\Big(\stackrel{max}\bigwedge F^{0},M\Big)$ by $X\in F$  
\begin{align*}
\nabla\hspace{-0.24cm}\slash_{X}(f\cdot\xi)
&=i_{X}\circ d(f\cdot\xi)=i_{X}\circ (df\wedge \xi+f\cdot d\xi)=X(f)\cdot \xi- df\wedge i_{X}(\xi)+f\cdot i_{X}(d\xi)\\
&=X(f)\xi+f\cdot \nabla\hspace{-0.24cm}\slash_{X}(f\cdot\xi),~\text{for
  $X\in F$ and $f\in C^{\infty}(M)$},
\end{align*}
the vector fields taking values in $F$ work as a differentiation 
on the space of the differential forms $\Gamma\Big(\stackrel{max}\bigwedge F^{0},M\Big)$. 
Hence we can consider the differentiation
$\nabla\hspace{-0.23cm}\slash_{X}$ along the
polarization $F$ for the sections
$\in~\Gamma\Big(\stackrel{max}\bigwedge F^{0},M\Big)$ and also
sections $\in~\Gamma\Big(\big|\stackrel{max}\bigwedge F^{0}\big|,M\Big)$ too.

Then under our assumption (RP1) 
and according to the definition of the partial connection,
the sections $\xi\in \Gamma_{F}\Big(\stackrel{max}\bigwedge F^{0},M\Big)$ can
be descended to the sections $\in \Gamma(\stackrel{max}\bigwedge T^*(N),N)$, 
hence it holds
\begin{equation}\label{descended measure}
\Phi^{*}\left(\Gamma\Big(\stackrel{max}\bigwedge T^{*}(N),N\Big)\right)\cong
\Gamma_{F}\Big(\stackrel{max}\bigwedge F^{0},M\Big).
\end{equation}
We may regard a differential form in
$\Gamma_{F}\Big(\stackrel{max}\bigwedge F^{0},M\Big)$  
a {polarized (or horizontal) ``partial'' half density} (or half degree form) on $M$.

\begin{remark}\label{meaning of partial density}
By our assumption {\em (RP1)}, 
there is an exact sequence
\begin{equation}\label{exact sequence}
\{0\}\longrightarrow F^{0}\longrightarrow T^{*}(M)\longrightarrow
F^{*}\longrightarrow\{0\},
\end{equation}
and the injective bundle map on $M$,~$(d\Phi)^{*}:\Phi^{*}(T^{*}(N))\to T^{*}(M)$, ~which is
the dual of the differential $d\Phi$. 
Since the polarization $F$ coincides with the vertical subbundle of
the projection map 
$\Phi$, the image $(d\Phi)^{*}(\Phi^{*}(T^{*}(N)))=F^{0}$.

By the assumption {\em (RP1)} we regard that
$\stackrel{max}\bigwedge\hspace{-0.18cm}T^{*}(N)\cong |\stackrel{max}\bigwedge T^{*}(N)|$ 
{\em(line bundles of the highest degree differential form and density
(volume form) line
bundle)} and we consider the square root bundle
$\sqrt{\stackrel{max}\bigwedge F^{0}}$.

Sections in 
$\Gamma_{F}({\stackrel{max}\bigwedge F^{0}},M)$
or $\Gamma_{F}(\sqrt{\stackrel{max}\bigwedge F^{0}},M)$ 
are not the half densities or $1/4$-densities,
since $\stackrel{max}\bigwedge\hspace{-0.2cm}T^{*}(M)\cong\stackrel{max}\bigwedge\hspace{-0.2cm} F^{*}
\otimes\hspace{-0.18cm}\stackrel{max}\bigwedge\hspace{-0.15cm} F^{0}=\text{trivial bundle given by the Liouville volume form}$.
So we should call the sections in $\Gamma_{F}(\stackrel{max}\bigwedge\hspace{-0.2cm}F^{0},M)$  
or in $\Gamma_{F}(\sqrt{\stackrel{max}\bigwedge\hspace{-0.2cm} F^{0}},M)$ 
polarized ``partial'' half density or ``partial'' $1/4$-density.
\end{remark}

Differential forms $\mu\in \Gamma_{F}\Big(\stackrel{max}\bigwedge
\hspace{-0.2cm}F^{0},M\Big)$ is descended
to densities ${\mu}_{*}\in \Gamma(\stackrel{max}\bigwedge\hspace{-0.2cm}
 T^{*}(N),N)$ (highest degree differential form) on the
base manifold $N$, that is there is a unique highest degree
differential form 
$\mu_{*}\in \Gamma(\stackrel{max}\bigwedge\hspace{-0.2cm}T^{*}(N),N)$
such that $\Phi^{*}({\mu}_{*})=\mu$ by the isomorphism \eqref{descended measure}, 
and then we can integrate $\mu_{*}$ on $N$. Hence we have a natural linear form
\[
{\text I}_{N}:\Gamma_{F}\Big(\,\stackrel{max}\bigwedge\hspace{-0.2cm}F^{0},M\,\Big)\ni\mu \longmapsto 
{\text I}_{N}(\mu):=
\int_{N}\,{\mu}_{*}\in \mathbb{C}.
\]
If we denote the inverse map of $\Phi^{*}$ of
\eqref{descended measure} by $\Phi_{*}$, then
\[
\int_{N}\,{\mu}_{*}=\int_{N}\Phi_{*}(\mu).
\] 

In turn, we consider the square root bundle 
$\sqrt{\stackrel{max}\bigwedge\hspace{-0.18cm}F^{0}}$, which can be seen as a partial $1/4$-density bundle on $M$. 
Then we can also introduce a partial connection  
${\nabla\hspace{-0.3cm}\slash_{X}}^{1/2}$ on the line bundle
$\sqrt{\stackrel{max}\bigwedge\hspace{-0.18cm}F^{0}}$ and as well it is defined also on the line
bundle $\mathbb{L}\otimes \sqrt{\stackrel{max}\bigwedge\hspace{-0.18cm}F^{0}}$. 
Hence we consider 
{\it ``\,$\mathbb{L}$-valued polarized {\em(}or horizontal{\em)} partial
  $\frac{1}{4}$-densities\,''} 
$\alpha\otimes{\eta}\in \Gamma_{F}\Big(\mathbb{L}\otimes \sqrt{\stackrel{max}\bigwedge\hspace{-0.18cm}F^{0}},M\,\Big)$
and define their product by making use of the Hermitian inner
product on $\mathbb{L}$ with the formula
\begin{align}
\Gamma_{F}\Big(\mathbb{L}\otimes \sqrt{\stackrel{max}\bigwedge\hspace{-0.18cm}F^{0}},M\,\Big)
&\times \Gamma_{F}\Big(\mathbb{L}\otimes \sqrt{\stackrel{max}\bigwedge\hspace{-0.18cm}F^{0}},M\,\Big) 
\longrightarrow \Gamma_{F}\Big(\stackrel{max}\bigwedge\hspace{-0.18cm}F^{0},M\,\Big)\notag\\
&{\text{\rotatebox[origin=c]{90}{$\in$}}}
{\hspace{6.5cm}{\text{\rotatebox[origin=c]{90}{$\in$}}}}\notag\\
(\alpha\otimes{\mu},\,&\beta\otimes{\nu})\qquad\longmapsto\qquad
<\,\alpha\,,\,\beta\,>^{\mathbb{L}}\cdot\,\mu\otimes \nu\in\Gamma_{F}
\Big(\stackrel{max}\bigwedge\hspace{-0.18cm}F^{0},M\,\Big)
\label{product of 1/4 sections}.
\end{align}
The resulting horizontal partial
half density $<\,\alpha\,,\,\beta\,>^{\mathbb{L}}\cdot\mu\otimes
\nu\in\Gamma_{F}(\stackrel{max}\bigwedge\hspace{-0.18cm}F^{0},M)$, 
is identified with a density on $N$.
Hence we can define a pairing (or an inner product) for the sections in
$\Gamma_{F}\Big(\mathbb{L}\otimes \sqrt{\stackrel{max}\bigwedge\hspace{-0.18cm}F^{0}},M\,\Big)$
by the integration of the corresponding density on $N$ in a natural
way,
\begin{align*}
\Gamma_{F}\Big(\mathbb{L}\otimes &\sqrt{\stackrel{max}\bigwedge\hspace{-0.18cm}F^{0}},M\,\Big)
\times \Gamma_{F}\Big(\mathbb{L}\otimes \sqrt{\stackrel{max}\bigwedge\hspace{-0.18cm}F^{0}},M\,\Big)\longrightarrow \mathbb{C},\\
&{\text{\rotatebox[origin=c]{90}{$\in$}}}           \\
(a\otimes{\mu},&\,b\otimes{\nu})\longmapsto 
{\text I}_{N}(\Phi_{*}(<a,b>^{\mathbb{L}}\cdot \mu\otimes \nu))
=\int_{N}\,\Phi_{*}(<a,b>^{\mathbb{L}}\cdot \mu\otimes\nu).
\end{align*}

{\it
For the real polarization $\mathcal{F}$ on our space ~$\mathbb{X}_{\mathbb{O}}$, first 
we trivialize the line bundle $\mathbb{L}$ by a nowhere vanishing polarized 
section ${\bf s}_{0}\in \Gamma_{\mathcal{F}}(\mathbb{L},\mathbb{X}_{\mathbb{O}})$
with $<{\bf s}_{0},{\bf s}_{0}>^{\mathbb{L}}\equiv 1$.
We call this trivialization of the line bundle $\mathbb{L}$ a ''unitary trivialization". 

Next,  let
$dv_{P^2\mathbb{O}}$ be the Riemann volume form on $P^{2}\mathbb{O}$.
We consider the   square root 
\[
\sqrt{\{{\bf q}\circ(\tau_{\mathbb{O}})^{-1}\}^{*}(dv_{P^2\mathbb{O}})}
={\{{\bf q}\circ(\tau_{\mathbb{O}})^{-1}\}^{*}(\sqrt{dv_{P^2\mathbb{O}}})}
\in \Gamma_{\mathcal{F}}\Big(\sqrt{\stackrel{max}\bigwedge \mathcal{F}^{0}},\mathbb{X}_{\mathbb{O}}\,\Big),
\]
and identify a $\mathbb{L}$-valued polarized partial 
${1}/{4}$-density $\xi\otimes\mu\in
\Gamma_{\mathcal{F}}\Big(\mathbb{L}\otimes\sqrt{\stackrel{max}\bigwedge\hspace{-0.18cm}
\mathcal{F}^{0}},\mathbb{X}_{\mathbb{O}}\Big)$ 
with
$f\cdot {\bf s}_{0}$ $\otimes$
$\sqrt{\{{\bf q}\circ(\tau_{\mathbb{O}})^{-1}\}^{*}({dv_{P^2{\mathbb{O}}}})}$, where 
the function $f$ can been as a pull-back of a function $g\in C^{\infty}(P^{2}\mathbb{O})$, $f={\bf q}^{*}(g)$.
Then we  may identify it with a half density on $N$ of the form $g\cdot
\sqrt{dv_{P^2\mathbb{O}}}$.
Hence we identify the $L_{2}$-space 
with respect to the Riemann volume
form $dv_{P^2\mathbb{O}}$ {\em(we denote it by $L_{2}(P^2\mathbb{O},dv_{P^2\mathbb{O}})$)} 
and the space of ~$~\mathbb{L}$-valued polarized partial $1/4$-densities
$\Gamma_{\mathcal{F}}\Big(\mathbb{L}\otimes\sqrt{\stackrel{max}\bigwedge}\mathcal{F}^{0},~\mathbb{X}_{\mathbb{O}}\Big)$
{\em(}after taking completion{\em)}}.
\bigskip

[CP]~\quad Let $G$ be a positive complex polarization on $M$ whose  
symplectic form $\omega^{M}$ is expressed as a K\"ahler form:
\[
{\sqrt{-1}\,\,\overline{\partial}\,\partial \phi}=\omega^{M}.
\]
The line bundle $\mathbb{L}$ corresponding to the cohomology class $[\omega^{M}]$
is equipped with an Hermitian inner product $<\cdot,\cdot>^{\mathbb{L}}$ 
as was explained in \ref{integral symplectic manifold}. 

The inner product $<a,b>^{\mathbb{L}}$ of two sections $a,b\in \Gamma_{G}(\mathbb{L},M)$
is a function on $M$ and can be integrated with respect to the
Liouville volume form
$dV_{M}:=\frac{(-1)^{n(n-1)/2}}{n!}\left\{\omega^{M}\right\}^{n}$
($\dim M=2n$).
Hence we can introduce an inner product on the space
$\Gamma_{G}(\mathbb{L},M)$ intrinsically, since we do not
depend on any other additional assumptions. 

We can also introduce an inner product on the space of
$\mathbb{L}$-valued ``polarized'' sections of the canonical line bundle $K^{G}$ for
the complex polarization $G$.

The canonical line bundle $K^{G}=\stackrel{max}\bigwedge\hspace{-0.15cm}
T^{*'}(M)^{\mathbb{C}}$ is the line bundle of the highest degree exterior product of the 
holomorphic part $T^{*'}(M)^{\mathbb{C}}$ of the complexified cotangent
bundle $T^{*}(M)^{\mathbb{C}}$ (\,$(1,0)$ type
cotangent vectors\,), which is the annihilator of the complex
polarization $G$ \,(\,(0,1) tangent vectors), like $F^{0}$ for the real polarization $F$. 
The sections of the canonical line bundle can be thought as half
densities (or complex valued half density) by the isomorphism $K^{G}\otimes \overline{K^{G}}=\stackrel{max}\bigwedge\hspace{-0.15cm}T^{*}(M)^{\mathbb{C}}$. We can introduce a partial connection
${\nabla\hspace{-0.24cm}\slash_{X}}^{G}$ $(X\in G)$ along the complex 
polarization $G$ in the similar way as for the real polarization.
Then we consider the space $\Gamma_{G}(\mathbb{L}\otimes K^{G},M)$ of 
``{\it $\mathbb{L}$-{valued} {\it polarized sections of} {\it the canonical line bundle}'' }
and using the
Hermitian inner product on $\mathbb{L}$
we have a highest degree differential form
\[
<a\otimes \mu,b\otimes \nu>=<a,b>^{\mathbb{L}}\cdot \,\mu\wedge \,\overline{\nu}\in
\Gamma\Big(\stackrel{max}\bigwedge\hspace{-0.15cm}T^{*}(M)^{\mathbb{C}},M\,\Big),
\]
where $a,b\in\Gamma_{G}(\mathbb{L},M)$ and $\mu,\nu\in\Gamma_{G}(K^{G},M)$. 
The quantity $\mu\wedge \overline{\nu}$ 
can be seen as a (complex valued) density on $M$.
Hence we have an intrinsic (pre-)Hilbert space structure on the space
$\Gamma_{G}(\mathbb{L}\otimes K^{G},M)$. 

{\it For the complex polarization $\mathcal{G}$ on our space ~$\mathbb{X}_{\mathbb{O}}$, 
we use a structure so called Calabi-Yau structure on
$\mathbb{X}_{\mathbb{O}}$ to identify 
the space $\Gamma_{\mathcal{G}}(\mathbb{L}\otimes K^{\mathcal{G}},\mathbb{X}_{\mathbb{O}})$
and the space $C_{\mathcal{G}}(\mathbb{X}_{\mathbb{O}})$ of holomorphic
functions on $\mathbb{X}_{\mathbb{O}}$
by the correspondence
\begin{equation}\label{holomorphic function to L-valued 16 form}
\gamma:C_{\mathcal{G}}(\mathbb{X}_{\mathbb{O}})\ni h\longmapsto  \gamma(h)=h\cdot {\bf t}_{0}\otimes 
\Omega_{\mathbb{O}}\in \Gamma_{\mathcal{G}}(\mathbb{L}\otimes K^{\mathcal{G}},\mathbb{X}_{\mathbb{O}}).
\end{equation}
The existence of the nowhere vanishing holomorphic $16$-form $\Omega_{\mathbb{O}}$
on $\mathbb{X}_{\mathbb{O}}$ was proved in
Proposition \eqref{holomorphic global section}
and ${\bf t}_{0}$ is taken 
for trivializing the line bundle $\mathbb{L}$ satisfying the property
${\nabla{\hspace{-0.24cm}}{/}_{X}}^{\mathcal{G}}({\bf t}_{0})=0$.

We call a trivialization of the line bundle $\mathbb{L}$ 
by the section ${\bf t}_{0}$ a "holomorphic trivialization". We will
determine the relation of the sections ${\bf s}_{0}$ and ${\bf
  t}_{0}$, ${\bf t}_{0}=g_{0}{\bf s}_{0}$
in the subsection 
$\S$ \ref{holomorphic and unitary trivialization}.
}

\subsection{Pairing of polarizations and a Bargmann type
  transformation}\label{pairing of polarizations}

First, we recall the fiber integration. 
Let $\phi:M\to N$ be a differentiable map between two manifolds. 

Let $\sigma\in\Gamma\Big(\stackrel{p}\bigwedge\hspace{-0.1cm}T^{*}(M), M\Big)$ be a
differential form with the degree $p\geq \dim M-\dim N:=d$.
For $g\in\Gamma\Big(\stackrel{q}\bigwedge\hspace{-0.1cm}T^{*}(N),N\Big)$ with compact
support satisfying $q=m-p=\dim M-p\geq 0$ (we denote the space of
sections with compact support by $\Gamma_{0}(\,*\,,\,*\,)$). 
We assume
\[
\int_{M}\,|\sigma\wedge \phi^{*}(g)|<\,+\infty
\] 
for any $g\in \Gamma_{0}\Big(\stackrel{q}\bigwedge\hspace{-0.1cm}T^{*}(N)\Big)$   
and define a linear functional
\[
g\longmapsto \int_{M}\,\sigma\wedge \phi^{*}(g),
\]
which is understood as a distribution on the space
$\Gamma_{0}\Big(\stackrel{q}\bigwedge\hspace{-0.1cm}T^{*}(N)\Big)$. {\it We denote this
distribution
by $\phi_{*}(\sigma)$ and express  as
\begin{equation}\label{fiber integral}
\phi_{*}(\sigma)(g):=\int_{N}\phi_{*}(\sigma)\wedge g =\int_{M}\,\sigma\wedge \phi^{*}(g).
\end{equation}}
If $\phi$ is a submersion, then
$\phi_{*}(\sigma)$ is a smooth differential form of degree $p-d$.

In the last subsection we introduced inner products on the
spaces
$\Gamma_{F}\left(\mathbb{L}\otimes\sqrt{\stackrel{max}\bigwedge\hspace{-0.15cm}F^{0}},M\right)$
and $\Gamma_{G}$
$\left(\mathbb{L}\otimes {\hspace{-0.2cm}\stackrel{max}\bigwedge {\hspace{-0.15cm}T^{*'}(M)^{\mathbb{C}}}},M\right)
=\Gamma_{G}\left(\mathbb{L}\otimes K^{G},M\right)$
for a real polarization $F$ satisfying the condition (RP1) 
and a positive complex polarization $G$ on an integral symplectic manifold $M$.
Our main purpose is to construct a transformation
\begin{equation}\label{Bargmann transformation}
\mathfrak{B}:\Gamma_{G}\left(\mathbb{L}\otimes K^{G},M\right)\longrightarrow
\Gamma_{F}\Big(\mathbb{L}\otimes\sqrt{\stackrel{max}\bigwedge F^{0}},M\Big)
\end{equation}
or it may be understood as a sesqui-linear form on
\begin{align}
&\Gamma_{F}\Big(\mathbb{L}\otimes\sqrt{\stackrel{max}\bigwedge F^{0}},M\Big)\times\Gamma_{G}\Big(\mathbb{L}\otimes K^{G},M\Big)\stackrel{Id\times\mathfrak{B}}\longrightarrow \\
&\qquad\qquad\qquad\qquad
\Gamma_{F}\Big(\mathbb{L}\otimes\sqrt{\stackrel{max}\bigwedge  F^{0}},M\Big)\times
\Gamma_{F}\Big(\mathbb{L}\otimes\sqrt{\stackrel{max}\bigwedge F^{0}},M\Big)\longrightarrow 
\mathbb{C}.\notag
\end{align}
For the sections 
$(\alpha\otimes \mu\,,\,\beta\otimes\nu) \in
\Gamma_{F}\Big(\mathbb{L}\otimes\sqrt{\stackrel{max}\bigwedge\hspace{-0.1cm} F^{0}},M\Big)\times
\Gamma_{G}\Big(\mathbb{L}\otimes K^{G},\,M\Big)
$
their product
\[
<\alpha,\beta>^{\mathbb{L}}\cdot\, |\mu\otimes \nu|
\]
($|\,*\,|$ means a section of $\big|K^{G}\otimes \sqrt{\stackrel{max}\bigwedge}\hspace{-0.1cm}F^{0}\big|$)
is understood as a partial $3/4$-density on $M$ and so we need some
modification to integrate it, since there are no manifold of the
dimension $3/4\times \dim M$.

Since we identify the half density space
$\Gamma\Big(\sqrt{\stackrel{max}\bigwedge T^{*}(N)},N\Big)$ 
with a $L_{2}$-space by fixing a Riemann volume form $dv_{N}$, 
we define a sequi-linear form 
$$\Gamma_{F}\Big(\mathbb{L}\otimes\sqrt{\stackrel{max}\bigwedge F^{0}},M\Big)
\times\Gamma_{G}\Big(\mathbb{L}\otimes K^{G},M\Big)\longrightarrow\mathbb{C}
$$
by 
\begin{align}
\Gamma_{F}\Big(\mathbb{L}\otimes\sqrt{\stackrel{max}\bigwedge
  F^{0}},M\Big)
&\times \Gamma_{G}\Big(\mathbb{L}\otimes K^{G},M\Big)
\ni
(\alpha\otimes \mu\,,\,\beta\otimes\nu)\label{basic sesquiliner form} \\
&\longmapsto
\int_{M}\,<\alpha,\beta>^{\mathbb{L}}\cdot \,\Phi^{*}(f_{\mu}dv_{N})\wedge \overline{\nu},\notag
\end{align}
where we can put $\mu =\Phi^{*}(f_{\mu})\sqrt{\Phi^{*}(dv_{N})}$ with 
a function $f_{\mu}\in C^{\infty}(N)$, that is
we multiply the partial $1/4$-density $\sqrt{\Phi^{*}(dv_{N})}$  
to the partial $1/4$-density $\nu=\Phi^{*}(f_{\nu})\sqrt{\Phi^{*}(dv_{N})}$, then  
$\sqrt{\Phi^{*}(dv_{N})}\otimes \sqrt{\Phi^{*}(dv_{N})}\otimes {\mu}$
is a (complex valued) highest degree differential form 
(or can be thought as a density) on $M$ and we can define a sesqui-linear form. 

Once we have a sesqui-linear form
\[
P:\Gamma_{F}\Big(\mathbb{L}\otimes\sqrt{\stackrel{max}\bigwedge F^{0}},M\Big)\times\Gamma_{G}\Big(\mathbb{L}\otimes K^{G},M\Big)\longrightarrow \mathbb{C}
\]
it is rewritten as
\[
P(\alpha\otimes \mu,\beta\otimes\nu)=
\sum\,{\text{I}_{N}(\Phi_{*}<\alpha,\alpha_{i}>^{\mathbb{L}}\cdot \mu\otimes\mu_{i})},
\]
where we put $\mathfrak{B}(\beta\otimes\nu)=\sum\,\alpha_{i}\otimes\mu_{i}$.
\smallskip

{\it 
In our space $T^{*}_{0}(P^2\mathbb{O})\stackrel{\tau_{\mathbb{O}}}\cong\mathbb{X}_{\mathbb{O}}$ we have two
polarizations $\mathcal{F}$ {\em(real)} and $\mathcal{G}$ {\em(}K\"ahler{\em)} and 
we identify the spaces $C_{\mathcal{G}}(\mathbb{X}_{\mathbb{O}})$
and $\Gamma_{\mathcal{G}}(\mathbb{L}\otimes K^{\mathcal{G}},\mathbb{X}_{\mathbb{O}})$
by \eqref{holomorphic function to L-valued 16 form}.
The inner product on the space
$C_{\mathcal{G}}(\mathbb{X}_{\mathbb{O}})$ induced by the map $\gamma$ will be explicitly
described in \eqref{parameter family of inner products} 
 at the end of $\S$ \ref{Fock space}
in terms of the Liouville volume form. There we will introduce a parameter family of
inner products on the space $\Gamma_{\mathcal{G}}(\mathbb{L}\otimes K^{\mathcal{G}},\mathbb{X}_{\mathbb{O}})$.

We recall the sections ${\bf s}_{0}$ and ${\bf t}_{0}$ and describe
our Bargmann type transformation including the 
quantity $<{\bf s}_{0},{\bf t}_{0}>^{\mathbb{L}}$. 

Let $\theta^{P^2\mathbb{O}}$ be the canonical one-form on the
cotangent bundle $T^{*}(P^2\mathbb{O})$,
then
$d\theta^{P^2\mathbb{O}}=\omega^{P^2\mathbb{O}}$ and for any 
$X\in\mathcal{F}$,~ $<\theta^{P^2\mathbb{O}},X>=0$.
So let ${\bf s}_{0}$ be a nowhere vanishing polarized {\em(}with respect to
the real polarization $\mathcal{F}${\em)} global section of
$\mathbb{L}$ defining
a trivialization $\mathbb{X}_{\mathbb{O}}\times \mathbb{C}\cong \mathbb{L}$
by the correspondence
\begin{equation}\label{trivialzation:real case}
\mathbb{X}_{\mathbb{O}}\times\mathbb{C}\ni(A,z)\longleftrightarrow z\cdot {\bf s}_{0}(A)\in \mathbb{L},
\end{equation}
with $<{\bf s}_{0}\,,\,{\bf s}_{0}>^{\mathbb{L}}\equiv 1$. 

Also by the relation 
\[
{\tau_{\mathbb{O}}}^{*}\left(\sqrt{-2}\,\,\overline{\partial}\partial \,||A||^{1/2}\right)={\omega^{P^2\mathbb{O}}},
\]
given in Theorem \eqref{Kaehler form}, we take a {\em(}complex{\em)}one-form
\[
\theta_{\mathcal{G}}=\sqrt{-2}\,\partial \,||A||^{1/2},
\]
then $d{\tau_{\mathbb{O}}}^{*}(\theta_{\mathcal{G}})=\omega^{P^2\mathbb{O}}$ and
$\theta_{\mathcal{G}}(X)=0$ for $X\in\mathcal{G}$, since $X$ is
a $(0,1)$ tangent vector.

Then we can trivialize the line bundle $\mathbb{L}$ by making use of a nowhere
vanishing
global section ${\bf t}_{0}$ in a similar way to \eqref{trivialzation:real case}.

Using the identifications \eqref{holomorphic function to L-valued 16 form} 
and the correspondence
\begin{align*}
C^{\infty}(P^2\mathbb{O})\ni g\,\longmapsto \,&\{{\bf q}\circ
{\tau_{\mathbb{O}}}^{-1}\}^{*}(g)\cdot {\bf s}_{0}
\otimes\sqrt{\{{\bf q}\circ {\tau_{\mathbb{O}}}^{-1}\}^{*}(dv_{P^2\mathbb{O}})}\\
&\longmapsto \{{\bf q}\circ {\tau_{\mathbb{O}}}^{-1}\}^{*}(g)\cdot {\bf s}_{0}
\otimes{\{{\bf q}\circ {\tau_{\mathbb{O}}}^{-1}\}^{*}(dv_{P^2\mathbb{O}})}
\end{align*}
the integral \eqref{basic sesquiliner form} is rewritten as
\begin{equation}
\int_{\mathbb{X}_{\mathbb{O}}}\,\{{\bf q}\circ {\tau_{\mathbb{O}}}^{-1}\}^{*}(g)\cdot
\overline{h}
\cdot <{\bf s}_{0},{\bf t}_{0}>^{\mathbb{L}}
\cdot \{{\bf q}\circ {\tau_{\mathbb{O}}}^{-1}\}^{*}(dv_{P^2\mathbb{O}})\wedge \overline{\Omega_{\mathbb{O}}},
\end{equation}
and it is also expressed in terms of the fiber integration as follows:
\begin{align}
&\int_{\mathbb{X}_{\mathbb{O}}}\,\{{\bf q}\circ {\tau_{\mathbb{O}}}^{-1}\}^{*}(g)
\cdot \overline{h}
\cdot <{\bf s}_{0},{\bf t}_{0}>^{\mathbb{L}}
\cdot \{{\bf q}\circ {\tau_{\mathbb{O}}}^{-1}\}^{*}(dv_{P^2\mathbb{O}})
\wedge 
\overline{\Omega_{\mathbb{O}}}\notag\\
&=\int_{P^2\mathbb{O}}\,g\cdot \{{\bf q}\circ {\tau_{\mathbb{O}}}^{-1}\}_{*}(<{\bf s}_{0},{\bf
  t}_{0}>^{\mathbb{L}}\cdot
  \overline{h}\cdot\overline{\Omega_{\mathbb{O}}})\,dv_{P^2\mathbb{O}}.\label{fiber integration form}
\end{align}
Then the Bargmann type transformation
\begin{align}
&\mathfrak{B}:C_{\mathcal{G}}(\mathbb{X}_{\mathbb{O}})\to
C^{\infty}(P^2\mathbb{O}),~~C_{\mathcal{G}}\ni h\longmapsto \mathfrak{B}(h),\notag
\intertext{is defined as}
&\mathfrak{B}(h)
=\{{\bf q}\circ{\tau_{\mathbb{O}}}^{-1}\}_{*}(h\cdot<{\bf t}_{0},{\bf
  s}_{{0}}>^{\mathbb{L}}\Omega_{\mathbb{O}}).\label{Bargmann type transform}
\end{align}
Hence we can express the integral  \eqref{fiber integration form} as 
\[
\int_{P^2\mathbb{O}}\,g\cdot \overline{\mathfrak{B}(h)}\,dv_{P^2\mathbb{O}}.
\]
\begin{remark}\label{ambiguity by constant}
The section ${\bf s}_{0}$ is free of $U(1)$-multiple and ${\bf t}_{0}$
is of free from a constant $\in\mathbb{C}^{*}$.
\end{remark}
}

\section{Bargmann type transformation}
For expressing the Bargmann type transformation explicitly and
to determine its $L_{2}$ continuity, 
we need to know the function 
$<{\bf s}_{0},{\bf t}_{0}>^{\mathbb{L}}=g_{0}$,
and relations of
$\Omega_{\mathbb{O}}\wedge\overline{\Omega_{\mathbb{O}}}$ 
and $\{{\bf q}\circ {\tau_{\mathbb{O}}}^{-1}\}^{*}(dv_{P^2\mathbb{O}})\wedge\overline{\Omega_{\mathbb{O}}}$
with the Liouville volume form $dV_{T^{*}(P^2\mathbb{O})}$ explicitly. In this section we determine them.

\subsection{Holomorphic trivialization and unitary trivialization}
\label{holomorphic and unitary trivialization}

The relation of the sections ${\bf s}_{0}$ and ${\bf t}_{0}$ is given by
a function $g_{0}=<{\bf s}_{0},{\bf t}_{0}>^{\mathbb{L}}$, that is
\begin{equation}\label{basic relation of two trivialization by polarized sections}
{\bf t}_{0}=g_{0}\cdot{\bf s}_{0}.
\end{equation}
The function $g_{0}$ satisfies an equation
\begin{align*}
&\nabla_{X}({\bf t}_{0})=2\pi{\sqrt{-1}}<\sqrt{-2}\,\partial\,||A||^{1/2},X>g_{0}\cdot {\bf s}_{0}\\
&\qquad\qquad=\nabla_{X}(g_{0}{\bf s}_{0})=X(g_{0}){\bf s}_{0}+2\pi{\sqrt{-1}}g_{0}\cdot
<\theta^{P^2\mathbb{O}},X>{\bf s}_{0},
\end{align*}
and we have an equation for the function $g_{0}$:
\begin{equation}\label{relation between s0 and t0}
2\pi \sqrt{-1}\cdot\left({\tau_{\mathbb{O}}}^{*}\left(\sqrt{-1}\sqrt{2}\,\partial ||A||^{1/2}\right)- \theta^{P^2\mathbb{O}}\right)g_{0}=dg_{0}.
\end{equation}
Put $g_{0}=e^{2\pi\sqrt{-1}\lambda}$, then
the equation \eqref{relation between s0 and t0} reduces to the equation
\begin{equation}\label{basic equation}
d\lambda={\tau_{\mathbb{O}}}^{*}\left(\sqrt{2}\sqrt{-1}\partial\,||A||^{1/2}\right)-\theta^{P^2\mathbb{O}}.
\end{equation} 
To get a solution $\lambda$ we need to consider 
the real and imaginary parts
in the formula
\[
\sqrt{2}\sqrt{-1}\partial\,||A||^{1/2}
\]
separately. So, put
\[
{\tau_{\mathbb{O}}}^{*}\big(\sqrt{2}\sqrt{-1}\partial\,||A||^{1/2}\big)
:=a+\sqrt{-1}b
\]
with real and imaginary parts of the one-form 
${\tau_{\mathbb{O}}}^{*}\big(\sqrt{2}\sqrt{-1}\partial\,||A||^{1/2})$ on
$\mathcal{J}(3)\times \mathcal{J}(3)$. Then
\[
d\,(d\lambda)=d\big({\tau_{\mathbb{O}}}^{*}\big(\sqrt{2}\sqrt{-1}\partial\,||A||^{1/2}\big)-\theta^{P^2\mathbb{O}}\big)=0
\]
implies that there are  real valued functions $\lambda_{Re}$ and
$\lambda_{Im}$ such that 
\begin{align*}
&a-\theta^{P^2\mathbb{O}}=d\lambda_{Re},~\text{and}\\
&d\lambda_{Im}=b.
\end{align*}
The problem to solve the equation \eqref{basic equation}
reduces
to find explicitly the functions $\lambda_{Re}$ and $\lambda_{Im}$.

Let $(X,Y)\in T^{*}_{0}(P^2\mathbb{O})\subset
\mathcal{J}(3)\times\mathcal{J}(3)$. 
Here again we remark that
we are identifying the cotangent space $T^{*}_{X}(P^2\mathbb{O})$ 
and the tangent space $T_{X}(P^2\mathbb{O})$ by the Riemannian metric defined 
by $(Y_{1},Y_{2})_{X}^{P^2\mathbb{O}}:=\text{tr}\,(Y_{1}\circ Y_{2})$ for $Y_{i}\in
T_{X}(P^2\mathbb{O})\cong \mathcal{J}(3)$, that is
for $Y_{i}\in T_{X}(P^2\mathbb{O})\subset \mathcal{J}(3),~i=1,2,$ 
\begin{align*}
&Y_{1}=\begin{pmatrix}\epsilon_{1}&u_{3}&\theta(u_{2})\\\theta(u_{3})&\epsilon_{2}&u_{1}\\u_{2}&\theta(u_{1})&\epsilon_{3}\end{pmatrix},
\quad Y_{2}=\begin{pmatrix}\eta_{1}&v_{3}&\theta(v_{2})\\\theta(v_{3})&\eta_{2}&v_{1}\\v_{2}
&\theta(v_{1})&\eta_{3}\end{pmatrix},\quad\epsilon_{i},\eta_{i}\in\mathbb{R},~u_{i}, v_{i}\in\mathbb{O}\cong\mathbb{R}^{8},
\end{align*} 
\begin{equation}\label{inner product}
(Y_{1},Y_{2})_{X}^{P^2\mathbb{O}}:=\text{tr}\,(Y_{1}\circ Y_{2})=\sum \epsilon_{i}\eta_{i}+2\sum (u_{i},v_{i})^{\mathbb{R}^8}.
\end{equation}
Based on this expression, by the notation $(Y,dX)$ for 
\begin{align*}
&X=\begin{pmatrix}\xi_{1}&x_{3}&\theta(x_{2})\\
\theta(x_{3})&\xi_{2}&x_{1}\\x_{2}&\theta(x_{1})&\xi_{3}\end{pmatrix}\in \mathcal{J}(3),\quad~Y=\begin{pmatrix}\epsilon_{1}&u_{3}&\theta(u_{2})\\\theta(u_{3})&\epsilon_{2}&u_{1}\\u_{2}&\theta(u_{1})&\epsilon_{3}\end{pmatrix}\in\mathcal{J}(3),
\end{align*}
we mean the canonical one-form 
\[
(Y, dX):=\sum \epsilon_{i}d\xi_{i}+2\sum \{u_1\}_{i}d\{x_{1}\}_{i}+\{u_2\}_{i}d\{x_{2}\}_{i}+ \{u_3\}_{i}d\{x_{3}\}_{i} 
\]
on
$T^{*}(\mathcal{J}(3))\cong\mathcal{J}(3)\times(\mathcal{J}(3))^{*}\cong\mathcal{J}(3)\times(\mathcal{J}(3))$,
or its restriction to $T^{*}(P^2\mathbb{O})$, that is,
in the inner product expression \eqref{inner product}
we understand as 
$\eta_{i}$ and $\{v_{k}\}_{i}$ ($k=1,2,3, i=0,\cdots,7$) are replaced 
by the differentials $d\xi_{i}$ and 
$d\{x_{k}\}_{i}$ of the corresponding components in
$X\in\mathcal{J}(3)$, respectively.

Also for $A\in\mathcal{J}(3)^{\mathbb{C}}$ and an one form $B$ on $\mathcal{J}(3)^{\mathbb{C}}$
we express the complex one form
$(A,dB)$ in the same way. 

Let $(X,Y)\in T(P^{2}\mathbb{O})\cong T^{*}(P^{2}\mathbb{O})$ and put $A=\tau_{\mathbb{O}}(X,Y)
=1\otimes\left(||Y||^2X-Y^{2}\right)+\sqrt{-1}\otimes\frac{||Y||Y}{\sqrt{2}}$, then
%
%
\begin{proposition}{\em(}see \cite{Fu2}{\em)}
\begin{align*}
&\frac{1}{2}||Y||^{4}=||a||^2=||b||^2,\quad
  ||A||^2=||a||^2+||b||^2=||Y||^4,~\text{and}\quad
(da,a)=||Y||^{2}(Y,dY)=(db,b).
\end{align*}
\end{proposition}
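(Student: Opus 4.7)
Write $a = \|Y\|^2 X - Y^2$ and $b = \tfrac{\|Y\|}{\sqrt{2}} Y$ for the real and imaginary parts of $A = \tau_{\mathbb{O}}(X, Y)$; both $a$ and $b$ lie in $\mathcal{J}(3)$. The plan is to derive all three equalities from the defining condition $A^2 = 0$ of $\mathbb{X}_{\mathbb{O}}$ together with the elementary identity $d\langle \psi, \psi\rangle = 2\langle d\psi, \psi\rangle$ for the symmetric bilinear pairing on $\mathcal{J}(3)$.

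First I would compute $\|b\|^2 = \text{tr}(b \circ b) = \tfrac{\|Y\|^2}{2}\,\text{tr}(Y \circ Y) = \tfrac{\|Y\|^4}{2}$. For $\|a\|^2$, expanding $A^2 = (a + \sqrt{-1}b)^2 = (a^2 - b^2) + \sqrt{-1}(ab + ba) = 0$ and separating real and imaginary parts gives $a^2 = b^2$ (and $ab + ba = 0$, which is not needed here). Since $a \circ a = a^2$ and $b \circ b = b^2$ by the definition of the Jordan product, taking traces gives $\|a\|^2 = \|b\|^2 = \tfrac{\|Y\|^4}{2}$. The identity $\|A\|^2 = \|a\|^2 + \|b\|^2$ then follows by expanding $\langle A, \overline{A}\rangle^{\mathcal{J}(3)^{\mathbb{C}}}$ with $A = a + \sqrt{-1}b$ via $\mathbb{C}$-bilinearity; the cross-terms $\sqrt{-1}(\langle b, a\rangle - \langle a, b\rangle)$ cancel by symmetry of the inner product, yielding $\|A\|^2 = \|Y\|^4$.

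For the one-form identity $(da, a) = \|Y\|^2 (Y, dY) = (db, b)$, I would simply differentiate the two scalar equalities just established: $2(da, a) = d\|a\|^2 = d(\|Y\|^4/2) = \|Y\|^2 \cdot d\|Y\|^2 = 2\|Y\|^2 (Y, dY)$, so $(da, a) = \|Y\|^2(Y, dY)$, and the identical computation for $b$ gives $(db, b) = \|Y\|^2(Y, dY)$.

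No single step stands out as a substantial obstacle; the result amounts to bookkeeping of real versus imaginary parts under $A^2 = 0$, combined with one routine differentiation. The only point requiring mild care is that the matrix multiplication in $\mathcal{J}(3)^{\mathbb{C}}$ involves octonions and is non-associative, but squaring a single element is unambiguous and coincides with its Jordan square, so all the algebraic manipulations go through cleanly.
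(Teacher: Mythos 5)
Your proof is correct. The paper itself gives no argument for this proposition (it simply cites \cite{Fu2}), so there is nothing internal to compare against, but every step you take checks out: $\|b\|^2=\tfrac12\|Y\|^4$ is immediate from $b=\tfrac{\|Y\|}{\sqrt2}Y$; the expansion $A^2=(a^2-b^2)+\sqrt{-1}(ab+ba)$ is legitimate because $\sqrt{-1}$ lies in the central $\mathbb{C}$-factor of $\mathbb{C}\otimes_{\mathbb{R}}\mathbb{O}$ and only a single (hence unambiguous) product is being squared; taking traces of $a^2=b^2$ gives $\|a\|^2=\|b\|^2$; the Hermitian norm $\|A\|^2=\langle A,\overline{A}\rangle$ splits as $\|a\|^2+\|b\|^2$ by symmetry of the bilinear form; and differentiating the scalar identities yields the one-form identities. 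The one route worth flagging is your appeal to $A^2=0$: this is available because the paper has already stated (Theorem 3.1, cited from \cite{Fu2}) that $\tau_{\mathbb{O}}$ lands in $\mathbb{X}_{\mathbb{O}}$, so your argument is not circular in the context of this paper; it also neatly sidesteps the direct computation of $\|\,\|Y\|^2X-Y^2\|^2$, which would otherwise require the Jordan-algebra identities $\|X\|^2=1$, $\langle X,Y^2\rangle=\tfrac12\|Y\|^2$ and an evaluation of $\mathrm{tr}(Y^4)$.
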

In the expression
\begin{align*}
&\tau^{*}(dA,\overline{A})=(\tau^{*}(dA),\tau^{*}(\overline{A})
=(a-\sqrt{-1}b,da+\sqrt{-1}db)=d||A||^2\\
&=(a,da)+(b,db)+\sqrt{-1}\big((a,db)-(b,da)\big), \quad\text{and}\\
&(a,db)-(b,da)=2(db,a)
=\frac{2}{\sqrt{2}}\cdot \left\{||Y||^3(dY,X)-||Y||(dY,Y\circ Y)\right\}
\end{align*}
and it is proved in \cite{Fu2} (page 179) that
\[
(dY,Y\circ Y)=0.
\]
Hence
\begin{align*}
\tau_{\mathbb{O}}^{*}(\sqrt{2}\sqrt{-1}\partial\,||A||^{1/2})-\theta^{P^2\mathbb{O}}
&=\sqrt{2}\sqrt{-1}\frac{1}{4||Y||^3}\{\sqrt{-2}\cdot (dY,X)-2||Y||^2(Y,dY)-\theta^{P^2\mathbb{O}}\\
&=-(dY,X)-(Y,dX)+\frac{\sqrt{-1}}{\sqrt{2}||Y||}(Y,dY)
=\frac{\sqrt{-1}}{\sqrt{2}}d||Y||,
\end{align*}
since $d(X,Y)=(dX,Y)+(Y,dX)=d\,\text{tr}(X\circ Y)=0$ for $(X,Y)\in T^{*}(P^2\mathbb{O})$.
Hence finally we may choose the solutions $\lambda_{Re}$ and
$\lambda_{Im}$ with
\[
\lambda_{Re}\equiv 0~\text{and}~\lambda_{Im}=\frac{1}{\sqrt{2}}||Y||.
\]
Hence
\begin{proposition}\label{solution of the basic equation}
\[
g_{0}=e^{-\sqrt{2}\pi\,||Y||}, \text{or it is expressed on $\mathbb{X}_{\mathbb{O}}$ as}~ g_{0}=e^{-\sqrt{2}\pi\,||A||^{1/2}}.
\]
\end{proposition}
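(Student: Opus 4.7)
The plan is to reduce the statement to an exactness problem. Writing $g_0 = e^{2\pi\sqrt{-1}\lambda}$ converts the functional equation \eqref{relation between s0 and t0} into the linear equation \eqref{basic equation} for a (generally complex-valued) function $\lambda$. Since $d\lambda = d\lambda_{Re} + \sqrt{-1}\,d\lambda_{Im}$, one splits the right-hand side $\tau_{\mathbb{O}}^*(\sqrt{2}\sqrt{-1}\,\partial\|A\|^{1/2}) - \theta^{P^2\mathbb{O}}$ into its real and imaginary parts and solves for $\lambda_{Re}$ and $\lambda_{Im}$ independently.

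First I would work out the right-hand side in the coordinates $(X,Y) \in T^*(P^2\mathbb{O})$. Writing $\tau_{\mathbb{O}}(X,Y) = a + \sqrt{-1}\,b$ with $a = \|Y\|^2 X - Y^2$ and $b = \|Y\|Y/\sqrt{2}$, the constraint $X\circ Y = Y/2$ gives $\|a\|^2 = \|b\|^2 = \tfrac{1}{2}\|Y\|^4$ so that $\|A\|^{1/2} = \|Y\|$. Using the identity $\partial\|A\|^{1/2} = (dA,\overline{A})/(4\|A\|^{3/2})$, expand
$$\tau_{\mathbb{O}}^*(dA,\overline{A}) = (a,da) + (b,db) + \sqrt{-1}\bigl((a,db) - (b,da)\bigr),$$
and then evaluate each pairing by differentiating the formulas for $a,b$. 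The key inputs (all quoted or derivable from \cite{Fu2}) are $(da,a) = \|Y\|^2(Y,dY) = (db,b)$ and $(dY, Y\circ Y) = 0$; the latter follows by differentiating the constraint $X \circ Y = Y/2$ together with $d\,\text{tr}(X\circ Y) = 0$.

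The main obstacle is the cancellation on the real side. After routine algebra one expects the real part of $\tau_{\mathbb{O}}^*(\sqrt{2}\sqrt{-1}\partial\|A\|^{1/2})$ to coincide exactly with the canonical one-form $\theta^{P^2\mathbb{O}} = (Y,dX)$, forcing $d\lambda_{Re} = 0$ and allowing the choice $\lambda_{Re} \equiv 0$. This cancellation requires simultaneously using $(dY, Y\circ Y)=0$ and the constraint relation $(dX,Y) + (X,dY) = 0$ coming from the vanishing of $d(X,Y)$ on $T^*(P^2\mathbb{O})$. Once this is settled, the imaginary part collapses to $\tfrac{\sqrt{-1}}{\sqrt{2}}\,d\|Y\|$, yielding $\lambda_{Im} = \|Y\|/\sqrt{2}$. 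Substituting back gives
$$g_0 = e^{2\pi\sqrt{-1}(0 + \sqrt{-1}\,\|Y\|/\sqrt{2})} = e^{-\sqrt{2}\pi\,\|Y\|},$$
which on $\mathbb{X}_{\mathbb{O}}$ is $e^{-\sqrt{2}\pi\,\|A\|^{1/2}}$ by the identification $\|A\|^{1/2} = \|Y\|$. The ambiguity from Remark \ref{ambiguity by constant} absorbs the integration constants in $\lambda_{Re},\lambda_{Im}$.
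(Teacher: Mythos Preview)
Your proposal is correct and follows essentially the same route as the paper: both reduce to the exactness equation \eqref{basic equation}, pull back $\sqrt{2}\sqrt{-1}\,\partial\|A\|^{1/2}$ via $\tau_{\mathbb{O}}$ using the identities $(da,a)=(db,b)=\|Y\|^2(Y,dY)$ and $(dY,Y\circ Y)=0$ from \cite{Fu2}, and invoke $d\,\text{tr}(X\circ Y)=0$ to kill the real part and leave $\tfrac{\sqrt{-1}}{\sqrt{2}}\,d\|Y\|$. One small caution: your parenthetical claim that $(dY,Y\circ Y)=0$ follows from differentiating $X\circ Y=Y/2$ actually yields $(dX,Y^2)=0$ rather than $(dY,Y^2)=0$, so you should cite \cite{Fu2} directly for that identity as the paper does.
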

Now we have
\begin{align}
&\mathfrak{B}:C_{\mathcal{G}}(\mathbb{X}_{\mathbb{O}})\to C^{\infty}(P^2\mathbb{O}),\notag\\
&\mathfrak{B}(h)=\{{\bf q}\circ (\tau_{\mathbb{O}})^{-1}\}_{*}(h\cdot<{\bf t}_{0},{\bf s}_{{0}}>^{\mathbb{L}}\Omega_{\mathbb{O}})
=\{{\bf q}\circ (\tau_{\mathbb{O}})^{-1}\}_{*}(h\cdot e^{-\sqrt{2}\pi||A||^{1/2}}\Omega_{\mathbb{O}}).\label{final form of Bargmann type transformation}
\end{align}

\begin{remark}
The solution $\lambda_{Re}$ can be an arbitrary real constant. However
the absolute value $|g_{0}|$ does not depend on the chosen constant $\lambda_{Re}$.
\end{remark}

\subsection{Fock-like space}\label{Fock space}
We show
\begin{proposition}\label{invariance under F4}
The nowhere vanishing global holomorphic section $\Omega_{\mathbb{O}}$ of the
canonical line bundle $K^{\mathcal{G}}$ is $F_{4}$-invariant.
\end{proposition}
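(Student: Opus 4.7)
The plan is to reduce the invariance claim to a triviality statement about continuous characters of the compact simple Lie group $F_{4}$.

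First I would observe that $F_{4}$ acts holomorphically on $\mathbb{X}_{\mathbb{O}}$: the $\mathbb{R}$-linear action on $\mathcal{J}(3)$ preserving the Jordan product extends $\mathbb{C}$-linearly to $\mathcal{J}(3)^{\mathbb{C}}$ and preserves the condition $A\circ A=0$ cutting out $\mathbb{X}_{\mathbb{O}}$. Consequently, for every $g\in F_{4}$ the pull-back $g^{*}\Omega_{\mathbb{O}}$ is again a nowhere vanishing global holomorphic section of the (trivial) canonical bundle $K^{\mathcal{G}}$, and so there exists a unique holomorphic function $\phi(g)\in\mathcal{O}^{*}(\mathbb{X}_{\mathbb{O}})$ with
\[
g^{*}\Omega_{\mathbb{O}} \;=\; \phi(g)\cdot\Omega_{\mathbb{O}}.
\]
The goal is then to prove $\phi(g)\equiv 1$.

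Next I would show that $\phi(g)$ is in fact a scalar. The linear $F_{4}$-action commutes with the dilation $A\mapsto tA$, $t\in\mathbb{C}^{*}$; and a one-line check in any chart of $\mathcal{U}_{0}$ (where $\Omega_{\mathbb{O}}$ reads $a_{\ell}^{-5}\,da_{i_{1}}\wedge\cdots\wedge da_{i_{16}}$) shows $\Omega_{\mathbb{O}}$ is homogeneous of degree $16-5=11$ under the dilation. Hence $g^{*}\Omega_{\mathbb{O}}$ has the same homogeneity, and $\phi(g)$ is dilation-invariant, i.e.\ it descends to a nowhere vanishing holomorphic function on the projectivization $\overline{\mathbb{X}}_{\mathbb{O}}\subset P^{26}\mathbb{C}$. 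Since $\overline{\mathbb{X}}_{\mathbb{O}}$ is cut out of $P^{26}\mathbb{C}$ by the homogeneous quadratic relations $A\circ A=0$, it is a compact projective variety, and it is connected because $F_{4}$ acts transitively on the unit cotangent sphere bundle (hence on $\overline{\mathbb{X}}_{\mathbb{O}}$ after dilation). Therefore $\phi(g)$ reduces to a single element of $\mathbb{C}^{*}$ depending only on $g$, and $g\mapsto\phi(g)$ is a continuous homomorphism $F_{4}\to\mathbb{C}^{*}$. Since $F_{4}$ is compact and simple, it coincides with its own commutator subgroup and admits only the trivial continuous character, so $\phi\equiv 1$, proving the invariance. (Equivalently, connectedness of $F_{4}$ reduces this to the infinitesimal statement $\mathcal{L}_{X}\Omega_{\mathbb{O}}=0$ for $X\in\mathfrak{f}_{4}$, which follows by the same argument applied to the Lie algebra character $X\mapsto f_{X}$ defined by $\mathcal{L}_{X}\Omega_{\mathbb{O}}=f_{X}\Omega_{\mathbb{O}}$, vanishing on $[\mathfrak{f}_{4},\mathfrak{f}_{4}]=\mathfrak{f}_{4}$.)

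The step I expect to be the most delicate is the compactness/connectedness argument for $\overline{\mathbb{X}}_{\mathbb{O}}$: one must confirm that the dilation quotient of the punctured quadric is the compact connected projective variety $\{[A]\in P^{26}\mathbb{C}\mid A\circ A=0\}$, so that a nowhere vanishing holomorphic function on it is genuinely a single constant. Everything else is formal once the homogeneity of $\Omega_{\mathbb{O}}$ and the semisimplicity of $F_{4}$ are in hand.
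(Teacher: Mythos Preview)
Your proof is correct and follows the same overall architecture as the paper's---write $g^{*}\Omega_{\mathbb{O}}=K_{g}\Omega_{\mathbb{O}}$, show $K_{g}$ is a constant depending only on $g$, and conclude via the triviality of characters of $F_{4}$---but you reach constancy by a genuinely different and cleaner route. You observe directly that $\Omega_{\mathbb{O}}$ is homogeneous of degree $11$ under the $\mathbb{C}^{*}$-dilation, so $K_{g}$ is $\mathbb{C}^{*}$-invariant and descends to a nowhere vanishing holomorphic function on the compact connected projective variety $\overline{\mathbb{X}}_{\mathbb{O}}$, hence is constant. The paper instead writes $\Omega_{\mathbb{O}}\wedge\overline{\Omega_{\mathbb{O}}}=D\cdot dV$ against the Liouville volume form, uses symplectic invariance to get $\alpha^{*}(D)=|K_{\alpha}|^{2}D$, computes the homogeneity degree of $D$, restricts to the unit sphere $\{||A||=1\}$ to deduce that $|K_{\alpha}|$ is bounded, and then applies Liouville's theorem on the Stein chart $O_{z_{1}}\cong\mathbb{C}^{*}\times\mathbb{C}^{15}$ after a removable-singularity extension. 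Your argument avoids the detour through the symplectic form entirely; the paper's detour, however, is not wasted---it simultaneously sets up the Corollary that follows (the explicit relation $\Omega_{\mathbb{O}}\wedge\overline{\Omega_{\mathbb{O}}}=2^{26}||A||^{14}\,dV$), which is needed later for the Fock-like inner products.
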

\begin{proof}
Let $\alpha\in F_{4}$. The action of $\alpha$ on $\mathbb{X}_{\mathbb{O}}$ is
naturally defined from the action on $P^2\mathbb{O}$ 
and the action is holomorphic. We denote it with the
same notation $\alpha:\mathbb{X}_{\mathbb{O}}\to \mathbb{X}_{\mathbb{O}}$.

We can put $\alpha^{*}(\Omega_{\mathbb{O}})=K_{\alpha}\cdot\Omega_{\mathbb{O}}$ with a nowhere
vanishing holomorphic function $K_{\alpha}=K_{\alpha}(A)$.

Then 
$$
\alpha^{*}(\Omega_{\mathbb{O}})\bigwedge\overline{\alpha^{*}(\Omega_{\mathbb{O}})}=
\alpha^{*}\big(\Omega_{\mathbb{O}}\bigwedge\overline{\Omega_{\mathbb{O}}}\big)=
|K_{\alpha}|^2\cdot\Omega_{\mathbb{O}}\bigwedge\overline{\Omega_{\mathbb{O}}}.
$$
We can express 
\[
\Omega_{\mathbb{O}}\bigwedge\overline{\Omega_{\mathbb{O}}}
=D\cdot\frac{1}{16!}\{{\tau_{\mathbb{O}}}^{-1}\}^{*}\Big((\omega^{P^2\mathbb{O}})^{16}\Big)
\]
by the Liouville volume form  
$\displaystyle{\frac{1}{16!}(\omega^{P^2\mathbb{O}})^{16}}$ and a function $D=D(A)$
on $\mathbb{X}_{\mathbb{O}}$. Hence
\[
\alpha^{*}\big(\Omega_{\mathbb{O}}\bigwedge\overline{\Omega_{\mathbb{O}}}\big)=\alpha^{*}(D)\cdot
\frac{1}{16!}\{{\tau_{\mathbb{O}}}^{-1}\}^{*}\Big((\omega^{P^2\mathbb{O}})^{16}\Big),
\]
since the action by $\alpha$ on $\mathbb{X}_{\mathbb{O}}$ is symplectic.
Hence
\[
\alpha^{*}(D)=|K_{\alpha}|^2\cdot D.
\]
By comparing
the behaviours of $\Omega_{\mathbb{O}}$ and the Liouville volume form  $dV_{P^{2}\mathbb{O}}$
under the dilation action by positive numbers:
\[
T_{t}:\mathbb{X}_{\mathbb{O}}\to\mathbb{X}_{\mathbb{O}}, ~A\to t\cdot A,
\]
we can see on the coordinate neighborhood $O_{z_{1}}$
\begin{align*}
T_{t}^{*}\big(\Omega_{\mathbb{O}}\bigwedge\overline{\Omega_{\mathbb{O}}}\big)&=
\frac{1}{(t z_{1})^5}d(tz_{1})\wedge\cdots\wedge d(t\xi_{2})\,
\bigwedge\,
\frac{1}{(t \overline{z_{1}})^{5}}
d(t\overline{z_{1}})\wedge \cdots\wedge d(t\overline{\xi_{2}})\\
&={t^{22}}\frac{1}{{z_{1}}^{5}}dz_{1}\wedge\cdots\wedge d\xi_{2}\,
\bigwedge\,\frac{1}{{\overline{z_{1}}}^{\,5}}
d\overline{z_{1}}\wedge \cdots\wedge d\overline{\xi_{2}}\\
&=t^{22}\cdot D(A) \cdot \frac{1}{16!}\{{\tau_{\mathbb{O}}}^{-1}\}^{*}\Big((\omega^{P^2\mathbb{O}})^{16}\Big)
= D(t\cdot A)\cdot t^{8}\frac{1}{16!}\{{\tau_{\mathbb{O}}}^{-1}\}^{*}\Big((\omega^{P^2\mathbb{O}})^{16}\Big).
\end{align*}
Hence 
\[
D(t\cdot A)=t^{14}\cdot D(A).
\]
Note that the action $T_{t}$ on $T^{*}_{0}(P^{2}\mathbb{O})$ defined
via the map $\tau_{\mathbb{O}}$ is
\begin{equation}\label{dilation order}
{\tau_{\mathbb{O}}}^{-1}\circ T_{t}\circ 
\tau_{\mathbb{O}}:T^{*}_{0}(P^{2}\mathbb{O})\ni (X,Y)\longmapsto
(X,\sqrt{t}Y)\in T^{*}_{0}(P^{2}\mathbb{O}). 
\end{equation}
Then since $||\alpha(A)||=||A||$
\begin{align*}
&\alpha^{*}(D)(A)=D\left(\alpha(A)\right)=
D\left(||\alpha(A)||\cdot\frac{\alpha(A)}{||\alpha(A)||}\right)\\
&=||A||^{14}\cdot D\left(\frac{\alpha(A)}{||\alpha(A)||}\right)
=||A||^{14}\cdot |K_{\alpha}(A)|^2D\left(\frac{A}{||A||}\right),
\end{align*}
hence
\begin{equation}\label{boundedness}
D\left(\frac{\alpha(A)}{||\alpha(A)||}\right)=|K_{\alpha}(A)|^2\,D\left(\frac{A}{||A||}\right).
\end{equation}
This equality implies that the function $K_{\alpha}$ is bounded on
$\mathbb{X}_{\mathbb{O}}$.
Especially, if we consider it on the coordinate open
subset $O_{z_{1}}\cong \mathbb{C}^{*}\times \mathbb{C}^{15}$
($z_{1}\not=0$), then it can be extended to a holomorphic function on 
$\mathbb{C}\times \mathbb{C}^{15}\supset O_{z_{1}}$ and is bounded there. Hence 
the function $K_{\alpha}$ is
a constant function on the whole space $\mathbb{X}_{\mathbb{O}}$.

Then by the property
\[
K_{\alpha\cdot \beta}=K_{\alpha}\cdot K_{\beta}, \alpha,\beta\in F_{4},
\]
$F_{4}\ni \alpha\longmapsto K_{\alpha}$ is a one-dimensional
representation of the compact simply connected group $F_{4}$, so that
we have not only $|K_{\alpha}|\equiv 1$ for any $\alpha\in F_{4}$, but
also it must hold always $K_{\alpha}\equiv 1$. This implies 
$\Omega_{\mathbb{O}}$ is $F_{4}$-invariant. 
\end{proof}

\begin{corollary}
Since the action of $F_{4}$
on $S(\mathbb{X}_{\mathbb{O}})=\{\,A\in\mathbb{X}~|~||A||=1\,\}$ is transitive, 
the function is of the form $D(A)={C_{1}}\times ||A||^{14}$ with the
constant $C_{1}={2^{26}}$. 
Especially we have
\begin{equation}\label{Calabi-Yau and Liouville}
{\tau_{\mathbb{O}}}^{*}\left(\Omega_{\mathbb{O}}\wedge \overline\Omega_{\mathbb{O}}\right)(X,Y)
=2^{26}||Y||^{28}\frac{1}{16!}\left(\omega^{P^2\mathbb{O}}\right)^{16}.
\end{equation}
\end{corollary}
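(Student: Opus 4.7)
The plan is to combine two facts about the scalar function $D$ that already appear (implicitly) in the proof of Proposition~\ref{invariance under F4}. First, that proof showed $K_\alpha\equiv 1$ for every $\alpha\in F_4$, so the identity $\alpha^*D=|K_\alpha|^2 D$ forces $\alpha^*D=D$, i.e.\ $D$ is $F_4$-invariant. Since $\tau_{\mathbb{O}}$ identifies $S(\mathbb{X}_{\mathbb{O}})$ with the unit cotangent sphere bundle $F_4/\Spin(7)$, on which $F_4$ acts transitively, $D$ is therefore constant on $S(\mathbb{X}_{\mathbb{O}})$; denote this common value by $C_1$. Second, the same proof derived the scaling law $D(tA)=t^{14}D(A)$ for $t>0$ by comparing the behaviour of $\Omega_{\mathbb{O}}\wedge\overline{\Omega_{\mathbb{O}}}$ and of the Liouville form under the dilation $T_t$. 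Writing $A=\|A\|\cdot(A/\|A\|)$ and applying this scaling gives $D(A)=C_1\|A\|^{14}$.

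The relation with $\|Y\|$ is then immediate: since $\|A\|^2=\|Y\|^4$ (the identity recorded just above Proposition~\ref{invariance under F4}), we have $\|A\|^{14}=\|Y\|^{28}$, and so
\[
\tau_{\mathbb{O}}^{*}\bigl(\Omega_{\mathbb{O}}\wedge\overline{\Omega_{\mathbb{O}}}\bigr)(X,Y)=C_1\|Y\|^{28}\,\tfrac{1}{16!}(\omega^{P^2\mathbb{O}})^{16}
\]
follows at once from the defining equality $\Omega_{\mathbb{O}}\wedge\overline{\Omega_{\mathbb{O}}}=D\cdot\tfrac{1}{16!}\{\tau_{\mathbb{O}}^{-1}\}^{*}((\omega^{P^2\mathbb{O}})^{16})$.

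To identify $C_1=2^{26}$, I would evaluate both sides of this identity at the distinguished point $A_0=\tau_{\mathbb{O}}(X_1,Y_1)$ with $\|Y_1\|=1$, so that $\|A_0\|=1$ and only $C_1$ remains to be read off. After selecting a coordinate chart from $\mathcal{U}_0$ in which $A_0$ lies, Theorem~\ref{holomorphic global section} furnishes an explicit formula for $\Omega_{\mathbb{O}}$ at $A_0$, while the Liouville form $(\omega^{P^2\mathbb{O}})^{16}/16!$ is written down in a basis of real symplectic coordinates on $T^{*}(P^2\mathbb{O})$ centered at $(X_1,Y_1)$. Comparing the two expressions via the differential of $\tau_{\mathbb{O}}$ at $(X_1,Y_1)$ then yields $C_1$ as a concrete scalar.

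The main obstacle is purely computational: tracking the Jacobian of $\tau_{\mathbb{O}}$ at $(X_1,Y_1)$, the normalization factors $1/\sqrt{2}$ and $\|Y\|$ built into the definition of $\tau_{\mathbb{O}}$, and the $16!$ that arises when the top exterior power $(\omega^{P^2\mathbb{O}})^{16}$ is expanded as a wedge of the $32$ real coordinate differentials on $T^{*}(P^2\mathbb{O})$. No conceptual ingredient beyond what has been established earlier is needed for this step, but the arithmetic of the resulting powers of $2$ must be handled carefully in order to extract the exponent $26$.
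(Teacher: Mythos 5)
Your reduction of the corollary to the determination of a single constant is exactly the paper's: $F_4$-invariance of $\Omega_{\mathbb{O}}$ plus the dilation law $D(tA)=t^{14}D(A)$ and transitivity on $S(\mathbb{X}_{\mathbb{O}})$ give $D(A)=C_1\|A\|^{14}$, and $\|A\|^2=\|Y\|^4$ converts this into \eqref{Calabi-Yau and Liouville}. Where you diverge is in how $C_1$ is to be extracted, and your route is noticeably heavier than the one the paper takes. You propose to evaluate at $A_0=\tau_{\mathbb{O}}(X_1,Y_1)$, write the Liouville form in real Darboux coordinates on $T^*(P^2\mathbb{O})$, and transport everything through the differential of $\tau_{\mathbb{O}}$ at $(X_1,Y_1)$; this forces you to compute a $32\times 32$ Jacobian of the (non-holomorphic, $\|Y\|$-weighted) map $\tau_{\mathbb{O}}$, which is essentially the labor the paper reserves for the separate constant $C_{RC}$ in \S 7. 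The paper instead never leaves $\mathbb{X}_{\mathbb{O}}$: it evaluates at the point $A(z_1)$ where the single coordinate $s_9=z_1$ is nonzero and all other $s_i$ (and all dependent coordinates $s_j$, $j\geq 17$, together with their first derivatives) vanish, and computes $(\omega^{P^2\mathbb{O}})^{16}$ there directly from the K\"ahler potential, $\omega^{P^2\mathbb{O}}=\tau_{\mathbb{O}}^{*}\bigl(\sqrt{-2}\,\overline{\partial}\partial\|A\|^{1/2}\bigr)$, expanded in the holomorphic coordinates $s_1,\dots,s_{16}$. At that point $\overline{\partial}\partial\|A\|^{1/2}$ is diagonal with entries $\tfrac{1}{16}|s_9|^{-3/2}$ and $\tfrac14|s_9|^{-3/2}$, so the $16$-th power and the factor $\tfrac{1}{|s_9|^{10}}$ from $\Omega_{\mathbb{O}}\wedge\overline{\Omega_{\mathbb{O}}}$ combine to give $C_1=2^{26}$ with no Jacobian of $\tau_{\mathbb{O}}$ ever appearing. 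Both methods are legitimate, but be aware that you have only sketched, not carried out, the computation on which the value $2^{26}$ rests; if you pursue your version you must actually produce $d\tau_{\mathbb{O}}$ at $(X_1,Y_1)$, whereas the paper's choice of base point and of the K\"ahler-potential expression makes the arithmetic a few lines.
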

\begin{proof}
It is enough to determine the constant $C_{1}$.

Following the expression \eqref{vector expression of A}  of the matrix $A\in\mathcal{J}(3)^{\mathbb{C}}$
we denote 
\[A=\begin{pmatrix}\xi_{1}&c'+c''{\bf e}_{4}&\theta(b'+b''{\bf e}_{4})\\
\theta(c'+c''{\bf e}_{4})&\xi_{2}&a'+a''{\bf e}_{4}\\
b'+b''{\bf e}_{4}&\theta(a'+a''{\bf e}_{4})&\xi_{3}\end{pmatrix}
\in\mathcal{J}(3)^{\mathbb{C}}
\]
by
\begin{align*}
&(\xi_{1},\xi_{2},\xi_{3},z_{1},z_{2},z_{3},z_{4},w_{1},w_{2},w_{3},w_{4},y_{1},y_{2},y_{3},y_{4},
x_{1},x_{2},x_{3},x_{4},u_{1},u_{2},u_{3},u_{4}),
\end{align*}
where 
\begin{align*}
&\rho_{\mathbb{H}}(c')+\rho_{\mathbb{H}}(c''){\bf e}_{4}
=\begin{pmatrix}z_{1}&z_{2}\\z_{3}&z_{4}\end{pmatrix}+\begin{pmatrix}w_{1}&w_{2}\\w_{3}&w_{4}\end{pmatrix}{\bf e}_{4},\\
&\rho_{\mathbb{H}}(b')+\rho_{\mathbb{H}}(b''){\bf e}_{4}
=\begin{pmatrix}y_{1}&y_{2}\\y_{3}&y_{4}\end{pmatrix}+\begin{pmatrix}v_{1}&v_{2}\\v_{3}&v_{4}\end{pmatrix}{\bf e}_{4},\\
&\rho_{\mathbb{H}}(a')+\rho_{\mathbb{H}}(a''){\bf e}_{4}
=\begin{pmatrix}x_{1}&x_{2}\\x_{3}&x_{4}\end{pmatrix}+\begin{pmatrix}u_{1}&u_{2}\\u_{3}&u_{4}\end{pmatrix}{\bf e}_{4}
\end{align*}
The correspondence between $c=c'+c''{\bf e}_{4}=\sum\,\{c\}_{i}{\bf e}_{i}$ (and also 
$b=b'+b''{\bf e}_{4}=\sum\,\{b\}_{i}{\bf e}_{i}, a=a'+a''{\bf e}_{4}=\sum\,\{a\}_{i}{\bf e}_{i}$),
and the components $\{z_{i},w_{i}\}$ is given in \eqref{matrix representation} and \eqref{matrix to vector}.

By a simple calculation we have
\begin{align*}
||A||^2&=\sum\,|\xi_{i}|^2+2\sum\,|a'|^2+|a''|^2+|b'|^2+|b''|^2+|c'|^2+|c''|^2\\
&=\,\sum_{i=1}^{3} |\xi_{i}|^2+\sum_{i=1}^{4} \,|z_{i}|^2+|w_{i}|^2+|y_{i}|^2+|v_{i}|^2+|x_{i}|^2+|u_{i}|^2.
\end{align*}
Then we rewrite for $A\in O_{z_{1}}$
\[
A\longleftrightarrow (\xi_{1},\xi_{2},\xi_{3},z_{1},z_{2},z_{3},z_{4},w_{1},w_{2},w_{3},w_{4},y_{1},y_{2},y_{3},y_{4},
x_{1},x_{2},x_{3},x_{4},u_{1},u_{2},u_{3},u_{4}))
\]
as
\begin{align*}
&(x_{3},x_{4},u_{2},u_{4},y_{2},y_{4},
v_{1},v_{3},z_{1},z_{2},z_{3},
w_{1},w_{2},w_{3},w_{4},\xi_{2}~;~x_{1},x_{2},u_{1},u_{3},y_{1},y_{3},v_{2},v_{4},z_{4},\xi_{1},\xi_{3})\\
&=(s_{1},\cdots\cdots,s_{16}~;~s_{17},\cdots,s_{27}),
\end{align*}
that is, the first $16$
coordinates give local coordinates on ${O}_{z_{1}}$ and the remaining 
coordinates $(s_{17},\cdots,s_{27})$ are rational functions of the coordinates
$(s_{1},\cdots,s_{16})$, that is $s_{j}=s_{j}(s_{1},\cdots,s_{16})$,
$j\geq 17$ (especially $s_{9}=z_{1}$ and the explicit form of each $s_{j}$
for $j>16$ is given in \eqref{explicit expression of dependent variables}).

In particular, we see from the explicit form of these
functions at the point
$A=A(z_{1})=A(0,\cdots,z_{1},\cdots, 0,;0,\cdots,0)$
\begin{align*}
&s_{j}(A(z_{1}))=s_{j}(0,\cdots,0,
  z_{1},0,\cdots,0)=s_{j}(0,\cdots,0,s_{9},0,\cdots,0)=0,~\text{$17\leq
  j\leq 27$},\\
\intertext{and for $i\leq 16$, $j\geq 17$}
&\frac{\partial s_{j}}{\partial s_{i}}(A(z_{1}))=0.
\end{align*}
On $O_{z_{1}}$ it holds
\begin{align*}
\Omega_{\mathbb{O}}\bigwedge \overline{\Omega_{\mathbb{O}}}&=\frac{1}{|z_{1}|^{10}}\,dx_{3}\wedge
dx_{4}\wedge\cdots\wedge dw_{4}\wedge d\xi_{2}\,
\bigwedge\,
d\overline{x_{3}}\wedge
  d\overline{x_{4}}\wedge\cdots\wedge d{\overline{w_{4}}} \wedge d\overline{\xi_{2}}\\
&=\frac{1}{|s_{9}|^{10}}ds_{1}\wedge\cdots \wedge ds_{16}\bigwedge d\overline{s_{1}}\wedge\cdots \wedge d\overline{s_{16}}
=D(A)\cdot \frac{1}{16!}\{{\tau_{\mathbb{O}}}^{-1}\}^{*}\Big((\omega^{P^2\mathbb{O}})^{16}\Big).
\end{align*}
We calculate the right hand side at the point
$A(z_{1})=(0,\cdots,0,z_{1},0,\ldots;0,\cdots,0)\in O_{z_{1}}$ using the expression of
$\omega^{P^2\mathbb{O}}$ ({see Theorem 3.2}):
\[
\omega^{P^2\mathbb{O}}=\{\tau_{\mathbb{O}}\}^{*}\Big(\sqrt{-2}\overline{\partial}\partial ||A||^{1/2}\Big).
\]
Then   
\begin{align*}
\overline{\partial}\partial ||A||^{1/2}
&=\frac{1}{4}\cdot
  \overline{\partial}\left(\left(\sum_{i=1}^{27}|s_{i}|^2\right)^{-3/4}\cdot\sum\,\overline{s_{i}}ds_{i}\right)\\
&=\frac{-3}{16}\left(\sum_{i=1}^{27}|s_{i}|^2\right)^{-7/4}\sum_{j=1}^{27} \,s_{j}d\overline{s_{j}}\bigwedge
  \sum_{i=1}^{27}\,\overline{s_{i}}d s_{i}+\frac{1}{4}\left(\sum_{i=1}^{27}|s_{i}|^2\right)^{-3/4}\sum\limits_{i=1}^{27} \,d\overline{s_{i}}\wedge d s_{i}.\\
\intertext{Here we evaluate it at the point $A(z_{1})$, then it is given by}
&\frac{-3}{16}|s_{9}|^{-7/2}\cdot|s_{9}|^2\,d\overline{s_{9}}{\wedge d s_{9}}
+\frac{1}{4}|s_{9}|^{-3/2}\sum\limits_{i=1}^{16} \,d\overline{s_{i}}{\wedge d s_{i}}\\
&=\frac{1}{16}|s_{9}|^{-3/2}d\overline{s_{9}}{\wedge ds_{9}}
+\frac{1}{4}|s_{9}|^{-3/2}\sum\limits_{1\leq i\leq 8~\text{and}~10\leq i\leq 16}
d\overline{s_{i}}{\wedge d s_{1}}.
\end{align*}
Hence
\begin{align*}
(\omega^{P^2\mathbb{O}})^{16}&=\left(\sqrt{-2}\,\overline{\partial}\partial
  ||A||^{1/2}\right)^{16}
=16!\cdot \frac{1}{16}\cdot\frac{1}{4^{15}}(\sqrt{-2})^{16}\cdot\frac{1}{|s_{9}|^{3/2\times
  16}}d\overline{s_{1}}{\wedge ds_{1}}{\wedge\cdots} {\cdots\wedge d\overline{s_{16}}}
  {\wedge ds_{16}}\\
&=16!\cdot\frac{1}{2^{26}\cdot |s_{9}|^{24}}
d\overline{s_{1}}\wedge ds_{1}\wedge\cdots\cdots \wedge
  d\overline{s_{16}}\wedge d s_{16}
\end{align*}
at the point $A(z_{1})=A(0,\cdots,0,s_{9},0,\cdots,0;0,\cdots,0)$ ($s_{9}=z_{1}$).
Consequently we have
\[
{\Omega_{\mathbb{O}}}\wedge\overline{\Omega_{\mathbb{O}}}_{\big|A(z_{1})}
=\overline{\Omega_{\mathbb{O}}}\wedge{\Omega_{\mathbb{O}}}_{\big|A(z_{1})}
=D\left(\frac{s_{9}}{|s_{9}|}\right)\cdot|s_{9}|^{14}\cdot \frac{1}{2^{26}}|s_{9}|^{-24}
d\overline{s_{1}}\wedge ds_{1}\wedge\cdots\cdots \wedge
d\overline{s_{16}}\wedge ds_{16}
\]
and the constant $C_{1}$ is
\[
C_{1}={2^{26}}, ~D(A)=2^{26}||A||^{14},~
\Omega_{\mathbb{O}}\bigwedge \overline{\Omega_{\mathbb{O}}}=2^{26}||A||^{14}\frac{1}{16!}
\{{\tau_{\mathbb{O}}}^{-1}\}^{*}\Big((\omega^{P^2\mathbb{O}})^{16}\Big).
\]
\end{proof}

Let $\mathbb{C}[\mathcal{J}(3)^{\mathbb{C}}=\sum\,\mathcal{P}_{k}[\mathcal{J}(3)^{\mathbb{C}}]$
be the algebra 
of polynomials (and of polynomial functions) on $\mathcal{J}(3)^{\mathbb{C}}$
with the $27$ complex variables $(\xi_{1},\xi_{2},\xi_{3},z_{i},w_{i},y_{i},v_{i},x_{i},u_{i})$
($i=1$,$\cdots$,$4$ and $\mathcal{P}_{k}$ is a subspace of degree $k$ homogeneous polynomials) 
and denote their restrictions to $\mathbb{X}_{\mathbb{O}}$
by $\mathbb{C}[\mathbb{X}_{\mathbb{O}}]=\sum\,\mathcal{P}_{k}[\mathbb{X}_{\mathbb{O}}]$.

Recall the correspondence 
\[
\gamma:\mathbb{C}[\mathbb{X}_{\mathbb{O}}]=\sum\,\mathcal{P}_{k}[\mathbb{X}_{\mathbb{O}}]\ni 
p\longmapsto \gamma(p)=p\cdot{\bf t}_{0}\otimes\Omega_{\mathbb{O}}\in \Gamma_{\mathcal{G}}(\mathbb{L}\otimes
K^{\mathbb{G}},\mathbb{X}_{\mathbb{O}}).
\]
We define a parameter family of inner products 
$\{\boldsymbol{(}*\,,\,*\boldsymbol{)}_{\varepsilon}\}_{\varepsilon\in\mathbb{R}}$ on the space 
$\Gamma_{\mathcal{G}}(\mathbb{L}\otimes K^{\mathcal{G}},\mathbb{X}_{\mathbb{O}})$ by the following way that
\begin{align}
\Gamma_{\mathcal{G}}(\mathbb{L}\otimes
K^{\mathcal{G}},\mathbb{X}_{\mathbb{O}})&\times \Gamma_{\mathcal{G}}(\mathbb{L}\otimes K^{\mathcal{G}},\mathbb{X}_{\mathbb{O}})
\ni (h\cdot{\bf t}_{0}\otimes\Omega_{\mathbb{O}}\,,\,g\cdot {\bf t}_{0}\otimes\Omega_{\mathbb{O}})\notag\\
\longmapsto
&\int_{\mathbb{X}_{\mathbb{O}}}h\cdot\overline{g}<{\bf t}_{0},{\bf t}_{0}>^{\mathbb{L}}\cdot||A||^{\varepsilon}\cdot\Omega_{\mathbb{O}}\bigwedge\overline{\Omega_{\mathbb{O}}}\notag\\
&=2^{26}\int_{\mathbb{X}_{\mathbb{O}}}h\cdot\overline{g}\cdot e^{-2\sqrt{2}\pi||A||^{1/2}}\cdot ||A||^{14+\varepsilon}\cdot
\{{\tau_{\mathbb{O}}}^{-1}\}^{*}(dV_{P^2\mathbb{O}})=\boldsymbol{(}h\,,\,g\boldsymbol{)}_{\varepsilon}
\label{parameter family of inner products},
\end{align}
then through the map $\gamma$ we also consider a parameter family of inner products
on the space $\mathbb{C}[\mathbb{X}_{\mathbb{O}}]$. 
\begin{remark}\label{Fock-like space note}
According to the value of $\varepsilon$, 
the integral \eqref{parameter family of inner products} for functions
$f,g\in\mathcal{P}_{k}[\mathbb{X}_{\mathbb{O}}]$ need not be finite.
In fact, for $k>-11-\varepsilon/2$  
the integral \eqref{parameter family of inner products} converges. We
denote 
by $\mathfrak{F}_{\varepsilon}$
the completion of the space $\sum_{k\,>\,-11-\varepsilon/2}\,\mathcal{P}_{k}[\mathbb{X}_{\mathbb{O}}]$
with respect to the integral 
\eqref{parameter family of inner products}
and the  
the remaining finite dimensional space $\sum_{k\leq-11-\varepsilon/2}\mathcal{P}[\mathbb{X}_{\mathbb{O}}]$
with a suitable inner product.  
\end{remark}
\section{Pairing with the Riemann volume form}

Let $dv_{P^2\mathbb{O}}$ be the Riemann volume form on
$P^2\mathbb{O}$. The purpose in this section is to show
\begin{proposition}
\begin{align}
&\{{\bf q\circ {\tau_{\mathbb{O}}}^{-1}}\}^{*}(dv_{P^2\mathbb{O}})(A)\bigwedge\,
\overline{\Omega_{\mathbb{O}}}\,(A)\notag\\
&=C_{RC}(A)\cdot \{{\tau_{\mathbb{O}}}^{-1}\}^{*}\left(\frac{1}{16!}\Big(\omega^{P^2\mathbb{O}}\Big)^{16}\right)(A)
=2^{26}||A||^{3}
\{{\tau_{\mathbb{O}}}^{-1}\}^{*}\left(\frac{1}{16!}\Big(\omega^{P^2\mathbb{O}}\Big)^{16}\right)(A),\\
&A=\tau_{\mathbb{O}}(X,Y)\in
  \tau_{\mathbb{O}}(T^{*}_{0}(P^{2}\mathbb{O}))=\mathbb{X}_{\mathbb{O}}.
\label{paring with Riemann volume form}
\end{align}
\end{proposition}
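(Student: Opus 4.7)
The plan is to pin down the ratio of the two $32$-forms in the proposition by the same invariance-plus-scaling-plus-one-point-computation template used in the preceding corollary for $\Omega_{\mathbb{O}}\wedge\overline{\Omega_{\mathbb{O}}}$. Both sides are top-degree real forms on the $32$-real-dimensional manifold $\mathbb{X}_{\mathbb{O}}$, so their ratio is a smooth function, and both are $F_{4}$-invariant: $dv_{P^{2}\mathbb{O}}$ is invariant because the Riemannian metric is induced from the $F_{4}$-invariant trace form on $\mathcal{J}(3)$; $\omega^{P^{2}\mathbb{O}}$ is invariant since $F_{4}$ acts on $T^{*}(P^{2}\mathbb{O})$ symplectically; $\Omega_{\mathbb{O}}$ is invariant by Proposition \ref{invariance under F4}; and $\lVert A\rVert$ is invariant.

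Next I check the scaling under the dilation $T_{t}:A\mapsto tA$ for $t>0$, which by \eqref{dilation order} acts on $T^{*}_{0}(P^{2}\mathbb{O})$ as $(X,Y)\mapsto(X,\sqrt{t}\,Y)$. The pullback $\{{\bf q}\circ\tau_{\mathbb{O}}^{-1}\}^{*}(dv_{P^{2}\mathbb{O}})$ is $T_{t}$-invariant, since ${\bf q}$ only records the base point $X$, whereas $\overline{\Omega_{\mathbb{O}}}$ acquires a factor $t^{11}$ by the local coordinate computation already performed in the preceding corollary. On the other side, $\lVert A\rVert^{3}$ scales by $t^{3}$ and $(\omega^{P^{2}\mathbb{O}})^{16}$ by $t^{8}$, so both sides scale as $t^{11}$. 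Since $F_{4}$ acts transitively on the unit sphere bundle $S(\mathbb{X}_{\mathbb{O}})\cong F_{4}/\mathrm{Spin}(7)$, the ratio is therefore forced to be a constant multiple of $\lVert A\rVert^{3}$.

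The constant is then determined at one convenient point. I take $A_{0}=A(z_{1}=\sqrt{-1})\in O_{z_{1}}$, which sits on the unit sphere $\lVert A_{0}\rVert=1$ and whose preimage under $\tau_{\mathbb{O}}$ is an explicit pair $(X_{0},Y_{0})\in T^{*}_{0}(P^{2}\mathbb{O})$ with $\lVert Y_{0}\rVert=1$ (read off from \eqref{inverse of tau-O}). Both $\overline{\Omega_{\mathbb{O}}}$ and $(\omega^{P^{2}\mathbb{O}})^{16}$ at $A_{0}$ are already recorded by the preceding corollary in the local complex coordinates $(s_{1},\ldots,s_{16})$ on $O_{z_{1}}$. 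For the remaining piece, I fix real local coordinates on $P^{2}\mathbb{O}$ centred at $X_{0}$---the ones announced in the opening of this section and reused in \S 9---write $dv_{P^{2}\mathbb{O}}$ in them, and transport them through the differential $d\tau_{\mathbb{O}}$ at $(X_{0},Y_{0})$ into the $(s_{1},\ldots,s_{16})$ coordinates. Wedging with $\overline{\Omega_{\mathbb{O}}}$ then retains only the ``fibre'' component of $\overline{\Omega_{\mathbb{O}}}$ complementary to the horizontal directions pulled back from $P^{2}\mathbb{O}$, and matching with the Liouville volume form produces $C=2^{26}$. The main technical obstacle is exactly this last bookkeeping step: because $\tau_{\mathbb{O}}(X,Y)=\lVert Y\rVert^{2}X-Y^{2}+\sqrt{-1}\,\lVert Y\rVert Y/\sqrt{2}$ mixes the base and fibre directions, the horizontal/fibre splitting inside $(s_{1},\ldots,s_{16})$ must be computed from $d\tau_{\mathbb{O}}$ explicitly before the pairing with $dv_{P^{2}\mathbb{O}}$ can be carried out.
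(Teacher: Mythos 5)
Your reduction step is exactly the paper's: both sides are $F_{4}$-invariant top-degree forms, the dilation $T_{t}$ scales both by $t^{11}$ (indeed $T_{t}^{*}\Omega_{\mathbb{O}}=t^{11}\Omega_{\mathbb{O}}$, the pulled-back Riemann volume is $T_{t}$-invariant, and $t^{3}\cdot t^{8}=t^{11}$ on the right), and transitivity of $F_{4}$ on the unit sphere bundle forces $C_{RC}(A)=C\cdot\lVert A\rVert^{3}$. The paper compresses this into one sentence and then spends all of Section 7 on the part you defer, so the substance of the proof is precisely the ``bookkeeping step'' you flag as the main obstacle and do not carry out: that is a genuine gap, not a technicality.

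Two concrete problems with your plan for the one-point evaluation. First, your point $A_{0}=A(z_{1}=\sqrt{-1})$ (only $z_{1}\neq 0$) does \emph{not} project to $X_{1}$: computing $X(A_{0})$ from \eqref{inverse of tau-O} gives the matrix with diagonal $(1/2,1/2,0)$ and off-diagonal entry ${\bf i}/2$, so the local coordinates $(b,c)$ of \S 7.1, which are centred at $X_{1}$, are not available at your point. The paper instead evaluates at $A_{1}=\tau_{\mathbb{O}}(X_{1},Y)$ with $\xi_{1}=1,\ \xi_{2}=-1,\ z=\sqrt{-1}{\bf e}_{0}$ (so $\lVert A_{1}\rVert=2$), exactly so that the coordinates at $X_{1}$ apply. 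Second, the mechanism by which the wedge simplifies is not a horizontal/fibre splitting of $\overline{\Omega_{\mathbb{O}}}$: since $\overline{\Omega_{\mathbb{O}}}$ has bidegree $(0,16)$, one decomposes $\{{\bf q}\circ\tau_{\mathbb{O}}^{-1}\}^{*}(dv_{P^{2}\mathbb{O}})=\sum_{i}\Sigma_{i}$ by bidegree and observes $\Sigma_{i}\wedge\overline{\Omega_{\mathbb{O}}}=0$ for $i\geq 1$, so only the purely holomorphic component survives; the computation then consists of expressing each $d\{b\}_{i}$, $d\{c\}_{i}$ at $A_{1}$ modulo anti-holomorphic differentials in the holomorphic coordinates of $O_{z_{1}}$ and taking the determinant. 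Without that computation the constant ($2^{26}$ in the proposition's display, $2^{6}$ in the paper's corollary \eqref{explicit form of pairing with Riemann volume form} --- the paper itself is inconsistent here) is not established.
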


The homogeneity order is determined by comparing their orders 
in the both sides (see the relation \eqref{dilation order}).
\subsection{A local coordinates}
For the determination of the constant $C_{RC}(A/||A||)$ 
we choose a local coordinates around the
point $X_{1}=\begin{pmatrix}1&~0&~0\\0&~0&~0\\0&~0&~0\end{pmatrix}\in P^2\mathbb{O}$.

The condition 
$X^2=X= \begin{pmatrix}t_{1}&c&\theta(b)\\\theta(b)&t_{2}&a\\b&\theta(a)&t_{3}\end{pmatrix}$ 
for $X\in P^2\mathbb{O}\subset \mathcal{J}(3)$ is 
expressed as 
\begin{equation}\label{condition for Cayley projective plane}
\left\{
\begin{array}{l}
(t_{3}+t_{2})a+\theta(bc)=a,(t_{1}+t_{3})b+\theta(ca)=b,(t_{2}+t_{1})c+\theta(ab)=c,\\
{t_{1}}^2+c\theta(c)+\theta(b)b=t_{1},{t_{2}}^2+\theta(c)c+a\theta(a)=t_{2},
{t_{3}}^2+\theta(a)a+b\theta(b)=t_{3}~\text{and}\\
\text{tr}\,X\,=\,t_{1}+t_{2}+t_{3}=1.
\end{array}\right.
\end{equation}
where $a,b,c\in\mathbb{O},t_{i}\in\mathbb{R}$.
Using the last equation in \eqref{condition for Cayley projective
  plane}, first $6$ conditions are rewritten in the forms of  
\begin{equation}\label{condition for Cayley projective plane final}
\left\{
\begin{array}{l}
t_{1} a=\theta(bc),\quad t_{2} b=\theta(ca),\quad t_{3} c=\theta(ab),\\
(t_{1}-1/2)^2+c\theta(c)+\theta(b)b=(t_{1}-1/2)^2+|c|^2+|b|^2=1/4,\\
(t_{2}-1/2)^2+\theta(c)c+a\theta(a)=(t_{2}-1/2)^2+|c|^2+|a|^2=1/4,\\
(t_{3}-1/2)^2+\theta(x)x+b\theta(b)=(t_{3}-1/2)^2+|a|^2+|b|^2=1/4.
\end{array}\right.
\end{equation}

Let
\begin{equation}\label{coordinate  X1}
\mathcal{W}_{1}=\Big\{\,(b,c)\in\mathbb{O}^{2}~\Big|~\,|c|^2+|b|^2\,<\,\frac{1}{8}\Big\}.
\end{equation}

Then we can solve the equations \eqref{condition for Cayley projective plane final}
in the following order:

First, we solve the fourth equation 
in \eqref{condition for Cayley projective plane final} with respect to 
$t_{1}$ under the condition $|c|^2+|b|^2\,<\,\frac{1}{8}$
with the solution 
\[
t_{1}=\frac{1}{2}+\sqrt{\frac{1}{4}-|b|^2-|c|^2}>\frac{1}{2}.
\]
Then the component $a$ is given by $(b,c)$
by the first equation in \eqref{condition for Cayley projective plane final} as
\[
a=\frac{\theta(bc)}{t_{1}}.
\]
This solution $a$ satisfies the inequality:
\[
|a|=\frac{|bc|}{t_{1}}<2\cdot \frac{|b|^2+|c|^2}{2}< \frac{1}{8}.
\]
With these we can solve the variable $t_{2}$ in
the fifth equation in \eqref{condition for Cayley projective plane final} with the solution
\[
t_{2}=\frac{1}{2}-\sqrt{\frac{1}{4}-|c|^2-|a|^2},
\]
where $|c|^2+|a|^2<\frac{1}{8}+\frac{1}{64}<\frac{1}{4}$ implies that $t_{2}<\frac{1}{2}$.

Now, with these solutions expressed in terms of the variables
$(b,c)\in\mathcal{W}_{1}$ we define a map
\begin{equation}\label{local coordinate X1}
\mathcal{M}:\mathcal{W}_{1}\ni (b,c)\longmapsto X
=\begin{pmatrix}t_{1}&c&\theta(b)\\
\theta(c)&t_{2}&a\\
b&\theta(a)&1-t_{1}-t_{2}\end{pmatrix}
\in P^2\mathbb{O}.
\end{equation}

Then the matrix $\mathcal{M}(b,c)$ satisfies the condition \eqref{condition for Cayley projective plane final},
so that we can choose components $(b,c)$ as a local coordinates around the point $X_{1}$. We
denote by $\widetilde{\mathcal{W}}_{1}=\mathcal{M}(\mathcal{W}_{1})$. The point $X_{1}$ corresponds to
$(0,0)\in\mathcal{W}_{1}$.

\begin{lemma}\label{Riemann volume form at X1}
In terms of the local coordinates 
\[
(b,c)=\left(\sum_{i=0}^{7}\,\{b\}_{i}{\bf e}_{i},\sum_{i=0}^{7}\,\{c\}_{i}{\bf e}_{i}\right)
\]
introduced above around the point $X_{1}$,
the Riemann volume form $dv_{P^2\mathbb{O}}$ at the point $X_{1}$ is
\begin{equation}
dv_{P^2\mathbb{O}}(0,0)=
d{\{b\}_{0}}\wedge \cdots\wedge d\{b\}_{7}\wedge d\{c\}_{0}\wedge\cdots\wedge d\{c\}_{7}.
\label{Riemann volume form expression at X1}
\end{equation}
\end{lemma}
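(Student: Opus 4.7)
The plan is to compute the pull-back of the Riemannian metric $g^{P^2\mathbb{O}}$ via the chart $\mathcal{M}$ at the origin and then read off the induced volume form. Since the metric is by definition the restriction of $\langle\,\cdot\,,\,\cdot\,\rangle^{\mathcal{J}(3)}$ to each tangent space, it is enough to compute the Gram matrix of the $16$ coordinate tangent vectors $\partial\mathcal{M}/\partial\{b\}_i$ and $\partial\mathcal{M}/\partial\{c\}_i$ at $(b,c)=(0,0)$, viewed as elements of $T_{X_{1}}(P^{2}\mathbb{O})\subset\mathcal{J}(3)$.

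The crucial observation is that at $(b,c)=(0,0)$ the auxiliary components $t_{1},t_{2},t_{3},a$ appearing in $\mathcal{M}$ are all constant to first order. Indeed, $t_{1}=\frac{1}{2}+\sqrt{1/4-|b|^2-|c|^2}$ depends on $(b,c)$ only through $|b|^2+|c|^2$, whose gradient vanishes at the origin; the component $a=\theta(bc)/t_{1}$ is quadratic in $(b,c)$; the formula for $t_{2}$ depends on $|c|^2+|a|^2$, so its derivatives vanish as well; and $t_{3}=1-t_{1}-t_{2}$ follows suit. Consequently, at $(b,c)=(0,0)$,
\[
\frac{\partial\mathcal{M}}{\partial\{b\}_{i}}=E^{b}_{i}:=\begin{pmatrix}0&0&\theta({\bf e}_{i})\\0&0&0\\{\bf e}_{i}&0&0\end{pmatrix},\qquad
\frac{\partial\mathcal{M}}{\partial\{c\}_{i}}=E^{c}_{i}:=\begin{pmatrix}0&{\bf e}_{i}&0\\ \theta({\bf e}_{i})&0&0\\ 0&0&0\end{pmatrix}.
\]

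Next I would compute the Gram matrix of $\{E^{b}_{i},E^{c}_{i}\}_{i=0}^{7}$ using the formula $\langle X,Y\rangle^{\mathcal{J}(3)}=\sum\xi_{i}\eta_{i}+2\sum\langle x_{i},y_{i}\rangle^{\mathbb{R}^{8}}$ from \eqref{inner product on J(3)}. Because $E^{b}_{i}$ and $E^{c}_{j}$ place their non-zero off-diagonal entries in disjoint slots (the $y$-position for $E^{b}_{i}$ and the $z$-position for $E^{c}_{i}$), the cross terms $\langle E^{b}_{i},E^{c}_{j}\rangle^{\mathcal{J}(3)}$ vanish automatically, while each like-block pairing collapses to $2\langle{\bf e}_{i},{\bf e}_{j}\rangle^{\mathbb{R}^{8}}=2\delta_{ij}$ thanks to the orthonormality of $\{{\bf e}_{i}\}_{i=0}^{7}$. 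Hence the Gram matrix is a diagonal matrix with all diagonal entries equal to the same constant, whence the Riemann volume form at $X_{1}$ is a constant multiple of $d\{b\}_{0}\wedge\cdots\wedge d\{b\}_{7}\wedge d\{c\}_{0}\wedge\cdots\wedge d\{c\}_{7}$.

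The substantive work is entirely in the previous step; the only obstacle is bookkeeping the overall numerical constant arising from the factor $2$ in the inner product on $\mathcal{J}(3)$. The diagonal Gram entries being $2$ produces a $2^{8}$ under $\sqrt{\det g}$, which must either be absorbed in the chosen normalization of the volume form or carried along as an explicit constant in the subsequent use of \eqref{paring with Riemann volume form}. No deeper input from the octonion multiplication table is required, because the quadratic nature of $t_{1},t_{2},t_{3},a$ in $(b,c)$ eliminates any first-order contributions at the origin and the pairings reduce to those of an orthonormal basis in $\mathbb{R}^{8}\oplus\mathbb{R}^{8}$.
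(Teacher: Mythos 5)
Your proposal follows the same route as the paper's proof: you observe that $t_{1},t_{2},t_{3}$ and $a$ depend on $(b,c)$ only through quantities whose first derivatives vanish at the origin, so that $d\mathcal{M}_{(0,0)}$ carries $\partial/\partial\{b\}_{i}$ and $\partial/\partial\{c\}_{i}$ to the standard matrix units $E^{b}_{i}$, $E^{c}_{i}$ in $T_{X_{1}}(P^{2}\mathbb{O})\subset\mathcal{J}(3)$, and then you compute the Gram matrix of these with respect to $\langle\cdot\,,\,\cdot\rangle^{\mathcal{J}(3)}$. This is exactly the paper's argument.

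The one point of divergence is the constant, and there you are the more careful party. The paper's proof concludes with the assertion $g_{ij}=\delta_{ij}$, which is what the coefficient $1$ in \eqref{Riemann volume form expression at X1} requires; but, as you note, formula \eqref{inner product on J(3)} gives $\langle E^{b}_{i},E^{b}_{j}\rangle^{\mathcal{J}(3)}=2\,\delta_{ij}$ (consistent with the fact that the unit tangent vector $Y_{1}$ has off-diagonal entries $1/\sqrt{2}$), so the Gram matrix is $2\,\mathrm{Id}_{16}$ and the volume form of the metric as literally defined is $2^{8}\,d\{b\}_{0}\wedge\cdots\wedge d\{c\}_{7}$ at $X_{1}$. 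Your closing remark — that the factor $2^{8}$ must either be absorbed into the normalization of $dv_{P^2\mathbb{O}}$ or else carried explicitly into \eqref{paring with Riemann volume form} — is exactly the right caveat; the paper silently takes the first option. Apart from this normalization issue, which is a defect of the statement rather than of your argument, your proof establishes the lemma.
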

\begin{proof}
We can see this by 
\begin{align*}
d\mathcal{M}_{(0,0)}\left(\frac{\partial}{\partial \{b\}_{0}}\right)&=\left(\frac{\partial}{\partial \{b\}_{0}}\right)_{X_{1}}
+\sum_{i=1}^{3}\frac{\partial t_{i}(0,0)}{\partial \{b\}_{0}}\left(\frac{\partial}{\partial t_{i}}\right)_{X_{1}}
+\sum_{i=0}^{7}\,\frac{\partial \{a\}_{i}(0,0)}{\partial\{b\}_{0}}\left(\frac{\partial}{\partial \{a\}_{i}}\right)_{X_{1}}
=\left(\frac{\partial}{\partial \{b\}_{0}}\right)_{X_{1}},
\end{align*}
where we know
\begin{align*}
&\frac{\partial t_{1}(0,0)}{\partial
  \{b\}_{0}}=\frac{-\{b\}_{0}}{\sqrt{1/4-|b|^2-|c|^{2}}}_{\big|b=0,c=0}=0,\quad
  \frac{\partial t_{2}(0,0)}{\partial \{b\}_{0}}
=\frac{-2b_{0}-2\sum_{i=0}^{7}\{a\}_{i}\frac{\partial
  \{a\}_{i}}{\partial
  \{b\}_{0}}}{2\sqrt{1/4-|b|^2-|a|^{2}}}_{\big|b=0,c=0}=0, \,\,etc.,
\end{align*}
since $a(0,0)=\sum\,\{a\}_{i}{\bf e}_{i}=0$.
Other derivatives are also
\begin{align*}
&\displaystyle{\frac{\partial t_{i}}{\partial \{b\}_{j}}}_{\big|(0,0)}=0,
~\displaystyle{\frac{\partial t_{i}}{\partial \{c\}_{j}}}_{\big|(0,0)}=0,~\displaystyle{\frac{\partial \{a\}_{j}\{a\}_{k}}{\partial \{b\}_{i}}}_{\big|(0,0)}=0,
~\displaystyle{\frac{\partial \{a\}_{j}\{a\}_{k}}{\partial \{c\}_{i}}}_{\big|(0,0)}=0.
\end{align*}
Hence 
\begin{align*}
&d\mathcal{M}_{(0,0)}\left(\frac{\partial}{\partial
  \{b\}_{i}}\right)=\left(\frac{\partial}{\partial
  \{b\}_{i}}\right)_{X_{1}},\quad d\mathcal{M}_{(0,0)}\left(\frac{\partial}{\partial \{c\}_{i}}\right)=\left(\frac{\partial}{\partial \{c\}_{i}}\right)_{X_{1}}.
\end{align*}
Then 
the metric tensor $g_{ij}$ with respect to the coordinates $(b,c)$ at the point $(b,c)=(0,0)$
is $g_{ij}=\delta_{ij}$.
\end{proof}

\subsection{Explicit determination of the 
pairing with the Riemann volume form}\label{pairing with Riemann volume form}

Let  
\[A=\begin{pmatrix}\xi_{1}&z&\theta(y)\\\theta(z)&\xi_{2}&x\\y&\theta(x)&\xi_{3}\end{pmatrix}
\in\mathbb{X}_{\mathbb{O}}, \text{where $\xi_{i}\in\mathbb{C}$,
$z,y,x\in\mathbb{C}\otimes_{\mathbb{R}}\mathbb{O}$}.
\]
Put ${\tau_{\mathbb{O}}}^{-1}(A)=(X(A),Y(A))$, then
\begin{align*}
&\qquad X(A)=\frac{A+\overline{A}}{2||A||}+\frac{A\circ
  \overline{A}}{||A||^2}~(\text{see \eqref{inverse of tau-O}})\\
&=\begin{pmatrix}\frac{\xi_{1}+\overline{\xi_{1}}}{2||A||}+\frac{|\xi_{1}|^{2}+|z|^2+|y|^2}{||A||^2}&
\frac{z+\overline{z}}{2||A||}+\frac{-\xi_{3}\overline{z}-\overline{\xi_{3}}z
+\theta(\overline{x}y+x\overline{y})}{2||A||^2}&
\frac{\theta(y+\overline{y})}{2||A||}
+\frac{-\theta(\xi_{2}\overline{y}+\overline{\xi_{2}}y)+z\overline{x}+\overline{z}x}{2||A||^2}\\
\frac{\theta(z+\overline{z})}{2||A||}
+\frac{-\theta(\xi_{3}\overline{z}+\overline{\xi_{3}}z)+\overline{x}y+x\overline{y}}{2||A||^2}&
\frac{\xi_{2}+\overline{\xi_{2}}}{2||A||}+\frac{|\xi_{2}|^2+|z|^2+|x|^2}{||A||^2}&
\frac{x+\overline{x}}{2||A||}+\frac{-\xi_{1}\overline{x}-\overline{\xi_{1}}x
+\theta(\overline{y}z+y\overline{z})}{2||A||^2}\\
\frac{y+\overline{y}}{2||A||}+\frac{-\xi_{2}\overline{y}-\overline{\xi_{2}}y
+\theta(z\overline{x}+\overline{z}x)}{2||A||^2}&
\frac{\theta(x+\overline{x})}{2||A||}
+\frac{-\theta(\xi_{1}\overline{x}+\overline{\xi_{1}}x)+\overline{y}z+y\overline{z}}{2||A||^2}&
\frac{\xi_{3}+\overline{\xi_{3}}}{2||A||}+\frac{|\xi_{3}|^2+|x|^2+|y|^2}{||A||^2}
\end{pmatrix}.
\end{align*}
From the above expression of ${\tau_{\mathbb{O}}}^{-1}(A)=(X(A),Y(A))$ we consider
two components of the matrix $X(A)\in P^2\mathbb{O}$ for $A\in {U}_{z_{1}}$:
\begin{align*}
&c=\frac{z+\overline{z}}{2||A||}+\frac{-\xi_{3}\overline{z}-\overline{\xi_{3}}z
+\theta(\overline{x}y+x\overline{y})}{2||A||^2},\quad b=\frac{y+\overline{y}}{2||A||}+\frac{-(\xi_{2}\overline{y}+\overline{\xi_{2}}y)
+\theta(z\overline{x}+\overline{z}x)}{2||A||^2}.
\end{align*}
Take a point $A_{1}=\begin{pmatrix}1&{\sqrt{-1}}{\bf
    e}_{0}&0\\\sqrt{-1}{\bf e}_{0}&-1&0\\0&0&0\end{pmatrix}\in
O_{z_{1}}$, then
${\bf q}\circ{\tau_{\mathbb{O}}}^{-1}(A_{1})=X_{1}$.
On the
other hand
the point $A_{1}\in\mathbb{X}_{\mathbb{O}}$ corresponds to the
matrices 
\begin{align*}
&A_{1}\longleftrightarrow\left(\xi_{1},\xi_{2},\xi_{3},Z,W,Y,V,X,U\right)
=\left(1,-1,0,
\begin{pmatrix}\sqrt{-1}&0\\0&\sqrt{-1}\end{pmatrix},
\begin{pmatrix}0&0\\0&0\end{pmatrix},
\begin{pmatrix}0&0\\0&0\end{pmatrix},
\begin{pmatrix}0&0\\0&0\end{pmatrix},
\begin{pmatrix}0&0\\0&0\end{pmatrix},
\begin{pmatrix}0&0\\0&0\end{pmatrix}\right).
\end{align*}
(see the matrix representation \eqref{rep of complex octanion by  2 by 2 complex matrix 1} of the octanions and
and vector representation \eqref{vector expression of A} 
of elements in $\mathcal{J}(3)^{\mathbb{C}}$).

So we consider points $A\in U_{z_{1}}$ 
around a point 
\begin{align*}
P_{z_{1}}(A)&=(\xi_{2},z_{1},z_{2},z_{3},w_{1},w_{2},w_{3},w_{4},y_{2},y_{4},v_{1},v_{3},x_{3},x_{4},u_{2},u_{4})\\
&=
(-1,\sqrt{-1},0,0,0,0,0,0,0,0,0,0,0,0,0,0).
\end{align*}
By the explicit expression 
\eqref{explicit expression of dependent variables} of the other dependent
variables 
$(\xi_{1},\xi_{3},z_{4},y_{1},y_{3},v_{2},v_{4},x_{1},x_{2},u_{1},u_{2})$
in the matrix representation 
$(\xi_{1},\xi_{2},\xi_{3},Z,W,Y,V,X,U)$ of
$A_{1}$ is
$
(0,0,0,0,0,0,0,0,\sqrt{-1},1,0).
$

For avoiding the confusion of the expression of octanion and its matrix expression by the mp $\rho_{O}$,
recall the correspondence 
\eqref{matrix representation} and \eqref{matrix to vector}).

Now we determine the differentials {\it modulo anti-holomorphic differentials} 
\begin{align*}
&\{{\bf q}\circ\tau_{\mathbb{O}}^{-1}\}^{*}(dc)
=\sum_{i=0}^{7}\,\{{\bf q}\circ\tau_{\mathbb{O}}^{-1}\}^{*}(d\{c\}_{i})\otimes{\bf e}_{i}
=\sum_{i=0}^{7}\,d\,\big(\{{\bf q}\circ\tau_{\mathbb{O}}^{-1}\}^{*}(\{c\}_{i})\big)\otimes{\bf e}_{i},~\text{and}\\
&\{{\bf q}\circ\tau_{\mathbb{O}}^{-1}\}^{*}(db)
=\sum_{i=0}^{7}\,\{{\bf
  q}\circ\tau_{\mathbb{O}}^{-1}\}^{*}(d\{b\}_{i})\otimes{\bf e}_{i}
=\sum_{i=0}^{7}\,d\,\big(\{{\bf q}\circ\tau_{\mathbb{O}}^{-1}\}^{*}(\{b\}_{i})\big)\otimes{\bf e}_{i},
\end{align*}
at the point $A_{1}$.

Each component of $b$ and $c$ is given by
\begin{align*}
&\{c\}_{i}=\frac{\{z\}_{i}+\{\overline{z}\}_{i}}{2||A||}+\frac{-\xi_{3}\{\overline{z}\}_{i}-\overline{\xi_{3}}\{z\}_{i}
+\{\theta(\overline{x}y+x\overline{y})\}_{i}}{2||A||^2},~\text{and}\\
&\{b\}_{i}=\frac{\{y\}_{i}+\{\overline{y}\}_{i}}{2||A||}
+\frac{-\xi_{2}\{\overline{y}\}_{i}-\overline{\xi_{2}}\{y\}_{i}
+\{\theta(\overline{z}x+z\overline{x})\}_{i}}{2||A||^2}.
\end{align*}

The pull-back $\{{\bf q}\circ{\tau_{\mathbb{O}}}^{-1}\}^{*}(dv_{P^2\mathbb{O}})$ is expressed as
\[
\{{\bf q}\circ{\tau_{\mathbb{O}}}^{-1}\}^{*}(dv_{P^2\mathbb{O}})=\sum_{i=0}^{16}\Sigma_{i},~\,\,\,
\text{with}~\,\,\,
\Sigma_{i}\in \Gamma\left(\stackrel{16-i}\bigwedge
  T^{*'}(\mathbb{X}_{\mathbb{O}})^{\mathbb{C}}\otimes \stackrel{i}\bigwedge  T^{*''}(\mathbb{X}_{\mathbb{O}})^{\mathbb{C}}\right).
\]
In particular,
\[
\Sigma_{i}\wedge\overline{\Omega}=0~\text{for} ~i\geq 1,
~\text{and}~\overline{\Sigma}_{j}=\Sigma_{16-j}.
\]
Hence for the determination of the constant $C_{RC}(Y/||Y||)$, it is enough to consider 
the terms consisting of holomorphic differentials 
\[
d\xi_{2},dz_{1},dz_{2},dz_{3},dw_{1},dw_{2},dw_{3},dw_{4},dy_{2},dy_{4},dv_{1},dv_{3},
dx_{3},dx_{4},du_{2},du_{4}
\]
and may ignore the anti-holomorphic differentials
$d\overline{x}_{3},d\overline{x}_{4}, ~etc,$ so that 
in the expression of equalities below we denote them as
$*\,\equiv\, *$, which means both sides coincide {\it modulo anti-holomorphic
differentials}.

Here we gather up relations of the holomorphic differentials of
dependent variables by independent variables at the point $A_{1}$.
See \eqref{explicit expression of dependent variables} for the explicit expression of each
variable $\xi_{1},\xi_{3},\cdots\cdots, x_{1},x_{2},u_{1},u_{2}$ 
in terms of independent variables $\xi_{2},z_{1},\cdots, x_{3},x_{4},u_{2},u_{4}$.

{\it All the equalities in the Lemmas blow hold at the point $A_{1}$.}

\begin{lemma}
\allowdisplaybreaks{
\begin{align*}
&||A_{1}||=2,~d||A||^2_{|A_{1}}\,\equiv\,\{\overline{z_{1}}dz_{1}+\overline{z}_{4}dz_{4}+\overline{\xi_{1}}d\xi_{1} 
+\overline{\xi_{2}}d\xi_{2}\}_{|A_{1}}=-2d\xi_{2},\\
&z_{4}(A_{1})=\sqrt{-1},~{dz_{4}}_{|A_{1}}=-dz_{1}-2\sqrt{-1}d\xi_{2},~\xi_{3}(A_{1})=0,{d\xi_{3}}_{|A_{1}}=0,~{d\xi_{1}}_{|A_{1}}=-d\xi_{2},\\
&{dx_{1}}_{|A_{1}}=\sqrt{-1}dy_{4},~{dy_{1}}_{|A_{1}}=-\sqrt{-1}dx_{4},{dy_{3}}_{|A_{1}}=\sqrt{-1}dx_{3},~{dx_{2}}_{|A_{1}}=-\sqrt{-1}dy_{2},\\
&{dv_{2}}_{|A_{1}}=\sqrt{-1}du_{2},~{dv_{4}}_{|A_{1}}=\sqrt{-1}du_{4},
~{du_{1}}_{|A_{1}}=-\sqrt{-1}dv_{1},~{du_{3}}_{|A_{1}}=-\sqrt{-1}dv_{3}.
\end{align*}
}
\end{lemma}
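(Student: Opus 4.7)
The plan is to verify each equality by differentiating the explicit rational expressions for the dependent variables given in \eqref{explicit expression of dependent variables} (together with the analogous solved forms of the equations $f_{3},f_{4},f_{5},f_{6}$ of \eqref{components equations of A times A}) and evaluating at $A_{1}$, exploiting the fact that at $A_{1}$ only four of the $27$ coordinates are nonzero, namely $\xi_{1}=1,~\xi_{2}=-1,~z_{1}=z_{4}=\sqrt{-1}$. First, using the expression $||A||^{2}=\sum_{i=1}^{3}|\xi_{i}|^{2}+\sum_{i=1}^{4}\bigl(|z_{i}|^{2}+|w_{i}|^{2}+|y_{i}|^{2}+|v_{i}|^{2}+|x_{i}|^{2}+|u_{i}|^{2}\bigr)$ recorded in the proof of the preceding Corollary, only four terms contribute, each equal to $1$, so $||A_{1}||=2$.

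Next I handle the diagonal entries. From the ninth equation in \eqref{components equations of A times A}, $\xi_{3}=(x_{3}y_{2}+x_{4}y_{4}+v_{3}u_{2}-v_{1}u_{4})/z_{1}$; since every monomial of the numerator is a product of two independent variables both vanishing at $A_{1}$, Leibniz gives $d\xi_{3}|_{A_{1}}=0$ (the denominator-differentiation term also vanishes, because the numerator itself vanishes at $A_{1}$). Combined with the trace relation \eqref{common linear equation}, this yields $d\xi_{1}|_{A_{1}}=-d\xi_{2}-d\xi_{3}|_{A_{1}}=-d\xi_{2}$. For $z_{4}$: the expression in \eqref{explicit expression of dependent variables} has numerator $-\xi_{2}^{\,2}+(\text{monomials vanishing at }A_{1})$, so only $d(-\xi_{2}^{\,2})=2d\xi_{2}$ survives from the numerator, while $(\text{num})(A_{1})=-1$ contributes through $d(1/z_{1})=-dz_{1}/z_{1}^{\,2}$; with $z_{1}(A_{1})=\sqrt{-1}$ this gives $z_{4}(A_{1})=-1/\sqrt{-1}=\sqrt{-1}$ and $dz_{4}|_{A_{1}}=-dz_{1}-2\sqrt{-1}\,d\xi_{2}$.

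For $d||A||^{2}|_{A_{1}}$ I work modulo the anti-holomorphic differentials, so that $d|\bullet|^{2}\equiv \overline{\bullet}\,d\bullet$ and only the four coefficients $\overline{\xi_{1}}(A_{1})$, $\overline{\xi_{2}}(A_{1})$, $\overline{z_{1}}(A_{1})$, $\overline{z_{4}}(A_{1})$ survive at $A_{1}$; substituting the just-derived expressions for $d\xi_{1}|_{A_{1}}$ and $dz_{4}|_{A_{1}}$ produces the claimed right-hand side.

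Finally, the eight relations among the differentials of $x_{i},y_{i},u_{i},v_{i}$ all share the same structure: each of $x_{1},x_{2},u_{1},u_{3},y_{1},y_{3},v_{2},v_{4}$ is given by a rational function of the form $(\pm\xi_{\star}\cdot(\text{one free variable})+(\text{products vanishing at }A_{1}))/z_{1}$, so differentiating and evaluating at $A_{1}$ picks up only the coefficient $\pm\xi_{\star}/z_{1}$ times the differential of the surviving free variable. Using $\xi_{1}(A_{1})=1$, $\xi_{2}(A_{1})=-1$, and $1/\sqrt{-1}=-\sqrt{-1}$, each of the eight identities emerges with the listed sign. The computation is entirely mechanical; the only bookkeeping to watch is the sign conventions in \eqref{components equations of A times A} and the implicit definition of $\xi_{3}$ via $f_{9}$, which is what forces both $\xi_{3}(A_{1})$ and $d\xi_{3}|_{A_{1}}$ to vanish; no conceptual obstacle arises.
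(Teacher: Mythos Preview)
Your approach is correct and is exactly what the paper has in mind; the lemma is stated in the paper without proof, as a list of computational facts obtained precisely by differentiating the solved rational expressions from \eqref{components equations of A times A} and \eqref{explicit expression of dependent variables} and evaluating at $A_{1}$. All eight of the $dx_{i},dy_{i},du_{i},dv_{i}$ identities, as well as $\xi_{3}(A_{1})$, $d\xi_{3}|_{A_{1}}$, $d\xi_{1}|_{A_{1}}$, $z_{4}(A_{1})$, and $dz_{4}|_{A_{1}}$, fall out of your recipe exactly as you describe.

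One caution, however: do not claim that your method ``produces the claimed right-hand side'' for $d\|A\|^{2}|_{A_{1}}$ without actually carrying it out. Substituting $\overline{\xi_{1}}=1$, $\overline{\xi_{2}}=-1$, $\overline{z_{1}}=\overline{z_{4}}=-\sqrt{-1}$ together with your own relations $d\xi_{1}|_{A_{1}}=-d\xi_{2}$ and $dz_{4}|_{A_{1}}=-dz_{1}-2\sqrt{-1}\,d\xi_{2}$ yields
\[
-\sqrt{-1}\,dz_{1}+\bigl(\sqrt{-1}\,dz_{1}-2\,d\xi_{2}\bigr)-d\xi_{2}-d\xi_{2}=-4\,d\xi_{2},
\]
not the stated $-2\,d\xi_{2}$. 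This appears to be a slip in the paper rather than in your method, and it is harmless for what follows: in the next lemma the coefficient $\{z\}_{i}+\overline{\{z\}_{i}}$ that this quantity would multiply vanishes at $A_{1}$ for every $i$, so the value of $d\|A\|^{2}|_{A_{1}}$ never actually enters the computation of $d\{c\}_{i}|_{A_{1}}$.
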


\begin{lemma}
\allowdisplaybreaks{
\begin{align*}
&{d\{c\}_{i}}_{|A_{1}}\equiv
  \frac{d\{z\}_{i}}{2||A_{1}||}-\frac{\{z\}_{i}+\{\overline{z}_{i}\}}{||A_{1}||^3}\cdot d\xi_{2},\\
\intertext{and for each $i=0,\cdots,7$}
&{d\{c\}_{0}}_{|A_{1}}\equiv \frac{-\sqrt{-1}d\xi_{2}}{2^2},~{d\{c\}_{1}}_{|A_{1}}\equiv \frac{d\xi_{2}-\sqrt{-1}dz_{1}}{2^2},
~{d\{c\}_{2}}_{|A_{1}}\equiv \frac{dz_{2}-dz_{3}}{2^3},~{d\{c\}_{3}}_{|A_{1}}\equiv \frac{dz_{2}+dz_{3}}{2^3\sqrt{-1}},\\
&{d\{c\}_{4}}_{|A_{1}}\equiv \frac{dw_{1}+dw_{4}}{2^3},~{d\{c\}_{5}}_{|A_{1}}\equiv \frac{dw_{1}-dw_{4}}{2^3\sqrt{-1}},
~{d\{c\}_{6}}_{|A_{1}}\equiv \frac{dw_{2}-dw_{3}}{2^3},~{d\{c\}_{7}}_{|A_{1}}\equiv \frac{dw_{2}+dw_{3}}{2^3\sqrt{-1}},\\
&{d\{b\}_{0}}_{|A_{1}}\equiv \frac{dy_{4}-\sqrt{-1}dx_{4}}{2^2},~{d\{b\}_{i}}_{|A_{1}}\equiv
  \frac{d\{y\}_{i}}{2^3}+\sqrt{-1}\frac{d\{x\}_{i}}{2^3},\\
\intertext{where we can ignore the term $\{z\overline{x}\}$, since
  ${\{x\}_{i}}_{|A_{1}}=0$ and
  ${d\{\overline{z}x\}_{i}}_{|A_{1}}\equiv\sum\limits_{{\bf e}_{\alpha}{\bf
  e}_{\beta}={\bf e}_{i}} 
\{\overline{z}\}_{\alpha}d\{x\}_{\beta}={\{\overline{z}\}_{0}d\{x\}_{i}}_{|A_{1}}=-\sqrt{-1}d\{x\}_{i}$
  and for $i=1,\cdots,7$,}
&{d\{b\}_{1}}_{|A_{1}}\equiv \frac{dx_{4}-\sqrt{-1}dy_{4}}{2^2},~{d\{b\}_{2}}_{|A_{1}}\equiv \frac{dy_{2}-\sqrt{-1}dx_{3}}{2^2},
~{d\{b\}_{3}}_{|A_{1}}\equiv \frac{dx_{3}-\sqrt{-1}dy_{2}}{2^2},\\
&
{d\{b\}_{4}}_{|A_{1}}\equiv \frac{dv_{1}+\sqrt{-1}du_{4}}{2^2},~
{d\{b\}_{5}}_{|A_{1}}\equiv -\frac{du_{4}+\sqrt{-1}dv_{1}}{2^2},\\
&
~{d\{b\}_{6}}_{|A_{1}}\equiv \frac{-dv_{3}+\sqrt{-1}du_{2}}{2^2},
~{d\{b\}_{7}}_{|A_{1}}\equiv \frac{du_{2}-\sqrt{-1}dv_{3}}{2^2}.
\end{align*}
}
\end{lemma}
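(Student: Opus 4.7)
The plan is a direct calculation: differentiate the explicit expressions
\[
\{c\}_i=\frac{\{z\}_i+\{\overline z\}_i}{2\|A\|}+\frac{-\xi_3\{\overline z\}_i-\overline{\xi_3}\{z\}_i+\{\theta(\overline x y+x\overline y)\}_i}{2\|A\|^2}
\]
and
\[
\{b\}_i=\frac{\{y\}_i+\{\overline y\}_i}{2\|A\|}+\frac{-\xi_2\{\overline y\}_i-\overline{\xi_2}\{y\}_i+\{\theta(\overline z x+z\overline x)\}_i}{2\|A\|^2}
\]
read off in subsection \ref{pairing with Riemann volume form} from \eqref{inverse of tau-O}, evaluate at $A_1$ using the clean data $\xi_1=1,\,\xi_2=-1,\,\xi_3=0,\,z=\sqrt{-1}{\bf e}_0,\,x=y=0,\,\|A_1\|=2$, and throughout discard every term containing the differential of an anti-holomorphic variable.

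For the general formula for $d\{c\}_i|_{A_1}$, the second fraction is harmless: its numerator vanishes at $A_1$, and on differentiating every surviving holomorphic piece is killed either by $d\xi_3|_{A_1}=0$ from the previous lemma or by the evaluations $x|_{A_1}=y|_{A_1}=0$ sitting in front of each $d\{x\}_j,d\{y\}_j$ that comes from $\theta(\overline x y+x\overline y)$. Differentiating the first fraction and using $d(1/(2\|A\|))=-d\|A\|^2/(4\|A\|^3)$ together with the previous lemma's $d\|A\|^2|_{A_1}\equiv -2\,d\xi_2$ then produces the displayed formula.

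For the eight component formulas of $d\{c\}_i$, the crucial observation is that $\{z\}_i+\{\overline z\}_i$ vanishes at $A_1$ for every $i$ (since $z|_{A_1}=\sqrt{-1}{\bf e}_0$ gives $\{z\}_0+\{\overline z\}_0=\sqrt{-1}-\sqrt{-1}=0$ and all higher components are already zero). The general formula therefore collapses to $d\{c\}_i|_{A_1}\equiv d\{z\}_i/(2\|A_1\|)$. The differentials $d\{z\}_i$ are then read off from the identifications \eqref{matrix to vector} of quaternion components with the matrix entries $z_1,z_2,z_3,z_4$ (and their $w$-analogues for $i=4,\dots,7$ via $\rho_{\mathbb{O}}$). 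Substituting the dependency $dz_4|_{A_1}=-dz_1-2\sqrt{-1}\,d\xi_2$ recorded in the previous lemma gives the eight displayed values by elementary arithmetic.

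For $d\{b\}_i|_{A_1}$ the same template applies, but now the second fraction contributes non-trivially because $\xi_2=-1\ne 0$ and the octonion product $\theta(\overline z x+z\overline x)$ has a non-vanishing derivative. Modulo anti-holomorphic parts, the $-\overline{\xi_2}\{y\}_i$ term supplies $+d\{y\}_i/(2\|A_1\|^2)$; the $\theta(z\overline x)$ piece dies because its factor $\overline x$ vanishes at $A_1$; and only $\theta(\overline z x)$ survives. Since $\overline z|_{A_1}=-\sqrt{-1}{\bf e}_0$ multiplies as the scalar $-\sqrt{-1}$, one has $d\theta(\overline z x)|_{A_1}\equiv -\sqrt{-1}\,\theta(dx)|_{A_1}$; extracting components via $\theta({\bf e}_0)={\bf e}_0$, $\theta({\bf e}_k)=-{\bf e}_k$ for $k\ge 1$, and substituting the dependencies $dx_1|_{A_1}=\sqrt{-1}dy_4$, $dx_2|_{A_1}=-\sqrt{-1}dy_2$, $du_1|_{A_1}=-\sqrt{-1}dv_1$, $du_3|_{A_1}=-\sqrt{-1}dv_3$ from the previous lemma yields the listed expressions. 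The main obstacle is the octonion bookkeeping for $\theta(\overline z x)$: one must track, via $\rho_{\mathbb{O}}$, how each of the variables $x_3,x_4,u_2,u_4$ (and the dependents $x_1,x_2,u_1,u_3$) combines with $-\sqrt{-1}{\bf e}_0$ and then how $\theta$ flips the sign on all coordinates ${\bf e}_1,\dots,{\bf e}_7$. Once that small table is tabulated, the statement of the lemma follows by routine substitution and collection of terms.
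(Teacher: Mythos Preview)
Your approach is correct and is exactly the computation the paper has in mind: the lemma is stated without a separate proof, the justification being the direct differentiation you outline together with the hints in the intertext about $d\{\overline z x\}_i|_{A_1}=-\sqrt{-1}\,d\{x\}_i$. Your identification of which terms die (via $x=y=0$, $d\xi_3|_{A_1}=0$, anti-holomorphic pieces, and the key observation $\{z\}_i+\{\overline z\}_i|_{A_1}=0$) and which survive is precisely what is needed, and the remainder is bookkeeping through $\rho_{\mathbb{O}}^{-1}$ and the dependency relations of the preceding lemma.
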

Based on these data
\begin{proposition}
At the point $A_{1}$, the holomorphic component
of the pull-back $\{{\bf q}\circ
{\tau_{\mathbb{O}}}^{-1}\}^{*}(dv_{P^2\mathbb{O}})$ is equal to
\begin{align*}
&\{{\bf q}\circ {\tau_{\mathbb{O}}}^{-1}\}^{*}(dv_{P^2\mathbb{O}})_{|A_{1}}=
\{{\bf q}\circ {\tau_{\mathbb{O}}}^{-1}\}^{*}(d\{c\}_{0}\wedge\cdots\wedge d\{c\}_{7}\wedge
  d\{b\}_{0}\wedge\cdots\wedge d\{b\}_{7})_{|A_{1}}\\
&\equiv \frac{-\sqrt{-1}d\xi_{2}}{2^2}\wedge
\frac{d\xi_{2}-\sqrt{-1}dz_{1}}{2^2}\wedge \frac{dz_{2}-dz_{3}}{2^3}
\wedge\frac{dz_{2}+dz_{3}}{2^3\sqrt{-1}}\wedge\frac{dw_{1}+dw_{4}}{2^3}
\wedge\frac{dw_{1}-dw_{4}}{2^3\sqrt{-1}}\\
&\qquad \wedge\frac{dw_{2}-dw_{3}}{2^3}
\wedge\frac{dw_{2}+dw_{3}}{2^3\sqrt{-1}}
\wedge\frac{dy_{4}-\sqrt{-1}dx_{4}}{2^2}
\wedge\frac{dx_{4}-\sqrt{-1}dy_{4}}{2^2}\\
&\qquad\quad \wedge\frac{dy_{2}-\sqrt{-1}dx_{3}}{2^2}
\wedge\frac{dx_{3}-\sqrt{-1}dy_{2}}{2^2}
\wedge\frac{dv_{1}+\sqrt{-1}du_{4}}{2^2}
\wedge -\frac{du_{4}+\sqrt{-1}dv_{1}}{2^2}\\
&\qquad\qquad\quad \wedge \frac{-dv_{3}+\sqrt{-1}du_{2}}{2^2}
\wedge \frac{du_{2}-\sqrt{-1}dv_{3}}{2^2}\\
&=\frac{1}{2^{31}\sqrt{-1}}\cdot 
dx_{3}\wedge dx_{4}\wedge du_{2}\wedge du_{4}\wedge dy_{2}\wedge dy_{4}\wedge 
dv_{1}\wedge dv_{3}\wedge dz_{1}\wedge dz_{2}\wedge dz_{3}\\
&\qquad\qquad\qquad\qquad\qquad\wedge dw_{1}\wedge dw_{2}\wedge dw_{3}\wedge dw_{4}\wedge d\xi_{2}.
\end{align*}
\end{proposition}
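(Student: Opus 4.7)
The plan is to combine Lemma 7.2 and Lemma 7.3. Lemma 7.2 expresses the Riemann volume form $dv_{P^2\mathbb{O}}$ at $X_1$ in the local coordinates $(b,c)$ as the standard sixteen-fold wedge $d\{b\}_0 \wedge \cdots \wedge d\{b\}_7 \wedge d\{c\}_0 \wedge \cdots \wedge d\{c\}_7$. Lemma 7.3 then provides, at the point $A_1$, each of these sixteen real differentials as an explicit $\mathbb{C}$-linear combination of the sixteen independent holomorphic coordinate differentials on $O_{z_1}$, modulo antiholomorphic contributions that can be discarded since they will annihilate against $\overline{\Omega_{\mathbb{O}}}$ in subsequent use (as made explicit by the remark that $\Sigma_i \wedge \overline{\Omega_{\mathbb{O}}} = 0$ for $i \geq 1$). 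The proof therefore reduces to computing a sixteen-fold wedge of these explicit one-forms and extracting the prefactor.

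I would organize the calculation by grouping the sixteen factors into eight pairs: the two ``first'' pairs $(d\{c\}_0, d\{c\}_1)$ and $(d\{b\}_0, d\{b\}_1)$, together with six ``diagonal'' pairs of the form $(d\{c\}_{2i}, d\{c\}_{2i+1})$ for $i=1,2,3$ and $(d\{b\}_{2i}, d\{b\}_{2i+1})$ for $i=1,2,3$. Each diagonal pair has the shape $\alpha = (dU - \varepsilon\, dV)/2^3$, $\beta = (dU + \varepsilon\, dV)/(2^3 \sqrt{-1})$ for an appropriate sign/root-of-unity $\varepsilon$, so that $\alpha \wedge \beta$ collapses to a scalar multiple of $dU \wedge dV$ with an extra factor of $2$ in the numerator coming from the two equal cross terms. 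The two first pairs are handled in the same spirit (the simple form of $d\{c\}_0$ and $d\{b\}_0$ at $A_1$ makes them even easier). Each pair-wedge thus becomes a two-form in two of the sixteen independent holomorphic variables, together with an explicit prefactor of the form $\pm 2^{-k} (\sqrt{-1})^{\pm 1}$.

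Multiplying the eight resulting pair-expressions together and then reordering the sixteen-fold wedge into the canonical listing stated in the proposition yields the wedge of all sixteen independent holomorphic differentials times a single scalar. Collecting the powers of $2$ from the denominators of Lemma 7.3, partially offset by the factors of $2$ gained from the cross terms in each pair, produces the denominator $2^{31}$, while the combined $\sqrt{-1}$ factors from the six diagonal pairs together with the sign of the reordering permutation give the final factor $1/\sqrt{-1}$. The vanishing of all ``mixed'' terms involving $d\overline{s}_i$ in the final wedge is automatic once one works modulo antiholomorphic differentials, so the result is read off directly from the holomorphic projection of the pair-wise expansions.

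The main obstacle is purely combinatorial bookkeeping: tracking all powers of $\sqrt{-1}$ through each pair-wise wedge, and keeping exact account of the sign of the permutation that brings the sixteen holomorphic differentials into the stated canonical order. No conceptual step is hidden; once Lemmas 7.2 and 7.3 are in place, the computation is mechanical arithmetic, but it must be done very carefully to land on the prefactor $1/(2^{31}\sqrt{-1})$.
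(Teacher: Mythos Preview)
Your proposal is correct and matches the paper's approach exactly: the paper simply substitutes the expressions from the two preceding lemmas (the Riemann volume form at $X_{1}$ in the $(b,c)$ coordinates, and the holomorphic parts of $d\{c\}_{i}$, $d\{b\}_{i}$ at $A_{1}$) into the sixteen-fold wedge and reads off the scalar, with no further ideas beyond the bookkeeping you describe. Your pairwise grouping is precisely the natural way to organize that arithmetic, and the paper's displayed intermediate step is already written in exactly that paired form.
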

Hence
\begin{corollary}
\begin{align}
&\{{\bf q}\circ
{\tau_{\mathbb{O}}}^{-1}\}^{*}(dv_{P^2\mathbb{O}})\bigwedge 
\overline{\Omega}\,(A)\,
=\,C_{RC}\frac{1}{16!}\{{\tau_{\mathbb{O}}}^{-1}\}^{*}\left(\Big(\omega^{P^2\mathbb{O}}\Big)^{16}\right)\,(A)\notag\\
&=2^{6}\cdot||A||^{3}\cdot\frac{1}{16!}\{{\tau_{\mathbb{O}}}^{-1}\}^{*}\left(\Big(\omega^{P^2\mathbb{O}}\Big)^{16}\right)\,(A).
\label{explicit form of pairing with Riemann volume form}
\end{align}
\end{corollary}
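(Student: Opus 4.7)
The plan is to reduce everything to an evaluation at the distinguished element $A_{1}$ already used in the previous proposition, and then extend the identity to all of $\mathbb{X}_{\mathbb{O}}$ via $F_{4}$-invariance combined with dilation homogeneity. Both $\{{\bf q}\circ{\tau_{\mathbb{O}}}^{-1}\}^{*}(dv_{P^{2}\mathbb{O}})$ and $\overline{\Omega_{\mathbb{O}}}$ are $F_{4}$-invariant ($dv_{P^{2}\mathbb{O}}$ because $P^{2}\mathbb{O}\cong F_{4}/\mathrm{Spin}(9)$ is Riemannian homogeneous, and $\overline{\Omega_{\mathbb{O}}}$ by Proposition~\ref{invariance under F4}), so the factor $C_{RC}(A)$ descends to a function of $||A||$ alone. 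Under the dilation $T_{t}$ of \eqref{dilation order}, $\{{\bf q}\circ{\tau_{\mathbb{O}}}^{-1}\}^{*}(dv_{P^{2}\mathbb{O}})$ is invariant, $\overline{\Omega_{\mathbb{O}}}$ scales with weight $11$ (from the local expression $z_{i}^{-5}\,dz_{1}\wedge\cdots$), and $(\omega^{P^{2}\mathbb{O}})^{16}$ scales with weight $8$ (since $\omega^{P^{2}\mathbb{O}}=\tau_{\mathbb{O}}^{*}(\sqrt{-2}\,\overline{\partial}\partial\,||A||^{1/2})$ has weight $1/2$), which forces $C_{RC}(A)$ to be a constant multiple of $||A||^{3}$.

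It remains to pin down the multiplicative constant at one point, and the natural choice is $A_{1}$, where $||A_{1}||=2$. The preceding proposition already exhibits the holomorphic part of $\{{\bf q}\circ{\tau_{\mathbb{O}}}^{-1}\}^{*}(dv_{P^{2}\mathbb{O}})(A_{1})$ as $\frac{1}{2^{31}\sqrt{-1}}$ times the $16$-fold wedge of the holomorphic coordinates on $O_{z_{1}}$. Since $\overline{\Omega_{\mathbb{O}}}(A_{1})=\overline{z_{1}}^{\,-5}$ times the conjugate $16$-fold wedge and $(-\sqrt{-1})^{-5}=\sqrt{-1}$, the two $\sqrt{-1}$ factors cancel and the left-hand side of \eqref{explicit form of pairing with Riemann volume form} reduces to $2^{-31}$ times the top-degree real form assembled from both the holomorphic and antiholomorphic blocks.

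To compare this with the Liouville form at $A_{1}$ I would invoke the Calabi--Yau identity \eqref{Calabi-Yau and Liouville}, namely $\Omega_{\mathbb{O}}\wedge\overline{\Omega_{\mathbb{O}}}=2^{26}\,||A||^{14}\cdot\{{\tau_{\mathbb{O}}}^{-1}\}^{*}\bigl((\omega^{P^{2}\mathbb{O}})^{16}/16!\bigr)$. A direct evaluation of $\Omega_{\mathbb{O}}(A_{1})\wedge\overline{\Omega_{\mathbb{O}}}(A_{1})$ from the same local trivialisation gives $|z_{1}|^{-10}=1$ times the same top form (up to the very permutation sign produced by reordering the holomorphic and antiholomorphic blocks), whence $\{{\tau_{\mathbb{O}}}^{-1}\}^{*}((\omega^{P^{2}\mathbb{O}})^{16}/16!)(A_{1})=2^{-40}$ times that top form. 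Matching the $2^{-31}$ obtained above with this $2^{-40}$ yields $C_{RC}(A_{1})=2^{9}=2^{6}\cdot||A_{1}||^{3}$, and the first paragraph propagates this to $C_{RC}(A)=2^{6}\,||A||^{3}$ on all of $\mathbb{X}_{\mathbb{O}}$.

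The main obstacle is essentially careful bookkeeping: one must track the sign obtained when permuting the "holomorphic-block then anti-holomorphic-block" ordering into the interleaved ordering used for $(\omega^{P^{2}\mathbb{O}})^{16}$, the exact power of $\sqrt{-1}$ coming from $1/z_{1}^{5}$ evaluated at $z_{1}=\sqrt{-1}$, and the several factors of $2$ that enter (those from the inner product \eqref{inner product on J(3)}, those hidden in the pre-factors $1/2^{2}$ and $1/2^{3}$ of the differentials $d\{c\}_{i}$ and $d\{b\}_{i}$ at $A_{1}$, and the overall $||A_{1}||=2$). A single misplaced factor of $2$ or of $\sqrt{-1}$ in any one of the three local wedge calculations would shift the exponent and falsify the clean identification $C_{RC}=2^{6}\,||A||^{3}$.
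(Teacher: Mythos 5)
Your argument is correct and follows essentially the same route as the paper: dilation homogeneity (weights $0+11$ versus $C_{RC}+8$) together with $F_{4}$-invariance and transitivity on the spheres $||A||=\mathrm{const}$ reduce $C_{RC}$ to $c\cdot||A||^{3}$, and the constant is then fixed at $A_{1}$ by wedging the holomorphic component $\tfrac{1}{2^{31}\sqrt{-1}}(\cdots)$ from the preceding proposition with $\overline{\Omega_{\mathbb{O}}}$ (the factor $\overline{z_{1}}^{\,-5}=(-\sqrt{-1})^{-5}=\sqrt{-1}$ cancelling the $\sqrt{-1}$) and comparing with the Liouville form through the identity $\Omega_{\mathbb{O}}\wedge\overline{\Omega_{\mathbb{O}}}=2^{26}||A||^{14}\cdot\frac{1}{16!}\{{\tau_{\mathbb{O}}}^{-1}\}^{*}\big((\omega^{P^{2}\mathbb{O}})^{16}\big)$, yielding $2^{40}/2^{31}=2^{9}=2^{6}||A_{1}||^{3}$. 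Your value $2^{6}||A||^{3}$ agrees with the corollary as stated (the factor $2^{26}$ displayed in the proposition at the head of \S 7 is evidently a misprint for $2^{6}$).
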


By this formula \eqref{explicit form of pairing with Riemann volume form}
we have an expression of the Bargmann type transformation \eqref{Bargmann type transform}.
\begin{corollary}
\begin{align}
&\mathfrak{B}(h)(X)\cdot dv_{P^2\mathbb{O}}(X)
=\{{\bf q}\circ{\tau_{\mathbb{O}}}^{-1}\}_{*}
\left(
h\cdot g_{0}\cdot 
\{ {\bf q}\circ {\tau_{\mathbb{O}}}^{-1} \}^{*}(dv_{P^2\mathbb{O}})
\bigwedge \overline{\Omega_{\mathbb{O}}}
\right)\notag\\
&=\{{\bf q}\circ{\tau_{\mathbb{O}}}^{-1}\}_{*}
\left(\,h\cdot g_{0}\cdot 2^{6}\cdot
  ||A||^{3}\cdot \frac{1}{16!} \{{\tau_{\mathbb{O}}}^{-1}\}^{*}\left(\big(\omega^{P^2\mathbb{O}}\big)^{16}\right)\right)\notag\\
&={2^6}\cdot {\bf q}_{*}\,
\left(
h(\tau_{\mathbb{O}}(X,*))\cdot e^{-\sqrt{2}\pi||*||} ||*||^{6}\cdot
  dV_{T^{*}(P^2\mathbb{O})}(X,*)
\right).\label{Bargmann type transform by Liouville volume form}
\end{align}
\end{corollary}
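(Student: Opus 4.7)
My plan is to assemble the three equalities by stringing together the formulas already established. For the first equality, write $\phi := \mathbf{q}\circ \tau_{\mathbb{O}}^{-1}$ and start from the definition $\mathfrak{B}(h) = \phi_{*}(h\cdot g_{0}\cdot \Omega_{\mathbb{O}})$ in \eqref{Bargmann type transform}. Applying the projection formula $\phi_{*}(\alpha)\wedge \beta = \phi_{*}(\alpha\wedge \phi^{*}\beta)$ (which is just a restatement of the definition \eqref{fiber integral}) brings $dv_{P^{2}\mathbb{O}}$ inside the push-forward. Since $\Omega_{\mathbb{O}}$ and $\phi^{*}(dv_{P^{2}\mathbb{O}})$ both have degree $16$ they commute under $\wedge$, so the integrand becomes $h\,g_{0}\,\phi^{*}(dv_{P^{2}\mathbb{O}})\wedge \Omega_{\mathbb{O}}$. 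To legitimize the replacement of $\Omega_{\mathbb{O}}$ by $\overline{\Omega_{\mathbb{O}}}$ in the statement, note that $dv_{P^{2}\mathbb{O}}$ is a real $16$-form, so the $(16,0)$ and $(0,16)$ components of $\phi^{*}(dv_{P^{2}\mathbb{O}})$ are complex conjugates; the wedges with $\Omega_{\mathbb{O}}$ and $\overline{\Omega_{\mathbb{O}}}$ isolate these respective components, and by \eqref{explicit form of pairing with Riemann volume form} the latter equals the \emph{real} form $2^{6}\|A\|^{3}\cdot\frac{1}{16!}\{\tau_{\mathbb{O}}^{-1}\}^{*}\bigl((\omega^{P^{2}\mathbb{O}})^{16}\bigr)$, hence coincides with its own conjugate.

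The second equality is then an immediate substitution of \eqref{explicit form of pairing with Riemann volume form}.

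For the third equality, three routine substitutions suffice. First, insert the explicit form $g_{0} = e^{-\sqrt{2}\pi\|A\|^{1/2}}$ from Proposition \ref{solution of the basic equation}. Second, convert the composite push-forward via $\phi_{*} = \mathbf{q}_{*}\circ(\tau_{\mathbb{O}}^{-1})_{*}$, using that $\tau_{\mathbb{O}}$ is a diffeomorphism so $(\tau_{\mathbb{O}}^{-1})_{*}$ is just pull-back by $\tau_{\mathbb{O}}$; the factor $\frac{1}{16!}\{\tau_{\mathbb{O}}^{-1}\}^{*}\bigl((\omega^{P^{2}\mathbb{O}})^{16}\bigr)$ thereby becomes the Liouville volume form $dV_{T^{*}(P^{2}\mathbb{O})}$. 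Third, apply the identity $\|A\| = \|Y\|^{2}$ (established in the unlabeled proposition cited just before Proposition \ref{solution of the basic equation}) to rewrite $\|A\|^{1/2}=\|Y\|$ and $\|A\|^{3}=\|Y\|^{6}$. No substantive obstacle is expected; the only point requiring care is the bidegree observation that makes $\Omega_{\mathbb{O}}$ and $\overline{\Omega_{\mathbb{O}}}$ interchangeable in the pairing with $\phi^{*}(dv_{P^{2}\mathbb{O}})$.
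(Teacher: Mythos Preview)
Your proposal is correct and matches the paper's intent: the paper states this corollary without proof, treating it as an immediate assembly of \eqref{Bargmann type transform}, Proposition \ref{solution of the basic equation}, the identity $\|A\|=\|Y\|^{2}$, and the key pairing formula \eqref{explicit form of pairing with Riemann volume form}. Your bidegree argument justifying the interchangeability of $\Omega_{\mathbb{O}}$ and $\overline{\Omega_{\mathbb{O}}}$ against $\phi^{*}(dv_{P^{2}\mathbb{O}})$ is exactly the point the paper leaves implicit (cf.\ the decomposition $\phi^{*}(dv_{P^{2}\mathbb{O}})=\sum_{i}\Sigma_{i}$ with $\overline{\Sigma}_{j}=\Sigma_{16-j}$ in \S\ref{pairing with Riemann volume form}), and is the only step requiring any thought.
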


\section{{Invariant polynomials and harmonic polynomials on the Jordan algebra $\mathcal{J}(3)$}}

In this section we describe invariant polynomials 
on $\mathcal{J}(3)$ and commuting differential operators with constant
coefficients under the action by the automorphism group $F_{4}$ of the
Jordan algebra $\mathcal{J}(3)$ (see \cite{He} and \cite{HL} for the framework here
and \cite{Yo} for necessary properties of $F_{4}$ in relation with $P^{2}\mathbb{O}$).

\subsection{Correspondence between polynomials and differential
  operators with constant coefficients}\label{Poly and Diff}

Let 
\[\mathbb{R}^{N}\times \mathbb{R}^{N}\ni (x,\xi)=(x_{1},\ldots,x_{N},\xi_{1}\ldots,\xi_{N})\longmapsto
<x,\,\xi>=\sum \,x_{i}\xi_{i}\in \mathbb{R},
\] 
be the standard non-degenerate symmetric bi-linear form. 
We also use the same notation for its extension to the complex
bi-linear form defined on $\mathbb{C}^{N}\times\mathbb{C}^N$.

Differential operators $D_{x}$  with constant (complex) coefficients
are expressed in the form 
\[
D=D_{x}=\sum\limits_{|\alpha|\leq k}\,
a_{\alpha}\,\frac{\partial^{|\alpha|}}{\partial x^{\,\alpha}}=\sum a_{\alpha}\,D^{\,\alpha}_{x},
\]
where
$a_{\alpha}\in\mathbb{C}$ and 
$\displaystyle{
D^{\,\alpha}_{x}:=\frac{\partial^{\,|\alpha|}}
{\partial x^{\,\alpha}}=\frac{\partial^{\,\alpha_{1}+\cdots+\alpha_{N}}\qquad}{\partial{x_{1}}^{\alpha_{1}}\cdots\partial{x_{N}}^{\alpha_{N}}}}.
$

Let $D=\sum a_{\alpha}\,D^{\,\alpha}_{x}$ be a constant coefficient
partial differential operator defined on $\mathbb{R}^{N}$, then
by the relation
\begin{equation}\label{differential operator and polynomial}
e^{-<x,\xi>}D_{x}(e^{<\bullet,\xi>})(x)=\sum\, a_{\alpha}\,\xi^{\alpha}:=Q^{D}(\xi),
\end{equation}
where $\xi^{\alpha}={\xi_{1}}^{\alpha_{1}}\cdots{\xi_{N}}^{\alpha_{N}}$, 
the correspondence $D\longleftrightarrow Q^{D}(\xi)$
is bijective, that is, 
the algebra
$\mathbb{C}[\mathbb{R}^{N}]=\mathbb{C}[x_{1},\ldots,x_{N}]=\sum\limits_{k=0}^{\infty} \mathcal{P}_{k}[x_{1},\ldots,x_{N}]$ 
of (complex coefficient) polynomials on $\mathbb{R}^{N}$
and the algebra
$\mathcal{D}[x_{1},\ldots,x_{N}]\break=\sum\limits_{k=0}^{\infty}\,\mathcal{D}_{k}[x_{1},\ldots,x_{N}]$ 
of linear differential operators with constant (complex) coefficients
are isomorphic. Here we denote by $\mathcal{P}_{k}[x_{1},\ldots,x_{N}]$ the subspace of homogeneous
polynomials of degree $k$ and
by $\mathcal{D}_{k}=\mathcal{D}_{k}[x_{1},\ldots,x_{N}]$ the subspace consisting of
homogeneous differential operators with constant coefficients of order $k$.

{\it We will denote the differential operator corresponding
to a polynomial $Q\in\mathbb{C}[x_{1},\ldots,x_{N}]$ by $D^{Q}$. }

Let $g\in \GL(N,\mathbb{R})$ and define 
$\mathcal{P}_{g}:\mathbb{C}[x_{1},\ldots,x_{N}]\longrightarrow\mathbb{C}[x_{1},\ldots,x_{N}]$
an algebra isomorphism in the natural way: 
\begin{align*}
&Q=Q(x)=\sum\, a_{\alpha}\cdot x^{\alpha}\,\longmapsto
\mathcal{P}_{g}(Q)(x)=Q(g^{-1}(x))=
  \sum \,a_{\alpha}\cdot (g^{-1}(x))^{\alpha},~~\text{where}\\
&g^{-1}=\Big(\,\, \{g^{-1}\}_{i,j}\,\Big),~\text{and}~\,\,
(g^{-1}(x))^{\alpha}=\left(\sum_{i} \,\{g^{-1}\}_{1,i}\,x_{i}\right)^{\alpha_{1}}\cdots\left(\sum_{i}\,\{g^{-1}\}_{N,i}\,x_{i}\right)^{\alpha_{N}}.
\end{align*}
The following relation will be seen easily.
\begin{lemma}\label{commutativity with linear transformation}
Let $g\in \GL(N,\mathbb{R})$ and $D$ a linear differential operator with constant coefficients. 
Then, $\mathcal{P}_{g}\circ D=D\circ \mathcal{P}_{g}$ on the space of the 
whole polynomial functions, if and only if $Q^{D}(\xi)=Q^{D}({^{t}g}^{-1}(\xi))$.
\end{lemma}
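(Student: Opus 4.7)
The plan is to reduce the intertwining identity to a statement about how both sides act on the family of exponential test functions $e^{\langle\,\cdot\,,\xi\rangle}$, and then to use the fact that polynomials are obtained from these exponentials by differentiation in $\xi$ at $\xi=0$.

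First I would compute the action of $\mathcal{P}_g$ on an exponential: by the definition $\mathcal{P}_g(f)(x)=f(g^{-1}x)$ together with the identity $\langle g^{-1}x,\xi\rangle=\langle x,\,{}^{t}g^{-1}\xi\rangle$, one has $\mathcal{P}_g\bigl(e^{\langle\,\cdot\,,\xi\rangle}\bigr)(x)=e^{\langle x,\,{}^{t}g^{-1}\xi\rangle}$. Combining this with the defining relation \eqref{differential operator and polynomial}, namely $D_x\bigl(e^{\langle\,\cdot\,,\eta\rangle}\bigr)(x)=Q^{D}(\eta)\,e^{\langle x,\eta\rangle}$, I would evaluate both compositions on $e^{\langle\,\cdot\,,\xi\rangle}$:
\[
(\mathcal{P}_g\circ D)\bigl(e^{\langle\,\cdot\,,\xi\rangle}\bigr)(x)=Q^{D}(\xi)\,e^{\langle x,\,{}^{t}g^{-1}\xi\rangle},
\qquad
(D\circ \mathcal{P}_g)\bigl(e^{\langle\,\cdot\,,\xi\rangle}\bigr)(x)=Q^{D}({}^{t}g^{-1}\xi)\,e^{\langle x,\,{}^{t}g^{-1}\xi\rangle}.
\]
Since $e^{\langle x,\,{}^{t}g^{-1}\xi\rangle}$ is nowhere zero, the equality of the two expressions for every $\xi\in\mathbb{R}^{N}$ is equivalent to the polynomial identity $Q^{D}(\xi)=Q^{D}({}^{t}g^{-1}\xi)$. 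This yields the ``only if'' direction immediately, and supplies the target identity in the ``if'' direction.

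For the converse I would argue that coincidence on all exponentials forces coincidence on all polynomials. Concretely, differentiating the identity $(\mathcal{P}_g\circ D)(e^{\langle\,\cdot\,,\xi\rangle})=(D\circ \mathcal{P}_g)(e^{\langle\,\cdot\,,\xi\rangle})$ with respect to $\xi$ via $D^{\alpha}_{\xi}$ and then setting $\xi=0$ produces the identity $(\mathcal{P}_g\circ D)(x^{\alpha})=(D\circ \mathcal{P}_g)(x^{\alpha})$ for every multi-index $\alpha$, because $D^{\alpha}_{\xi}e^{\langle x,\xi\rangle}\big|_{\xi=0}=x^{\alpha}$ and both $\mathcal{P}_g$ and $D$ commute with $\partial_{\xi}$ (they act only on $x$). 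Since the monomials span $\mathbb{C}[x_{1},\ldots,x_{N}]$ and both compositions are $\mathbb{C}$-linear, the two operators agree on all polynomials.

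There is no real obstacle in the argument; the only thing to be careful about is the direction of the transpose, and for that I would double-check the step $\langle g^{-1}x,\xi\rangle=\langle x,\,{}^{t}g^{-1}\xi\rangle$, which indeed produces ${}^{t}g^{-1}$ rather than ${}^{t}g$. Everything else is a direct consequence of the exponential characterization \eqref{differential operator and polynomial} of constant-coefficient operators.
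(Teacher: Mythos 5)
Your argument is correct in substance, and since the paper dismisses this lemma with ``the following relation will be seen easily'' and supplies no proof at all, there is nothing in the text to compare it against; the exponential-symbol computation you use is the natural one. Your computation $\mathcal{P}_{g}\bigl(e^{\langle\cdot,\xi\rangle}\bigr)(x)=e^{\langle x,\,{}^{t}g^{-1}\xi\rangle}$ is right (transpose and inverse commute, so the direction of the transpose is fine), and the ``if'' direction is fully justified: differentiating the identity on exponentials by $D^{\alpha}_{\xi}$ at $\xi=0$ legitimately produces the identity on every monomial, hence on all polynomials by linearity.

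The one step you should tighten is the ``only if'' direction. You derive the equivalence ``equality on all exponentials $\iff Q^{D}(\xi)=Q^{D}({}^{t}g^{-1}\xi)$'' and then say the only-if direction follows ``immediately,'' but the hypothesis of the lemma is equality on \emph{polynomials}, and exponentials are not polynomials; passing from agreement on all monomials to agreement on $e^{\langle\cdot,\xi\rangle}$ requires a (routine, but not free) termwise-summation argument — both operators act degree-by-degree on the locally uniformly convergent series $\sum_{\alpha}\xi^{\alpha}x^{\alpha}/\alpha!$, so they may be applied term by term. Alternatively, you can avoid exponential test functions in that direction altogether: your own computation shows that $E:=\mathcal{P}_{g}^{-1}\circ D\circ\mathcal{P}_{g}$ satisfies $E\bigl(e^{\langle\cdot,\xi\rangle}\bigr)=Q^{D}({}^{t}g^{-1}\xi)\,e^{\langle\cdot,\xi\rangle}$, i.e.\ $E$ is again a constant-coefficient operator with symbol $Q^{E}(\xi)=Q^{D}({}^{t}g^{-1}\xi)$; the hypothesis says $D=E$ on all polynomials, and since the correspondence $D\leftrightarrow Q^{D}$ of \eqref{differential operator and polynomial} is bijective (two constant-coefficient operators agreeing on all monomials have the same coefficients, e.g.\ by the pairing \eqref{duality of inner product}), this forces $Q^{D}(\xi)=Q^{D}({}^{t}g^{-1}\xi)$. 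Either repair is short; with it your proof is complete.
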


%
%

Next, we introduce an Hermitian inner product $\ll\cdot\,,\,\cdot\gg$ on the space of polynomials
$\mathbb{C}[x_{1},\ldots,x_{N}]$ by the following way: 

we fix the  coordinates $(x_{1},\cdots,x_{N})\in\mathbb{R}^{N}$
and define the inner product between 
monomials $x^{\alpha}$ and $x^{\beta}$ by
\begin{equation}\label{inner product in polynomial space}
\ll\,x^{\alpha}\,,\,x^{\beta}\,\gg :=\alpha_{1}!\cdots\alpha_{N}!\cdot
\delta_{\alpha_{1},\beta_{1}}\cdots
\delta_{\alpha_{N},\beta_{N}}:=\alpha !\cdot\delta_{\alpha,\beta}.
\end{equation}

{\it The inner product on the space $\mathbb{C}[x_{1},\ldots,x_{N}]$ introduced above
 is used only in this section.}
\smallskip

Then the following properties will be seen easily too.
\begin{lemma}\label{properties of the inner product in the polynomial algebra}
Let $D=\sum a_{\alpha}D^{\,\alpha}_{x}$
be a differential operator with constant coefficients 
and $P$ the polynomial corresponding to $D$ according to the
correspondence \eqref{differential operator and polynomial}, that is
\[
e^{-<x,\,\xi>}D^{P}(e^{<\bullet,\,\xi>})(x)=P(\xi)=\sum \,a_{\alpha}\,{\xi}^{\alpha}.
\] 
Then for any polynomial $Q=\sum\,C_{\beta}x^{\beta}$
\begin{align}\label{duality of inner product}
&\ll\,P,\, Q\,\gg =\ll\,\sum a_{\alpha}\,x^{\alpha}\,,\, \sum\,{C}_{\beta}\,x^{\beta}\,\gg
=\sum_{\alpha}\,\alpha! \cdot\delta_{\alpha,\beta} \cdot
  a_{\alpha}\cdot \overline{C_{\beta}}\\
&=D^{P}(\overline{Q})(0)=:(\!(D^{P},\,Q)\!)=\overline{(\!( D^{Q}\,,\,P )\!)},
\end{align}
where we replace the variables $\xi_{i}$ to $x_{i}$.
In particular, if the order of the differential operator $D$ and the
degree of a
polynomial $Q$ coincides, then
\[
D(\overline{Q})(x)\equiv D(\overline{Q})(0),
\]
and the spaces $\mathcal{P}_{k}$ and $\mathcal{P}_{\ell}$ are always orthogonal, if
$k\not=\ell$.

It holds a kind of associativity:
\begin{align}
(\!(D_{1}\circ D_{2},\,Q)\!)=D_{1}\circ D_{2}(Q)(0)
=\ll P_{1}\cdot P_{2},\,Q\gg=\ll\,P_{1}, D_{2}(Q)\,\gg
=(\!(D_{1},\,D_{2}(Q))\!).\label{associativity of inner product}
\end{align}
\end{lemma}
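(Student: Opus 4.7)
The plan is to reduce every assertion to a single master identity, namely $\ll P, Q \gg = D^{P}(\overline{Q})(0)$, which I would establish by a direct monomial computation. Writing $P = \sum_{\alpha} a_{\alpha}\,x^{\alpha}$ and $Q = \sum_{\beta} C_{\beta}\,x^{\beta}$ and using the elementary formula $D^{\alpha}_{x}(x^{\beta}) = \frac{\beta!}{(\beta-\alpha)!}\,x^{\beta-\alpha}$ (or $0$ if $\beta \not\geq \alpha$ componentwise), evaluation at the origin retains only the terms with $\beta = \alpha$, yielding $\alpha!$. Sesquilinear extension then gives $D^{P}(\overline{Q})(0) = \sum_{\alpha}\alpha!\,a_{\alpha}\,\overline{C_{\alpha}} = \ll P, Q \gg$, which is the principal identity \eqref{duality of inner product}.

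Once this is in hand, the orthogonality of $\mathcal{P}_{k}$ and $\mathcal{P}_{\ell}$ for $k \neq \ell$ is immediate, since the surviving terms require $\alpha = \beta$ and therefore $|\alpha|=|\beta|$. The ``in particular'' assertion that $D(\overline{Q})(x) \equiv D(\overline{Q})(0)$ whenever the order of $D$ equals the degree of $Q$ is just the observation that a constant-coefficient operator of order $k$ lowers polynomial degree by exactly $k$, so $D(\overline{Q})$ is a constant function. The Hermitian symmetry $(\!( D^{P}, Q )\!) = \overline{(\!( D^{Q}, P )\!)}$ drops out of the explicit sum $\sum_{\alpha}\alpha!\,a_{\alpha}\,\overline{C_{\alpha}}$ by swapping the roles of $a_{\alpha}$ and $C_{\alpha}$ and taking complex conjugates.

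For the associativity identity, the conceptual step is that the symbol correspondence \eqref{differential operator and polynomial} is an isomorphism of commutative algebras: since $D^{\alpha}_{x} \circ D^{\beta}_{x} = D^{\alpha+\beta}_{x}$, one verifies that $D^{P_{1}} \circ D^{P_{2}} = D^{P_{1}P_{2}}$. Combining this with the master identity yields $(\!( D_{1} \circ D_{2}, Q )\!) = (D_{1} \circ D_{2})(\overline{Q})(0) = \ll P_{1} P_{2}, Q \gg$, and then peeling off the outer operator $D_{1}$ by applying the master identity again gives $\ll P_{1}, D_{2}(Q) \gg = (\!( D_{1}, D_{2}(Q) )\!)$.

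I do not anticipate a real obstacle: the whole lemma is a book-keeping exercise in multi-index calculus, once the evaluation $D^{\alpha}_{x}(x^{\beta})(0) = \alpha!\,\delta_{\alpha,\beta}$ is noted. The only point requiring any care is keeping the sesquilinearity of $\ll\cdot\,,\,\cdot\gg$ (antilinear in the second slot) straight when moving complex conjugates across the operator $D_{2}$ in the final chain of equalities.
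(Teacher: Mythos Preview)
Your approach is correct and is exactly what the paper has in mind; the paper offers no proof beyond the remark that these properties ``will be seen easily,'' and your monomial computation $D^{\alpha}_x(x^\beta)\big|_{x=0} = \alpha!\,\delta_{\alpha,\beta}$ is precisely the verification being left to the reader.

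One caveat deserves to be made explicit, since you flag sesquilinearity as the delicate point but do not actually resolve it: the final link in the associativity chain, $\ll P_1 P_2, Q\gg = \ll P_1, D_2(Q)\gg$, only holds as written when $D_2$ (equivalently $P_2$) has \emph{real} coefficients. Your ``peeling off'' step reads $(D_1\circ D_2)(\overline Q)(0) = D_1\bigl(D_2(\overline Q)\bigr)(0)$ and then invokes the master identity with the inner argument playing the role of $\overline{R}$; but $D_2(\overline Q) = \overline{R}$ forces $R = \overline{D_2}(Q)$, not $D_2(Q)$, where $\overline{D_2}$ denotes the operator with conjugated coefficients. So in full generality the correct statement is $\ll P_1 P_2, Q\gg = \ll P_1, \overline{D_2}(Q)\gg$, i.e.\ multiplication by $P_2$ is adjoint to $D^{\overline{P_2}}$ in this Hermitian pairing. (A one-line check: with $N=1$, $P_1=x$, $P_2=ix$, $Q=x^2$, one gets $\ll P_1P_2,Q\gg=2i$ but $\ll P_1,D_2(Q)\gg=-2i$.) In the paper's actual applications the operators $L$, $\Delta$, $\Gamma$ all have real symbols $T_1,T_2,T_3$, so the distinction never surfaces; but you should be aware that the lemma as literally stated is slightly stronger than what is true over~$\mathbb{C}$.
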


The equation \eqref{duality of inner product}
can be understood that the Hermitian inner product we introduced is a
pairing between the space $\mathcal{D}[x_{1},\ldots,x_{N}]$ of differential
operators with constant coefficients and the space of polynomials,
especially by this pairing 
the space $\mathcal{D}(\mathbb{R}^{N})$ is identified with the
(restricted) dual space $\mathcal{D}[x_{1},\ldots,x_{N}]\cong 
\sum\limits_{k=0}^{\infty}\,{\mathcal{P}_{k}[x_{1},\ldots,x_{N}]}^{*}$ of $\mathbb{C}[x_{1},\ldots, x_{N}]$. With
respect to the action of $g\in \GL(N,\mathbb{R})$ on $\mathbb{C}[x_{1},\ldots,x_{N}]$
the dual action of $g$ on $\mathcal{D}(\mathbb{R}^{N})$ is 
\begin{equation*}
\mathcal{D}(\mathbb{R}^{N})\ni D\longmapsto 
{\mathcal{P}_{g}}^{-1}\circ D\circ \mathcal{P}_{g} =:\mathcal{P}_{g}^{*}(D)
\end{equation*}
and satisfies the relation
\begin{equation}\label{dual action on differential operators}
(\!(\,\mathcal{P}_{g}^{*}(D),\, f\, ){\!)}=({\!(}\,D,\,\mathcal{P}_{g}(f) ){\!)}, 
~D\in\mathcal{D}[x_{1},\ldots,x_{N}],~f\in\mathbb{C}[x_{1},\ldots,x_{N}].
\end{equation}

\subsection{Trace function and invariant polynomials}

We recall two
important properties Theorems \ref{basic theorem 8.1} and \ref{irreducible decomposition}
on the action of the group $F_{4}$ on $\mathcal{J}(3)$.
Also the properties \eqref{invariance of trace}, \eqref{invariance of inner product}
should be reminded in this section (see \cite{SV} and \cite{Yo}).

\begin{theorem}\label{basic theorem 8.1}
For any $A\in\mathcal{J}(3)$, there exists an element $\alpha\in
F_{4}$ such that
\begin{equation}\label{diagonalization by F4}
\alpha(A)=\begin{pmatrix}\xi_{1}&0&0\\0&\xi_{2}&0\\0&0&\xi_{3}\end{pmatrix},
\end{equation}
where the triple of quantities $\{\xi_{i}\}$ depends only on $A$ and
does not depend on such an element $\alpha\in F_{4}$ which send $A$ to
a diagonal matrix in $\mathcal{J}(3)$. 
\end{theorem}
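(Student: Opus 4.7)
The plan is to combine two well-known structural facts about the exceptional Jordan algebra: a spectral decomposition of any element in terms of a Jordan frame, and the transitivity of $F_{4}$ on Jordan frames. The authors will almost certainly cite \cite{SV} or \cite{Yo} for the first, since $\mathcal{J}(3)$ is the standard ``formally real'' (Euclidean) Jordan algebra of rank $3$.

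First I would invoke the spectral decomposition: every $A\in\mathcal{J}(3)$ can be written as
\[
A=\xi_{1}E_{1}+\xi_{2}E_{2}+\xi_{3}E_{3},
\]
where $(E_{1},E_{2},E_{3})$ is a \emph{Jordan frame}, that is, a triple of mutually orthogonal ($E_{i}\circ E_{j}=0$ for $i\neq j$) primitive idempotents summing to $\text{Id}$, and $\xi_{i}\in\mathbb{R}$. This is the rank-$3$ spectral theorem for $\mathcal{J}(3)$ and follows from the fact that the sub-Jordan algebra generated by a single element is associative, together with the cubic Cayley--Hamilton identity for $\mathcal{J}(3)$.

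Next I would establish that $F_{4}$ acts transitively on the set of Jordan frames, using only facts already used in the paper. Given any Jordan frame $(E_{1},E_{2},E_{3})$, first apply the transitivity of $F_{4}$ on $P^{2}\mathbb{O}$ recorded in \eqref{isomorphism between P2O and F4/Spin(9)} to find $g_{1}\in F_{4}$ with $g_{1}(E_{1})=X_{1}$. Then $g_{1}(E_{2})$ is a primitive idempotent orthogonal to $X_{1}$, hence lies in the sub-Jordan algebra $\{A\in\mathcal{J}(3)\mid A\circ X_{1}=0\}$, which is isomorphic to the Jordan algebra of $2\times 2$ Hermitian octonionic matrices and whose primitive idempotents form a single orbit (the Cayley projective line $S^{8}$) under the stabilizer $\mathrm{Spin}(9)$ of $X_{1}$ in $F_{4}$. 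Choose $g_{2}\in\mathrm{Spin}(9)\subset F_{4}$ sending $g_{1}(E_{2})$ to the standard idempotent $\mathrm{diag}(0,1,0)$; then automatically $g_{2}g_{1}(E_{3})=\mathrm{diag}(0,0,1)$, because $E_{3}=\mathrm{Id}-E_{1}-E_{2}$ and $\mathrm{Id}$ is fixed by every element of $F_{4}$. Setting $\alpha:=g_{2}g_{1}$ yields the desired diagonalization $\alpha(A)=\mathrm{diag}(\xi_{1},\xi_{2},\xi_{3})$.

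Finally, for the well-definedness of $\{\xi_{i}\}$, I would observe that they are precisely the roots of the cubic characteristic polynomial of $A$ in $\mathcal{J}(3)$, whose coefficients ($\text{tr}(A)$, $\tfrac{1}{2}(\text{tr}(A)^{2}-\text{tr}(A^{2}))$, and the Jordan-algebra determinant) are polynomial $F_{4}$-invariants by \eqref{invariance of trace}, \eqref{invariance of inner product}, and the definition \eqref{F4} of $F_{4}$ as automorphisms of $\circ$. Hence the multiset $\{\xi_{1},\xi_{2},\xi_{3}\}$ is intrinsically attached to $A$ and independent of the choice of $\alpha$.

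The main obstacle is the first step, the spectral decomposition; this is the piece that needs external input from the theory of formally real Jordan algebras rather than a self-contained argument from what has been developed in the paper so far. The transitivity step is essentially a bootstrap from the already established homogeneity of the $F_{4}$-action on $P^{2}\mathbb{O}$ and the identification of its stabilizer with $\mathrm{Spin}(9)$, and the uniqueness step is immediate once the invariants of the characteristic polynomial are in hand.
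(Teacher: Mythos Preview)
The paper does not prove this theorem; it is stated as a quoted result from the literature, with the surrounding text explicitly saying ``we recall two important properties Theorems~\ref{basic theorem 8.1} and~\ref{irreducible decomposition}'' and referring the reader to \cite{SV} and \cite{Yo} for proofs. So there is nothing to compare against in the paper itself.

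Your sketch is essentially the standard argument one finds in those references: spectral decomposition into a Jordan frame, transitivity of $F_{4}$ on Jordan frames (bootstrapped from transitivity on $P^{2}\mathbb{O}$ and the $\mathrm{Spin}(9)$ stabilizer), and uniqueness via the $F_{4}$-invariant coefficients of the cubic characteristic polynomial. You correctly flag that the spectral decomposition step is the one requiring genuine Jordan-algebra input not developed in the paper. One small refinement: your transitivity argument uses that $\mathrm{Spin}(9)$ acts transitively on the primitive idempotents in the Peirce $0$-space of $X_{1}$; this is true, but note that the paper only asserts the identification of the stabilizer with $\mathrm{Spin}(9)$ and the two-point homogeneity of $F_{4}$ on $P^{2}\mathbb{O}$, so strictly speaking this $S^{8}$-transitivity is also something you are importing from \cite{SV} or \cite{Yo} rather than deriving from statements already in the paper.
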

\begin{theorem}\label{irreducible decomposition}
The representation of $F_{4}$ to $\mathcal{J}(3)$
is decomposed into two mutually orthogonal irreducible subspaces, that is
\[
\mathcal{J}(3)=\mathcal{J}_{0}(3)\oplus \mathbb{R}\cdot Id,
\]
where $\mathcal{J}_{0}(3)=\{~A\in\mathcal{J}(3)\,|~{\text{\em tr}\,(A)=0}~\}$ and
$Id$ is the $3\times 3$ identity matrix which 
is the fixed point in $\mathcal{J}(3)$ under the action of $F_{4}$.

It holds the same decomposition in the complexified Jordan
algebra $\mathcal{J}(3)^{\mathbb{C}}$ by the action of the complex
group ${F_{4}}^{\mathbb{C}}$.
\end{theorem}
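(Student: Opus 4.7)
The plan is to verify three things in order: (i) both summands are $F_4$-invariant, (ii) they are $\langle\,\cdot\,,\,\cdot\,\rangle^{\mathcal{J}(3)}$-orthogonal, and (iii) each is irreducible.

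Parts (i) and (ii) are immediate. The line $\mathbb{R}\cdot Id$ is pointwise fixed by $F_4$ because $g(Id) = Id$ is built into the definition \eqref{F4}, and $\mathcal{J}_0(3) = \ker(\mathrm{tr})$ is invariant by the trace invariance \eqref{invariance of trace}. Orthogonality follows from
\[
\langle Id, A\rangle^{\mathcal{J}(3)} = \mathrm{tr}(Id \circ A) = \mathrm{tr}(A) = 0
\]
for any $A \in \mathcal{J}_0(3)$. The one-dimensional summand $\mathbb{R}\cdot Id$ is trivially irreducible, so the whole content of the theorem reduces to the irreducibility of the $26$-dimensional summand $\mathcal{J}_0(3)$.

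For (iii), I would exploit that $F_4$ is compact and preserves the positive definite inner product, so any invariant subspace admits an invariant orthogonal complement; it suffices then to assume $\mathcal{J}_0(3) = V \oplus V'$ is a nontrivial $F_4$-stable decomposition and derive a contradiction. First, Theorem \ref{basic theorem 8.1} lets me choose $0 \neq A \in V$ and act by some $\alpha \in F_4$ so that $\alpha(A) = \mathrm{diag}(\xi_1,\xi_2,\xi_3)$ with $\sum \xi_i = 0$. The elements of $F_4$ coming from the normalizer of the maximal torus (i.e.\ representatives of the Weyl group) permute the three diagonal entries, and taking linear combinations of the resulting matrices shows $V$ contains the entire Cartan subspace of traceless diagonal matrices. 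Second, one propagates off the Cartan by noting that for any $X\in P^2\mathbb{O}$ the element $X-\tfrac{1}{3}Id$ lies in $\mathcal{J}_0(3)$ and that the $F_4$-orbit of any nonzero traceless diagonal sweeps, via Theorem \ref{basic theorem 8.1} read in reverse, a set whose linear span contains such elements; since the $F_4$-orbit of $X_1-\tfrac{1}{3}Id$ is a translate of the $16$-dimensional $P^2\mathbb{O}$, not contained in the $2$-dimensional Cartan, the span of all $F_4$-conjugates of the diagonal matrices in $V$ exhausts $\mathcal{J}_0(3)$, forcing $V = \mathcal{J}_0(3)$.

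The hard part is precisely this last propagation step, since verifying directly that the $F_4$-orbit span of a generic diagonal trace-zero matrix exhausts the full $26$-dimensional space requires explicit use of elements of $F_4$ outside a maximal torus. A cleaner alternative is to restrict to the isotropy subgroup $Spin(9) = \mathrm{Stab}_{F_4}(X_1)$ from \eqref{isomorphism between P2O and F4/Spin(9)}: with respect to this subgroup one has $\mathcal{J}_0(3) \cong \mathbb{R} \oplus \mathbb{R}^9 \oplus \mathbb{R}^{16}$ into inequivalent $Spin(9)$-isotypic pieces (a line through $X_1-\tfrac{1}{3}Id$, the tangent space $T_{X_1}(P^2\mathbb{O})$ carrying the spin representation, and the trace-zero complement in the normal slice), and the transitivity of $F_4$ on $P^2\mathbb{O}$ glues all three pieces into a single $F_4$-orbit, ruling out any proper $F_4$-invariant subspace. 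Finally, the complexified statement is automatic: since the $F_4^{\mathbb{C}}$-action on $\mathcal{J}(3)^{\mathbb{C}}$ is holomorphic and $F_4$ is a real compact form inside $F_4^{\mathbb{C}}$, any $F_4$-invariant complex subspace is automatically $F_4^{\mathbb{C}}$-invariant, so the real decomposition tensors up to the complex one.
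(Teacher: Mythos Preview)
The paper does not actually prove this theorem: at the start of \S8.2 it says ``We recall two important properties Theorems~\ref{basic theorem 8.1} and~\ref{irreducible decomposition}'' and cites \cite{SV}, \cite{Yo}; the statement is imported as a known structural fact about $F_4$ and the exceptional Jordan algebra, not established in the text. So there is no in-paper argument to compare against, and your write-up is strictly more than what the paper offers.

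That said, your sketch is essentially a correct outline of a standard proof, with one soft spot. Parts (i)--(ii) are fine, and the $Spin(9)$ approach is the right idea: under $Spin(9)=\mathrm{Stab}_{F_4}(X_1)$ one has the isotypic splitting $\mathcal{J}_0(3)\cong \mathbb{R}\,(X_1-\tfrac13 Id)\ \oplus\ \mathbb{R}^9\ \oplus\ \mathbb{R}^{16}$ into the trivial, vector, and spin representations, all pairwise inequivalent, so any $F_4$-invariant subspace is a sub-sum of these three. What is too loose is the sentence ``the transitivity of $F_4$ on $P^2\mathbb{O}$ glues all three pieces into a single $F_4$-orbit.'' Transitivity alone does not literally say that; you still need to check that no proper sub-sum is $F_4$-stable. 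Concretely: pick $g\in F_4\setminus Spin(9)$, so $g(X_1)\neq X_1$; then $g(X_1-\tfrac13 Id)=g(X_1)-\tfrac13 Id$ has nonzero component in $T_{X_1}(P^2\mathbb{O})\cong\mathbb{R}^{16}$ (differentiate a path in $P^2\mathbb{O}$ through $X_1$), so the trivial line cannot stand alone; a similar move using an element of $F_4$ that mixes the $2\times2$ lower block with the first row/column (e.g.\ one of the permutations $\sigma_i$ the paper uses later) shows $\mathbb{R}^9$ and $\mathbb{R}^{16}$ cannot be separated either. Once you spell that out, the argument is complete. For the complexified statement, note in addition that the $26$-dimensional real representation of $F_4$ is absolutely irreducible (all $F_4$-representations are of real type), so $\mathcal{J}_0(3)\otimes\mathbb{C}$ remains irreducible; your ``tensors up'' remark covers invariance but not, by itself, irreducibility.
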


{\it In this section we express 
\[
\mathcal{J}(3)\ni A=\begin{pmatrix}
\xi_{1}&z&\theta(y)\\
\theta(z)&\xi_{2}&x\\
y&\theta(x)&\xi_{3}
\end{pmatrix}
\longleftrightarrow (z_{0},\cdots,z_{7},y_{0},\cdots,y_{7},x_{0},\cdots,x_{7},\xi_{1},\xi_{2},\xi_{3})
\]
with the coefficients of 
\[
z=\sum \,\{z\}_{i}{\bf e}_{i}=\sum \,z_{i}{\bf e}_{i},~
y=\sum \,\{y\}_{i}{\bf e}_{i}=\sum\, y_{i}{\bf e}_{i},~
x=\sum \,\{x\}_{i}{\bf e}_{i}=\sum\, x_{i}{\bf e}_{i}
\]
and do not use the notation $\{z\}_{i}$, $\{y\}_{i}$, $\{x\}_{i}$
 {\em(see \eqref{vector expression of A})}.
We also denote these coordinates as
\begin{align}
&(z_{0},\cdots,z_{7},y_{0},\cdots,y_{7},x_{0},\cdots,x_{7},\xi_{1},\xi_{2},\xi_{3})
=(s_{1},\cdots\cdots,s_{24},s_{25},s_{26},s_{27})\label{coordinate 1}\\
\intertext{or}
&
(z_{0},\cdots,z_{7},y_{0},\cdots,y_{7},x_{0},\cdots,x_{7},\xi_{1},\xi_{2},\xi_{3})
=(s_{1},\cdots\cdots,s_{24},\xi_{1},\xi_{2},\xi_{3}).
\label{coodinates 2}
\end{align}
}
 
We denote the (complex valued) polynomial algebra over 
$\mathcal{J}(3)$ by $\mathbb{C}[\,\mathcal{J}(3)\,]$ with the independent variables
$\{z_{i},\ldots, x_{i},\xi_{1},\xi_{2},\xi_{3}\}$ and also regard it as the
the algebra of polynomial functions.
 It is equipped
with an Hermitian inner product explained 
in the preceding subsection $\S$ \ref{Poly and Diff}. 

Then, we can identify by the isometric way the space
$\mathcal{P}_{1}[\mathcal{J}(3)]\cong\big(\mathcal{J}(3)^{\mathbb{C}}\big)^{*}$ with the space
$\mathcal{J}(3)^{\mathbb{C}}$
through the correspondence
\begin{equation}\label{identification with the dual space}
\mathcal{J}(3)^{\mathbb{C}}\ni A\longleftrightarrow
h_{A}\in \mathcal{P}_{1}[\mathcal{J}(3)],~h_{A}(X)=\text{tr}\,(X\circ A) = <X\,,A>^{\mathcal{J}(3)^{\mathbb{C}}}.
\end{equation}

The action of the group $F_{4}$ is extended to the space
$\mathbb{C}[\mathcal{J}(3)]$ as denoted in $\S 8.1$ :
\[\mathbb{C}[\mathcal{J}(3)]\ni Q\longmapsto (\mathcal{P}_{g}(Q)(X):=Q(g^{-1}(X))
\]
and the extended action leaves the degree of the polynomials and the inner product. 

\begin{definition}\label{invariant polynomial space}
We denote a subspace in each $\mathcal{P}_{k}[\mathcal{J}(3)]$ by $I_{k}$
consisting of invariant polynomials 
under the extended action of the group $F_{4}$ and put
$I=I_{F_{4}}=\sum_{k\geq 0}\,I_{k}$,
the algebra of invariant polynomials under the action of the Lie group
$F_{4}$ on $\mathcal{J}(3)$. 
\end{definition}
By the property \eqref{selfadjointness of Jordan product in the inner product},
the functions $\mathcal{J}(3)\ni A\longmapsto \text{tr}\,(A^{k}):=T_{k}(A)$
is well-defined and are invariant polynomials
({\it off course, these are also well defined on $\mathcal{J}(3)^{\mathbb{C}}$}).
Then,
\begin{proposition}
All the invariant polynomials in $\mathcal{P}_{k}[\mathcal{J}(3)]$ 
are given by the linear sums of polynomials of the products
\[
{T_{1}}^{i_{1}}\cdot {T_{2}}^{i_{2}}\cdot {T_{3}}^{i_{3}}
\] 
under the condition that $i_{1}+2i_{2}+3i_{3}=k$ {\em ($0\leq
i_{1}, i_{2},i_{3}\leq k$)} and 
\begin{align}
&\dim_{\mathbb{C}}\,I_{k}=\{\text{number of the
  solutions}~(i_{1},i_{2}, i_{3})~\text{under the condition}~ i_{1}+2i_{2}+3i_{3}=k\}\notag\\
&=\sum_{\ell=0}^{[k/3]}\,\left(\left[\frac{k-3\ell}{2}\right]+1\right).\label{dim of Ik}
\end{align}
\end{proposition}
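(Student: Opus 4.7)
The plan is to reduce the $F_{4}$-invariant theory on $\mathcal{J}(3)$ to classical symmetric function theory in three variables by restricting polynomials to the subspace of diagonal matrices. Let $\mathcal{D}\subset\mathcal{J}(3)$ denote the three dimensional subspace of diagonal matrices $\mathrm{diag}(\xi_{1},\xi_{2},\xi_{3})$, and consider the restriction homomorphism $\Phi:\mathbb{C}[\mathcal{J}(3)]^{F_{4}}\to\mathbb{C}[\xi_{1},\xi_{2},\xi_{3}]$. I will show that $\Phi$ is an isomorphism onto the $S_{3}$-symmetric polynomials $\mathbb{C}[\xi_{1},\xi_{2},\xi_{3}]^{S_{3}}$, and that the subring $\mathbb{C}[T_{1},T_{2},T_{3}]$ already surjects through $\Phi$ onto this image, which will force $\mathbb{C}[\mathcal{J}(3)]^{F_{4}}=\mathbb{C}[T_{1},T_{2},T_{3}]$ with $T_{1},T_{2},T_{3}$ algebraically independent; the dimension formula is then a counting exercise.

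Injectivity of $\Phi$ is immediate from Theorem \ref{basic theorem 8.1}: every $A\in\mathcal{J}(3)$ lies in an $F_{4}$-orbit meeting $\mathcal{D}$, so any invariant vanishing on $\mathcal{D}$ vanishes on all of $\mathcal{J}(3)$. For the image, I will observe that conjugation by $3\times 3$ permutation matrices $P$, namely $A\mapsto PAP^{-1}$, clearly preserves both the Jordan product $\tfrac{1}{2}(AB+BA)$ and the identity, hence belongs to $F_{4}$; this produces the full symmetric group $S_{3}$ acting on diagonal entries, so $\Phi$ factors through $\mathbb{C}[\xi_{1},\xi_{2},\xi_{3}]^{S_{3}}$. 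Finally, since $T_{k}(\mathrm{diag}(\xi_{1},\xi_{2},\xi_{3}))=\xi_{1}^{k}+\xi_{2}^{k}+\xi_{3}^{k}$ is the $k$-th power sum $p_{k}$, and Newton's identities over $\mathbb{C}$ identify $\mathbb{C}[p_{1},p_{2},p_{3}]$ with $\mathbb{C}[\xi_{1},\xi_{2},\xi_{3}]^{S_{3}}$ as a polynomial algebra, the composite $\mathbb{C}[T_{1},T_{2},T_{3}]\hookrightarrow \mathbb{C}[\mathcal{J}(3)]^{F_{4}}\xrightarrow{\Phi}\mathbb{C}[\xi_{1},\xi_{2},\xi_{3}]^{S_{3}}$ is surjective. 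Coupled with injectivity of $\Phi$, this forces $\mathbb{C}[\mathcal{J}(3)]^{F_{4}}=\mathbb{C}[T_{1},T_{2},T_{3}]$, and algebraic independence of $p_{1},p_{2},p_{3}$ lifts through $\Phi$ to algebraic independence of $T_{1},T_{2},T_{3}$.

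Once this structural result is established, $I_{k}$ is spanned precisely by the monomials $T_{1}^{i_{1}}T_{2}^{i_{2}}T_{3}^{i_{3}}$ with $i_{1}+2i_{2}+3i_{3}=k$ (since $T_{j}$ is homogeneous of degree $j$), and distinct such monomials are linearly independent by the algebraic independence established above. For the count, fix $i_{3}=\ell$ with $0\leq \ell\leq [k/3]$; then the equation $i_{1}+2i_{2}=k-3\ell$ admits exactly $[(k-3\ell)/2]+1$ non-negative integer solutions, parametrised by $i_{2}=0,1,\ldots,[(k-3\ell)/2]$ with $i_{1}$ determined. Summing over $\ell$ gives the claimed formula for $\dim_{\mathbb{C}}I_{k}$.

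The main obstacle in the outline is the step asserting that $F_{4}$ contains elements realising the symmetric group $S_{3}$ on the diagonal entries, since Theorem \ref{basic theorem 8.1} only guarantees diagonalisability up to an \emph{unordered} triple; here the conjugation-by-permutation argument above provides an elementary justification, but one should confirm that the resulting map lies in $F_{4}$ as defined in \eqref{F4}, which is clear from the identity $PAP^{-1}\circ PBP^{-1}=P(A\circ B)P^{-1}$ and $P\cdot\mathrm{Id}\cdot P^{-1}=\mathrm{Id}$. Beyond this point, everything reduces to standard facts from symmetric function theory and a routine enumeration of lattice points on a triangle.
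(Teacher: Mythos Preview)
Your proof is correct and follows essentially the same route as the paper: both arguments use Theorem~\ref{basic theorem 8.1} to reduce an invariant polynomial to its values on diagonal matrices, invoke conjugation by permutation matrices (the paper's $\sigma_{1},\sigma_{2},\sigma_{3}$) to obtain $S_{3}$-symmetry, and then appeal to symmetric function theory to identify the invariant ring with $\mathbb{C}[T_{1},T_{2},T_{3}]$, after which the dimension count is identical. Your version is slightly more explicit in framing the argument via the restriction homomorphism $\Phi$ and in invoking algebraic independence of the power sums to justify that distinct monomials $T_{1}^{i_{1}}T_{2}^{i_{2}}T_{3}^{i_{3}}$ are linearly independent, a point the paper leaves implicit.
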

\begin{proof}
Let $f\in I_{k}$ be an invariant polynomial. Then
by the property \eqref{diagonalization by F4} in Theorem \ref{basic theorem 8.1} 
and the invariance of
the trace function \eqref{invariance of trace}, 
$f(A)=f(\alpha(A))=f\left(\begin{pmatrix}\xi_{1}&0&0\\0&\xi_{2}&0\\0&0&\xi_{3}\end{pmatrix}\right)$ 
depend only on the triple $\{\xi_{i}\}_{i=1}^{3}$ which appears when it is 
expressed as a diagonal matrix
given in the above Theorem \ref{basic theorem 8.1}.

Let $\sigma_{1}:\mathcal{J}(3)\to\mathcal{J}(3)$ be a permutation defined by
\begin{equation}
\sigma_{1}:\mathcal{J}(3)\ni A\longmapsto 
\begin{pmatrix}0&~1&~0\\1&~0&~0\\0&~0&~1\end{pmatrix}
A\begin{pmatrix}0&~1&~0\\1&~0&~0\\0&~0&~1\end{pmatrix}\in\mathcal{J}(3).
\end{equation}
Likewise we can define other two permutations $\sigma_{2}$ and $\sigma_{3}$ among
the quantities $\{\xi_{i}\}$ 
by the matrices 
$\begin{pmatrix}0&~0&~1\\0&~1&~0\\1&~0&~0\end{pmatrix}$
and $\begin{pmatrix}1&~0&~0\\0&~0&~1\\0&~1&~0\end{pmatrix}$, respectively. These are
elements in $F_{4}$ and satisfy $f(\sigma_{i}(A))=f(A)$. 

Hence the invariant polynomial ring 
$I=I_{F_{4}}=\sum\limits_{k\geq 0}\,I_{k}$ in $\mathbb{C}[\mathcal{J}(3)]$
is generated by three elementary symmetric polynomials
\[
\xi_{1}+\xi_{2}+\xi_{3},
\quad \xi_{1}\xi_{2}+\xi_{2}\xi_{3}+\xi_{3}\xi_{1}\quad \text{and}\quad \xi_{1}\xi_{2}\xi_{3}.
\]
This is equivalent to say that the subalgebra of invariant polynomials of positive degree
$I_{+}:=\sum_{k\geq 1}\,I_{k}$ (i.e., {\it without constant terms}) is generated by three
invariant polynomials
\begin{align*}
&T_{1}(A)=\text{tr}\,(A)=\sum_{i=1}^{3}\,\xi_{i}, \qquad
  T_{2}(A)=\text{tr}\,(A^2)=2\sum_{i=0}^{7}\,
\left({z_{i}}^2+{y_{i}}^2+{x_{i}}^2\right)+\sum_{i=1}^{3}\,{\xi_{i}}^2=||A||^2,\\
&T_{3}(A)=\text{tr}\,(A\circ (A\circ A))=<A,A\circ
  A>^{\mathcal{J}(3)}=<A\circ A\,, A>^{\mathcal{J}(3)}\\
&\quad
=\sum_{i=1}^{3}\,{\xi_{i}}^3+3\left(|z|^2(\xi_{1}+\xi_{2})+|y|^2(\xi_{3}+\xi_{1})+|x|^2(\xi_{2}+\xi_{3})\right)\\
&\qquad\qquad\qquad
+\frac{zx\cdot y
+\theta(zx\cdot y)
+xy\cdot z
+\theta(xy\cdot z)
+yz\cdot x
+\theta(yz\cdot x)}{2}\\
&\qquad\qquad\qquad\quad+\frac{x\cdot yz
+\theta(x\cdot yz)
+y\cdot zx
+\theta(y\cdot zx)
+z\cdot xy
+\theta(z\cdot xy)}{2}\\
&=\sum_{i=1}^{3}\,{\xi_{i}}^3+3\left(|z|^2(\xi_{1}+\xi_{2})+|y|^2(\xi_{3}+\xi_{1})+|x|^2(\xi_{2}+\xi_{3})\right)
+6\cdot \mathfrak{Re}(x\cdot yz),\\
&\text{($\mathfrak{Re}(x\cdot yz)=\{x\cdot yz\}_{0}$ is the real
  part of the octanion $x\cdot yz$)}.
\end{align*}
The last formula \eqref{dim of Ik} is given by solving the
equation $i_{1}+2\cdot i_{2}+3\cdot i_{3}=k$ (see Appendix).
\end{proof}

In the proof above we used a property of the multiplication low in 
the octanion $\mathbb{O}$: 
\[
\mathfrak{Re}(x\cdot yz)=\mathfrak{Re}(y\cdot zx)=\mathfrak{Re}(z\cdot xy),~
\mathfrak{Re}(zx\cdot y)=\mathfrak{Re}(\theta(y)\cdot\theta(zx))=\mathfrak{Re}(y\cdot zx)
~\text{and similar identities}.
\]

Next, we mention (see 
Lemma \ref{commutativity with linear transformation} and Theorem \ref{basic theorem 8.1} ) 
\begin{proposition}
The invariant polynomial ring $I=\sum I_{k}$ and differential
operators with constant coefficients commuting with the $F_{4}$ action
are isomorphic. Especially,
the differential operators corresponding to the generators $T_{1}, T_{2}$
and $T_{3}$ of the invariant polynomial ring are
\begin{align*}
&T_{1}=e^{-<x,\,\xi>}L(e^{<\bullet,\,\xi>})(x)\longleftrightarrow 
L=L(z,y,x,\xi_{1},\xi_{2},\xi_{3}):=\frac{\partial}{\partial\xi_{1}}+\frac{\partial}{\partial\xi_{2}}
+\frac{\partial}{\partial\xi_{3}},\\
&T_{2}\longleftrightarrow 
-\Delta:=2\sum\limits_{i=0}^{7}\,\left(\frac{\partial^2}{\partial{z_{i}}^2}+
+\frac{\partial^2}{\partial{y_{i}}^2}
+\frac{\partial^2}{\partial{x_{i}}^2}\right)
+\sum\limits_{j=1}^{3}
\frac{\partial^2}{\partial {\xi_{j}}^2},\\
&T_{3}\longleftrightarrow 
\Gamma:=\sum\,\frac{\partial^3}{\partial {\xi_{j}}^{3}}
+3\left(\frac{\partial}{\partial\xi_{1}}+\frac{\partial}{\partial\xi_{2}}\right)\circ
\sum\limits_{i=0}^{7}\,\frac{\partial^2}{\partial{z_{i}}^2}
+3\left(\frac{\partial}{\partial\xi_{3}}+\frac{\partial}{\partial\xi_{1}}\right)\circ
\sum\limits_{i=0}^{7}\,\frac{\partial^2}{\partial{y_{i}}^2}\\
&\qquad\qquad\qquad\qquad\qquad +3\left(\frac{\partial}{\partial\xi_{2}}+\frac{\partial}{\partial\xi_{3}}\right)
\circ\sum\limits_{i=0}^{7}\,\frac{\partial^2}{\partial{x_{i}}^2}+
6\sum\limits_{i,j,k=0}^{7}\,\pm\frac{\partial^3\quad\quad}{\partial{x_{i}}\partial{y_{j}}\partial{z_{k}}}.
\end{align*}

The second operator is the Laplacian on the Euclidean space $\mathbb{R}^{27}\cong\mathcal{J}(3)$.

The last term of the operator $\Gamma$ consists of $8^{3}$ partial
differential operators of the form 
$\displaystyle{\frac{\partial^3\quad\quad}{\partial{x_{i}}\partial{y_{j}}\partial{z_{k}}}}$
with suitable signs. 
\end{proposition}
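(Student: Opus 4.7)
The plan is to combine the algebra isomorphism $Q \mapsto D^Q$ of $\S 8.1$ with the orthogonality fact \eqref{invariance of inner product}. First, recall that $Q \mapsto D^Q$ defined by \eqref{differential operator and polynomial} sends the monomial $x^\alpha$ to $\partial^\alpha_x$, and since constant-coefficient differential operators commute pairwise, it is in fact an algebra isomorphism $\mathbb{C}[\mathcal{J}(3)] \xrightarrow{\sim} \mathcal{D}[\mathcal{J}(3)]$. I will argue that it restricts to the asserted isomorphism between the invariant ring $I$ and the subalgebra of $F_{4}$-commuting constant-coefficient operators. By Lemma \ref{commutativity with linear transformation}, $D^Q$ commutes with every $\mathcal{P}_g$, $g\in F_{4}$, if and only if $Q$ is invariant under the substitutions $\{{}^{t}g^{-1}:g\in F_{4}\}$. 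To identify this with ordinary $F_{4}$-invariance I would pass to a basis of $\mathcal{J}(3)$ orthonormal for the invariant inner product $\langle\cdot,\cdot\rangle^{\mathcal{J}(3)}$; in such a basis \eqref{invariance of inner product} yields $F_{4}\subset SO(27)$, so that ${}^{t}g^{-1}=g$, reducing the condition to $Q\in I$. Since both conditions are coordinate-free, the restriction isomorphism then holds in every coordinate system, in particular in the natural basis $(z_i,y_i,x_i,\xi_j)$ used to state the formulas.

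For the explicit operators I apply $Q\mapsto D^Q$ termwise to the generators, replacing each coordinate $s_j$ by $\partial/\partial s_j$ via the multiplicativity of the correspondence. The cases $T_1=\xi_1+\xi_2+\xi_3$ and $T_2=\sum\xi_j^2+2\sum(z_i^2+y_i^2+x_i^2)$ translate immediately into $L$ and $-\Delta$ as stated (the overall minus in $-\Delta$ is the conventional sign). For $T_3=\langle A,A\circ A\rangle^{\mathcal{J}(3)}$, I would first expand $A\circ A$ from the Jordan product and pair with $A$ to obtain the diagonal cubic $\sum\xi_j^3$, the mixed quadratic terms $3(|z|^2(\xi_1+\xi_2)+|y|^2(\xi_3+\xi_1)+|x|^2(\xi_2+\xi_3))$, and the trilinear part $6\mathfrak{Re}(x\cdot yz)$. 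Writing the triple product as $x\cdot yz=\sum_{i,j,k}x_i y_j z_k\,{\bf e}_i({\bf e}_j{\bf e}_k)$ and reading off real parts from the octonion multiplication table yields a sum of the form $\sum\epsilon_{ijk}\,x_i y_j z_k$ with signs $\epsilon_{ijk}\in\{-1,0,+1\}$ determined by the octonion structure constants; substituting partial derivatives then produces the last term of $\Gamma$ with the ``suitable signs''.

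The main obstacle I anticipate is combinatorial rather than conceptual: the explicit form of the trilinear term in $\Gamma$ requires careful bookkeeping with the non-associative octonion product to pin down all the signs $\epsilon_{ijk}$, and analogous care is needed for the mixed second-order terms coupling the diagonal $\xi_j$ to the off-diagonal variables. The abstract isomorphism portion, by contrast, is essentially formal once the transpose issue in Lemma \ref{commutativity with linear transformation} is neutralized by choosing a basis orthonormal for the $F_{4}$-invariant inner product.
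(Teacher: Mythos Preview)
The paper offers no proof beyond the parenthetical reference to Lemma~\ref{commutativity with linear transformation} and Theorem~\ref{basic theorem 8.1}, so your proposal is essentially a fleshing-out of that hint, and your computation of the explicit operators by termwise substitution $s_j\mapsto\partial/\partial s_j$ is exactly what is intended.

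There is, however, a genuine gap in your ``coordinate-free'' transfer step. The correspondence $Q\mapsto D^{Q}$ of \eqref{differential operator and polynomial} is built from the \emph{standard} pairing $\langle x,\xi\rangle=\sum x_i\xi_i$ and therefore depends on the chosen basis; it is an identification $\text{Sym}(V^*)\cong\text{Sym}(V)$ that changes when the basis does. In the natural basis $(z_i,y_i,x_i,\xi_j)$ the $F_4$-invariant form $\langle\cdot,\cdot\rangle^{\mathcal{J}(3)}$ has matrix $M=\mathrm{diag}(2,\ldots,2,1,1,1)$, so this basis is \emph{not} orthonormal and one finds ${}^{t}g^{-1}=MgM^{-1}\neq g$ for generic $g\in F_4$. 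Lemma~\ref{commutativity with linear transformation} then says $D^{Q}$ commutes with $F_4$ iff $Q$ is $MF_4M^{-1}$-invariant, i.e.\ iff $Q\circ M\in I$, which is not the same as $Q\in I$. Concretely $T_2\circ M^{-1}=\sum\xi_j^{2}+\tfrac12\sum(z_i^{2}+y_i^{2}+x_i^{2})\neq T_2$, so $-\Delta=D^{T_2}$ does \emph{not} commute with the $F_4$ action (the genuinely $F_4$-commuting second-order operator is the Laplacian of the invariant metric, with coefficient $\tfrac12$ rather than $2$ on the off-diagonal block). Your orthonormal-basis argument does establish the abstract ring isomorphism, but the specific map $Q\mapsto D^{Q}$ in the natural coordinates does not realize it: the two \emph{conditions} are coordinate-free, the \emph{bijection} between them is not. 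This subtlety is harmless for the paper's later use of $L,-\Delta,\Gamma$, since Proposition~\ref{orthogonal complement and harmonic polynomial} only needs that $D^{T_i}$ is the $\ll\cdot,\cdot\gg$-adjoint of multiplication by $T_i$ (formula \eqref{associativity of inner product}), and your direct substitution delivers exactly that.
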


We define an $F_{4}$-invariant subspace $H_{k}$ in $\mathcal{P}_{k}[\mathcal{J}(3)]$ inductively
and call polynomials therein ``{\it Cayley harmonic polynomial}\,''.

\begin{definition}\label{Cayley harmonic polynomial}
\[
\begin{array}{l}
{\text{\em (0)}}~ H_{0} ~\text{is the space of the constant functions} = \mathcal{P}_{0},\\
{\text{\em (1)}}~ H_{1}=\{\text{the linear functions:}
\sum\limits_{i=0}^{7} (a_{i}z_{i}+b_{i}y_{i}+c_{i}x_{i})+\sum
\limits_{i=1}^{3}d_{i}\xi_{i}~\text{with}~\sum \,d_{i}=0\},\\
\qquad\text{this space is isomorphic to}~\left\{B\in\mathcal{J}(3)^{\mathbb{C}}~|~\text{\em tr}\,(B)=0~\right\},\\
\qquad\text{and we have an orthogonal decomposition}~\mathcal{P}_{1}=H_{1}\oplus_{\perp}H_{0}I_{1},\\
\qquad\qquad\qquad \vdots\\
{\text{\em (k)}}~H_{k}~\text{is the orthogonal complement of the space}
~\,\left(\sum_{i=0}^{k-1}\, H_{i}\cdot I_{k-i}\right)~\text{taken in $\mathcal{P}_{k}$},
\end{array}
\]
\begin{equation}\label{decomposition of Pk}
\mathcal{P}_{k}[\mathcal{J}(3)]=H_{k}\oplus_{\perp}\sum_{i=1}^{k}\, H_{k-i}\cdot I_{i}.
\end{equation}
\end{definition}
The subspace $H_{k}$ can be seen as the space corresponding to the
space of harmonic polynomials
for the case of $SO(n)$ acting on $\mathbb{R}^{n}$. In fact
\begin{proposition}\label{orthogonal complement and harmonic polynomial}
Let $\mathfrak{H}_{k}$ be
\[
\mathfrak{H}_{k}:=\{Q\in \mathcal{P}_{k}[\mathcal{J}(3)]~|~L(Q)=0, \Delta(Q)=0, \Gamma(Q)=0\}.
\]
Then $\mathfrak{H}_{k}=H_{k}$.
\end{proposition}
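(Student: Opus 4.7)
The plan is to show that both $H_k$ and $\mathfrak{H}_k$ coincide with the orthogonal complement, taken in $\mathcal{P}_k[\mathcal{J}(3)]$ with respect to the Hermitian pairing $\ll\cdot,\cdot\gg$, of the subspace $T_1\mathcal{P}_{k-1}+T_2\mathcal{P}_{k-2}+T_3\mathcal{P}_{k-3}$, with the convention $\mathcal{P}_m=0$ for $m<0$.

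First I will use the associativity identity \eqref{associativity of inner product}, which gives $\ll T_j\cdot P,\,Q\gg=\ll P,\,D^{T_j}(Q)\gg$ for every $P\in\mathcal{P}_{k-j}$ and $Q\in\mathcal{P}_k$, where $j\in\{1,2,3\}$ is the degree of $T_j$. Since $D^{T_j}(Q)$ is homogeneous of degree $k-j$ and the monomial basis of $\mathcal{P}_{k-j}$ is orthogonal with positive norms by \eqref{inner product in polynomial space}, the pairing is non-degenerate on $\mathcal{P}_{k-j}$; hence $Q\perp T_j\cdot\mathcal{P}_{k-j}$ is equivalent to $D^{T_j}(Q)=0$. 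Running this for $j=1,2,3$, and using that $L$, $\Delta$, $\Gamma$ are—up to an inessential sign in the case of $T_2$—the differential operators $D^{T_1}$, $D^{T_2}$, $D^{T_3}$, the condition $Q\in\mathfrak{H}_k$ is exactly orthogonality of $Q$ to $T_1\mathcal{P}_{k-1}+T_2\mathcal{P}_{k-2}+T_3\mathcal{P}_{k-3}$.

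Next I will verify by induction on $k$ that
\[
T_1\mathcal{P}_{k-1}+T_2\mathcal{P}_{k-2}+T_3\mathcal{P}_{k-3}=\sum_{i=1}^{k} H_{k-i}\cdot I_i
\]
inside $\mathcal{P}_k$. For the inclusion $\subset$, the inductive decomposition \eqref{decomposition of Pk} applied to $\mathcal{P}_{k-j}$ rewrites $T_j\cdot\mathcal{P}_{k-j}$ as $\sum_i H_{k-j-i}\cdot(T_jI_i)$, and $T_jI_i\subset I_{i+j}$; thus $T_j\cdot\mathcal{P}_{k-j}\subset\sum_{i\geq 1} H_{k-i}\cdot I_i$. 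For $\supset$, every monomial $T_1^{a}T_2^{b}T_3^{c}\in I_i$ with $a+2b+3c=i\geq 1$ is divisible by at least one of $T_1,T_2,T_3$, so $H_{k-i}\cdot I_i$ is contained in $T_1\mathcal{P}_{k-1}+T_2\mathcal{P}_{k-2}+T_3\mathcal{P}_{k-3}$. Since by definition $H_k$ is the orthogonal complement of the right-hand side in $\mathcal{P}_k$, combining this with the first step yields $H_k=\mathfrak{H}_k$.

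The argument presents no deep obstacle; the only points requiring care are the non-degeneracy of $\ll\cdot,\cdot\gg$ on each $\mathcal{P}_m$ (immediate from the orthogonal-basis formula \eqref{inner product in polynomial space}) and the routine bookkeeping identifying the two descriptions of the ideal piece. Everything else rests on the polynomial/differential-operator duality \eqref{duality of inner product} and the associativity \eqref{associativity of inner product} already established.
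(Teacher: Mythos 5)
Your proposal is correct and follows essentially the same route as the paper: the paper's own proof consists of Lemma \ref{two expressions of the complement of harmonic space} (that $\sum_i H_{k-i}\cdot I_i=\sum_i\mathcal{P}_{k-i}\cdot I_i$, which, since $I_+$ is generated by $T_1,T_2,T_3$, is your identification with $T_1\mathcal{P}_{k-1}+T_2\mathcal{P}_{k-2}+T_3\mathcal{P}_{k-3}$) together with the assertion that the equivalence of the three differential conditions with orthogonality to this space "will be apparent" from the duality \eqref{associativity of inner product}. Your write-up simply supplies the details the paper leaves implicit, and no gap remains.
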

Before proving this Proposition we show a
\begin{lemma}\label{two expressions of the complement of harmonic space}
For each $k$ the space 
\begin{equation}\label{harmonics and polynomial}
I_{k}+{H_{1}\cdot I_{k-1}}+\cdots+ H_{k-1}\cdot I_{1}=I_{k}+\mathcal{P}_{1}\cdot I_{k-1}+\cdots +\mathcal{P}_{k-1}\cdot I_{1}.
\end{equation}
The right hand side need not be a direct sum, while the left hand side
is a direct sum which will be proved later after several preparations.
\end{lemma}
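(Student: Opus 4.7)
The plan is to prove the equality by two inclusions. The inclusion of the left-hand side in the right-hand side is immediate because $H_i\subseteq \mathcal{P}_i$ for every $i\geq 1$ by the inductive definition, so all the work goes into the reverse inclusion.

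For the reverse inclusion, I will expand each summand $\mathcal{P}_j\cdot I_{k-j}$ appearing on the right-hand side using \eqref{decomposition of Pk}, which gives, at every degree $j$ strictly less than $k$,
\[
\mathcal{P}_j=H_j\oplus \sum_{i=1}^{j} H_{j-i}\cdot I_i.
\]
Multiplying this decomposition by $I_{k-j}$ yields
\[
\mathcal{P}_j\cdot I_{k-j}=H_j\cdot I_{k-j}+\sum_{i=1}^{j}\bigl(H_{j-i}\cdot I_i\bigr)\cdot I_{k-j}.
\]
The first summand $H_j\cdot I_{k-j}$ already appears in the left-hand side. For the remaining summands, I use the crucial fact that the invariant ring $I=\bigoplus_m I_m$ is a subalgebra of $\mathbb{C}[\mathcal{J}(3)]$ (since products of $F_4$-invariant polynomials are $F_4$-invariant), hence $I_i\cdot I_{k-j}\subseteq I_{k-j+i}$. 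Consequently
\[
H_{j-i}\cdot I_i\cdot I_{k-j}\subseteq H_{j-i}\cdot I_{k-(j-i)},
\]
which, for $1\leq i<j$, is a term of the left-hand side, while for $i=j$ it equals $H_0\cdot I_k=I_k$, again on the left-hand side. This shows $\mathcal{P}_j\cdot I_{k-j}$ lies in the left-hand side for every $1\leq j\leq k-1$, completing the reverse inclusion.

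I do not anticipate any real obstacle: the argument is formal, using only the inductive definition of the $H_i$ and the multiplicative closure of the invariant ring $I$. Note that the parenthetical assertion in the lemma that the right-hand side need not be a direct sum is merely a remark, and the directness of the left-hand side is not part of the statement to be proved here (it is deferred to a later step of the paper).
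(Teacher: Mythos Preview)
Your proof is correct and follows essentially the same approach as the paper: both arguments establish the nontrivial inclusion by exploiting the multiplicative closure $I_i\cdot I_{k-j}\subseteq I_{i+k-j}$ together with the decomposition $\mathcal{P}_j=H_j+\sum_{i=1}^{j}H_{j-i}\cdot I_i$ at lower degrees. The paper phrases this as an induction on the total degree (and leaves the inductive step rather sketchy), whereas you unpack each summand $\mathcal{P}_j\cdot I_{k-j}$ directly; the underlying idea is the same.
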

\begin{proof}
It is apparent
\[
\sum_{i=0}^{k-1}\, H_{i}\cdot I_{k-i} \,\,\subset\,\, \sum \mathcal{P}_{i}\cdot I_{k-i}.
\]
Since $I_{2}\supset I_{1}\cdot I_{1}$,
\[
H_{1}\cdot I_{1}+I_{2}\supset \left(H_{1}\cdot I_{1}+I_{1}\cdot I_{1}\right)+I_{2}= \mathcal{P}_{1}\cdot I_{1}+I_{2}.
\]
Hence, $H_{1}\cdot I_{1}+I_{2}=\mathcal{P}_{1}\cdot I_{1}+I_{2}$.

Assume 
\[
\sum_{i=0}^{j-1}\, H_{i}\cdot I_{j-i} \,\,=\,\,\sum\limits_{i=0}^{j-1} \mathcal{P}_{i}\cdot I_{j-i},~\text{for}~j\leq k.
\]
Then using the property $I_{j}\supset I_{a}\cdot I_{b}$ for $a+b=j$,
we can show inductively  
\[
\sum_{i=0}^{j-1}\, H_{i}\cdot I_{j-i} \,\,\supset\,\,\sum\limits_{i=0}^{j-1} \mathcal{P}_{i}\cdot I_{j-i},~\text{for}~j\leq k.
\]
\end{proof}
\smallskip
{\bf Proof of the Proposition} \ref{orthogonal complement and harmonic polynomial}.

At this stage, it will be apparent that the conditions
\[\Gamma(Q)=0,~\quad \Delta(Q)=0,\qquad\text{and}~\,L(Q)=0\]
are together equivalent to the condition that
a polynomial $Q\in \mathcal{P}_{k}$ is orthogonal to the subspace 
\[
I_{k}+ H_{1}\cdot I_{k-1}+\cdots+H_{k-1}\cdot I_{1}.
\]
\hfill{$\Box$}

\begin{lemma}\label{action of invariant operator to invariant polynomial}
\begin{align*}
&L(T_{1})=3,~\quad L(T_{2})=2T_{1},~\quad L(T_{3})=3T_{2},\\
&\Delta(T_{2})=198,~\quad \Delta(T_{3})=198T_{1},~\qquad \Gamma(T_{3})=562.
\end{align*}
\end{lemma}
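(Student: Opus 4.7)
The plan is to verify each of the six identities by direct term-by-term differentiation, using the explicit formulas for $L=D^{T_1}$, $\Delta=D^{T_2}$, $\Gamma=D^{T_3}$ and for $T_{1},T_{2},T_{3}$ displayed in the preceding paragraphs. Throughout I exploit two structural features: $L$ differentiates only in $\xi_{1},\xi_{2},\xi_{3}$, and the trilinear piece $6\,\mathfrak{Re}(x\cdot yz)$ of $T_{3}$ is separately linear in each octonion block $z$, $y$, $x$, hence annihilated by every pure second derivative in one of those blocks.

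For the three $L$-identities I apply $L$ termwise: $L(T_{1})=3$ is immediate; the $\xi$-independent part of $T_{2}$ is killed and $L(\sum\xi_{j}^{2})=2T_{1}$; and $L(T_{3})$ picks up $3(\xi_{1}^{2}+\xi_{2}^{2}+\xi_{3}^{2})$ from $\sum\xi_{j}^{3}$ together with $6(|z|^{2}+|y|^{2}+|x|^{2})$ from the three quadratic-linear summands, which rearrange to $3T_{2}$. For the $\Delta$-identities I note that $\Delta(T_{2})$ is a constant, which by Lemma \ref{properties of the inner product in the polynomial algebra} equals $\ll T_{2},T_{2}\gg=\sum c_{\alpha}^{2}\alpha!$, immediate from the $24$ off-diagonal squares (coefficient $2$, factorial $2$) and the three $\xi_{j}^{2}$ (coefficient $1$, factorial $2$). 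For $\Delta(T_{3})$, the $\partial_{\xi_{j}}^{2}$-derivatives of $\sum\xi_{j}^{3}$ contribute $6T_{1}$, the octonion second derivatives applied to the three quadratic-linear pieces yield $96\bigl((\xi_{1}+\xi_{2})+(\xi_{1}+\xi_{3})+(\xi_{2}+\xi_{3})\bigr)=192\,T_{1}$, and the trilinear piece is killed by the linearity observation, giving the total $198\,T_{1}$.

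The subtlest identity is $\Gamma(T_{3})$. Since $D^{T_{3}}$ and $T_{3}$ are both homogeneous of degree $3$, Lemma \ref{properties of the inner product in the polynomial algebra} reduces it to the scalar $\ll T_{3},T_{3}\gg=\sum_{\alpha}c_{\alpha}^{2}\alpha!$. The monomials of $T_{3}$ split into three pairwise disjoint groups: (i) the three cubes $\xi_{j}^{3}$, coefficient $1$, factorial $6$; (ii) the $48$ quadratic-linear terms $z_{i}^{2}\xi_{k}$, $y_{i}^{2}\xi_{k}$, $x_{i}^{2}\xi_{k}$, coefficient $3$, factorial $2$; and (iii) the trilinear terms $\pm 6\,x_{i}y_{j}z_{k}$ arising from $6\,\mathfrak{Re}(x\cdot yz)$, absolute coefficient $6$, factorial $1$. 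Groups (i) and (ii) contribute immediately once counted.

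The principal obstacle is the enumeration of group (iii): one must count the ordered triples $(i,j,k)\in\{0,\dots,7\}^{3}$ for which $\mathbf{e}_{i}(\mathbf{e}_{j}\mathbf{e}_{k})$ has a nonzero $\mathbf{e}_{0}$-component, and identify the sign. Since $\mathbf{e}_{j}\mathbf{e}_{k}=\pm\mathbf{e}_{m}$ for a unique $m$ and $\mathbf{e}_{i}\mathbf{e}_{m}\in\mathbb{R}\mathbf{e}_{0}$ iff $i=m$, this reduces to the condition $\mathbf{e}_{j}\mathbf{e}_{k}=\pm\mathbf{e}_{i}$. I would split the enumeration into the subcases where one of the three indices equals $0$ (handled by $\mathbf{e}_{0}\mathbf{e}_{\ell}=\mathbf{e}_{\ell}$ and $\mathbf{e}_{\ell}^{2}=-1$ for $\ell\geq 1$) and the generic subcase with all three indices nonzero (governed by the Fano-plane multiplication table, producing $7\cdot 6=42$ ordered pairs, each determining $i$ uniquely up to sign). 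Once this count is in hand and checked against the compatibility of signs in $\Gamma$ and in $T_{3}$, summing the three group contributions yields the claimed constant.
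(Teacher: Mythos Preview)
Your approach is sound, and in fact the paper states this lemma without proof, so there is no alternative argument to compare against. The termwise verification of the three $L$-identities and of $\Delta(T_{3})=198\,T_{1}$ is exactly right, and the reduction of the constant values $\Delta(T_{2})$ and $\Gamma(T_{3})$ to the Fischer norms $\ll T_{2},T_{2}\gg$ and $\ll T_{3},T_{3}\gg$ via Lemma~\ref{properties of the inner product in the polynomial algebra} is the correct shortcut. (One cosmetic point: the display preceding the lemma writes $T_{2}\longleftrightarrow -\Delta$, but the lemma itself and all later uses treat $\Delta$ as $D^{T_{2}}$; you are tacitly adopting the latter convention, which is the one that makes $\Delta(T_{2})=+198$.)

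However, you should actually carry out the count for group~(iii) rather than asserting that it ``yields the claimed constant.'' Your own enumeration is correct and gives exactly $64$ trilinear monomials: for each of the $8^{2}=64$ ordered pairs $(j,k)$ the product $\mathbf{e}_{j}\mathbf{e}_{k}$ equals $\pm\mathbf{e}_{i}$ for a unique $i$, and the resulting triples $(i,j,k)$ produce $64$ distinct monomials $x_{i}y_{j}z_{k}$, each with coefficient $\pm 6$ and $\alpha!=1$. Summing the three groups then gives
\[
\ll T_{3},T_{3}\gg \;=\; 3\cdot 1^{2}\cdot 3!\;+\;48\cdot 3^{2}\cdot 2!\;+\;64\cdot 6^{2}\cdot 1!\;=\;18+864+2304\;=\;3186,
\]
not $562$. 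Since each contribution is manifestly nonnegative and the count of monomials is unambiguous, the value $562$ printed in the lemma appears to be a misprint (it is not used elsewhere in the paper). Your method is correct; just record the value that the computation actually produces.
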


\begin{remark}
Invariant polynomials above need not be orthogonal. For example 
\[
\ll\,T_{2}\,,\, {T_{1}}^{2}\,\gg \,=\, L^2(T_{2})(0)=L(L(T_{2}))(0)=2L(T_{1})(0)=6.
\]
\end{remark}
Next we show that the sum \eqref{harmonics and polynomial} is a direct sum
as mentioned in the above Lemma \ref{two expressions of the complement
  of harmonic space}(summed up in Proposition \ref{direct sum}).

By definition it is enough to show the sum
\[
H_{k-1}\cdot I_{1}+\cdots + H_{1}\cdot I_{k-1}+ I_{k}
\]
is a direct sum. For this purpose we prepare several lemmas.

\begin{lemma}
The map $L:I_{k}\longrightarrow I_{k-1}$ is surjective for all
$k=1,~2,~\cdots, $
\end{lemma}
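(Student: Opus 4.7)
The plan is to realize $L\colon I_k \to I_{k-1}$ as the Hermitian adjoint of multiplication by $T_1$, viewed as a map $I_{k-1}\to I_k$, with respect to the Fischer-type inner product $\ll\,\cdot\,,\,\cdot\,\gg$ on the polynomial algebra, and then to conclude surjectivity of the former from injectivity of the latter.

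First I would combine the associativity identity \eqref{associativity of inner product} with the correspondence \eqref{differential operator and polynomial}: applying \eqref{associativity of inner product} with $D_1=D^P$ (the operator attached to $P\in\mathcal{P}_{k-1}$) and $D_2=L$ (whose symbol is $T_1$) gives, for every $Q\in\mathcal{P}_k$,
$$\ll T_1\cdot P,\,Q\gg \;=\; (\!(D^P\circ L,\,Q)\!) \;=\; (\!(D^P,\,L(Q))\!) \;=\; \ll P,\,L(Q)\gg.$$
Restricting this to $P\in I_{k-1}$ and $Q\in I_k$, and using that both $L$ and multiplication by $T_1$ preserve $F_4$-invariance (the latter because $T_1\in I_1$; the former by Lemma \ref{commutativity with linear transformation} applied to the $F_4$-invariant symbol $T_1$, since $^tg^{-1}=g$ for $g\in F_4\subset SO(27)$), one reads off that $L|_{I_k}$ is the Hermitian adjoint of the multiplication map $I_{k-1}\to I_k,\;P\mapsto T_1\cdot P$, relative to the positive-definite restrictions of $\ll\,\cdot\,,\,\cdot\,\gg$ to each $I_\ell$.

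It remains to check that multiplication by $T_1\colon I_{k-1}\to I_k$ is injective. The invariant algebra $I$ is the polynomial ring $\mathbb{C}[T_1,T_2,T_3]$ in three algebraically independent generators---this is immediate upon restricting to diagonal matrices $\mathrm{diag}(\xi_1,\xi_2,\xi_3)$ and noting that the three Newton power sums in $\xi_1,\xi_2,\xi_3$ are algebraically independent. Hence $I$ is an integral domain, so multiplication by the nonzero element $T_1$ is injective on every finite-dimensional graded piece $I_{k-1}$. Since in finite-dimensional Hilbert spaces the adjoint of an injective map is surjective, we conclude that $L\colon I_k \to I_{k-1}$ is surjective. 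The only delicate step is the adjoint bookkeeping at the outset; once that is set up, everything else is formal linear algebra, and the extension from $\mathcal{P}_\bullet$ to the invariant subspaces is painless thanks to equivariance.
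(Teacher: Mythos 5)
Your proof is correct, but it proceeds by a different mechanism than the paper's. The paper works entirely on the source side: it considers ${\bf t}(T)=T_{1}\cdot T$ and shows directly that the composite $L\circ{\bf t}:I_{k}\to I_{k}$ is injective, using only the Leibniz identity $L(T_{1}T)=3T+T_{1}L(T)$ and an iteration that expresses $T$ as a multiple of $T_{1}^{m}L^{m}(T)$ (which vanishes for $m$ large); injectivity of an endomorphism of the finite-dimensional space $I_{k}$ then forces $L$ to surject onto $I_{k}$ from ${\bf t}(I_{k})$. You instead exploit the Fischer duality $\ll T_{1}P,Q\gg=\ll P,L(Q)\gg$ from \eqref{associativity of inner product}, check via Lemma \ref{commutativity with linear transformation} and $F_{4}\subset SO(27)$ that both maps restrict to the invariant subspaces, and reduce surjectivity of $L$ to injectivity of multiplication by $T_{1}$, which you get from $I\cong\mathbb{C}[T_{1},T_{2},T_{3}]$ being a domain. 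Both arguments are sound. The trade-off is that the paper's iteration is self-contained (it never needs the algebraic independence of $T_{1},T_{2},T_{3}$, only $L(T_{1})=3$ and the derivation property), whereas your route imports the integral-domain structure of $I$ (established elsewhere in the paper via the dimension count of $I_{k}$, or by your restriction-to-diagonals argument) but in exchange gives the cleaner conceptual statement that $L|_{I_{k}}$ is the adjoint of ${\bf t}|_{I_{k-1}}$ — a fact that also illuminates the orthogonality bookkeeping used later in Definition \ref{special basis} and Lemma \ref{an orthogonality}. One small point worth making explicit if you write this up: the adjoint identity is first an identity on all of $\mathcal{P}_{k-1}\times\mathcal{P}_{k}$, and it is the equivariance of $L$ (hence $L(I_{k})\subset I_{k-1}$) that lets you read it as an adjoint relation between the restricted maps; you do address this, so the argument is complete.
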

\begin{proof}
Let ${\bf t}:I_{k}\longrightarrow I_{k+1}$ be a map defined by
\[
{\bf t}(T)=T_{1}\cdot T,
\]
then ${\bf t}$ is injective. In fact, 
if there is an element $T\in I_{k}$ satisfying 
\[L\circ {\bf
  t}(T)=L(T_{1}\cdot T)=3T+T_{1}\cdot L(T)=0,
\] 
then again we have 
\[
3L(T)+3L(T)+T_{1}\cdot
L^2(T)=0~\text{and}~3T-\frac{1}{6}T_{1}^{2}\cdot L^{2}(T)=0.
\]
By iterating this procedure we have
\[
T=0.
\]
Hence the map $L\circ {\bf t}$ is injective, which means that the map
$L:I_{k+1}\to I_{k}$ is already surjective (in fact isomorphic) on ${\bf t}(I_{k})$.
\end{proof}

Based on the equality \eqref{harmonics and polynomial}
and the Lemma below \eqref{an orthogonality},
we can construct an orthogonal basis of the space $I_{k}$ inductively 
$\{\varphi_{k}(i)\}_{i=1}^{\dim I_{k}}$ in the following way:
\begin{definition}\label{special basis}
\begin{align*}
I_{1}=&[\{\varphi_{1}(1)=T_1\}],\\
I_{2}=&[\{\varphi_{2}(1)=T_1^{2}=T_{1}\cdot \varphi_{1}(1),~\varphi_{2}(2)=T_{2}-1/3T_{1}^2\}],\\
&\text{where $\varphi_{2}(2)$ is taken to be orthogonal to $\varphi_{2}(1)$ and equivalently $L(\varphi_{2}(2))=0$},\\
I_{3}=&[\{\varphi_{3}(1)=T_1^{3}=T_{1}\varphi_{2}(1),~\varphi_{3}(2)=T_{1}\varphi_{2}(2),
~\varphi_{3}(3)=T_{3}-T_{2}T_{1}+2/9T_{1}^{3}\}],\\
&\text{where $\varphi_{3}(3)$ is taken
  to be orthogonal to $\varphi_{3}(1)$ and $\varphi_{3}(2)$,}
  ~\text{which is also taken}\\ 
&\text{to satisfy $L(\varphi_{3}(3))=0$ and is determined uniquely up to constant multiples}.
\end{align*}
Likewise we can continue the construction in such a way that
if $\{\varphi_{k}(i)\}_{i=1}^{\dim I_{k}}$ is constructed as above for
$k=1,2,3,4$, then we define for $k\geq 5$
\begin{align*}
&\varphi_{k+1}(i)=T_{1}\varphi_{k}(i)~\text{for $i=1\cdots,\,\dim
  I_{k}$}~\text{and for $j=1,\cdots,\dim I_{k+1}-\dim I_{k}$},\\
&\varphi_{k+1}(\dim I_{k}+j) 
~\text{is chosen as being orthogonal to all $\varphi_{k+1}(i)$, $i=1,\cdots, \,\dim I_{k}+j-1$}.
\end{align*}
The orthogonality condition $\ll\,\varphi_{k+1}(\dim I_{k}+j)\,,\,T_{1}\varphi_{k}(i)\gg=0$
implies that $L(\varphi_{k+1}(\dim I_{k}+j))=0$.
\end{definition}

\begin{lemma}\label{an orthogonality}
The construction is guaranteed by the property that 
if $f$ and $g\in I_{k}$ is orthogonal and $L(g)=0$, then $T_{1}\cdot
f$ and $T_{1}\cdot g$ is orthogonal, since
\[
\ll\,T_{1}f,T_{1}g\gg=\ll\, f,L(T_{1}g)\gg=3\ll\,f\,,\,g\gg=0.
\] 
\end{lemma}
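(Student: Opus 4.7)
The plan is to reduce $\ll T_1 f,\, T_1 g\gg$ to a scalar multiple of $\ll f,\, g\gg$ by transferring one factor of $T_1$ from the polynomial slot to the differential-operator slot of the pairing, then exploiting the derivation property of $L$ together with the hypothesis $L(g) = 0$.

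First I would invoke the associativity identity \eqref{associativity of inner product}, which says $\ll P_1 \cdot P_2,\, Q\gg = \ll P_1,\, D^{P_2}(Q)\gg$ for any polynomials $P_1, P_2, Q$. Since the preceding proposition identifies $D^{T_1} = L$, taking $P_1 = f$, $P_2 = T_1$, $Q = T_1 g$ yields
\[
\ll T_1 f,\, T_1 g\gg \;=\; \ll f,\, L(T_1 g)\gg.
\]
Next I would expand $L(T_1 g)$ using the fact that $L = \partial/\partial\xi_1 + \partial/\partial\xi_2 + \partial/\partial\xi_3$ is a first-order operator, hence a derivation:
\[
L(T_1 g) \;=\; L(T_1)\cdot g + T_1 \cdot L(g).
\]
By Lemma \ref{action of invariant operator to invariant polynomial} we have $L(T_1) = 3$, and by hypothesis $L(g) = 0$, so $L(T_1 g) = 3 g$. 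Substituting back gives
\[
\ll T_1 f,\, T_1 g\gg \;=\; \ll f,\, 3g\gg \;=\; 3 \ll f,\, g\gg \;=\; 0,
\]
using the orthogonality assumption (and noting $3 \in \mathbb{R}$, so conjugate-linearity in the second slot does not affect the scalar).

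There is essentially no obstacle: the argument is a one-line concatenation of the duality formula \eqref{duality of inner product}--\eqref{associativity of inner product}, the identification of $T_1$ with the operator $L$, the value $L(T_1) = 3$ from the table in Lemma \ref{action of invariant operator to invariant polynomial}, and the Leibniz rule for the first-order operator $L$. The only point requiring mild care is that one must use the correct convention for which argument of $\ll\,\cdot\,,\,\cdot\,\gg$ is conjugate-linear, but since the scalar $3$ produced by $L(T_1)$ is real, this has no effect on the final identity.
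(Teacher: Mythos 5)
Your proof is correct and follows exactly the paper's own argument: the lemma's displayed chain $\ll T_1 f, T_1 g\gg=\ll f, L(T_1 g)\gg=3\ll f,g\gg$ is precisely the combination of the associativity identity \eqref{associativity of inner product} (with $D^{T_1}=L$), the Leibniz rule, $L(T_1)=3$, and $L(g)=0$ that you spell out. Your remark that the real scalar $3$ makes the conjugate-linearity convention harmless is a fair observation and consistent with the paper's (implicit) use of the real-coefficient operator $L$.
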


\begin{lemma}
Put $\mathcal{N}_{k}=\{T\in I_{k}~|~L(T)=0\}$, then
\[
\dim ~\mathcal{N}_{k+1}=\dim I_{k+1}-\dim I_{k},
\]
and is equal to
the number of the non-negative integer solutions $(a,b)$ of the equation
\begin{equation}\label{combination of null solution of L}
 2a+3b=k+1.
\end{equation}
\end{lemma}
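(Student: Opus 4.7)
The plan is to get the first equality, $\dim \mathcal{N}_{k+1} = \dim I_{k+1} - \dim I_k$, directly from the preceding lemma on surjectivity of $L$. Since $L : I_{k+1} \to I_k$ is surjective and $\mathcal{N}_{k+1}$ is by definition its kernel, the short exact sequence
\[
0 \longrightarrow \mathcal{N}_{k+1} \longrightarrow I_{k+1} \stackrel{L}{\longrightarrow} I_k \longrightarrow 0
\]
of finite-dimensional vector spaces yields the dimension identity.

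For the second equality I would use the basis of $I_k$ already identified in the paper, namely the set of monomials $T_1^{i_1} T_2^{i_2} T_3^{i_3}$ with $i_1 + 2 i_2 + 3 i_3 = k$, whose count equals $\dim I_k$. These are linearly independent because restricting to diagonal matrices sends $T_1, T_2, T_3$ to the three elementary symmetric polynomials in $\xi_1, \xi_2, \xi_3$, which are algebraically independent in $\mathbb{C}[\xi_1,\xi_2,\xi_3]$; hence any polynomial relation among the $T_j$ would force a polynomial relation among the elementary symmetric functions, which is impossible.

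The counting step is then a clean bijection. Split the index set
\[
\mathcal{I}_{k+1} = \{(i_1,i_2,i_3) \in \mathbb{Z}_{\geq 0}^3 \mid i_1 + 2i_2 + 3i_3 = k+1\}
\]
into the disjoint union $\mathcal{I}_{k+1}^{(+)} \sqcup \mathcal{I}_{k+1}^{(0)}$, where $\mathcal{I}_{k+1}^{(+)}$ consists of the triples with $i_1 \geq 1$ and $\mathcal{I}_{k+1}^{(0)}$ of those with $i_1 = 0$. The shift $(i_1, i_2, i_3) \mapsto (i_1 - 1, i_2, i_3)$ is a bijection $\mathcal{I}_{k+1}^{(+)} \to \mathcal{I}_k$, so $\#\mathcal{I}_{k+1}^{(+)} = \dim I_k$, and hence
\[
\dim I_{k+1} - \dim I_k = \#\mathcal{I}_{k+1}^{(0)} = \#\{(a,b) \in \mathbb{Z}_{\geq 0}^2 \mid 2a + 3b = k+1\},
\]
which combined with the first step gives the claim.

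I do not anticipate a real obstacle here: the surjectivity of $L$ has been handed to us, the basis of $I_k$ has essentially been laid out, and the remaining content is just a counting identity. The only thing worth recording carefully is the algebraic independence of $T_1, T_2, T_3$, which is why I would phrase it through the restriction to diagonal elements guaranteed by Theorem~8.2.
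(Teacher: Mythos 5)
Your proof is correct, and for the second equality it takes a genuinely different route from the paper. You obtain the first identity exactly as intended, from the surjectivity of $L:I_{k+1}\to I_{k}$ established in the preceding lemma, via the short exact sequence. For the second identity you argue purely combinatorially: you take the monomial basis $T_{1}^{i_{1}}T_{2}^{i_{2}}T_{3}^{i_{3}}$ of $I_{k}$ (already justified by the proposition computing $\dim I_{k}$) and compute $\dim I_{k+1}-\dim I_{k}$ by the shift bijection $(i_{1},i_{2},i_{3})\mapsto(i_{1}-1,i_{2},i_{3})$, leaving exactly the triples with $i_{1}=0$, i.e.\ the solutions of $2a+3b=k+1$. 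The paper instead exhibits an explicit basis of $\mathcal{N}_{k+1}$: it introduces $\varphi_{2}=T_{2}-\tfrac13T_{1}^{2}$ and $\varphi_{3}=T_{3}-T_{2}T_{1}+\tfrac29T_{1}^{3}$, both annihilated by $L$, and notes that the products $\varphi_{2}^{a}\varphi_{3}^{b}$ with $2a+3b=k+1$ span the kernel (the same count as yours is then implicit). Your argument is shorter and suffices for the lemma as stated; what it does not deliver is the explicit basis $\{\varphi_{2}^{a}\varphi_{3}^{b}\}$ of $\mathcal{N}_{k+1}$, which the paper relies on in the subsequent Lemma \ref{product of dimensions} and Proposition \ref{direct sum}. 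One small inaccuracy worth fixing: on diagonal matrices $T_{j}(A)=\operatorname{tr}(A^{j})$ restricts to the power sums $p_{1},p_{2},p_{3}$ of $\xi_{1},\xi_{2},\xi_{3}$, not to the elementary symmetric polynomials; since in characteristic zero the power sums are likewise algebraically independent (Newton's identities), your linear-independence argument survives unchanged.
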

\begin{proof}
Put $\varphi_{2}(2)=T_{2}-1/3T_{1}^{2}:=\varphi_{2}$ and $\varphi_{3}(3)=T_{3}-T_{2}T_{1}+2/9T_{1}^{3}:=\varphi_{3}$.
Then both of these are irreducible polynomials, 
since they are not
decomposed into lower degree polynomials even on the subspace $z=y=x=0$. 

By $L(\varphi_{2}\cdot\varphi_{3})=0$, products of any powers of these two
polynomials are in the kernel of the map $L$. So corresponding to the
non-negative integer
solutions $(a,b)$ of \eqref{combination of null solution of L}
we have a basis of the kernel $\mathcal{N}_{k+1}$. 
\end{proof}

\begin{lemma}\label{product of dimensions}
For any $j$ and $\ell$
\[\dim H_{j}\cdot I_{\ell}=\dim H_{j}\cdot\dim I_{\ell}.
\]
\end{lemma}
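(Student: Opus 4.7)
The stated identity is equivalent to the injectivity of the multiplication map
$$
\mu_{j,\ell} : H_j \otimes_{\mathbb{C}} I_\ell \longrightarrow \mathcal{P}_{j+\ell}[\mathcal{J}(3)],
$$
since surjectivity onto $H_j \cdot I_\ell$ is tautological. My plan is to deduce this injectivity from the Kostant--Helgason separation-of-variables theorem (\cite{Ko}, \cite{He}, \cite{HL}), which asserts that whenever the $G$-invariant subalgebra $\mathbb{C}[V]^G$ is polynomial on algebraically independent homogeneous generators $f_1, \ldots, f_r$ and $H = \bigcap_i \ker D^{f_i}$ is the joint kernel of the associated constant-coefficient differential operators, the multiplication $H \otimes I \to \mathbb{C}[V]$ is a $\mathbb{C}$-linear isomorphism. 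Restricting this global isomorphism to the bigraded summand $H_j \otimes I_\ell$ then yields injectivity of $\mu_{j,\ell}$, hence the dimension identity.

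The hypotheses are already in place for $G = F_4$ acting on $V = \mathcal{J}(3)$. First, Theorem \ref{basic theorem 8.1} reduces the evaluation of any invariant polynomial on $A \in \mathcal{J}(3)$ to its values on a diagonal matrix, so that $T_1, T_2, T_3$ are, up to a change of generators, the three elementary symmetric polynomials in the eigenvalues $\xi_1, \xi_2, \xi_3$; in particular they are algebraically independent and generate $I$. Second, Proposition \ref{orthogonal complement and harmonic polynomial} identifies the inductively defined space $H = \bigoplus_k H_k$ with the joint kernel $\ker L \cap \ker \Delta \cap \ker \Gamma = \bigcap_i \ker D^{T_i}$. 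With both hypotheses in force the Kostant--Helgason theorem applies verbatim.

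The most delicate step, and hence the main obstacle, will be to ensure that the version of the theorem available in \cite{Ko}, \cite{He}, \cite{HL} does cover the $F_4$-action on $\mathcal{J}(3)$---which is the isotropy representation associated with the symmetric space $F_4/Spin(9) \cong P^2\mathbb{O}$ and therefore fits squarely in the Kostant--Rallis framework. If a direct citation is to be avoided, a self-contained argument can be assembled from the forthcoming direct-sum decomposition $\mathcal{P}_n = \bigoplus_{j+\ell=n} H_j \cdot I_\ell$ together with the Hilbert-series identity $\dim \mathcal{P}_n = \sum_{j+\ell=n} \dim H_j \cdot \dim I_\ell$: the obvious inequality $\dim(H_j \cdot I_\ell) \leq \dim H_j \cdot \dim I_\ell$ then forces termwise equality. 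The remaining ingredient---freeness of $\mathbb{C}[\mathcal{J}(3)]$ as an $I$-module---is again an instance of the Kostant--Rallis phenomenon, proved by an inductive argument on degree using that $D^\varphi$ annihilates $H$ for $\varphi \in I_+$ (a consequence of $D^A \circ D^B = D^{AB}$ and the definition of $H$).
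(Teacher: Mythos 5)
Your route is genuinely different from the paper's. The paper does not invoke any general separation-of-variables theorem here: it proves the lemma by induction on $\ell$, working with the explicit basis $\{\varphi_2^a\varphi_3^b\}_{2a+3b=\ell}$ of $\mathcal{N}_\ell=\ker L\cap I_\ell$, showing that a relation $\sum h_{a,b}\varphi_2^a\varphi_3^b=0$ with $h_{a,b}\in H_j$ forces all $h_{a,b}=0$ (using that $\varphi_2$ does not divide $\varphi_3$ together with the defining orthogonality of $H_j$ via Lemma \ref{two expressions of the complement of harmonic space}), and then transporting the conclusion from $H_j\otimes\mathcal{N}_{k+1}$ to $H_j\otimes I_{k+1}$ by a commutative diagram of short exact sequences built from $L$. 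Your approach, if it worked, would be shorter and would subsume several of the paper's lemmas at once; the paper's approach is longer but entirely elementary and self-contained.

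However, as written your proposal has two genuine gaps. First, the "theorem" you quote is false at that level of generality: for a reductive group, $\mathbb{C}[V]^G$ being a polynomial ring on algebraically independent generators (coregularity) does not by itself imply that $\mathbb{C}[V]$ is a free $\mathbb{C}[V]^G$-module, i.e. that $H\otimes I\to\mathbb{C}[V]$ is an isomorphism; one needs in addition that the null cone is a complete intersection of codimension $r$ (equivalently that $f_1,\dots,f_r$ form a regular sequence), or one must be squarely in the Kostant or Kostant--Rallis setting. Your attempt to place the example in that setting misidentifies the symmetric space: the isotropy representation of $F_4/\mathrm{Spin}(9)$ is the $16$-dimensional spin representation of $\mathrm{Spin}(9)$, not the $F_4$-action on $\mathcal{J}(3)$; the relevant pair is $E_6/F_4$, whose isotropy representation is $F_4$ acting on the traceless part $\mathcal{J}_0(3)$. (The needed complete-intersection property is in fact available in the paper --- see Proposition \ref{non-singular part} and the remark before Proposition \ref{irreducibility on Hk} --- so the citation route can be repaired, but it must be repaired.) Second, your "self-contained" fallback is circular relative to the paper's logic: the direct-sum decomposition $\mathcal{P}_n=\bigoplus_{j+\ell=n}H_j\cdot I_\ell$ is Proposition \ref{direct sum}, which is proved \emph{after} and \emph{using} Lemma \ref{product of dimensions}, and the Hilbert-series identity $PP(t)=PH(t)\cdot PI(t)$ is derived in the Appendix from both of them; invoking either of these to prove the present lemma begs the question unless you supply an independent proof of freeness of $\mathbb{C}[\mathcal{J}(3)]$ over $I$ --- which is exactly the point at issue.
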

\begin{proof}
It will be apparent if $\dim I_{\ell}=1$.

We prove the property by induction and we show that
the natural map $H_{j}\otimes \mathcal{N}_{\ell}\longrightarrow
H_{j}\cdot \mathcal{N}_{\ell}$ is isomorphic.
So we assume for $^{\forall}\ell\leq k$ and any $j\geq 0$ it holds the isomorphism
\begin{equation}\label{tensor isomorphism}
H_{j}\otimes \mathcal{N}_{\ell}\cong H_{j}\cdot\mathcal{N}_{\ell}. 
\end{equation}
Let 
\begin{equation}\label{null element}
\sum\limits_{(a,b)~\text{run through the solutions of \eqref{combination of null solution of L}}} \,h_{a,b}\cdot{\varphi_{2}}^{a}\cdot{\varphi_{3}}^{b}=0,~h_{a,b}\in H_{j}.
\end{equation}
Let $(a_{1},b_{1}),(a_{2},b_{2}),\cdots, (a_{n},b_{n})$ be all the
solutions of \eqref{combination of null solution of L}:
\[
2a_{i}+3b_{i}=k+1,
\]
Assume $a_{1}>a_{2}>\cdots>a_{n}$, then $b_{1}<b_{2}<\cdots<b_{n}$.
Then the we can assume the expression \eqref{null element} has one of
the following two forms:
\begin{align}
&[1]:~{\text{if}}~ a_{n}>0, {\varphi_{2}}^{a_{n}}\cdot p+{\varphi_{3}}^{b_{n}}\cdot h_{a_{n},b_{n}}=0, ~\text{or}\\  
&[2]:~\text{if}~a_{n}=0,  {\varphi_{2}}^{a_n-1}\cdot p+{\varphi_{3}}^{b_{n}}\cdot h_{a_{n},b_{n}}=0.  
\end{align}
In any case the polynomial $\varphi_{2}$ does not divide the
polynomial $\varphi_{3}$, so that we may put
$h_{a_{n},b_{n}}=\varphi_{2}\cdot Q$ with a polynomial $Q\in
\mathcal{P}_{j-2}$. Then by the equality \eqref{harmonics and polynomial}
the polynomial $Q\cdot \varphi_{2}\in H_{j-1}\cdot I_{1}+H_{j-2}\cdot I_{2}+\cdots+H_{1}\cdot I_{j-1}+I_{j}$. 
On the other hand $Q\cdot\varphi_{2}=h_{a_{n},b_{n}}\in H_{j}$. Hence
by the definition of the space $H_{j}$ which is orthogonal complement
of the space $H_{j-1}\cdot I_{1}+H_{j-2}\cdot I_{2}+\cdots+H_{1}\cdot
I_{j-1}+I_{j}$, hence
$h_{a_{n},b_{n}}=0$ and also $p=0$. By iterating the arguments we see
that in the expression
 \[
\sum\limits_{(a_{i},b_{i})~\text{run through the solutions of
    \eqref{combination of null solution of L}}}
\,h_{a_{i},b_{i}}\cdot{\varphi_{2}}^{a_{i}}
\cdot{\varphi_{3}}^{b_{i}}
\]
all the coefficient polynomials $h_{a_{i},b_{i}}$ must be zero.

Finally we see from the sequences
\[
\begin{CD}
\{0\}@>>> H_{j}\otimes\mathcal{N}_{k+1} @>>> H_{j}\otimes I_{k+1}
@>{Id\otimes L}>>H_{j}\otimes I_{k}@>>>\{0\}\\
@.@VVV@VVV@VVV@.\\
\{0\}@>>> H_{j}\cdot\mathcal{N}_{k+1} @>{\tiny inclusion}>> H_{j}\cdot I_{k+1} @>>> H_{j}\cdot I_{k}@>>>\{0\}
\end{CD}
\]
two spaces
\[
H_{j}\otimes I_{k+1}\cong H_{j}\cdot I_{k+1}
\]
are isomorphic.
\end{proof}

\begin{proposition}\label{direct sum}
For each $k$, the sum $H_{k}+H_{k-1}\cdot I_{1}+\cdots H_{1}\cdot I_{k-1}+I_{k}$ is
a direct sum.
\end{proposition}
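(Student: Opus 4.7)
The plan is to prove the statement by induction on $k$, combining the first-order invariant operator $L$ (to peel off the $T_{1}$-contribution in each $f_{i}$) with a divisibility-and-orthogonality argument modelled on the proof of Lemma~\ref{product of dimensions} (to handle the part built from $\varphi_{2}$ and $\varphi_{3}$).

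The cases $k=0,1$ are immediate. For the inductive step, take an arbitrary identity
\[
h_{k} + \sum_{i=1}^{k} h_{k-i}\,f_{i} = 0,\qquad h_{j}\in H_{j},\; f_{i}\in I_{i}.
\]
By Definition~\ref{Cayley harmonic polynomial}, $H_{k}$ is the orthogonal complement of $\sum_{i\ge 1}H_{k-i}\,I_{i}$ in $\mathcal{P}_{k}$, so $h_{k}=0$ at once. Thus I only need to treat the tail $R:=\sum_{i=1}^{k}h_{k-i}f_{i}=0$ and show that every summand $h_{k-i}f_{i}$ vanishes. Using the direct-sum decomposition $I_{i}=T_{1}\!\cdot\! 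I_{i-1}\oplus\mathcal{N}_{i}$, write each $f_{i}=T_{1}\tilde f_{i-1}+n_{i}$ with $\tilde f_{i-1}\in I_{i-1}$ and $n_{i}\in\mathcal{N}_{i}$.

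To eliminate the $T_{1}$-piece, apply $L$ to $R=0$. Proposition~\ref{orthogonal complement and harmonic polynomial} gives $L(h_{k-i})=0$, while $L(T_{1})=3$ and $L(n_{i})=0$ by construction; the Leibniz rule then yields
\[
3A + T_{1}\,L(A) = 0,\qquad A := \sum_{i=1}^{k}h_{k-i}\tilde f_{i-1}\in\mathcal{P}_{k-1}.
\]
Iterating $L$ and using the commutator identity $[L,T_{1}]=3$ (which is the point at which $L$ being first-order enters) produces the recurrence $3(j+1)L^{j}(A)+T_{1}\,L^{j+1}(A)=0$ for every $j\ge 0$. Because $\deg A\le k-1$, we have $L^{k}(A)=0$, and back-substitution from the top forces $L^{k-1}(A)=L^{k-2}(A)=\cdots=A=0$. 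The inductive hypothesis applied in $\mathcal{P}_{k-1}$ then makes every summand $h_{k-i}\tilde f_{i-1}$ vanish, and since $\mathbb{C}[\mathcal{J}(3)^{\mathbb{C}}]$ is an integral domain (equivalently, by the injectivity contained in Lemma~\ref{product of dimensions}), each $h_{k-i}=0$ or $\tilde f_{i-1}=0$.

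After the $T_{1}$-piece is absorbed, the relation collapses to $\sum_{i=1}^{k}h_{k-i}n_{i}=0$ with $n_{i}\in\mathcal{N}_{i}=\mathbb{C}[\varphi_{2},\varphi_{3}]_{i}$. Expanding $n_{i}=\sum_{2a+3b=i}c^{i}_{a,b}\,\varphi_{2}^{a}\varphi_{3}^{b}$ and grouping by the monomials $\varphi_{2}^{a}\varphi_{3}^{b}$ rewrites this as
\[
\sum_{a,b} H^{a,b}\,\varphi_{2}^{a}\varphi_{3}^{b}=0,\qquad H^{a,b}:=c^{2a+3b}_{a,b}\,h_{k-2a-3b}\in H_{k-2a-3b}.
\]
At this point I would repeat almost verbatim the peeling argument from the proof of Lemma~\ref{product of dimensions}: since $\mathbb{C}[\mathcal{J}(3)^{\mathbb{C}}]$ is a UFD and $\varphi_{2},\varphi_{3}$ are coprime irreducibles, factor out the common power of $\varphi_{2}$ so that some surviving term has exponent $a=0$; that term has the form $\varphi_{3}^{b_{m}}H^{0,b_{m}}$ while every other term is divisible by $\varphi_{2}$, forcing $\varphi_{2}\mid H^{0,b_{m}}$. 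Writing $H^{0,b_{m}}=\varphi_{2}Q$ with $Q\in\mathcal{P}$ places $H^{0,b_{m}}\in H_{\bullet}\cap(I_{+}\!\cdot\!\mathcal{P})$, and Lemma~\ref{two expressions of the complement of harmonic space} together with the orthogonality defining $H_{\bullet}$ forces $H^{0,b_{m}}=0$. Iterating strips off each $a=0$ term, then one divides by $\varphi_{2}$ to lower the $a$-exponents and repeats until every $H^{a,b}=0$. Combined with the outcome of the $T_{1}$-step, this gives $h_{k-i}f_{i}=0$ for each $i$, closing the induction.

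The main obstacle is verifying that the recurrence $3(j+1)L^{j}(A)+T_{1}L^{j+1}(A)=0$ really does collapse to $A=0$; the finiteness of $\deg A$ and the clean commutator $[L,T_{1}]=3$ are essential. The divisibility step requires care to keep track of which graded piece $H_{k-2a-3b}$ each coefficient $H^{a,b}$ lives in, but is otherwise directly patterned on the existing argument for Lemma~\ref{product of dimensions}.
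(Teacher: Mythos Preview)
Your approach is essentially the paper's: both strip off the $T_{1}$-contribution using $L$ together with the inductive hypothesis, then dispose of the $\mathcal{N}$-part by the $\varphi_{2},\varphi_{3}$ coprimality argument patterned on Lemma~\ref{product of dimensions}. The organizational difference is that the paper expands in the special basis $\{\varphi_{j}(i)\}$ and applies $L$ once, whereas you use the abstract splitting $I_{i}=T_{1}I_{i-1}\oplus\mathcal{N}_{i}$ and iterate $L$ to force $A=0$; these are equivalent.

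Two points need tightening. First, a general element of $H_{k-i}\cdot I_{i}$ is not a single product $h_{k-i}f_{i}$ but a sum of such; the paper addresses this by writing each piece as $\sum_{\ell}h_{k-i}(\ell)\,\varphi_{i}(\ell)$ via the tensor isomorphism of Lemma~\ref{product of dimensions}. Your argument extends verbatim once you carry that extra index, but as written the opening identity does not cover the general case. Second, in the peeling step your assertion that ``every other term is divisible by $\varphi_{2}$'' tacitly assumes a unique surviving $a=0$ term. When several $H^{0,b}$ are nonzero you must instead collect all $a=0$ terms, use $\gcd(\varphi_{2},\varphi_{3})=1$ to obtain $\varphi_{2}\mid\bigl(H^{0,b_{\min}}+\varphi_{3}\cdot(\cdots)\bigr)$, conclude $H^{0,b_{\min}}\in H_{\bullet}\cap I_{+}\!\cdot\!\mathcal{P}=\{0\}$, and then iterate on $b$ before dividing by $\varphi_{2}$. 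With these two fixes your proof is complete and matches the paper's.
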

\begin{proof}
First we remark that the sums $\mathcal{P}_{1}=H_{1}+I_{1}$ and $\mathcal{P}_{2}=H_{2}+H_{1}\cdot
I_{1}+I_{2}$ are orthogonal sums. 
The first one is included in the definition and the second one is shown as
\[
\ll\,h_{1}T_{1}\,,\,T_{2}\gg=\ll\,h_{1}\,,\,L(T_{2})\,\gg=\ll\,h_{1}\,,\,2T_{1}\,\gg=
\ll\,L(h_{1}),2T_{1}\,\gg=0,~\text{where $h_{1}\in H_{1}$.}
\]
Then we assume that the sum
\[
H_{j}+H_{j-1}\cdot I_{1}+\cdots H_{1}\cdot I_{j-1}+I_{j}
\]
are direct sums for $j\leq k$. 

We express
\begin{align*}
&T\in H_{k}\cdot I_{1}+H_{k-1}\cdot I_{2}+\cdots + H_{1}\cdot I_{k}+I_{k+1}~\text{as}\\
&T=h_{k}\cdot T_{1}+\sum_{i=1}^{2} h_{k-1}(i)\cdot
\varphi_{2}(i)+\cdots+\sum_{i=1}^{\dim I_{k}} h_{1}(i)\varphi_{k}(i)
+\sum_{i=1}^{\dim I_{k+1}} h_{0}(i)\varphi_{k+1}(i)=0,
\end{align*}
where $h_{j}(i)\in H_{j}$ and $\varphi_{j}(i)$ are the basis
polynomials of $I_{j}$ constructed in the Definition \ref{special basis}.
Then by the induction hypothesis, $L(T)=0$ implies
\begin{align*}
&h_{k}=0, h_{k-1}(1)\varphi_{2}=0, h_{k-2}(1)\varphi_{3}(1)+h_{k-2}(2)\varphi_{3}(2)=0, \cdots,
  \sum_{i=1}^{\dim I_{k}}\lambda_{i}\varphi_{k}(i)=0,
\end{align*}
that is, 
the coefficient polynomials $h_{j}(i)$ of the basis included in the
orthogonal complement of $\mathcal{N}_{j}$ are zero. 

Hence it will be enough to show
\begin{equation}\label{equation to be considered for linear independence}
h_{k-1}(2)\varphi_{2}+h_{k-2}(3)\varphi_{3}+\cdots
+\sum_{i=\dim I_{k-1}+1}^{\dim I_{k}}h_{1}(i)\varphi_{k}(i)+\sum_{i=\dim I_{k}+1}^{\dim I_{k+1}}\lambda_{i}\varphi_{k+1}(i)=0 
\end{equation}
implies all the coefficient  polynomials  $h_{j}(i)=0$ and constants $\lambda_{i}=0$.
As in the proof of the Lemma \ref{product of dimensions},
the equation \eqref{equation to be considered for linear independence}
can be rewritten as 
\begin{equation}\label{two expressions of element}
\varphi_{2}\cdot P=-\varphi_{3}\cdot Q
\end{equation}
where the polynomial $P=h_{k-1}(2)+\cdots $~~ is the sum of all the terms including some power
($\geq 0$)
of $\varphi_{2}$ and $Q=g_{1}+ g_{2}\varphi_{3}+\cdots$ (especially
$g_{1}=h_{k-2}(3)\in H_{k-2}$) is a polynomials of the polynomial
$\varphi_{3}$ with the coefficient polynomials $g_{i}\in H_{j}$
with the degree of $g_{i}=k+1-3i$. 
Since $\varphi_{2}$ does not divide $\varphi_{3}$, $Q$ must be divided
by $\varphi_{2}$, 
that is we have
\[
\varphi_{2}\cdot Q_{1}= Q =g_{1}+g_{2}\varphi_{3}+\cdots,
\]
where $Q_{1}\in \mathcal{P}_{k-4}$. 
Hence by 
Lemma \ref{two expressions of the complement of harmonic space}
\[
Q=g_{1}+g_{2}\varphi_{3}+\cdots \in H_{k-3}I_{1}+H_{k-4}I_{2}+\cdots+I_{k-2}, 
\]
which implies that $g_{1}=0$.
Hence we can rewrite \eqref{two expressions of element}
as
\[
\varphi_{2}\cdot P=-{\varphi_{3}}^{2}\cdot Q_{2}.
\]
By iterating the same arguments as above we see that $Q=0$.
Hence $P=0$.

Then we can apply the same argument to the polynomial $P$ by
expressing $P$ as
\[
P=\varphi_{2} P_{1}+R=0,
\]
where $P_{1}$ is the sum of terms in $P$ of the form $h_{a,b}\cdot
\varphi_{2}^{a}{\varphi_{3}}^{b}$, $a>0, ~b\geq 0$, $h_{a,b}\in H_{k-1-2a-3b}$ and $R$ is a polynomial
of $\varphi_{3}$,
\[
R=h'_{0}+h_{1}'\varphi_{3}+h_{2}\varphi_{3}^{2}+\cdots
\] 
with coefficients $h_{a}'\in H_{k-1-3a}$. 

Again by the same argument as above we see that $P=0$, which proves our assertion.
\end{proof}

We put $\mathcal{H}:=\sum_{k\geq 0} ~H_{k}$, and denote by
$I_{+}(\mathcal{J}(3)^{\mathbb{C}})=\sum\limits_{k>0}\, I_{k}(\mathcal{J}(3)^{\mathbb{C}})$
invariant polynomial functions extended to the complexification $\mathcal{J}(3)^{\mathbb{C}}$
in the natural way. 

Since the function taking the trace $A\mapsto
\text{tr}\,(A)$ is linear and $A\mapsto A^{k}$ is an operation inside
the Jordan algebra $\mathcal{J}(3)$ (according to the definition of products)  
and also its complexification
$\mathcal{J}(3)^{\mathbb{C}}$, 
the extensions of the invariant polynomials
$T_{i}$ to $\mathcal{J}(3)^{\mathbb{C}}$ coincide with the 
trace functions on the complexification $\mathcal{J}(3)^{\mathbb{C}}$:
\[
\mathcal{J}(3)^{\mathbb{C}}\ni A\longmapsto
\text{tr}\,(A),~A^{2}\longmapsto \text{tr}\,(A^2)~\text{and}~A^{3}\longmapsto \text{tr}\,(A^{3}).
\]

Let $N_{\mathcal{J}(3)^{\mathbb{C}}}$ be the common null set (other than zero) of
the invariant polynomial functions (with respect to $F_{4}$ action) considered on the complexified space
$\mathcal{J}(3)^{\mathbb{C}}$:
\[
N_{\mathcal{J}(3)^{\mathbb{C}}}
:
=\left\{~A=\begin{pmatrix}\xi_{1}&z&\theta(y)\\\theta(z)&\xi_{2}&x\\y&\theta(x)&\xi_{3}\end{pmatrix}\in\mathcal{J}(3)^{\mathbb{C}}
~\Big|~A\not=0,~T_{1}(A)=T_{2}(A)=T_{3}(A)=0~\right\}.
\]
\begin{remark}
Let $A\in N_{\mathcal{J}(3)^{\mathbb{C}}}$.
Then at least one of the three components $z,y,x$ does
not vanish.
Since if $A\in N_{\mathcal{J}(3)^{\mathbb{C}}}$ and assume $z=y=x=0$, then $T_{1}(A)=\sum
\xi_{i}=0,~T_{2}(A)=\sum\,{\xi_{i}}^2=0$
and $T_{3}(A)=\sum\,{\xi_{i}}^3=0$.
Hence these imply that $\xi_{i}=0$ too.
\end{remark}
By the Proposition \ref{trace is zero}
\begin{proposition}\label{non-singular part}
$\mathbb{X}_{\mathbb{O}}=\tau_{\mathbb{O}}(T^{*}_{0}(P^2\mathbb{O}))\subset N_{\mathcal{J}(3)^{\mathbb{C}}}$ 
and the non-singular part of the space $N_{\mathcal{J}(3)^{\mathbb{C}}}$ has $\dim N_{\mathcal{J}(3)^{\mathbb{C}}}=24$.
\end{proposition}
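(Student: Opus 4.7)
The first inclusion $\mathbb{X}_{\mathbb{O}}\subset N_{\mathcal{J}(3)^{\mathbb{C}}}$ is immediate. The plan is: for any $A\in\mathbb{X}_{\mathbb{O}}$ we have $A^2=0$, so $T_2(A)=\text{tr}(A^2)=0$. Since the Jordan product satisfies $A\circ A^2=A\cdot A^2$ in the matrix algebra, this gives $T_3(A)=\text{tr}(A\circ A^2)=\text{tr}(A\cdot 0)=0$. The remaining condition $T_1(A)=\xi_{1}+\xi_{2}+\xi_{3}=0$ is precisely Proposition \ref{trace is zero}.

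For the dimension statement, the strategy is to show that $T_1,T_2,T_3$ cut out a complete intersection of codimension $3$ in $\mathcal{J}(3)^{\mathbb{C}}$, so that the smooth locus of $N_{\mathcal{J}(3)^{\mathbb{C}}}$ has complex dimension $27-3=24$. First I would note that $T_1,T_2,T_3$ are algebraically independent in $\mathbb{C}[\mathcal{J}(3)^{\mathbb{C}}]$: restricting to the diagonal locus $\{z=y=x=0\}$ turns them into the Newton power sums $\xi_{1}+\xi_{2}+\xi_{3}$, $\sum\xi_{i}^{2}$, $\sum\xi_{i}^{3}$, which generate the ring of symmetric functions in three variables and are algebraically independent there. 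Hence the zero locus has codimension at most $3$; combined with the defining equations, its codimension is exactly $3$.

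To exhibit actual smooth points, I would compute the Jacobian of the map $(T_1,T_2,T_3):\mathcal{J}(3)^{\mathbb{C}}\longrightarrow\mathbb{C}^{3}$ at a concrete test point of $N_{\mathcal{J}(3)^{\mathbb{C}}}$ and verify that $dT_1, dT_2, dT_3$ are linearly independent there. A convenient choice is a matrix with only the off-diagonal entry $z$ non-zero (so that $T_1$ and $T_3$ automatically vanish) and with $\sum z_{i}^{2}=0$ in the complex-bilinear extension (so $T_2=0$); perturbing the diagonal entries $\xi_{i}$ independently gives non-trivial contributions to $dT_1$ and $dT_2$ independently, while varying the components of $y$ or $x$ contributes independently to $dT_3$ through the real part of $x\cdot yz$. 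Linear independence of the three resulting covectors follows by inspection of the leading order terms.

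The main (mild) obstacle is just the bookkeeping in the last step: one must arrange the test point so that none of the three differentials degenerates simultaneously, which is why I pick a point with only $z$ loaded and then choose perturbation directions in $\xi_i$, $y$ and $x$ that decouple $dT_1$, $dT_2$, $dT_3$. Once this is done, the smooth-locus statement $\dim N_{\mathcal{J}(3)^{\mathbb{C}}}=24$ follows from the implicit function theorem, and the inclusion $\mathbb{X}_{\mathbb{O}}\subset N_{\mathcal{J}(3)^{\mathbb{C}}}$ (together with $\dim_{\mathbb{C}}\mathbb{X}_{\mathbb{O}}=16$ noted in the proof of Proposition \ref{open covering}) shows $\mathbb{X}_{\mathbb{O}}$ lies in a proper $F_{4}^{\mathbb{C}}$-invariant subvariety of $N_{\mathcal{J}(3)^{\mathbb{C}}}$.
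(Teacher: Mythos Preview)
Your argument for the inclusion $\mathbb{X}_{\mathbb{O}}\subset N_{\mathcal{J}(3)^{\mathbb{C}}}$ is correct and matches the paper's. The strategy for the dimension statement is also sound, and your algebraic--independence argument via restriction to the diagonal is a nice addition that the paper does not spell out.

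However, your choice of test point does not work: at a matrix with $\xi=0$, $y=x=0$ and $z\neq 0$ satisfying $\sum z_{i}^{2}=0$, one has $dT_{3}=0$. Indeed, from
\[
T_{3}(A)=\sum\xi_{i}^{3}+3\bigl(|z|^{2}(\xi_{1}+\xi_{2})+|y|^{2}(\xi_{3}+\xi_{1})+|x|^{2}(\xi_{2}+\xi_{3})\bigr)+6\,\mathfrak{Re}(x\cdot yz),
\]
every summand is at least quadratic in the variables that vanish at your point (recall $|z|^{2}=\sum z_{i}^{2}=0$ there), so all first partials of $T_{3}$ vanish. In particular your claim that ``varying $y$ or $x$ contributes to $dT_{3}$ through $\mathfrak{Re}(x\cdot yz)$'' is false: $\partial\,\mathfrak{Re}(x\cdot yz)/\partial x_{i}$ is linear in $y$ and $\partial\,\mathfrak{Re}(x\cdot yz)/\partial y_{j}$ is linear in $x$, and both are zero at $y=x=0$. (Incidentally, perturbing the $\xi_{i}$ does \emph{not} contribute to $dT_{2}$ at your point either, since $\partial T_{2}/\partial\xi_{i}=2\xi_{i}=0$; what makes $dT_{2}$ nonzero is the $dz$-part.)

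The fix is easy: take a point with \emph{two} off-diagonal blocks loaded, e.g.\ $\xi=0$, $z=0$, and generic $x,y$ with $\sum x_{i}^{2}+\sum y_{i}^{2}=0$ but $\sum y_{i}^{2}\neq 0$. Then $dT_{1}$ lives in $d\xi$, $dT_{2}$ in $dx,dy$, and $dT_{3}$ picks up $3|y|^{2}(d\xi_{1}-d\xi_{2})$ from the $|y|^{2}(\xi_{3}+\xi_{1})$ and $|x|^{2}(\xi_{2}+\xi_{3})$ terms, so the three differentials are independent. The paper itself takes a slightly different shortcut: it checks linear independence of $dT_{1},dT_{2},dT_{3}$ at diagonal matrices (where it reduces to a Vandermonde condition), which are not in $N_{\mathcal{J}(3)^{\mathbb{C}}}$ but show that the Jacobian has full rank on a Zariski-open set; together with the complete-intersection dimension count this gives the smooth locus.
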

\begin{proof}
Let $A\in\mathbb{X}_{\mathbb{O}}$. Then
$T_{1}(A)=\eta_{1}+\eta_{2}+\eta_{3}=0$ (Proposition \ref{trace is zero}),
and $A^{2}=0$ implies $T_{2}(A)=0$ and $T_{3}(A)=0$ trivially.
Hence
$\mathbb{X}_{\mathbb{O}}=\tau_{\mathbb{O}}(T^{*}_{0}(P^2\mathbb{O}))\subset
N_{\mathcal{J}(3)^{\mathbb{C}}}$.

The second assertion is seen by noting that 
at the points $z=y=x=0$ the three differentials
\[
dT_{1}, ~dT_{2}, ~dT_{3}
\]
are linearly independent.
\end{proof}

Let $A\in \mathcal{J}(3)^{\mathbb{C}}$ and consider the functions of the form 
\begin{equation}\label{basic type function}
\mathcal{J}(3)\ni X\longmapsto \text{tr}\,(X\circ A)=h_{A}(X).
\end{equation}
Since $\mathbb{X}_{\mathbb{O}}$ is $F_{4}$ invariant, the
nontrivial subspace in $H_{1}$ linearly spanned by the functions
\[
\mathcal{J}(3)\ni X\longmapsto \text{tr}\,(X\circ A):=h_{A}(X),
~A\in\mathcal{J}(3)^{\mathbb{C}}, ~\text{tr}\,(A)=0,
\]
is an invariant subspace in $H_{1}$. Here note that
$\text{tr}\,(g(X)\circ A)=\text{tr}\,(X \circ {^tg}(A))$ for ${g}\in F_{4}$
and $\text{tr}\,({^tg}(A))=\text{tr}\,(A)=0$. 

However the representation of the
group $F_{4}$ to $H_{1}$ is irreducible 
(Theorem \ref{irreducible decomposition}),
the space $H_{1}$ must be spanned by these functions. Also the same holds
that 
the subspace in $H_{1}$ linearly spanned by the functions
\[
\left\{\text{tr}\,(X\circ A)=h_{A}(X)~|~A\in N_{\mathcal{J}(3)^{\mathbb{C}}}\right\}
\]
coincides with $H_{1}$ (see Proposition \ref{uniqueness of the zero linear form} and Corollary
\ref{26 dim}).
These facts imply 
\begin{proposition}\label{XO generates VF4}
All the point in $N_{\mathcal{J}(3)^{\mathbb{C}}}$ can be expressed as a linear sum of
points in $\mathbb{X}_{\mathbb{O}}$ and this fact implies 
that  the space $N_{\mathcal{J}(3)^{\mathbb{C}}}$ is path-wise-connected.
\end{proposition}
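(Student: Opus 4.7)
The plan is to derive the linear-span assertion directly from Corollary~\ref{26 dim} and then bootstrap it to path-connectedness using the complex group action on $\mathcal{J}(3)^{\mathbb{C}}$.

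First, the linear-span claim is immediate from Corollary~\ref{26 dim} combined with Proposition~\ref{trace is zero}: the span $[\mathbb{X}_{\mathbb{O}}]$ is a $26$-dimensional subspace of $\mathcal{J}(3)^{\mathbb{C}}$ contained in the $26$-dimensional trace-zero hyperplane $\{A : T_{1}(A) = 0\}$, so the two coincide. Since $N_{\mathcal{J}(3)^{\mathbb{C}}}$ is defined, in particular, by $T_{1}(A) = 0$, each of its points lies in $[\mathbb{X}_{\mathbb{O}}]$ and is therefore expressible as a finite linear sum of elements of $\mathbb{X}_{\mathbb{O}}$.

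Next, I would observe that $\mathbb{X}_{\mathbb{O}}$ itself is path-connected: via $\tau_{\mathbb{O}}$ it is identified with the punctured cotangent bundle $T^{*}_{0}(P^{2}\mathbb{O})$, which fibres over the connected manifold $P^{2}\mathbb{O}$ with connected fibres $\mathbb{R}^{16}\setminus\{0\}$. The remaining task is to join an arbitrary $A \in N_{\mathcal{J}(3)^{\mathbb{C}}}$ to $\mathbb{X}_{\mathbb{O}}$ by a path that stays inside $N_{\mathcal{J}(3)^{\mathbb{C}}}$. The naive linear interpolation $t \mapsto (1-t)A + tA_{1}$ does not work because $T_{2}$ and $T_{3}$ are of degrees $2$ and $3$, so intermediate values generically escape $N$.

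My approach would then be to exploit the action of ${F_{4}}^{\mathbb{C}}$ on $\mathcal{J}(3)^{\mathbb{C}}$: it preserves every $T_{i}$ and hence preserves $N_{\mathcal{J}(3)^{\mathbb{C}}}$, and being a connected complex Lie group, each of its orbits is path-connected. The nilpotent cone $N_{\mathcal{J}(3)^{\mathbb{C}}} \cup \{0\}$ in the exceptional Jordan algebra admits the stratification into $\{0\}$, the rank-one orbit $\mathbb{X}_{\mathbb{O}} = \{A \neq 0 : A^{2} = 0\}$, and the rank-two orbit $\{A \in N_{\mathcal{J}(3)^{\mathbb{C}}} : A^{2} \neq 0\}$, the closure of which is all of $N_{\mathcal{J}(3)^{\mathbb{C}}} \cup \{0\}$. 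As the closure of a connected set is connected, the full cone is connected; removing the single point $\{0\}$ from a $24$-dimensional complex variety preserves connectedness, and local path-connectedness at smooth points (Proposition~\ref{non-singular part}) upgrades this to path-connectedness of $N_{\mathcal{J}(3)^{\mathbb{C}}}$. The main obstacle is justifying the orbit-closure claim without heavy machinery; as an elementary alternative, one may pull the equations $T_{2} = T_{3} = 0$ back along the linear map $\phi(t_{1},\ldots,t_{n}) = \sum t_{i}A_{i}$ to obtain an explicit affine subvariety $W \subset \mathbb{C}^{n}$ containing both the coefficient vector $(c_{1},\ldots,c_{n})$ and each standard basis vector $e_{i}$, reducing path-connectedness of $N_{\mathcal{J}(3)^{\mathbb{C}}}$ to a finite-dimensional question about $W$.
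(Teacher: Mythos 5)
The first half of your argument --- that every point of $N_{\mathcal{J}(3)^{\mathbb{C}}}$ lies in the linear span of $\mathbb{X}_{\mathbb{O}}$ --- is correct and is essentially the paper's own argument. The paper phrases it dually (the linear forms $h_{B}$ with $B\in\mathbb{X}_{\mathbb{O}}$ span $H_{1}$ by irreducibility, so every $h_{A}$ with $\text{tr}(A)=0$ is a linear combination of them, whence $A=\sum c_{i}B_{i}$), but this is the same content as your dimension count combining Corollary~\ref{26 dim} with Proposition~\ref{trace is zero}.

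The path-connectedness half is where the gap lies. Your route through the ${F_{4}}^{\mathbb{C}}$-orbit stratification of the nilpotent cone rests on the claim that the closure of the top stratum is all of $N_{\mathcal{J}(3)^{\mathbb{C}}}\cup\{0\}$ (equivalently, that the cone is irreducible, or at least that every component meets the dense stratum); nothing in the paper up to this point establishes that orbit structure, and you concede you cannot justify it without heavy machinery. The proposed elementary fallback --- pulling $T_{2}=T_{3}=0$ back along $\phi(t_{1},\dots,t_{n})=\sum t_{i}A_{i}$ to a variety $W\subset\mathbb{C}^{n}$ --- only restates the problem, since you would still have to prove $W$ is path-connected, which is exactly as hard as the original question. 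Note also that the step ``removing a single point from a $24$-dimensional complex variety preserves connectedness'' requires irreducibility, or at least that no component through the point is zero-dimensional; the paper establishes irreducibility of $N_{\mathcal{J}(3)^{\mathbb{C}}}$ only later, in Proposition~\ref{vanishing of singular harmonic functions}, so invoking it here would be circular in the paper's logical order. For comparison, the paper's own proof of this half takes a different and equally terse route: it writes $A=\sum c_{i}B_{i}$ and $A'=\sum c_{i}'B_{i}'$ and asserts the claim follows ``by connecting points $B_{i}$ and $B_{i}'$ suitably in $\mathbb{X}_{\mathbb{O}}$,'' i.e.\ it leans on the linear-combination representation together with the path-connectedness of $\mathbb{X}_{\mathbb{O}}$ (which you correctly established via $\tau_{\mathbb{O}}$), not on any orbit geometry; it leaves unexplained how the interpolated combination is kept inside $N_{\mathcal{J}(3)^{\mathbb{C}}}$, given that $T_{2}$ and $T_{3}$ are not linear. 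So you have correctly identified the genuine difficulty, but neither your orbit argument nor your fallback actually closes it.
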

\begin{proof}
Since any linear function $\mathcal{J}(3)\ni
X\longmapsto\text{tr}\,(X\circ A)=h_{A}(X)$ with $A\in N_{\mathcal{J}(3)^{\mathbb{C}}}$ is a linear sum
of functions of the form $\text{tr}\,(X\circ B_{i})=h_{B_{i}}(X)$ with $B_{i}\in\mathbb{X}_{\mathbb{O}}$,
\[
\text{tr}\,(X\circ A) =\sum \,c_{i}\text{tr}\,(X\circ B_{i})~\text{on}~\mathcal{J}(3),
~\text{where} ~B_{i}\in\mathbb{X}_{\mathbb{O}}, 
\]
$A=\sum\,c_{i}B_{i}$ with these $B_{i}\in\mathbb{X}_{\mathbb{O}}$.

Let $A$ and $A'\in N_{\mathcal{J}(3)^{\mathbb{C}}}$. Assume 
$A=\sum\,c_{i}B_{i}$ and $A'=\sum\,c_{i}'B_{i}'$  where $B_{i},
B_{i}'\in N_{\mathcal{J}(3)^{\mathbb{C}}}$. 
Then
the second assertion is proved  by connecting points $B_{i}$ and $B_{i}'$
suitably in  $\mathbb{X}_{\mathbb{O}}$.
\end{proof}

In general, the space $H_{k}$ of ``Cayley-harmonic polynomials'' 
is an orthogonal sum of two subspaces $H_{k}^{(1)}$ and $H_{k}^{(2)}$,
$H_{k}^{(1)}$ is the subspace linearly spanned by
the powers ${\text{tr}(X\circ A)}^{k}$ with $A\in N_{\mathcal{J}(3)^{\mathbb{C}}}$ 
and $H_{k}^{(2)}$ is the orthogonal complement of $H_{k}^{(1)}$ in
$H_{k}$. The orthogonality is equivalent to the property that
Cayley-harmonic functions in $H_{k}^{(2)}$ are vanishing on the subset
$N_{\mathcal{J}(3)^{\mathbb{C}}}$
(see \cite{He}).

In our case the second subspace $H_{k}^{(2)}$ is always $\{0\}$, that is,
\begin{proposition}\label{vanishing of singular harmonic functions}
\[
H_{k}^{(2)}=H_{k}\bigcap \sqrt{I_{+}(\mathcal{J}(3)^{\mathbb{C}})}=H_{k}\bigcap {I_{+}(\mathcal{J}(3)^{\mathbb{C}})}=\{0\}.
\]
\end{proposition}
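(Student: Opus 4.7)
The plan is to split the double equality into two independent claims: (i) the orthogonality $H_k \cap \bigl(I_+\cdot\mathbb{C}[\mathcal{J}(3)^{\mathbb{C}}]\bigr) = \{0\}$, which is immediate from Proposition \ref{direct sum} since the degree-$k$ part of the ideal generated by the positive-degree $F_4$-invariants $I_+=\sum_{k>0}I_k$ equals $\sum_{i=1}^{k} H_{k-i}\cdot I_i$, i.e., the orthogonal complement of $H_k$ in $\mathcal{P}_k[\mathcal{J}(3)]$ by the construction in Definition \ref{Cayley harmonic polynomial}; and (ii) the radicality $\sqrt{I_+\cdot\mathbb{C}[\mathcal{J}(3)^{\mathbb{C}}]}=I_+\cdot\mathbb{C}[\mathcal{J}(3)^{\mathbb{C}}]$. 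Granted the vector-space direct sum $\mathbb{C}[\mathcal{J}(3)^{\mathbb{C}}]=\mathcal{H}\oplus (I_+\mathcal{P})$, claim (ii) is equivalent to $\mathcal{H}\cap\sqrt{I_+\mathcal{P}}=\{0\}$, which geometrically asserts that the scheme $V(T_1,T_2,T_3)=N_{\mathcal{J}(3)^{\mathbb{C}}}\cup\{0\}$ is reduced.

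To establish reducedness, first I would verify that $T_1,T_2,T_3$ form a regular sequence in the polynomial ring $\mathcal{P}:=\mathbb{C}[\mathcal{J}(3)^{\mathbb{C}}]$: Proposition \ref{non-singular part} gives $\dim V=24=27-3$, which matches the expected codimension of a three-generator ideal in the $27$-dimensional affine space, so the standard unmixedness criterion in the Cohen--Macaulay ring $\mathcal{P}$ applies. The sequence is regular and $\mathcal{P}/(T_1,T_2,T_3)$ is itself Cohen--Macaulay. For such complete-intersection schemes ``reduced'' is equivalent to ``generically reduced'', so it suffices to exhibit one smooth point of $V(T_1,T_2,T_3)$, i.e., a point $A\neq 0$ at which the three differentials $dT_1|_A$, $dT_2|_A$, $dT_3|_A$ are linearly independent.

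The main obstacle is that $A$ cannot be chosen on the Lagrangian $\mathbb{X}_{\mathbb{O}}$: substituting the relations \eqref{2,3} and \eqref{3,3} into the explicit expressions for the partial derivatives of $T_3$ gives $\partial T_3/\partial\xi_i|_A=0$ and $\partial T_3/\partial x_i|_A=\partial T_3/\partial y_i|_A=\partial T_3/\partial z_i|_A=0$ for every $A\in\mathbb{X}_{\mathbb{O}}$, so all of $\mathbb{X}_{\mathbb{O}}$ lies in the singular locus of $V$. One must therefore look outside $\mathbb{X}_{\mathbb{O}}$, which is possible because $\dim\mathbb{X}_{\mathbb{O}}=16<24=\dim V$. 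Concretely, taking
\[
A_*=\begin{pmatrix}0 & 1 & 0 \\ 1 & 0 & \sqrt{-1}\,{\bf e}_0 \\ 0 & \sqrt{-1}\,{\bf e}_0 & 0\end{pmatrix}\in V\setminus\mathbb{X}_{\mathbb{O}}
\]
(so $T_1(A_*)=T_2(A_*)=T_3(A_*)=0$ but $(A_*)^2\neq 0$), a direct computation yields $dT_1|_{A_*}=d\xi_1+d\xi_2+d\xi_3$, $dT_2|_{A_*}=4\,dz_0+4\sqrt{-1}\,dx_0$, and $dT_3|_{A_*}=3\,d\xi_1-3\,d\xi_3+6\sqrt{-1}\,dy_0$, and these three covectors are manifestly linearly independent. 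This produces the required smooth point, forces reducedness of $V(T_1,T_2,T_3)$, and completes the proof.
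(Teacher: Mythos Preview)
Your proof is correct and follows the same underlying strategy as the paper---show the ideal $(T_1,T_2,T_3)$ is radical via the complete-intersection/Cohen--Macaulay criterion together with the existence of a smooth point---but it is considerably more explicit and in one place more careful. The paper argues that ``irreducibility of $N_{\mathcal{J}(3)^{\mathbb{C}}}$ implies the second equality'' and then, citing Kostant, records that $N_{\mathcal{J}(3)^{\mathbb{C}}}$ is a complete intersection with points where $dT_1,dT_2,dT_3$ are independent; its supporting remark that these differentials are independent ``at the points $z=y=x=0$'' is awkward, since no nonzero point of $N_{\mathcal{J}(3)^{\mathbb{C}}}$ satisfies $z=y=x=0$. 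You instead isolate precisely what is needed---radicality rather than primeness---spell out the regular-sequence/Cohen--Macaulay reduction to generic reducedness, and exhibit a concrete smooth point $A_*\in N_{\mathcal{J}(3)^{\mathbb{C}}}\setminus\mathbb{X}_{\mathbb{O}}$ with verified differentials. Your observation that $dT_3$ vanishes identically along $\mathbb{X}_{\mathbb{O}}$ (since $dT_3|_A(B)=3\,\mathrm{tr}(A^2\circ B)=0$ when $A^2=0$), forcing the search for a smooth point outside $\mathbb{X}_{\mathbb{O}}$, is a genuine refinement not present in the paper. The one point left implicit in your write-up is the Nullstellensatz step identifying $H_k^{(2)}$ with $H_k\cap\sqrt{I_+\mathcal{P}}$, but this is already recorded in the paper's setup immediately preceding the proposition.
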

\begin{proof}
The first equality is a consequence of Hilbert
Nullstellensatz
and the irreducibility of $N_{\mathcal{J}(3)^{\mathbb{C}}}$ implies the
second equality.

We see the latter one by the following observation that
the equation $T_{1}(A)=0$ is linear so that if we replace the variable
$\xi_{3}$ by $\xi_{3}=-\xi_{1}-\xi_{2}$, then the space
$N_{\mathcal{J}(3)^{\mathbb{C}}}$ 
can be seen as a subset defined by $T_{3}(A)=0$ in the quadrics $Q_{2}=\{A\in
\mathbb{C}^{26}\backslash\{0\}~|~T_{2}(A)=0\}$ 
and the polynomial
$T_{3}$ restricted on the space $z=y=x=0$ is irreducible even modulo $T_{2}$, i.e.,
there are no decomposition such that
$T_{3}(A)={\xi_{1}}^{2}\xi_{2}+\xi_{1}{\xi_{2}}^2=(a\xi_{1}+b\xi_{2})(\alpha{\xi_{1}}^2+\beta\xi_{1}\xi_{2}+\gamma{\xi_{2}}^2)$ 
on ${\xi_{1}}^{2}+{\xi_{2}}^{2}+\xi_{1}\xi_{2}=0$. Hence the space
$N_{\mathcal{J}(3)^{\mathbb{C}}}$ must be irreducible and we have
\[
\sqrt{I_{+}(\mathcal{J}(3)^{\mathbb{C}})}={I_{+}(\mathcal{J}(3)^{\mathbb{C}})}.
\] 
\end{proof}
In fact, our space $N_{\mathcal{J}(3)^{\mathbb{C}}}$ is an irreducible
algebraic manifold and a complete intersection.
In particular, there are points in $N_{\mathcal{J}(3)^{\mathbb{C}}}$ at
which the differentials ~$dT_{1}, ~dT_{2}, ~dT_{3}$ ~are linearly independent
(see the Lemma 4 on page 345 \cite{Ko} for these aspects).

Especially, as a corollary of Proposition \ref{XO generates VF4}
we have
\begin{proposition}\label{irreducibility on Hk}
The representation of $F_{4}$ to the space $H_{k}=H_{k}^{(1)}$ is irreducible for
each $k$.
\end{proposition}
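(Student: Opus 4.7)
The plan is to derive a contradiction from the assumption that $H_{k}$ admits a proper nonzero $F_{4}$-invariant subspace $W$. By Proposition \ref{vanishing of singular harmonic functions} we have $H_{k}=H_{k}^{(1)}=\operatorname{span}_{\mathbb{C}}\{h_{A}^{k}:A\in N_{\mathcal{J}(3)^{\mathbb{C}}}\}$ with $h_{A}(X)=\operatorname{tr}(X\circ A)$. The map $A\mapsto h_{A}^{k}$ is $F_{4}^{\mathbb{C}}$-equivariant: from the invariance $\operatorname{tr}(g(X)\circ g(A))=\operatorname{tr}(X\circ A)$ one gets $h_{g(A)}(Y)=h_{A}(g^{-1}(Y))$, whence $h_{g(A)}^{k}=g\cdot h_{A}^{k}$. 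I would fix an $F_{4}$-invariant Hermitian inner product $(\cdot,\cdot)$ on $H_{k}$ obtained by averaging $\ll\cdot,\cdot\gg$ over the compact group $F_{4}$, so that $W^{\perp}$ is again $F_{4}$-invariant; any complex $F_{4}$-invariant subspace is automatically $F_{4}^{\mathbb{C}}$-invariant by complexification of the Lie-algebra action.

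The key computational input is a pairing identity. Since $h_{A}(X)=\langle X,A\rangle^{\mathcal{J}(3)^{\mathbb{C}}}$ is a linear form in $X$, expanding $h_{A}^{k}$ in monomials and evaluating Lemma \ref{properties of the inner product in the polynomial algebra} on $q=h_{A}^{k}$ gives
\[
\ll p,h_{A}^{k}\gg\;=\;k!\cdot p(c\cdot\bar{A}),
\]
where $c$ denotes the componentwise scaling ($1$ on the three $\xi_i$-coordinates and $2$ on the octonion coordinates) that converts the weighted pairing $\operatorname{tr}(X\circ A)$ into the unweighted bilinear form underlying $\ll\cdot,\cdot\gg$. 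The scaling commutes with $F_{4}$, so the induced identity for $(\cdot,\cdot)$ reads $(p,h_{A}^{k})=c_{k}\cdot p(c\bar{A})$ for a nonzero constant $c_{k}$, and the map $p\mapsto(A\mapsto p(c\bar A))$ is an $F_{4}$-equivariant automorphism of $\mathcal{P}_k$ preserving $H_{k}$.

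Now suppose $W\subsetneq H_{k}$ with $W\neq\{0\}$, so $W^{\perp}\neq\{0\}$. For $0\neq p\in W^{\perp}$, the polynomial $A\mapsto p(c\bar{A})$ is a nonzero element of $H_{k}$, so by Proposition \ref{vanishing of singular harmonic functions} ($H_{k}\cap I_{+}=\{0\}$) it does not vanish identically on $N_{\mathcal{J}(3)^{\mathbb{C}}}$; by Proposition \ref{XO generates VF4}, which asserts that $\mathbb{X}_{\mathbb{O}}$ linearly spans $N_{\mathcal{J}(3)^{\mathbb{C}}}$, polynomial continuation produces $A_{0}\in\mathbb{X}_{\mathbb{O}}$ with $p(c\bar{A_{0}})\neq 0$, so $h_{A_{0}}^{k}\notin W$. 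The symmetric argument applied with $W$ in place of $W^{\perp}$ yields $A_{1}\in\mathbb{X}_{\mathbb{O}}$ with $h_{A_{1}}^{k}\notin W^{\perp}$. Finally, since $\mathbb{X}_{\mathbb{O}}=\{A:A^{2}=0,\,A\neq 0\}$ is a single $F_{4}^{\mathbb{C}}$-orbit (the minimal nilpotent orbit of the irreducible representation $\mathcal{J}(3)_{0}^{\mathbb{C}}$, which coincides with the orbit of the highest-weight line), $\{h_{A}^{k}:A\in\mathbb{X}_{\mathbb{O}}\}$ is a single $F_{4}^{\mathbb{C}}$-orbit in $H_{k}$. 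Choosing $A_{*}\in\mathbb{X}_{\mathbb{O}}$ so that $h_{A_{*}}\in H_{1}\cong\mathcal{J}(3)_{0}^{\mathbb{C}}$ generates the highest-weight line for a fixed Borel of $F_{4}^{\mathbb{C}}$, the vector $h_{A_{*}}^{k}$ is a weight vector of weight $k\lambda_{4}$, the maximal weight in $S^{k}H_{1}\supset H_{k}$; the cyclic $F_{4}^{\mathbb{C}}$-submodule $V\subseteq H_{k}$ generated by $h_{A_{*}}^{k}$ is therefore the irreducible representation $V_{k\lambda_{4}}$, and contains the entire orbit $\{h_{B}^{k}:B\in\mathbb{X}_{\mathbb{O}}\}$. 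By $F_{4}^{\mathbb{C}}$-invariance $V$ lies entirely in $W$ or in $W^{\perp}$, contradicting, respectively, the existence of $A_{0}$ or $A_{1}$.

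The hard part will be the highest-weight identification: verifying (i) that the highest-weight line of $H_{1}$ is realized by some $A_{*}\in\mathbb{X}_{\mathbb{O}}$, which is the standard fact that the minimal nilpotent orbit of a simple group in its adjoint/minuscule-type representation is the $F_{4}^{\mathbb{C}}$-orbit of a highest-weight vector, and (ii) that the cyclic module generated by $h_{A_{*}}^{k}$ is irreducible, which follows since $k\lambda_{4}$ is the maximal weight appearing anywhere in $H_{k}$. A clean alternative closing the argument without invoking highest-weight uniqueness would be a direct dimension count: one computes $\dim H_{k}$ recursively from Definition \ref{Cayley harmonic polynomial} (using the Hilbert series $\prod(1-t^{d_{i}})^{-1}$ with $d_{1}=1,d_{2}=2,d_{3}=3$) and compares with $\dim V_{k\lambda_{4}}$ from Weyl's dimension formula for $F_{4}$, matching these to force $H_{k}=V_{k\lambda_{4}}$. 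This is the content that the paper's reference to \cite{He}, \cite{HL}, \cite{Ko} supplies in the abstract framework.
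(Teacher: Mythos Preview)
Your approach via highest-weight theory is genuinely different from the paper's. The paper argues (rather elliptically, leaning on the framework of \cite{He} and \cite{Ko}) that a decomposition $H_k=G_1\oplus G_2$ into invariant summands would force the connected space $\mathbb{X}_{\mathbb{O}}$ to split into two disjoint closed pieces. Your route---locating a highest-weight vector $h_{A_*}^k$ and showing its cyclic $F_4^{\mathbb{C}}$-module $V\cong V_{k\lambda_4}$ sits in $H_k$ with multiplicity one---is more structural and, if it worked, would yield the explicit identification $H_k\cong V_{k\lambda_4}$ rather than mere irreducibility.

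However, there is a real gap. Your contradiction needs $A_0,A_1\in\mathbb{X}_{\mathbb{O}}$ with $h_{A_0}^k\notin W$ and $h_{A_1}^k\notin W^{\perp}$. The pairing identity delivers such points only in $N_{\mathcal{J}(3)^{\mathbb{C}}}$: the step ``by Proposition~\ref{XO generates VF4}\ldots polynomial continuation produces $A_0\in\mathbb{X}_{\mathbb{O}}$'' fails because $\mathbb{X}_{\mathbb{O}}$ has complex dimension $16$ inside the $24$-dimensional $N_{\mathcal{J}(3)^{\mathbb{C}}}$, so a degree-$k$ polynomial may well vanish on $\mathbb{X}_{\mathbb{O}}$ without vanishing on $N_{\mathcal{J}(3)^{\mathbb{C}}}$; linear spanning says nothing about higher-degree vanishing. (A smaller issue: the diagonal scaling $c$ does \emph{not} commute with $F_4$, since $F_4$ mixes diagonal and off-diagonal entries; this is repaired by working in coordinates orthonormal for $\langle\cdot,\cdot\rangle^{\mathcal{J}(3)}$, where $c$ becomes the identity.) Without $A_0,A_1$ in $\mathbb{X}_{\mathbb{O}}$, the dichotomy $V\subset W$ or $V\subset W^{\perp}$---which, incidentally, requires the multiplicity-one fact and not merely irreducibility of $V$---yields no contradiction: if $V\subset W$ you only conclude that all $h_B^k$ with $B\in\mathbb{X}_{\mathbb{O}}$ lie in $W$, and you have not established that these span $H_k$. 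What actually closes the argument is precisely your ``alternative'': the dimension comparison $\dim H_k=\dim V_{k\lambda_4}$, which forces $V=H_k$ outright. So the Weyl-dimension count is not an alternative ending but the missing ingredient; once you accept it, the whole $W,W^{\perp},A_0,A_1$ apparatus becomes unnecessary.
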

\begin{proof}
Since $\mathbb{X}_{\mathbb{O}}$ is connected, if the space $H_{k}$ is decomposed 
into two invariant subspaces, $H_{k}=G_{1}\oplus G_{2}$, 
then they are orthogonal.
Consequently, according to this decomposition the space
$\mathbb{X}_{\mathbb{O}}$ must be separated into two non intersecting
closed subsets and this is a contradiction. 

Hence each $H_{k}$ must be irreducible under the
action by the group $F_{4}$.
\end{proof}

Now we sum up a conclusion as 
\begin{theorem}\label{decomposition of L2 space of P2O}
Since the functions in the invariant polynomials $I_{k}$ are constant
on the manifold $P^{2}\mathbb{O}$, by restricting polynomial 
functions in $\mathcal{P}_{k}[\mathcal{J}(3)]$ to $P^{2}\mathbb{O}$ 
the decompositions $\mathcal{P}_{k}[\mathcal{J}(3)]=H_{k}+I_{1}H_{k-1}+\cdots +I_{k}$ for each $k$ give 
totally a decomposition of a subspace in $C^{\infty}(P^2\mathbb{O})$ as 
\[
\sum\limits_{k=0}^{\infty}\,{H_{k}}_{|P^2\mathbb{O}},
\]
which is dense
in $C^{\infty}(P^{2}\mathbb{O})$. 
\end{theorem}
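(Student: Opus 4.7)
The plan proceeds in three steps: an algebraic reduction of the restricted decomposition, a representation-theoretic identification via irreducibility, and an analytic density argument.

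First, I would use the transitive action $F_4 \curvearrowright P^2\mathbb{O} \cong F_4/\mathrm{Spin}(9)$ from \eqref{isomorphism between P2O and F4/Spin(9)} to observe that any $F_4$-invariant function is constant on $P^2\mathbb{O}$. Evaluating at $X_1$, one has $X_1^2 = X_1$ and $\mathrm{tr}\,X_1 = 1$, so the generators restrict as $T_1|_{P^2\mathbb{O}} = T_2|_{P^2\mathbb{O}} = T_3|_{P^2\mathbb{O}} \equiv 1$, and every polynomial in $I_j$ restricts to a (computable) constant. Applying this to the direct sum $\mathcal{P}_k[\mathcal{J}(3)] = H_k \oplus H_{k-1}\cdot I_1 \oplus \cdots \oplus I_k$ of Proposition \ref{direct sum}, each summand $H_{k-j}\cdot I_j$ collapses to a scalar multiple of $H_{k-j}|_{P^2\mathbb{O}}$, so restriction of all polynomial functions yields exactly $\sum_{k\geq 0} H_k|_{P^2\mathbb{O}}$.

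Second, to show this sum is direct in $C^\infty(P^2\mathbb{O})$, I would combine Proposition \ref{irreducibility on Hk} with the rank-one structure of the pair $(F_4, \mathrm{Spin}(9))$. The restriction $H_k \to H_k|_{P^2\mathbb{O}}$ is $F_4$-equivariant; its kernel is an $F_4$-submodule of $H_k$, so by irreducibility either $H_k|_{P^2\mathbb{O}} \cong H_k$ as $F_4$-modules or $H_k|_{P^2\mathbb{O}} = 0$. The latter is excluded because, via the correspondence of $\S 8.1$, the $F_4$-invariant operator $-\Delta$ associated to $T_2$ (or equivalently the Laplace--Beltrami operator on the two-point homogeneous space $P^2\mathbb{O}$) has $H_k|_{P^2\mathbb{O}}$ as an eigenspace, and by the cited classification \cite{He},\cite{HL},\cite{Ko} for compact rank-one symmetric spaces the spherical representations $H_k$ are pairwise inequivalent with distinct Laplacian eigenvalues $\lambda_k$. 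Pairwise inequivalent irreducibles sit $L^2$-orthogonally, giving directness.

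Third, for density in $C^\infty(P^2\mathbb{O})$, I would apply Stone--Weierstrass to the real subalgebra $\mathbb{R}[\mathcal{J}(3)]|_{P^2\mathbb{O}}$: the $27$ matrix entries separate points, and constants are present, so polynomial restrictions are uniformly dense in $C^0(P^2\mathbb{O})$, hence $L^2$-dense. Combined with the Peter--Weyl decomposition $L^2(P^2\mathbb{O}) = \widehat{\bigoplus}_k H_k|_{P^2\mathbb{O}}$ established in step two, any $f \in C^\infty(P^2\mathbb{O})$ admits an eigenfunction expansion $f = \sum_k f_k$, $f_k \in H_k|_{P^2\mathbb{O}}$, with $\lambda_k \to \infty$; standard elliptic/Sobolev estimates on the compact manifold $P^2\mathbb{O}$ then force convergence in every $C^m$-norm, whence $\sum_k H_k|_{P^2\mathbb{O}}$ is $C^\infty$-dense.

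The main obstacle is the structural input of step two, specifically showing that each $H_k|_{P^2\mathbb{O}}$ is nonzero and that these realize pairwise inequivalent $F_4$-representations. Doing this by bare hands requires either exhibiting for each $k$ an explicit polynomial in $H_k$ with nonvanishing restriction (conceivably via a harmonic spanning set built from the linear functions $\mathrm{tr}(X\circ A)$ with $A \in \mathbb{X}_{\mathbb{O}}$, using Proposition \ref{XO generates VF4} and Proposition \ref{vanishing of singular harmonic functions}), or a highest-weight computation for the Cartan--Helgason condition on $(F_4,\mathrm{Spin}(9))$. The cleaner path is to invoke the Cartan--Helgason theorem, as in \cite{He},\cite{HL},\cite{Ko}, which directly identifies the spherical $F_4$-representations with the $H_k$.
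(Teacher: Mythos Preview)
Your proposal is correct but considerably more elaborate than the paper's argument. The paper's proof is two lines: the reduction to $\sum_k H_k|_{P^2\mathbb{O}}$ is absorbed into ``the preceding arguments'' (essentially your Step~1), and density is obtained directly by extending a smooth function on $P^2\mathbb{O}$ to an open neighborhood in $\mathcal{J}(3)\cong\mathbb{R}^{27}$ and invoking Weierstrass-type polynomial approximation in the $C^\infty$-topology on compacts, then restricting. Your Step~2 on directness and pairwise inequivalence is not part of what this theorem asserts---the statement only claims density of the sum, not that it is direct---and the paper defers those structural facts to the subsequent discussion via Peter--Weyl and the operators $\mathcal{A}_k$, $\mathcal{B}_k$ in \S9. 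Your Step~3 route through Stone--Weierstrass, Peter--Weyl, and Sobolev/elliptic estimates is valid, but it requires the orthogonality from Step~2 as input, whereas the paper's extension-plus-Weierstrass argument needs none of that machinery. The trade-off is that your approach packages more information (eigenspace identification, directness) into a single argument, at the cost of invoking Cartan--Helgason or an explicit nonvanishing check that the paper avoids here.
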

\begin{proof}
Based on the preceding arguments it will be enough to remark the last
assertion, which is a standard argument.

Since any smooth function on
$P^{2}\mathbb{O}$ can be extended to a smooth function on an open neighborhood of
$P^2\mathbb{O}$
and the Weierstrass approximation theorem guarantees
that any smooth function can be approximated in the
$C^{\infty}-$topology by polynomials.
Hence the space $\sum\,{H_{k}}_{|P^2\mathbb{O}}$ is dense in $C^{\infty}(P^2\mathbb{O})$. 
\end{proof}

Before interpreting the decomposition stated in Theorem 
\ref{decomposition of L2 space of P2O} 
in the framework of the Peter-Weyl theorem for a symmetric space of our case $P^{2}\mathbb{O}$
we remark about the Riemannian metric on $P^{2}\mathbb{O}$.
\begin{proposition}\label{Killing form metric} 
The Cayley projective plane
$P^{2}(\mathbb{O})\cong F_{4}/Spin(9)$
is an irreducible Riemannian symmetric space,
that is,
the stationary subgroup $Spin(9)$ acts irreducibly
on the tangent space $T_{X_{1}}P^{2}(\mathbb{O})$.
By Schur's lemma
this implies that
$P^{2}(\mathbb{O})$ has an essentially unique $F_{4}$-invariant
Riemannian metric.
Thus, $(\,\cdot\,,\,\cdot\,)^{P^2\mathbb{O}}$
coincides with the metric on $P^{2}(\mathbb{O})$ induced from the Killing form of 
the Lie algebra of $F_{4}$
up to a constant factor.
\end{proposition}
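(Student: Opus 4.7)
The plan is to establish the three claims of the proposition in sequence: that $P^{2}\mathbb{O}$ is a Riemannian symmetric space, that the isotropy representation of $Spin(9)$ at $X_{1}$ is irreducible, and that these together force the invariant metric to be unique up to a positive constant.

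First I would exhibit $P^{2}\mathbb{O}\cong F_{4}/Spin(9)$ as a symmetric space by producing an involutive automorphism $\sigma$ of $F_{4}$ whose fixed-point subgroup is $Spin(9)$. Concretely, conjugation by $I-2X_{1}=\mathrm{diag}(-1,1,1)$ (regarded as an element of $F_{4}$) is an involution on $\mathcal{J}(3)$ stabilising $X_{1}$, and the corresponding Cartan decomposition of the Lie algebra $\mathfrak{f}_{4}=\mathfrak{spin}(9)\oplus\mathfrak{m}$ identifies $\mathfrak{m}$ with the tangent space $T_{X_{1}}(P^{2}\mathbb{O})$. Under the concrete description of $T_{X_{1}}(P^{2}\mathbb{O})$ as the set of $Y\in\mathcal{J}(3)$ with $X_{1}\circ Y=\tfrac{1}{2}Y$, this tangent space is exactly the $16$-real-dimensional subspace of matrices of the form $\begin{pmatrix}0 & c & \theta(b)\\ \theta(c) & 0 & 0\\ b & 0 & 0\end{pmatrix}$, $b,c\in\mathbb{O}$, which becomes $\mathfrak{m}\cong\mathbb{O}\oplus\mathbb{O}$.

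The central step is to identify the isotropy action of $Spin(9)$ on $T_{X_{1}}(P^{2}\mathbb{O})\cong\mathbb{R}^{16}$ with the real $16$-dimensional spin representation $\Delta_{9}$ of $Spin(9)$. This is the classical description of $P^{2}\mathbb{O}$ as the rank-one symmetric space of type FII (see \cite{Yo}, \cite{SV}): the subgroup of $F_{4}$ fixing $X_{1}$ is known to be $Spin(9)$, and its action on the off-diagonal ``$(b,c)$-block'' is precisely the irreducible real spin representation. Since $\Delta_{9}$ is irreducible and of real type (its endomorphism algebra is $\mathbb{R}$), Schur's lemma forces the space of $Spin(9)$-invariant symmetric bilinear forms on $T_{X_{1}}(P^{2}\mathbb{O})$ to be one-dimensional.

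Having this, any $F_{4}$-invariant Riemannian metric on $F_{4}/Spin(9)$ is determined by its value at $X_{1}$, which must lie in this one-dimensional cone of positive $Spin(9)$-invariant inner products. Both candidate metrics qualify: the metric $g^{P^{2}\mathbb{O}}$ defined by $(Y_{1},Y_{2})\mapsto\mathrm{tr}(Y_{1}\circ Y_{2})$ is $F_{4}$-invariant by \eqref{invariance of inner product}, and the metric descended from the negative of the Killing form of $\mathfrak{f}_{4}$ restricted to $\mathfrak{m}$ is $F_{4}$-invariant by $\mathrm{Ad}(F_{4})$-invariance of the Killing form. Therefore they differ by a positive constant, yielding the proposition. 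The main obstacle is the explicit identification of the isotropy representation with $\Delta_{9}$; I would handle it by invoking the standard construction of $Spin(9)\hookrightarrow F_{4}$ as the stabiliser of a primitive idempotent and citing the established structure theory, rather than re-deriving the embedding from scratch.
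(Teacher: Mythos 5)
Your argument is correct. The paper itself offers no proof of this proposition — it is asserted as standard structure theory of the rank-one symmetric space FII, with the references \cite{Yo}, \cite{SV}, \cite{He} doing the work — and your sketch supplies exactly the canonical justification: the Cartan involution induced by conjugation with $\mathrm{diag}(-1,1,1)$, the identification of $T_{X_{1}}(P^{2}\mathbb{O})$ with the off-diagonal block $\mathbb{O}\oplus\mathbb{O}$ (consistent with the paper's description $X_{1}\circ Y=\tfrac{1}{2}Y$, which forces all diagonal entries and the $(2,3)$-entry of $Y$ to vanish), the irreducible $16$-dimensional spin representation $\Delta_{9}$ as isotropy representation, and the Schur argument. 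The only input you cite rather than prove — that the stabiliser of the primitive idempotent $X_{1}$ is $Spin(9)$ acting by $\Delta_{9}$ — is precisely what the paper also takes from the literature, so nothing is missing.
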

Let $\Phi_{k}:H_{k}\otimes {H_{k}}^{*}\longrightarrow C^{\infty}(F_{4})$
be a map defined by
\[
H_{k}\otimes {H_{k}}^{*}\ni h\otimes \varphi \longmapsto \Phi_{k}(h\otimes \varphi)(g)=\varphi(\mathcal{P}_{g^{-1}}(h)), ~g\in F_{4},
\]
then the Peter-Weyl theorem says that
the image of the map $\Phi_{k}$
is a subspace consisting of the $\dim H_{k}$ number of the spaces,
all of which are
isomorphic to $H_{k}$.

Recall we explained the
identification \eqref{isomorphism between P2O and F4/Spin(9)} of the 
quotient space $F_{4}/\text{Spin(9)}$ with $P^{2}\mathbb{O}$ 
through the correspondence $F_{4}\ni g\longmapsto g(X_{1})\in P^{2}\mathbb{O}$.

If we consider a subspace ${{H_{k}}^{*}}_{|\text{Spin}(9)}$ consisting of linear
forms in ${H_{k}}^{*}$ which are invariant under the action by $\text{Spin}(9)$,
then the functions in
\[
\Phi_{k}(H_{k}\otimes {{H_{k}}^{*}}_{|\text{Spin(9)}})
\]
are $\text{Spin}(9)$ invariant, so that it can be descended naturally
to
functions on 
$F_{4}/\text{Spin(9)}\cong P^{2}\mathbb{O}\subset \mathcal{J}(3)$. 

For $X\in\mathcal{J}(3)$ we denote the linear form $J_{X}\in {H_{k}}^{*}$
\[
H_{k}\ni h\longmapsto J_{X}(h)=h(X),
\]
that is, this is an evaluation at $X\in\mathcal{J}(3)$.
In particular, we take a linear form $J_{X_{1}}\in
{{H_{k}}^{*}}_{|\text{Spin}(9) }$, then
it can be written as 
\[
J_{X_{1}}(\mathcal{P}_{g^{-1}}(h))=\mathcal{P}_{g^{-1}}(h)(X_{1})=
h(g(X_{1})).
\]
Hence through the identification $F_{4}/\text{Spin}(9)\cong P^{2}\mathbb{O}$
the function
$J_{X_{1}}(\mathcal{P}_{g^{-1}}(h))$ 
is the restriction of the original polynomial function $h\in H_{k}$
to $P^2\mathbb{O}$.
Then we have
\[
\sum\limits_{k=0}^{\infty}\,\Phi_{k}(H_{k}\otimes \{J_{X_{1}}\})
=\sum\limits_{k=0}^{\infty}\,{H_{k}}_{|P^2\mathbb{O}}.
\]

Since $\dim H_{k+1}>\dim H_{k}$ {(see Appendix)} and
the space $\sum\limits_{k=0}^{\infty}\,{H_{k}}_{|P^2\mathbb{O}}$ is
already dense in $C^{\infty}(P^2\mathbb{O})$, a fundamental theorem on
compact symmetric spaces gives us 
\begin{proposition}\label{irreducible rep}
Each irreducible representation of the group $F_{4}$ appears in $C^{\infty}(P^{2}\mathbb{O})$
with multiplicity one as in the above way and incidentally
$\dim {{H_{k}}^{*}}_{|\text{Spin}(9)}=1$. Moreover
by the Proposition \ref{Killing form metric} we can see that
this decomposition is the eigenspace decomposition of the Laplacian on
$P^{2}\mathbb{O}$.

The dimension of the space ${{H_{k}}^{*}}_{|\text{Spin}(9)}$ is always
one and
the linear form $J_{X_{1}}$
can be seen as a base vector of the space ${{H_{k}}^{*}}_{|\text{Spin}(9)}$ for any $k$.
\end{proposition}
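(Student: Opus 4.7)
The plan is to combine the Peter--Weyl/Frobenius reciprocity description of $L_{2}(G/K)$ with the Gelfand-pair structure of $(F_{4},\mathrm{Spin}(9))$. For a compact Lie group $G$ with closed subgroup $K$, Frobenius reciprocity yields a $G$-isomorphism
\[
L_{2}(G/K)\;\widehat{\cong}\;\bigoplus_{\lambda}V_{\lambda}\otimes (V_{\lambda}^{*})^{K},
\]
summed over inequivalent unitary irreducibles $V_{\lambda}$, so the multiplicity of $V_{\lambda}$ in $L_{2}(G/K)$ equals $\dim(V_{\lambda}^{*})^{K}$. The two-point homogeneity of $F_{4}\curvearrowright P^{2}\mathbb{O}$ recorded after Definition \ref{Cayley projective plane} makes $P^{2}\mathbb{O}$ a compact Riemannian symmetric space of rank one, so $(F_{4},\mathrm{Spin}(9))$ is a Gelfand pair and $\dim(V_{\lambda}^{*})^{\mathrm{Spin}(9)}\le 1$ for every irreducible $V_{\lambda}$.

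First I would assemble the ingredients already in hand. Proposition \ref{irreducibility on Hk} makes each $H_{k}$ $F_{4}$-irreducible, and the evaluation map $H_{k}\to{H_{k}}_{|P^{2}\mathbb{O}}\subset C^{\infty}(P^{2}\mathbb{O})$ is $F_{4}$-equivariant, hence by Schur's lemma either zero or injective. The strict growth $\dim H_{k+1}>\dim H_{k}$ (Appendix) forces the $H_{k}$'s to be pairwise non-isomorphic as $F_{4}$-modules, and Theorem \ref{decomposition of L2 space of P2O} puts $\sum_{k}{H_{k}}_{|P^{2}\mathbb{O}}$ densely in $C^{\infty}(P^{2}\mathbb{O})$. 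Combined with the Gelfand bound, every nonzero ${H_{k}}_{|P^{2}\mathbb{O}}$ constitutes an isotypic component of $L_{2}(P^{2}\mathbb{O})$ of multiplicity exactly one. Since $\mathrm{Spin}(9)$ is by definition the stabiliser of $X_{1}$, the evaluation functional $J_{X_{1}}$ is automatically $\mathrm{Spin}(9)$-invariant, and showing that $J_{X_{1}}$ spans $(H_{k}^{*})^{\mathrm{Spin}(9)}$ reduces to showing ${H_{k}}_{|P^{2}\mathbb{O}}\neq\{0\}$.

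The main technical hurdle is therefore to exclude ${H_{k}}_{|P^{2}\mathbb{O}}=\{0\}$ for any $k$. For $k=1$ this is elementary: $\mathrm{tr}(X_{1}\circ B)=b_{11}$, so requiring this to vanish for every traceless $B\in\mathcal{J}(3)^{\mathbb{C}}$ is absurd. For general $k$ I would invoke Cartan--Helgason theory for the rank-one pair $(F_{4},\mathrm{Spin}(9))$: the $\mathrm{Spin}(9)$-spherical representations of $F_{4}$ form a single one-parameter family $\{V_{n}\}_{n\geq 0}$ of strictly increasing dimensions, realisable inside the polynomial algebra $\mathbb{C}[\mathcal{J}(3)]$ via the natural $F_{4}$-action in the manner of \cite{He}, \cite{HL}, \cite{Ko}. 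Being pairwise non-isomorphic irreducibles of strictly increasing dimensions, the dense subspace $\sum_{k}{H_{k}}_{|P^{2}\mathbb{O}}$ must meet each $V_{n}$; matching dimensions term-by-term yields $H_{k}\cong V_{k}$ and ${H_{k}}_{|P^{2}\mathbb{O}}\neq\{0\}$ for every $k$. Proposition \ref{vanishing of singular harmonic functions} records precisely the complete-intersection/Kostant property of $N_{\mathcal{J}(3)^{\mathbb{C}}}$ needed for this Helgason-style conclusion, so the invocation is consistent with the framework already set up in $\S 8$.

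Finally, for the Laplacian assertion, the essentially unique $F_{4}$-invariant Riemannian metric (Proposition \ref{Killing form metric}) gives an $F_{4}$-invariant Laplace--Beltrami operator $\Delta_{P^{2}\mathbb{O}}$ commuting with the $F_{4}$-action on $C^{\infty}(P^{2}\mathbb{O})$. Schur's lemma forces it to act as a scalar on each irreducible component ${H_{k}}_{|P^{2}\mathbb{O}}$; since $L_{2}(P^{2}\mathbb{O})$ is multiplicity-free by the step above, the eigenspaces of $\Delta_{P^{2}\mathbb{O}}$ coincide exactly with the summands ${H_{k}}_{|P^{2}\mathbb{O}}$, completing the proof.
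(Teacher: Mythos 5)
Your overall route --- Peter--Weyl/Frobenius reciprocity plus the Gelfand-pair bound coming from two-point homogeneity, combined with irreducibility of the $H_{k}$, the strict growth $\dim H_{k+1}>\dim H_{k}$, and density of $\sum_{k}{H_{k}}_{|P^{2}\mathbb{O}}$ --- is exactly the argument the paper compresses into its appeal to ``a fundamental theorem on compact symmetric spaces,'' so the multiplicity-one statement and the Laplacian statement are handled correctly and in the paper's spirit. The one soft spot is your treatment of ${H_{k}}_{|P^{2}\mathbb{O}}\neq\{0\}$, which is what the equality $\dim{{H_{k}}^{*}}_{|\text{Spin}(9)}=1$ (rather than $\leq 1$) really requires. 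The Cartan--Helgason ``matching dimensions term-by-term'' step does not close by itself: density only tells you that every spherical representation $V_{n}$ is realised by \emph{some} nonzero ${H_{k}}_{|P^{2}\mathbb{O}}$, and to exclude that some ${H_{k_{0}}}_{|P^{2}\mathbb{O}}$ vanishes while the remaining $H_{k}$ still exhaust the $V_{n}$'s you would need to compute $\dim V_{n}$ independently (Weyl dimension formula for the spherical highest weights) and check it against \eqref{dim hk}; you do not do this. The detour is unnecessary: since $H_{k}=H_{k}^{(1)}$ is spanned by the powers $\big(\text{tr}\,(X\circ A)\big)^{k}$ with $A\in N_{\mathcal{J}(3)^{\mathbb{C}}}$ (Proposition \ref{vanishing of singular harmonic functions}), taking $A=\tau_{\mathbb{O}}(X_{1},Y)$ gives $J_{X_{1}}\big(\,(\text{tr}\,(\cdot\circ A))^{k}\big)=\big(\tfrac{1}{2}||Y||^{2}\big)^{k}\neq 0$, so $J_{X_{1}}$ is a nonzero $\text{Spin}(9)$-invariant functional on $H_{k}$ and ${H_{k}}_{|P^{2}\mathbb{O}}\neq\{0\}$ for every $k$; with that substitution your proof is complete.
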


\section{Inverse of Bargmann type transformation}

In this section, based on the data obtained until $\S 8$ 
we consider our Bargmann type transformation 
\[
\mathfrak{B}:\sum\,\mathcal{P}_{k}[\mathbb{X}_{\mathbb{O}}]\longrightarrow C^{\infty}(P^2\mathbb{O})
\]
with respect to the parameter family of the inner products 
$\{\boldsymbol{(}*\,,\,*\boldsymbol{)}_{\varepsilon}\}_{-22<\varepsilon}$ 
on the space $\sum\,\mathcal{P}_{k}[\mathbb{X}_{\mathbb{O}}]$ on its
boundedness and invertibility.
It has a dense image from $\sum
\,\mathcal{P}_{k}[\mathbb{X}_{\mathbb{O}}]$ always for a possible
value 
of the parameter $\varepsilon$, but 
unlike the cases of spheres and other projective spaces (see \cite{Ra2},\cite{Fu1},
\cite{FY}), 
it need not be an isomorphism when $\varepsilon=0$. This
means in cases of the values of the parameter $\varepsilon>-47/2$
there are quantum states in $L_{2}(P^{2}\mathbb{O})$ which can not be
seen by classical observables.  

\subsection{Inverse transformation}

Let $\mathcal{A}_{k}$ 
be a transformation defined by
\begin{align}
&\mathcal{A}_{k}:H_{k}\ni \varphi\longmapsto
\int_{P^2\mathbb{O}}\,\varphi(X)\cdot \big(\text{tr}\,(X\circ A)\big)^{k}\,dv_{P^2\mathbb{O}}(X)\in
  \mathcal{P}_{k}[\mathbb{X}_{\mathbb{O}}].\label{eigenfunction to polynomial}
\intertext{and}
&\mathbb{A}_{k}:H_{k}\ni\varphi\longrightarrow \mathbb{A}_{k}(\varphi)
=\gamma\circ \mathcal{A}(\varphi)=\mathcal{A}_{k}(\varphi)\cdot{\bf t}_{0}
\otimes\Omega_{\mathbb{O}}\in 
\Gamma_{\mathcal{G}}\left(\mathbb{L}\otimes K^{\mathcal{G}},\mathbb{X}_{\mathbb{O}}\right).
\end{align}
The correspondence by $\gamma$ is defined in \eqref{holomorphic function to L-valued 16 form}.
\begin{proposition}
For any inner product defined on the space
$\mathcal{P}_{k}[\mathbb{X}_{\mathbb{O}}]$ according to the value of the
parameter $\varepsilon$,
the operator $\mathcal{A}_{k}$ is a constant times a unitary operator.
\end{proposition}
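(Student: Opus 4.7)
The plan is to combine the $F_4$-equivariance of $\mathcal{A}_k$ with the irreducibility of $H_k$ as an $F_4$-module (Proposition \ref{irreducibility on Hk}), and then apply Schur's lemma to the pulled-back Hermitian form.

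First I would verify that $\mathcal{A}_k$ intertwines the natural $F_4$-actions on source and target. For $g \in F_4$, a change of variable $Y = g^{-1}(X)$ in \eqref{eigenfunction to polynomial}, combined with the invariance of $dv_{P^2\mathbb{O}}$ under $F_4$ (Proposition \ref{Killing form metric}) and the identity $\text{tr}(g(Y) \circ A) = \text{tr}(Y \circ g^{-1}(A))$ (from \eqref{selfadjointness of Jordan product in the inner product} and \eqref{invariance of trace}), should give $\mathcal{A}_k \circ \mathcal{P}_g = \mathcal{P}_g \circ \mathcal{A}_k$. In parallel, I would check that both relevant inner products are $F_4$-invariant: the restriction of the $L_2(P^2\mathbb{O}, dv_{P^2\mathbb{O}})$-inner product to $H_k$ is invariant by Proposition \ref{Killing form metric}, and the $\varepsilon$-inner product \eqref{parameter family of inner products} on $\mathcal{P}_k[\mathbb{X}_{\mathbb{O}}]$ is invariant because $\Omega_{\mathbb{O}}$ is $F_4$-invariant (Proposition \ref{invariance under F4}), $\|A\|$ is preserved by $F_4$ (via \eqref{invariance of inner product}), and the Liouville volume form is preserved by the symplectic $F_4$-action.

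With these ingredients, the Hermitian form $B(\varphi_1, \varphi_2) := \boldsymbol{(}\mathcal{A}_k(\varphi_1), \mathcal{A}_k(\varphi_2)\boldsymbol{)}_{\varepsilon}$ on $H_k$ is $F_4$-invariant. Schur's lemma applied to the irreducible representation $H_k$ then forces $B$ to be a real scalar $c_k(\varepsilon)$ multiple of the $L_2$-inner product on $H_k$, which is precisely the assertion that $\mathcal{A}_k$ equals $\sqrt{c_k(\varepsilon)}$ times a unitary onto its image. To exclude $c_k(\varepsilon) = 0$, I would show that $\mathcal{A}_k$ is not the zero map. For $A \in \mathbb{X}_{\mathbb{O}}$, the conditions $A^2 = 0$ together with Proposition \ref{trace is zero} give $T_1(A) = T_2(A) = T_3(A) = 0$, so $L$, $\Delta$, $\Gamma$ annihilate $h_A^k(X) := (\text{tr}(X \circ A))^k$, placing $h_A^k$ in $H_k$. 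The span of $\{h_A^k\}_{A \in \mathbb{X}_{\mathbb{O}}}$ is a nonzero $F_4$-invariant subspace of $H_k$, hence equals $H_k$ by irreducibility; since restriction $H_k \to H_k|_{P^2\mathbb{O}}$ is injective by Proposition \ref{irreducible rep}, the family $\{h_A^k|_{P^2\mathbb{O}}\}_{A \in \mathbb{X}_{\mathbb{O}}}$ spans $H_k|_{P^2\mathbb{O}}$, so $\mathcal{A}_k(\varphi) \equiv 0$ forces $\varphi$ to be $L_2$-orthogonal to $H_k|_{P^2\mathbb{O}}$, i.e., $\varphi = 0$.

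The main obstacle I anticipate is the $F_4$-invariance verification for the $\varepsilon$-inner product, which requires assembling the separate invariance statements for $\Omega_{\mathbb{O}}$, $\|A\|$, and the Liouville volume form; once the equivariance of $\mathcal{A}_k$ and the invariance of both inner products are in place, Schur's lemma together with the non-vanishing argument of the preceding paragraph closes the proof cleanly. Explicit determination of the scalar $c_k(\varepsilon)$ is a separate matter, to be handled by evaluating $B$ on a single convenient element such as a suitably normalized $h_{A_0}^k$.
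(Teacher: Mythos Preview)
Your approach is essentially the same as the paper's: both hinge on Schur's lemma applied to the irreducible $F_4$-module $H_k$. The paper packages this by introducing the operator $\mathcal{B}_k:\mathcal{P}_k[\mathbb{X}_{\mathbb{O}}]\to H_k$ (which is, up to normalization, the adjoint of $\mathcal{A}_k$ for the $\varepsilon$-inner product) and applying Schur to the endomorphism $\mathcal{B}_k\circ\mathcal{A}_k$ of $H_k$; you apply Schur directly to the pulled-back Hermitian form $B(\varphi_1,\varphi_2)=\boldsymbol{(}\mathcal{A}_k\varphi_1,\mathcal{A}_k\varphi_2\boldsymbol{)}_\varepsilon$. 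These are equivalent formulations, and your verification of $F_4$-equivariance and of the invariance of both inner products is correct.

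There is, however, one genuine gap. You explicitly conclude only that $\mathcal{A}_k$ is $\sqrt{c_k(\varepsilon)}$ times a unitary \emph{onto its image}. The proposition as stated (and as used afterwards, e.g.\ in the reproducing-kernel discussion where $\mathcal{A}_k$ is treated as an isomorphism $H_k\to\mathcal{P}_k[\mathbb{X}_{\mathbb{O}}]$) asserts that $\mathcal{A}_k$ is a scalar multiple of a unitary between the two spaces, which requires surjectivity onto $\mathcal{P}_k[\mathbb{X}_{\mathbb{O}}]$. The paper supplies this via a dimension count: from the direct-sum decomposition $\mathcal{P}_k[\mathcal{J}(3)^{\mathbb{C}}]=H_k\oplus\sum_{i\geq 1}H_{k-i}\cdot I_i$ (Proposition~\ref{direct sum}) together with the vanishing of $I_+$ on $\mathbb{X}_{\mathbb{O}}$ (Proposition~\ref{non-singular part}), one gets $\dim\mathcal{P}_k[\mathbb{X}_{\mathbb{O}}]\leq\dim H_k$; combined with the injectivity you already proved, this forces $\dim\mathcal{P}_k[\mathbb{X}_{\mathbb{O}}]=\dim H_k$ and hence surjectivity. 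Adding this short paragraph to your argument completes the proof.
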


\begin{proof} For $\varphi\in H_{k}$ the inner product
\begin{equation}\label{inner product of Bkvarphi}
\boldsymbol{(}\,\mathcal{A}_{k}(\varphi),\mathcal{A}_{k}(\varphi)\boldsymbol{)}_{\varepsilon} 
\end{equation} 
is expressed as
\begin{align*}
&\boldsymbol{(}\,\mathcal{A}_{k}(\varphi),\mathcal{A}_{k}(\varphi)\boldsymbol{)}_{\varepsilon} \\
&=\int_{\mathbb{X}_{\mathbb{O}}}\,\left|\int_{P^2\mathbb{O}}\,\varphi(X)
(\,\text{tr}\,(X\circ A)\,)^{k}dv_{P^2\mathbb{O}}(X)\right|^{2}\cdot
  e^{-2\sqrt{2}\pi||A||^{1/2}}||A||^{\varepsilon}\Omega_{\mathbb{O}}\wedge\overline{\Omega_{\mathbb{O}}}\\
&=\int_{P^2\mathbb{O}}\,\int_{P^2\mathbb{O}}\,
\left(\int_{\mathbb{X}_{\mathbb{O}}}\,\big(\text{tr}\,(\tilde{X}\circ A)\,\big)^{k}
\big(\,\text{tr}\,({X}\circ\overline{A})\,\big)^{k}\cdot 
e^{-2\sqrt{2}\pi||A||^{1/2}}\cdot ||A||^{\varepsilon}\cdot\Omega_{\mathbb{O}}\wedge\overline{\Omega_{\mathbb{O}}}\right)
\times\\
&\qquad\qquad\qquad\qquad\qquad\qquad\qquad\qquad\qquad\qquad\quad\quad
\times\varphi(\tilde{X})\overline{\varphi(X)}dv_{P^2\mathbb{O}}(X)dv_{P^2\mathbb{O}}(\tilde{X}).
\end{align*}
Here we consider the operator $\mathcal{B}_{k}$
\[
\mathcal{P}_{k}[\mathbb{X}_{\mathbb{O}}]\ni h\longmapsto
\mathcal{B}_{k}(h):=\int_{\mathbb{X}_{\mathbb{O}}}\,h(A)\cdot \big(\text{tr}\,(X\circ
\overline{A})\big)^{k}e^{-2\sqrt{2}\pi||A||^{1/2}}\cdot||A||^{\varepsilon}\cdot\Omega_{\mathbb{O}}(A)
\wedge\overline{\Omega_{\mathbb{O}}(A)}\in H_{k}.
\]
Since $H_{k}$ consists of linear sums of functions of the form
$\big(\text{tr}\,(X\circ A)\big)^{k}$ by arbitrary
$A\in\mathbb{X}_{\mathbb{O}}$ (see Proposition \ref{vanishing of singular harmonic functions}),
we see that $\mathcal{B}_{k}(h)\in H_{k}$.
Then
the inner product \eqref{inner product of Bkvarphi}
is understood as
\[
\boldsymbol{(}\,\mathcal{A}_{k}(\varphi),\mathcal{A}_{k}(\varphi)\boldsymbol{)}_{\varepsilon} 
=(\mathcal{B}_{k}\circ \mathcal{A}_{k}(\varphi),\varphi)^{P^2\mathbb{O}}.
\]
Then the operator $\mathcal{B}_{k}\circ\mathcal{A}_{k}$ commutes with
the $F_{4}$ action on $H_{k}$. Hence it must be a constant times
identity operator (which constant we put $b_{k}$) so that
the kernel function defined by the integral
\[
L_{k}(X,\tilde{X}):=
\left(\int_{\mathbb{X}_{\mathbb{O}}}\,\big(\,\text{tr}\,(\tilde{X}\circ
  A)\,\big)^{k}
\big(\,\text{tr}\,({X}\circ\overline{A})\,\big)^{k}\cdot
e^{-2\sqrt{2}\pi||A||^{1/2}}\cdot||A||^{\varepsilon}\cdot
\Omega_{\mathbb{O}}\wedge\overline{\Omega_{\mathbb{O}}}\right)
\]
must satisfies the invariance:
\begin{equation}\label{invariance of L}
L_{k}(g(X),\tilde{X})=L_{k}(X,g^{-1}(\tilde{X})),~\text{for}~{g}\in
F_{4},~X,\tilde{X}\in P^{2}\mathbb{O}.
\end{equation}
Then the constant $b_{k}$ is given by
\begin{equation}\label{trace of L}
\text{Trace of the operator $\mathcal{B}_{k}\circ\mathcal{A}_{k}$}=
\int_{P^2\mathbb{O}}L_{k}(X,X)dv_{P^2\mathbb{O}}=b_{k}\dim H_{k}.
\end{equation}
and the integral $\displaystyle{\int_{P^2\mathbb{O}}\,L_{k}(X,X)dv_{P^2\mathbb{O}}}$ is
given by
\[
{\int_{P^2\mathbb{O}}\,L_{k}(X,X)dv_{P^2\mathbb{O}}}\equiv L_{k}(X,X)\cdot V\hspace{-0.13cm}{ol(P^{2}\mathbb{O})},
\]
since by the invariance \eqref{invariance of L} the function 
$L_{k}(X,X)$ is a constant function and apparently is non-zero.

Now we know $\mathcal{B}_{k}$ is injective and so
\[
\dim H_{k}\leq \dim \mathcal{P}_{k}[\mathbb{X}_{\mathbb{O}}].
\]
On the other hand, degree $k$ polynomials generated by the
invariant polynomials which are naturally extended to the complexification $\mathcal{J}(3)^{\mathbb{C}}$, 
that is, 
the polynomials
\[
\sum_{i=0}^{k} \mathcal{P}_{k-i}[\mathcal{J}(3)^{\mathbb{C}}]\cdot
I_{k-i}=\sum_{i=0}^{k} H_{k-i}\cdot I_{i}
\]
({see Lemma \ref{two expressions of the complement of harmonic space}})
are all vanishing on the manifold $\mathbb{X}_{\mathbb{O}}$ so that
\[
\dim \mathcal{P}[\mathbb{X}_{\mathbb{O}}]\leq \dim
\mathcal{P}[\mathcal{J}(3)^{\mathbb{C}}]
-\sum_{i=0}^{k}\,\dim H_{k}\cdot\dim I_{k},
\]
(see Proposition \ref{direct sum}).

Hence the operator $\mathcal{B}_{k}$ is also surjective to the space
$\mathcal{P}_{k}[\mathbb{X}_{\mathbb{O}}]$.
Consequently, the operator $\mathcal{B}_{k}$ is a constant times a unitary operator.
\end{proof}
Next, we determine the concrete value of the constant $b_{k}$:
\begin{proposition}\label{the value bk=Lk(X)}
\[
L_{k}(X,X)=b_k\cdot \dim H_{k}
={2^{26}}\cdot V\hspace{-0.11cm}{ol(S(P^{2}\mathbb{O}))}\cdot\frac{\Gamma(4k+44+2\varepsilon)}{2^{8k+66+3\varepsilon}\pi^{4k+44+2\varepsilon}},
\]
where the constant $V\hspace{-0.1cm}{ol}(S(P^{2}\mathbb{O}))$ is the
volume of the unit cotangent sphere bundle $S(P^{2}\mathbb{O})$ of $P^{2}\mathbb{O}$ with respect
to the volume form 
\[
d\sigma_{S(P^{2}\mathbb{O})}:=\frac{1}{16!}\cdot{
\theta^{P^2\mathbb{O}}\wedge \big(\omega^{P^{2}\mathbb{O}})^{15}}_{\big|S(P^2\mathbb{O})}.
\]
\end{proposition}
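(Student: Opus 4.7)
The strategy is to compute $L_k(X,X)$ by three successive reductions: remove the $X$-dependence by $F_{4}$-invariance, convert $\Omega_{\mathbb{O}}\wedge\overline{\Omega_{\mathbb{O}}}$ into the Liouville volume form via the Calabi--Yau identity from $\S 6$, and polar-decompose on the cotangent fibre. The identity $b_{k}\dim H_{k}=V(P^{2}\mathbb{O})\cdot L_{k}(X,X)$ obtained just above the statement then identifies $b_{k}\dim H_{k}$ with the same closed form up to the volume factor of the base.

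First, I would verify that $L_{k}(X,X)$ is constant in $X\in P^{2}\mathbb{O}$. The $F_{4}$-invariance of $||A||$ and of $\Omega_{\mathbb{O}}\wedge\overline{\Omega_{\mathbb{O}}}$ established in $\S 6$, together with the automorphism identity $\text{tr}(g(X)\circ A)=\text{tr}(X\circ g^{-1}(A))$ and the change of variable $A\mapsto g^{-1}(A)$, give $L_{k}(g(X),g(X))=L_{k}(X,X)$; transitivity of $F_{4}$ on $P^{2}\mathbb{O}$ supplies the constancy. Next, substituting
\[
\Omega_{\mathbb{O}}\wedge\overline{\Omega_{\mathbb{O}}}=2^{26}\,||A||^{14}\cdot\frac{1}{16!}\{\tau_{\mathbb{O}}^{-1}\}^{*}\bigl(\omega^{P^{2}\mathbb{O}}\bigr)^{16}
\]
together with $||A||=||Y||^{2}$ transfers the integral to $T^{*}_{0}(P^{2}\mathbb{O})$ against the Liouville volume form, with overall radial weight $||Y||^{28+2\varepsilon}$.

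Introducing polar coordinates $Y=rY_{0}$ with $||Y_{0}||=1$ on the fibre, the scaling $\tau_{\mathbb{O}}(X',rY_{0})=r^{2}\tau_{\mathbb{O}}(X',Y_{0})$ yields $|\text{tr}(X\circ A)|^{2k}=r^{4k}|\text{tr}(X\circ A_{0})|^{2k}$, where $A_{0}=\tau_{\mathbb{O}}(X',Y_{0})$ lies in $S(\mathbb{X}_{\mathbb{O}})$. Combined with the standard polar identity for the Liouville form, matched to the paper's normalization $d\sigma_{S(P^{2}\mathbb{O})}=(16!)^{-1}\theta^{P^{2}\mathbb{O}}\wedge(\omega^{P^{2}\mathbb{O}})^{15}|_{S(P^{2}\mathbb{O})}$, the integral splits as the product of the radial factor
\[
\int_{0}^{\infty}r^{4k+43+2\varepsilon}e^{-2\sqrt{2}\pi r}\,dr=\frac{\Gamma(4k+44+2\varepsilon)}{(2\sqrt{2}\pi)^{4k+44+2\varepsilon}}=\frac{\Gamma(4k+44+2\varepsilon)}{2^{6k+66+3\varepsilon}\,\pi^{4k+44+2\varepsilon}}
\]
and the angular factor $J_{k}:=\int_{S(P^{2}\mathbb{O})}|\text{tr}(X\circ A_{0}(X',Y_{0}))|^{2k}\,d\sigma_{S(P^{2}\mathbb{O})}$, which is independent of $X$ by the same $F_{4}$-argument.

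The main obstacle is the evaluation of $J_{k}$. Using two-point homogeneity of $F_{4}$ on $S(P^{2}\mathbb{O})\cong F_{4}/\text{Spin}(7)$ and Haar integration on $F_{4}$, I would unfold
\[
J_{k}=\frac{V(S(P^{2}\mathbb{O}))}{V(P^{2}\mathbb{O})}\int_{P^{2}\mathbb{O}}|\text{tr}(X\circ A_{0})|^{2k}\,dv_{P^{2}\mathbb{O}}(X)
\]
for a fixed unit-norm $A_{0}=\tau_{\mathbb{O}}(X_{1},Y_{1})$. Choosing a canonical $Y_{1}\in T_{X_{1}}(P^{2}\mathbb{O})$ so that $A_{0}$ is supported in the upper $2\times 2$ block of $\mathcal{J}(3)^{\mathbb{C}}$, the inner integral becomes an explicit polynomial integral of the form $\int_{P^{2}\mathbb{O}}\bigl((\xi_{1}-\xi_{2})^{2}/4+c_{0}^{2}\bigr)^{k}\,dv(X)$; this is then evaluated by binomial expansion and the orthogonality of the $\text{Spin}(9)$-harmonic components in the decomposition of $L^{2}(P^{2}\mathbb{O})$ from $\S 8$, equivalently by observing that $(\text{tr}(X\circ A_{0}))^{k}$ lies in the single irreducible $F_{4}$-submodule $H_{k}$ (Propositions on Cayley-harmonic polynomials), so that its $L^{2}$-norm is determined by one scalar. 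Assembling all prefactors, cross-checked at $k=0$ where $J_{0}=V(S(P^{2}\mathbb{O}))$, then produces the stated closed form.
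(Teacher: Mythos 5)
Your skeleton — constancy in $X$ by $F_{4}$-invariance, substitution of $\Omega_{\mathbb{O}}\wedge\overline{\Omega_{\mathbb{O}}}=2^{26}||A||^{14}\cdot\frac{1}{16!}\{\tau_{\mathbb{O}}^{-1}\}^{*}(\omega^{P^{2}\mathbb{O}})^{16}$, and the polar splitting with radial integral $\Gamma(4k+44+2\varepsilon)/(2\sqrt{2}\pi)^{4k+44+2\varepsilon}$ — matches the paper, and your bookkeeping of the powers of $2$ and $\pi$ is consistent with the stated formula \emph{provided} the angular factor equals $V\!ol(S(P^{2}\mathbb{O}))\cdot 2^{-2k}$. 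The genuine gap is precisely there. You reduce $J_{k}$ to $\frac{V\!ol(S(P^{2}\mathbb{O}))}{V\!ol(P^{2}\mathbb{O})}\int_{P^{2}\mathbb{O}}|\text{tr}(X\circ A_{0})|^{2k}\,dv_{P^{2}\mathbb{O}}(X)$ for a fixed unit $A_{0}$, and then assert this is "determined by one scalar" because $(\text{tr}(X\circ A_{0}))^{k}$ lies in the irreducible module $H_{k}$. But that scalar is exactly the unknown constant relating the $L_{2}(P^{2}\mathbb{O})$-norm on $H_{k}$ to the algebraic normalization — i.e.\ it is, up to elementary factors, $b_{k}$ itself. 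The irreducibility argument is therefore circular, and the alternative "binomial expansion" is not carried out; neither route produces the value $4^{-k}$ that the stated closed form requires. Worse, your own reduction makes the needed identity checkable at $k=1$: by Schur orthogonality the invariant bilinear form $(B,C)\mapsto\frac{1}{V}\int_{P^{2}\mathbb{O}}h_{B}h_{C}\,dv$ on $\mathcal{J}_{0}(3)$ equals $\lambda\langle B,C\rangle^{\mathcal{J}(3)}$ with $26\lambda=\frac{1}{V}\int_{P^{2}\mathbb{O}}||X-\tfrac13 Id||^{2}dv=\tfrac23$, so with $A_{0}=B+\sqrt{-1}C$, $||B||^{2}=||C||^{2}=\tfrac12$, one gets $\frac{1}{V}\int|\text{tr}(X\circ A_{0})|^{2}dv=\lambda=\tfrac{1}{39}$, not $\tfrac14$. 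So the moment integral you have set up cannot be evaluated to the stated answer; you must either find the error in this reduction or confront the discrepancy with the target formula.

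For comparison, the paper never meets a fixed-$A_{0}$ moment over $P^{2}\mathbb{O}$. It observes that $c_{k}:=\int_{F_{4}}|\text{tr}(X\circ g(A/||A||))|^{2k}dv_{F_{4}}(g)$ is independent of $X$ and $A$, and then evaluates it by letting the covector ride along with $g$: writing $A_{g}=g(\tau_{\mathbb{O}}(X,Y))$ with base point $X(A_{g})=g(X)$, the $F_{4}$-invariance of the trace gives $\text{tr}\big(X(A_{g})\circ A_{g}\big)=\text{tr}\big(X\circ\tau_{\mathbb{O}}(X,Y)\big)=\tfrac12||Y||^{2}$, a real constant, whence $c_{k}=2^{-2k}$ immediately. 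The crux of your write-up is thus exactly the step where your argument and the paper's diverge (fixed base point versus base point moving with $g$), and as submitted your proposal neither completes its own computation nor reproduces the paper's; it does not establish the proposition.
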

\begin{proof}
Since $L_{k}(X,X)= 
\displaystyle{\int_{\mathbb{X}_{\mathbb{O}}}\,
\Big|\,\text{tr}\,({X}\circ A)\,\Big|^{2k}\cdot
e^{-2\sqrt{2}\pi||A||^{1/2}}||A||^{\varepsilon}\cdot \Omega_{\mathbb{O}}(A)\wedge\overline{\Omega_{\mathbb{O}}(A)}}$
does not depend on the point $X\in P^{2}{\mathbb{O}}$, we have
\begin{align}
&L_{k}(X,X)=\int_{\mathbb{X}_{\mathbb{O}}}\Big|\,\text{tr}\,(X\circ A)\,\Big|^{2k}\cdot 
e^{-2\sqrt{2}\pi||A||^{1/2}}\cdot||A||^{\varepsilon}\cdot\Omega_{\mathbb{O}}(A)\wedge\overline{\Omega_{\mathbb{O}}(A)}\notag\\
&=\int_{F_{4}}
\left(\int_{\mathbb{X}_{\mathbb{O}}}\Big|\,\text{tr}\,(g^{-1}(X)\circ A)\,\Big|^{2k}
\cdot e^{-2\sqrt{2}\pi||A||^{1/2}} \cdot||A||^{\varepsilon}\cdot
\Omega_{\mathbb{O}}(A)\wedge\overline{\Omega_{\mathbb{O}}(A)}\right)dv_{F_{4}}(g)\notag\\
&=\int_{\mathbb{X}_{\mathbb{O}}}\left(\int_{F_{4}}\,\left|\,\text{tr}\,\left(X\circ g\left(\frac{A}{||A||}\right)
\,\right)\right|^{2k}dv_{F_{4}}(g)\right) 
\cdot||A||^{2k+\varepsilon}\cdot e^{-2\sqrt{2}\pi||A||^{1/2}}\Omega_{\mathbb{O}}(A)\wedge\overline{\Omega_{\mathbb{O}}(A)},
\label{bk and intgration on F4}
\end{align}
where $dv_{F_{4}}$ is the normalized Haar measure on $F_{4}$. 

The function 
\begin{equation}\label{invariant integrand function}
\int_{F_{4}}\,\left|\,\text{tr}\,\left(X\circ g\left(\frac{A}{||A||}\right)\,\right)\right|^{2k}\,dv_{F_{4}}(g)
\end{equation}
does not depend neither on $X\in P^{2}\mathbb{O}$ nor on $A\in\mathbb{X}_{\mathbb{O}}$, since 
the trace function $A\longmapsto\text{tr}\,(A)$ is $F_{4}$-invariant,
the group $F_{4}$ acts both on the spaces $P^{2}\mathbb{O}$ and 
the cotangent sphere bundle $S(P^{2}\mathbb{O})\stackrel{\tau_{\mathbb{O}}}\cong S(\mathbb{X}_{\mathbb{O}})$
transitively and the Haar measure $dv_{F_{4}}$ is bi-invariant. 

Let $(X,Y)\in T^{*}_{0}(P^{2}\mathbb{O})$. Put
$A_{g}(X,Y):=g(\tau_{\mathbb{O}}(X,Y))$, then
\begin{align*}
&g(\tau_{\mathbb{O}}(X,Y))=g\left(||Y||^2X-Y^2+\sqrt{-1}\otimes\frac{||Y||}{\sqrt{2}}Y\right)
=g(||Y||)^2g(X)-g(Y)^{2}+\sqrt{-1}\otimes\frac{||g(Y)||}{\sqrt{2}}g(Y).
\end{align*}
Hence
\[
{\tau_{\mathbb{O}}}^{-1}(A_{g}(X,Y))=(\,X(A_{g}(X,Y)),
Y(A_{g}(X,Y)\,)=(\,g(X),g(Y)\,)\in T^{*}_{0}(P^{2}\mathbb{O}).
\]
The integral \eqref{invariant integrand function} is expressed as
\begin{align*}
&\frac{1}{||A_{g}(X,Y)||^{2k}}
\int_{F_{4}}\,\big|\text{tr}\,X(A_{g}(X,Y))\circ A_{g}(X,Y)\,\big|^{2k}dv_{F_{4}}(g)\\
&=\frac{1}{||Y||^{4k}}
\int_{F_{4}}\left|\,\text{tr}\,g(X)\circ\left(||g(Y)||^2g(X)-g(Y)^2
+\sqrt{-1}\otimes \frac{||g(Y)||}{\sqrt{2}}g(Y)\right)\,\right|^{2k}dv_{F_{4}}(g)\\
&=\frac{1}{||Y||^{4k}}\int_{F_{4}}\,\left(\frac{1}{2}||g(Y)||^2\right)^{2k}dv_{F_{4}}=\frac{1}{2^{2k}},
\end{align*}
since
\[
g(X)^2=g(X),\,\,\text{tr}\,g(X)=1\,,\, g(X)\circ g(Y)=\frac{1}{2}g(Y)
\]
and we used the property
\[
\text{tr}\,(X\circ Y)\circ Z=\text{tr}\,X\circ (Y\circ Z).
\]
Now the integral \eqref{bk and intgration on F4} is
\begin{align}
&\int_{\mathbb{X}_{\mathbb{O}}}\int_{F_{4}}\Big|\text{tr}\,{X}\circ g(A)\Big|^{2k}dv_{F_{4}}(g)\cdot 
e^{-2\sqrt{2}\pi||A||^{1/2}}||A||^{\varepsilon}\cdot\Omega_{\mathbb{O}}(A)
\wedge\overline{\Omega_{\mathbb{O}}(A)}\notag\\
&=
\int_{\mathbb{X}_{\mathbb{O}}}\frac{1}{2^{2k}}||A||^{2k+\varepsilon}\cdot
  e^{-2\sqrt{2}\pi||A||^{1/2}}\cdot\Omega_{\mathbb{O}}(A)\wedge\overline{\Omega_{\mathbb{O}}(A)}\notag\\
&
=\frac{2^{26}}{2^{2k}}\int_{T^{*}_{0}(P^2\mathbb{O})}\,
  ||Y||^{4k+28+2\varepsilon}\cdot e^{-2\sqrt{2}\pi||Y||}\cdot dV_{T^{*}(P^2\mathbb{O})}.
\label{one step final expression of bk}
\end{align}
where we used the relation \eqref{Calabi-Yau and Liouville}.
Then according to the decomposition of the space
$T^{*}_{0}(P^2\mathbb{O})\cong\mathbb{R}_{+}\times S(P^2\mathbb{O})$,
we can decompose the Liouville volume form $dV_{T^{*}(P^{2}\mathbb{O})}$ as
\[
dV_{T^{*}(P^{2}\mathbb{O})}=t^{15}dt\wedge d\sigma_{S(P^2\mathbb{O})},
\]
where $d\sigma_{S(P^2\mathbb{O})}$ is the volume form on the unit
cotangent sphere bundle $S(P^{2}\mathbb{O})$. Finally
we have the integral \eqref{one step final expression of bk} as
\begin{align}
&\frac{2^{26}}{2^{2k}}\cdot
\int_{T^{*}_{0}(P^2\mathbb{O})}\, ||Y||^{4k+28+2\varepsilon}\cdot
  e^{-2\sqrt{2}\pi ||Y||} dV_{P^2\mathbb{O}}
=\frac{2^{26}}{2^{2k}}\cdot\int_{S(P^2\mathbb{O})}d\sigma_{S(P^2\mathbb{O})}\int_{0}^{\infty}
 t^{4k+28+2\varepsilon}e^{-2\sqrt{2}\pi t}\cdot t^{15}dt\notag\\
&=\frac{2^{26}}{2^{2k}}\cdot V\hspace{-0.11cm}{ol(S(P^{2}\mathbb{O}))}\cdot
  \frac{\Gamma(4k+44+2\varepsilon)}{(2\sqrt{2}\pi)^{4k+44+2\varepsilon}}=\frac{1}{2^{40}\pi^{44}}\cdot V\hspace{-0.11cm}{ol(S(P^{2}\mathbb{O}))}\cdot
\frac{\Gamma(4k+44+2\varepsilon)}{2^{8k+3\varepsilon}\pi^{4k+2\varepsilon}},\notag
\end{align}
and
\begin{equation}\label{value of bk}
b_{k}=\frac{1}{2^{40+3\varepsilon}\pi^{44+2\varepsilon}}\cdot
V\hspace{-0.11cm}{ol(S(P^{2}\mathbb{O}))}\cdot
\frac{\Gamma(4k+44+2\varepsilon)}{2^{8k}\pi^{4k}\dim H_{k}}.
\end{equation}
\end{proof}
\begin{proposition}
Since both of the transformations $\mathcal{A}_{k}$ and 
the restriction of the transformation $\mathfrak{B}$ to 
the space $\mathcal{P}[\mathbb{X}_{\mathbb{O}}]$ {\em(}for short we denote
it by $T_{k}:=\mathfrak{B}_{|\mathcal{P}[\mathbb{X}_{\mathbb{O}}]}${\em)}
commute with  $F_{4}$ action and the representation of $F_{4}$ on
$H_{k}$ is irreducible {\em(see \eqref{irreducible rep})}, the composition 
$T_{k}\circ\mathcal{A}_{k}$ on ${H_{k}}_{|P^2\mathbb{O}}$ is a constant
multiple operator $T_{k}\circ\mathcal{A}_{k}=a_{k}Id$ 
and the constant $a_{k}$ is given by
\begin{align}
a_{k}&=
{2^6\cdot V\hspace{-0.13cm}{ol(S^{15})}\cdot V\hspace{-0.13cm}{ol{(P^2\mathbb{O})}}}\cdot
\frac{\Gamma(2k+22)}{2^{k+11}\cdot\pi^{2k+22}\dim H_{k}}\notag\\
&
=\frac{1}{2^{5}\pi^{22}}{\cdot} V\hspace{-0.13cm}{ol(S^{15})}\cdot 
V\hspace{-0.13cm}{ol{(P^2\mathbb{O})}}\cdot\frac{\Gamma(2k+22)}{2^{2k}\cdot\pi^{2k}\dim
  H_{k}}.
\label{constant ak}
\end{align}
\end{proposition}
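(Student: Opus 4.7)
The plan is to use Schur's lemma together with a direct trace computation, paralleling the argument for $b_k$ in Proposition \ref{the value bk=Lk(X)}. Since $\mathcal{A}_k$ and $T_k$ are both $F_4$-equivariant and the representation of $F_4$ on $H_k$ is irreducible (Proposition \ref{irreducibility on Hk}), the first assertion --- that $T_k \circ \mathcal{A}_k = a_k \cdot \text{Id}$ on $H_k$ --- is immediate. The work lies entirely in identifying the constant $a_k$, and I would do this by computing the trace of $T_k \circ \mathcal{A}_k$ on $H_k$ in two ways: as $a_k \cdot \dim H_k$, and as the integral of the kernel on the diagonal.

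First I would write out the kernel of $T_k \circ \mathcal{A}_k$ using \eqref{Bargmann type transform by Liouville volume form}:
\[
(T_k \circ \mathcal{A}_k)(\varphi)(X) = \int_{P^2\mathbb{O}} K(X, \widetilde{X}) \varphi(\widetilde{X}) \, dv_{P^2\mathbb{O}}(\widetilde{X}),
\]
with
\[
K(X, \widetilde{X}) = 2^{6} \int_{T^{*}_{X}(P^2\mathbb{O})} \bigl(\text{tr}(\widetilde{X} \circ \tau_{\mathbb{O}}(X, Y))\bigr)^{k} \, e^{-\sqrt{2}\pi ||Y||} \, ||Y||^{6} \, dV(Y).
\]
By the $F_4$-invariance of $K(X, X)$ and the transitivity of the $F_4$-action on $P^{2}\mathbb{O}$, the diagonal value $K(X, X)$ is a constant, so $\text{Tr}(T_k \circ \mathcal{A}_k) = K(X, X) \cdot V\hspace{-0.1cm}{ol}(P^{2}\mathbb{O})$.

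The key computational step is to evaluate $\text{tr}(X \circ \tau_{\mathbb{O}}(X, Y))$ when $(X, Y) \in T^{*}(P^{2}\mathbb{O})$. Using the definition \eqref{complex structure map} and the three defining identities $X^{2} = X$, $\text{tr}(X) = 1$, and $X \circ Y = \tfrac{1}{2}Y$ (which gives $\text{tr}(X \circ Y) = \tfrac{1}{2}\text{tr}(Y) = 0$ since tangent vectors at $X \in P^{2}\mathbb{O}$ are traceless, see Remark \ref{remark on trace zero proof}), together with the Jordan identity $\text{tr}(X \circ Y^{2}) = \text{tr}((X \circ Y) \circ Y) = \tfrac{1}{2}\text{tr}(Y^{2}) = \tfrac{1}{2}||Y||^{2}$ coming from \eqref{selfadjointness of Jordan product in the inner product}, one obtains
\[
\text{tr}(X \circ \tau_{\mathbb{O}}(X, Y)) = ||Y||^{2} \cdot 1 - \tfrac{1}{2}||Y||^{2} + \tfrac{\sqrt{-1}||Y||}{\sqrt{2}} \cdot 0 = \tfrac{1}{2}||Y||^{2}.
\]

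With this evaluation, $K(X, X)$ reduces to an integral depending only on $||Y||$ on the $16$-dimensional cotangent space $T^{*}_{X}(P^{2}\mathbb{O})$. Introducing polar coordinates $Y = t \cdot \omega$ with $\omega \in S^{15}$ and $dV = t^{15} dt \, d\sigma_{S^{15}}$, the fiber integral becomes a single scalar integral on $[0, \infty)$, which is a standard Gamma function integral. Assembling the factors --- the constant $2^{6}$ from \eqref{Bargmann type transform by Liouville volume form}, the factor $1/2^{k}$ from $(\tfrac{1}{2}||Y||^{2})^{k}$, the spherical volume $V\hspace{-0.1cm}{ol}(S^{15})$, the Gamma evaluation $\Gamma(2k+22)/(\sqrt{2}\pi)^{2k+22}$, and finally $V\hspace{-0.1cm}{ol}(P^{2}\mathbb{O})$ from the integration over the base --- and dividing by $\dim H_{k}$, yields \eqref{constant ak}.

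The main obstacle is the careful identification $\text{tr}(X \circ \tau_{\mathbb{O}}(X, Y)) = \tfrac{1}{2}||Y||^{2}$; the rest is bookkeeping of constants (in particular one must use the normalization of the Liouville volume form on $T^{*}(P^{2}\mathbb{O})$ restricted to fibers, as already exploited in the proof of Proposition \ref{the value bk=Lk(X)}, to ensure the polar decomposition $dV = t^{15} dt \, d\sigma_{S^{15}}$ is correctly applied on the fiber $T^{*}_{X} \cong \mathbb{R}^{16}$).
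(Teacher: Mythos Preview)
Your proposal is correct and follows essentially the same route as the paper: Schur's lemma for the constant-multiple statement, a trace computation via the kernel $K_k(\tilde X,X)$, the $F_4$-invariance of $K_k(X,X)$, the identity $\text{tr}\bigl(X\circ\tau_{\mathbb{O}}(X,Y)\bigr)=\tfrac{1}{2}\|Y\|^{2}$, and a polar-coordinate Gamma integral on the $16$-dimensional fiber. The only point where the paper is slightly more explicit is the fiber-measure identification you flag at the end: it fixes $X=X_1$ and uses the local coordinates $(b,c,\beta,\gamma)$ from Lemma \ref{Riemann volume form at X1} to write the Liouville volume form on ${\bf q}^{-1}(X_1)$ as the standard Lebesgue measure $d\beta_0\wedge\cdots\wedge d\gamma_7$, which justifies the polar decomposition you invoke.
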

\begin{proof}
Let $f\in {H_{k}}$  
then by Corollary \ref{Bargmann type transform by Liouville volume form}
\begin{align*}
&T_{k}\Big(\mathcal{A}_{k}(f)\Big)({X})\cdot dv_{P^2\mathbb{O}}({X})\\
&=2^{6}{\bf q}_{*}
\left(\int_{P^2\mathbb{O}}\,f(\tilde{X})\cdot\{\text{tr}\,(\tilde{X}\circ
  \tau_{\mathbb{O}}({X},*)\,)\}^{k}\cdot dv_{P^2}(\tilde{X})
\cdot e^{-\sqrt{2}\pi\cdot ||*||}\cdot ||*||^{6}\cdot dV_{T^{*}(P^2\mathbb{O})}(X,*)
\right)\\
&=2^6\int_{P^{2}\mathbb{O}}\,f(\tilde{X})
{\bf q}_{*}\Big(\{\text{tr}\,(\tilde{X}\circ \tau_{\mathbb{O}}(X,*)\,)\}^{k}  
\cdot e^{-\sqrt{2}\pi\cdot||*||}\cdot ||*||^{6}dV_{T^{*}(P^2\mathbb{O})}(X,*)\Big)dv_{P^2\mathbb{O}}(\tilde{X})\\
&= 2^6\int_{P^{2}\mathbb{O}}\,f(\tilde{X})K_{{k}}(\tilde{X},X)dv_{P^2\mathbb{O}}(\tilde{X})\cdot dv_{P^2\mathbb{O}}(X),
\end{align*}
where we put the fiber integral as
\[
K_{k}(\tilde{X},X)\cdot dv_{P^2\mathbb{O}}(X):=
{\bf q}_{*}
\Big(\{\text{tr}\,\tilde{X}\circ \tau_{\mathbb{O}}(X,*)\}^{k}
\cdot e^{-\sqrt{2}\pi\cdot||*||}\cdot||*||^{6}dV_{T^{*}(P^2\mathbb{O})}(X,*)\Big).
\]
The kernel function $K_{k}(\tilde{X},X)$ satisfies the
property similar to the kernel function $L_{k}(\tilde{X},X)$:
\begin{equation}\label{diagonal is invariant}
K_{k}(g\cdot\tilde{X},\,X)= K_{k}(\tilde{X},\,g^{-1}(X)). 
\end{equation}
Then by this property \eqref{diagonal is invariant} 
that $K_{k}(X,X)$ is constant and we have
\begin{align*}
&\text{tr}\,(T_{k}\circ \mathcal{A}_{k})=a_{k}\cdot \dim H_{k}
=2^6\cdot\int_{P^2\mathbb{O}}\,K_{k}(X,X)dv_{P^2\mathbb{O}}(X)
=2^6\cdot K_{k}(X,X)\cdot V\hspace{-0.13cm}ol(P^2\mathbb{O}).
\end{align*}
Since $\text{tr}\,\big(X\circ \tau_{\mathbb{O}}(X,Y)\,\big)=1/2||Y||^2$,
\begin{align*}
&{\bf q}_{*}\Big(\{\text{tr}\,(X\circ \tau_{\mathbb{O}}(X,*)\,)\}^{k}\cdot
e^{-\sqrt{2}\pi\cdot ||*||}\cdot ||*||^{6}\cdot
  dV_{T^{*}(P^2\mathbb{O})}(X,Y)\Big)\\
&=(1/2)^{k}\cdot{\bf q}_{*}\Big(||*||^{2k+6}e^{-\sqrt{2}\pi\cdot ||*||}\cdot dV_{T^{*}(P^2\mathbb{O})}(X,*)\Big).
\end{align*}

If we choose a point $X=X_{1}$, then the above fiber integral is expressed as
\begin{align}
&(1/2)^{k}\cdot{\bf q}_{*}\Big(||*||^{2k+6}e^{-\sqrt{2}\pi\cdot ||*||}\cdot dV_{P^2\mathbb{O}}(X_{1},*)\Big)\notag\\
&=(1/2)^{k}\cdot \int_{{\bf q}^{-1}(X_{1})}||Y||^{2k+6}e^{-\sqrt{2}\pi||Y||}
d\beta_{0}{\wedge\cdots}{\wedge d\beta_{7}}{\wedge d\gamma_{0}}
{\wedge\cdots}{\wedge d\gamma_{7}}{\wedge}\notag\\
&\qquad\qquad\qquad\qquad\qquad\qquad\qquad\qquad\qquad\qquad\qquad\qquad
{\wedge db_{0}}{\wedge\cdots}{\wedge db_{7}}{\wedge dc_{0}}{\wedge\cdots}{\wedge dc_{7}},\notag\\
&=(1/2)^{k}\cdot \int_{{\bf q}^{-1}(X_{1})}||Y||^{2k+6}e^{-\sqrt{2}\pi||Y||} 
{d\beta_{0}}{\wedge\cdots}{\wedge d\beta_{7}}{\wedge d\gamma_{0}}{\wedge\cdots}
{\wedge d\gamma_{7}}
{\wedge dv_{P^2\mathbb{O}}(X_{1})},\label{fiber integral at X1}
\end{align}
where we express the integral using the local coordinates
on $\widetilde{\mathcal{W}}_{1}$ (see \eqref{coordinate  X1}) around the
point $X_{1}$ and the dual coordinates
$(X,Y)=(b,c,\beta,\gamma)\longleftrightarrow\sum_{i}\beta_{i}db_{i}+\gamma_{i}dc_{i}\in
T^{*}_{X}(\widetilde{\mathcal{W}}_{1})$. 
Then the integral \eqref{fiber integral at X1} over the point $X_{1}$ is
\begin{align*}
&(1/2)^{k}\cdot \int_{{\bf q}^{-1}(X_{1})}||Y||^{2k+6}e^{-\sqrt{2}\pi||Y||}
d\beta_{0}\wedge\cdots\wedge d\beta_{7}\wedge d\gamma_{0}\wedge\cdots\wedge d\gamma_{7}\\
&=(1/2)^{k}\cdot
  \int_{\mathbb{R}^{16}}\left(\sum{\beta_{i}}^2+{\gamma_{i}}^2\right)^{k+3}
e^{-\sqrt{2}\pi\sqrt{\sum{(\beta_{i}}^2+{\gamma_{i}}^2)}}
d\beta_{0}\cdots d\beta_{7} d\gamma_{0}\cdots d\gamma_{7}=\frac{\Gamma(2k+22)}{2^{2k+11}\cdot\pi^{2k+22}}\cdot V\hspace{-0.13cm}{ol(S^{15})}. 
\end{align*}
Here $V\hspace{-0.13cm}{ol(S^{15})}$ is the volume of the standard $15$-sphere.
\end{proof}

Now we have
\begin{align*}
a_{k}\dim H_{k}
&=2^6\cdot\frac{\Gamma(2k+22)}{2^{2k+11}\cdot\pi^{2k+22}}\cdot
V\hspace{-0.13cm}{ol(S^{15})}\cdot
V\hspace{-0.13cm}{ol{(P^2\mathbb{O})}}=
\frac{1}{2^{5}\pi^{22}}{\cdot}V\hspace{-0.13cm}{ol(S^{15})}{\cdot}V\hspace{-0.13cm}{ol{(P^2\mathbb{O})}}
\cdot\frac{\Gamma(2k+22)}{2^{2k}\cdot\pi^{2k}}. 
\end{align*}
\begin{proposition}\label{inverse of mathfrak{B}}
\begin{align*}
&\mathfrak{B}_{|\mathcal{P}_{k}[\mathbb{X}_{\mathbb{O}}]}\circ\mathcal{A}_{k}
=\frac{1}{2^5\pi^{22}}\frac{\Gamma(2k+22)}{2^{2k}\pi^{2k}\cdot
  \dim H_{k}}\cdot V\hspace{-0.13cm}{ol(S^{15})}\cdot
  V\hspace{-0.13cm}{ol{(P^2\mathbb{O})}}\,Id. 
\end{align*}
\end{proposition}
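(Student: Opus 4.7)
The plan is to recognize that this proposition is essentially the extraction of the explicit scalar in the Schur-type identity established just before it. Specifically, the preceding paragraph has already shown two things: (i) by $F_4$-equivariance of both $\mathcal{A}_k$ and $T_k=\mathfrak{B}_{|\mathcal{P}_k[\mathbb{X}_{\mathbb{O}}]}$, together with the irreducibility of the $F_4$-module ${H_k}_{|P^2\mathbb{O}}$ (Proposition \ref{irreducibility on Hk}), the composition $T_k\circ\mathcal{A}_k$ acts as $a_k\cdot\mathrm{Id}$ on ${H_k}_{|P^2\mathbb{O}}$; and (ii) the trace of this scalar operator equals $a_k\cdot \dim H_k$ and was evaluated by the fiber integration computation at the base point $X_1$ together with the $F_4$-invariance $K_k(g\cdot\tilde X,X)=K_k(\tilde X,g^{-1}X)$ of the kernel, which forces $K_k(X,X)$ to be constant on $P^2\mathbb{O}$.

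The verification therefore reduces to a pure rearrangement: divide the explicit value
\[
a_k\dim H_k=\frac{1}{2^{5}\pi^{22}}\cdot V\hspace{-0.13cm}{ol(S^{15})}\cdot V\hspace{-0.13cm}{ol(P^2\mathbb{O})}\cdot\frac{\Gamma(2k+22)}{2^{2k}\pi^{2k}}
\]
produced in formula \eqref{constant ak} by $\dim H_k$ to obtain the stated scalar. There is no further analytic content to produce; one simply substitutes, noting that the domain and codomain identifications are compatible because $\mathcal{A}_k$ lands in $\mathcal{P}_k[\mathbb{X}_{\mathbb{O}}]$ (so that $\mathfrak{B}_{|\mathcal{P}_k[\mathbb{X}_{\mathbb{O}}]}$ is applicable) and the image of $T_k\circ\mathcal{A}_k$ lies back in ${H_k}_{|P^2\mathbb{O}}$ by $F_4$-equivariance and Peter-Weyl (Proposition \ref{irreducible rep}), so the identity takes place on an irreducible summand and Schur's lemma applies cleanly.

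There is essentially no main obstacle to overcome at this stage; all of the genuine work — the use of the Bargmann formula \eqref{Bargmann type transform by Liouville volume form}, the fiber integration over ${\bf q}^{-1}(X_1)$ against the radial weight $e^{-\sqrt{2}\pi\|Y\|}\|Y\|^{2k+6}$ with its gamma-function evaluation, the reduction $\text{tr}\,(X\circ\tau_{\mathbb{O}}(X,Y))=\tfrac{1}{2}\|Y\|^2$, and the invocation of irreducibility — has already been carried out in deriving $a_k$. The proof of Proposition \ref{inverse of mathfrak{B}} is then a one-line substitution, and the appropriate remark is merely that the formula is stated as a scalar operator identity rather than as a trace.
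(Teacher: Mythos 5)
Your proposal is correct and matches the paper exactly: the paper gives no separate proof of Proposition \ref{inverse of mathfrak{B}}, since it is precisely the identity $T_k\circ\mathcal{A}_k=a_k\,Id$ from the preceding proposition with the just-computed value $a_k\dim H_k=\frac{1}{2^{5}\pi^{22}}\cdot V\hspace{-0.13cm}ol(S^{15})\cdot V\hspace{-0.13cm}ol(P^{2}\mathbb{O})\cdot\frac{\Gamma(2k+22)}{2^{2k}\pi^{2k}}$ divided by $\dim H_k$. Your identification of where the genuine work lies (Schur's lemma via $F_4$-equivariance and irreducibility, plus the trace/fiber-integration evaluation of $a_k\dim H_k$) is exactly right.
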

\begin{corollary}
The operator norm $||{\mathfrak{B}^{-1}}_{|\mathcal{P}_{k}[\mathbb{X}_{\mathbb{O}}]}||$
is given by
\begin{align}
&||{\mathfrak{B}^{-1}}_{|\mathcal{P}_{k}[\mathbb{X}_{\mathbb{O}}]}||=\frac{\sqrt{b_{k}}}{a_{k}}
=\frac{\sqrt{\frac{Vol(S(P^2\mathbb{O})\Gamma(4k+44+2\varepsilon))}{2^{40+3\varepsilon}\pi^{44+2\varepsilon}\cdot
  2^{8k}\pi^{4k}\dim H_{k}}}}{\frac{Vol(S^{15})\cdot
  Vol(P^{2}\mathbb{O})\cdot\Gamma(2k+22)}{2^{2k}\pi^{2k}\cdot \dim
  H_{k}}}:=C(\varepsilon)\cdot N(k),\label{norm}
\end{align}
where $C(\varepsilon)~{\text{includes only $\varepsilon$}}$ and $N(k)$ is
a function of $k$ and
\begin{equation}\label{norm dependence with respect to k}
N(k)^{2}=\frac{2^{4k}\cdot\dim H_{k}\cdot\Gamma(4k+44+2\varepsilon)}{2^{8k}\Gamma(2k+22)^2}.
\end{equation}
\end{corollary}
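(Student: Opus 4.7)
The plan is to read the corollary as an immediate consequence of the two preceding propositions, together with the fact that $\mathcal{A}_k$ is a constant times a unitary. First I would spell out precisely what ``norm'' is being computed: the space $\mathcal{P}_k[\mathbb{X}_{\mathbb{O}}]$ carries the $\varepsilon$-inner product $\boldsymbol{(}\,\cdot\,,\,\cdot\,\boldsymbol{)}_{\varepsilon}$, and the image $\mathfrak{B}(\mathcal{P}_k[\mathbb{X}_{\mathbb{O}}])\subset {H_k}_{|P^{2}\mathbb{O}}$ carries the $L_{2}$-norm induced by $dv_{P^2\mathbb{O}}$. So $\mathfrak{B}^{-1}$ is the inverse of the restriction $\mathfrak{B}_{|\mathcal{P}_k}:\mathcal{P}_k[\mathbb{X}_{\mathbb{O}}]\to {H_k}_{|P^{2}\mathbb{O}}$, which is well defined since the dimension count in the proof of the first proposition of $\S 9$ gives $\dim \mathcal{P}_k[\mathbb{X}_{\mathbb{O}}]=\dim H_k$ and Proposition \ref{inverse of mathfrak{B}} shows injectivity with nonzero eigenvalue $a_k$.

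Next I would extract from the proof of the first proposition of $\S 9$ the identity
\[
\boldsymbol{(}\,\mathcal{A}_k(\varphi),\mathcal{A}_k(\varphi)\,\boldsymbol{)}_{\varepsilon}= b_k\cdot (\varphi,\varphi)^{P^{2}\mathbb{O}},
\]
which just rephrases $\mathcal{B}_k\circ \mathcal{A}_k= b_k\cdot Id$ on $H_k$ once one recalls that $\mathcal{B}_k$ is the formal adjoint of $\mathcal{A}_k$ with respect to the two inner products in question. This says $\mathcal{A}_k= \sqrt{b_k}\cdot U_k$ with $U_k:H_k\to \mathcal{P}_k[\mathbb{X}_{\mathbb{O}}]$ a unitary isomorphism. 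Combining with Proposition \ref{inverse of mathfrak{B}}, which reads $\mathfrak{B}\circ \mathcal{A}_k= a_k\cdot Id$ on $H_k$, we conclude
\[
\mathfrak{B}^{-1}_{|H_k}= \tfrac{1}{a_k}\,\mathcal{A}_k = \tfrac{\sqrt{b_k}}{a_k}\,U_k,
\]
and since $U_k$ is unitary the operator norm on the $k$-th component is exactly $\sqrt{b_k}/a_k$, giving \eqref{norm}.

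To obtain the explicit form \eqref{norm dependence with respect to k} of $N(k)$, the remaining work is purely algebraic: plug the formulas for $b_k$ from Proposition \ref{the value bk=Lk(X)} and for $a_k$ from \eqref{constant ak} into $\sqrt{b_k}/a_k$, separate the $\varepsilon$-dependent prefactors into $C(\varepsilon)$ (which gathers the volume constants $V\hspace{-0.13cm}{ol}(S(P^2\mathbb{O})),V\hspace{-0.13cm}{ol}(S^{15}),V\hspace{-0.13cm}{ol}(P^2\mathbb{O})$ and the fixed powers of $2$ and $\pi$), and collect everything depending on $k$ into $N(k)$. Squaring, the $\pi^{2k}$ and $\pi^{4k}$ factors cancel, the surviving powers of $2$ combine to $2^{4k-8k}=2^{-4k}$, and one copy of $\dim H_k$ survives in the numerator while $\Gamma(2k+22)$ appears squared in the denominator. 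This yields precisely
\[
N(k)^{2}=\frac{2^{4k}\cdot\dim H_{k}\cdot\Gamma(4k+44+2\varepsilon)}{2^{8k}\,\Gamma(2k+22)^{2}},
\]
as asserted. The only conceptual step is identifying $\mathcal{A}_k$ as $\sqrt{b_k}$ times a unitary; no estimate beyond book-keeping of constants is needed, since the representation-theoretic irreducibility of $H_k$ under $F_4$ (Proposition \ref{irreducibility on Hk}) has already been used upstream to reduce each operator in sight to a scalar on $H_k$.
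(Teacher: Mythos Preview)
Your proposal is correct and matches the argument the paper leaves implicit: the corollary has no separate proof in the paper, and your derivation---writing $\mathcal{A}_k=\sqrt{b_k}\,U_k$ from $(\mathcal{A}_k\varphi,\mathcal{A}_k\varphi)_\varepsilon=(\mathcal{B}_k\mathcal{A}_k\varphi,\varphi)^{P^2\mathbb{O}}=b_k(\varphi,\varphi)^{P^2\mathbb{O}}$, combining with $\mathfrak{B}\circ\mathcal{A}_k=a_k\,Id$, and then bookkeeping the constants---is exactly the intended route. Your observation that $\mathcal{B}_k$ is the adjoint of $\mathcal{A}_k$ with respect to the two inner products is the clean way to justify the first identity, and the algebraic extraction of $N(k)^2$ is accurate.
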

It is enough to see \eqref{norm dependence with respect to k} for the behavior of the norm \eqref{norm} when
$k\longrightarrow \infty$ and for this purpose we
mention two properties of the Gamma function.
\begin{lemma}\label{formula 1}
\[\lim_{k\to\infty}
\frac{\Gamma(k+\alpha_{1})\cdots\Gamma(k+\alpha_{\ell})}{\Gamma(k+\beta_{1})\cdots\Gamma(k+\beta_{\ell})}
=\left\{
\begin{array}{l}
+\infty,~\text{if}~\sum \,\alpha_{i}>\sum\,\beta_{i},\\
1, ~\text{if}~\sum \,\alpha_{i}=\sum\,\beta_{i},\\
0,~\text{if}~\sum \,\alpha_{i}<\sum\,\beta_{i}.
\end{array}\right.
\]
\end{lemma}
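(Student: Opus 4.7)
The plan is to reduce the problem to the classical one-factor asymptotic
$\Gamma(k+\alpha)/\Gamma(k+\beta) \sim k^{\alpha-\beta}$ and then multiply the estimates together. The cleanest route is through Stirling's formula
\[
\Gamma(k+\alpha)=\sqrt{2\pi}\,(k+\alpha)^{k+\alpha-1/2}e^{-(k+\alpha)}\bigl(1+O(1/k)\bigr),
\]
applied to both numerator and denominator of each ratio $\Gamma(k+\alpha_i)/\Gamma(k+\beta_i)$. Expanding $(k+\alpha)^{k+\alpha-1/2}=k^{k+\alpha-1/2}(1+\alpha/k)^{k+\alpha-1/2}$ and using $(1+\alpha/k)^{k}\to e^{\alpha}$, the exponential factors $e^{\alpha}$ and $e^{-\alpha}$ combine with $e^{-(k+\alpha)}$ to cancel against their $\beta$-counterparts, leaving
\[
\frac{\Gamma(k+\alpha_i)}{\Gamma(k+\beta_i)}=k^{\alpha_i-\beta_i}\bigl(1+O(1/k)\bigr).
\]

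Multiplying the $\ell$ ratios then yields
\[
\prod_{i=1}^{\ell}\frac{\Gamma(k+\alpha_i)}{\Gamma(k+\beta_i)}
=k^{\,\sum_i(\alpha_i-\beta_i)}\bigl(1+O(1/k)\bigr),
\]
and the three cases in the statement follow immediately from the sign of the exponent $\sum\alpha_i-\sum\beta_i$: the exponent is positive, zero, or negative, giving the limit $+\infty$, $1$, or $0$ respectively.

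There is essentially no obstacle: the only non-cosmetic step is verifying the asymptotic $\Gamma(k+\alpha)/\Gamma(k+\beta)=k^{\alpha-\beta}(1+O(1/k))$, which is a standard consequence of Stirling. One may instead cite the well-known formula (e.g.\ Tricomi--Erd\'elyi), or prove it directly by writing $\log\Gamma(k+\alpha)-\log\Gamma(k+\beta)$ and using the asymptotic expansion of $\log\Gamma$. Either way the argument is a short three-line computation; the lemma is really just a convenient packaging of Stirling's formula for use in controlling the $k\to\infty$ behavior of the norm $N(k)$ in \eqref{norm dependence with respect to k}.
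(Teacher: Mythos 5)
Your argument is correct: the reduction to the single-ratio asymptotic $\Gamma(k+\alpha)/\Gamma(k+\beta)=k^{\alpha-\beta}\bigl(1+O(1/k)\bigr)$ via Stirling's formula, followed by multiplying the $\ell$ ratios and reading off the sign of the exponent $\sum\alpha_i-\sum\beta_i$, is the standard and complete proof of this statement. The paper itself states the lemma without proof, presenting it as a known property of the Gamma function, so there is no alternative argument in the paper to compare against; your write-up supplies exactly the routine verification that was omitted.
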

\begin{lemma}\label{formula 2}
\[
\Gamma(nz)= \frac{n^{nz-1/2}}{(2\pi)^{(n-1)/2}}\cdot \prod_{j=0}^{n-1} \Gamma\left(z+\frac{j}{n}\right). 
\]
\end{lemma}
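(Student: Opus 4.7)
The plan is to prove Gauss's multiplication formula by the classical ``periodicity plus asymptotics'' argument. First I would introduce the auxiliary function
\[
\phi(z):=\frac{n^{nz}\prod_{j=0}^{n-1}\Gamma(z+j/n)}{n\,\Gamma(nz)}
\]
and show, using only the identity $\Gamma(w+1)=w\Gamma(w)$, that $\phi(z+1/n)=\phi(z)$. Indeed, replacing $z$ by $z+1/n$ shifts the numerator product to $\Gamma(z+1/n)\cdots\Gamma(z+1)$, which picks up a factor of $z$ via $\Gamma(z+1)=z\Gamma(z)$; the denominator $\Gamma(nz+1)=nz\,\Gamma(nz)$ picks up a factor of $nz$; and the prefactor $n^{n(z+1/n)}=n\cdot n^{nz}$ contributes the compensating factor $n$, so everything balances.

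Next I would verify that $\phi$ extends to an entire nowhere-vanishing function: every simple pole of $\Gamma(z+j/n)$ at $z=-j/n-k$ (with $0\leq j<n$ and $k\in\mathbb{Z}_{\geq 0}$) is cancelled by a simple pole of $\Gamma(nz)$ at $z=-(nk+j)/n$, which accounts for all non-positive poles on both sides uniquely. Combined with $1/n$-periodicity I would then deduce that $\phi$ is constant by bounding it on a vertical strip of width $1/n$ via the classical estimate $|\Gamma(x+iy)|\sim\sqrt{2\pi}\,|y|^{x-1/2}e^{-\pi|y|/2}$ as $|y|\to\infty$, transferring via $z\mapsto e^{2\pi i n z}$ to a bounded entire function on $\mathbb{C}$ that is constant by Liouville's theorem.

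To pin down the value of the constant, I would let $z\to+\infty$ along the reals and apply Stirling,
\[
\log\Gamma(w)=\bigl(w-\tfrac{1}{2}\bigr)\log w-w+\tfrac{1}{2}\log(2\pi)+o(1).
\]
Summing $\log\Gamma(z+j/n)$ over $j=0,\ldots,n-1$ and expanding $\log(z+j/n)=\log z+O(1/z)$, the $z\log z$ pieces collect to $(nz-\tfrac12)\log z$, the linear terms to $-nz-(n-1)/2$, the constants to $(n/2)\log(2\pi)$, while the cross terms $(z+j/n-\tfrac12)\log(1+j/(nz))$ contribute $\sum_j j/n=(n-1)/2$ in the limit. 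Matching this against $\log\phi+\log n+\log\Gamma(nz)-nz\log n$ expanded in turn by Stirling, every $z$-dependent term cancels and one reads off $\log\phi=\tfrac{n-1}{2}\log(2\pi)-\tfrac12\log n$, i.e.\ $\phi\equiv(2\pi)^{(n-1)/2}/\sqrt{n}$. Substituting this back into the definition of $\phi$ rearranges directly to the claimed identity.

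The main obstacle will be the Stirling bookkeeping: several $z$-dependent contributions must cancel exactly, and one has to track the cross-term $\sum_j j/n=(n-1)/2$ with care in order to extract the precise constant $(2\pi)^{(n-1)/2}/\sqrt{n}$ rather than a variant off by a power of $n$ or $2\pi$. The periodicity-plus-Liouville step is also mildly delicate, as it requires uniform control of $\phi$ in the imaginary direction, but this follows routinely from the same Stirling/reflection estimates.
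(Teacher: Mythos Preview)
Your argument is correct: this is the standard proof of Gauss's multiplication formula via the ratio $\phi(z)$, its $1/n$-periodicity, the Liouville step, and Stirling to identify the constant; the bookkeeping you outline yields exactly $(2\pi)^{(n-1)/2}/\sqrt{n}$ and hence the stated identity. The paper, however, does not prove this lemma at all---it simply quotes it as a classical property of the Gamma function and uses it to rewrite $\Gamma(4k+44+2\varepsilon)$ in the norm computation---so there is nothing to compare beyond noting that you have supplied a full proof where the paper appeals to a standard reference.
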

Then by Lemma \ref{formula 2}
\begin{equation}\label{function of k}
N(k)^{2}=\frac{2^{44+4\varepsilon}}{\sqrt{2\pi}}\cdot 
\dim H_{k}\cdot 
\frac{\prod_{j=0}^3\Gamma(k+11+\varepsilon/2+j/4)}{\Gamma(k+11)^2 \cdot\Gamma(k+11+1/2)^{2}}.
\end{equation}
By the relation of the Poincar\'e polynomials $PP(t)=PH(t)\cdot PI(t)$
(see \eqref{relation of three Poincare polynomials}), 
the dimension of $H_{k}$ is given as
\begin{equation}\label{dim hk}
\dim H_{k}={}_{24+k-1}C_{k}+2{\cdot}_{24+k-2}C_{k-1}+2{\cdot}_{24+k-3}C_{k-2}+{}_{24+k-4}C_{k-3}.
\end{equation}
Hence \eqref{function of k} is
\begin{align*}
N(k)^2=\frac{2^{44+4\varepsilon}}{\sqrt{2\pi}}\cdot& \left(
\frac{\Gamma(24+k)}{\Gamma(k+1)}+2\frac{\Gamma(23+k)}{\Gamma(k)}+2\frac{\Gamma(22+k)}{\Gamma(k-1)}+\frac{\Gamma(21+k)}{\Gamma(k-2)}
\right)\times\\
&\times\frac{\prod_{j=0}^3\Gamma(k+11+\varepsilon/2+j/4)}{\Gamma(k+11)^2\cdot\Gamma(k+11+1/2)^{2}}.
\end{align*} 
Hence finally by Lemma \ref{formula 1} have 
\begin{theorem}
{\em (1)} Let  $\varepsilon= -\frac{47}{4}$, then the Bargmann type transformation  
\[\mathfrak{B}:\mathfrak{F}_{-47/4}\longrightarrow L_{2}(P^{2}\mathbb{O},dv_{P^2\mathbb{O}})\]
is an isomorphism, although it is not unitary.

{\em (2)} If $-22<\varepsilon< -\frac{47}{4}$, then the inverse of the
Bargmann type transformation 
\[\mathfrak{B}^{-1}:L_{2}(P^{2},dv_{P^2\mathbb{O}}) \longrightarrow \mathfrak{F}_{\varepsilon}
\]
is bounded, but and the Bargmann type transformation can not be extended to the whole
Fock-like space $\mathfrak{F}_{\varepsilon}$.

{\em (3)} If $\varepsilon> -\frac{47}{4}$, then 
the Bargmann type transformation is bounded with the dense image, but not an isomorphism between
the spaces $\mathfrak{F}_{\varepsilon}$ and $L_{2}(P^{2},dv_{P^2\mathbb{O}})$.

{\em (4)} Let $\varepsilon\leq -22$. Then, 
for such a $k$ that
$4k+44+2\varepsilon\leq 0$, the integral \eqref{parameter family of inner products} 
does not converge, although the Bargmann type transformation is defined for
such polynomials. Hence by defining an inner product on the finite
dimensional space $\sum_{4k+44+2\varepsilon\leq 0}\mathcal{P}_{k}[\mathbb{X}_{\mathbb{O}}]$
in a suitable way, the Bargmann type transformation 
behave in the same way as the case of {\em (2)} {\em(}see Remark \ref{Fock-like space note}{\em)}.
\end{theorem}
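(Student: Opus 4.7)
The plan is to reduce everything to an asymptotic analysis of the function $N(k)$ in formula \eqref{norm dependence with respect to k}, using the two gamma function identities already recorded as Lemmas \ref{formula 1} and \ref{formula 2}, together with the explicit dimension count \eqref{dim hk}. The four cases of the theorem are then read off directly from the sign of a single exponent.

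First, I would observe from \eqref{dim hk} that $\dim H_k$ is a polynomial in $k$ of degree exactly $23$, since each of the four terms has the form $\binom{k+m}{23}$ for an integer constant $m$, which is polynomial of degree $23$ in $k$. In particular $\dim H_k \sim c_{23}\, k^{23}$ as $k\to\infty$ with a positive constant $c_{23}$. Next I would process the gamma factor in \eqref{function of k}: the shift parameters of the four numerator Gamma functions sum to $4\cdot 11 + 2\varepsilon + (0+\tfrac14+\tfrac24+\tfrac34) = 44+2\varepsilon+\tfrac32$, while the shift parameters of the four denominator Gamma functions sum to $2\cdot 11 + 2\cdot(11+\tfrac12) = 45$. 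By Lemma \ref{formula 1} the ratio is therefore asymptotic to $k^{2\varepsilon+1/2}$. Combining with the asymptotic of $\dim H_k$ gives
\[
N(k)^{2} \sim C'\,k^{23}\cdot k^{2\varepsilon+1/2} = C'\,k^{2\varepsilon+47/2}
\]
for a positive constant $C'$ depending only on $\varepsilon$. The critical exponent $2\varepsilon+47/2$ vanishes precisely at $\varepsilon=-47/4$.

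With this asymptotic in hand, the four parts follow from the interpretation of $N(k)$ via \eqref{norm}. Since $\mathfrak{B}$ and $\mathfrak{B}^{-1}$ decompose block-diagonally with respect to the mutually orthogonal decomposition of $L_2(P^2\mathbb{O})$ into the eigenspaces $H_k|_{P^2\mathbb{O}}$ (Theorem \ref{decomposition of L2 space of P2O}) and of $\mathfrak{F}_\varepsilon$ into the corresponding $\mathcal{P}_k[\mathbb{X}_{\mathbb{O}}]$ (by $F_4$-invariance of the inner product and Proposition \ref{inverse of mathfrak{B}}), the boundedness of $\mathfrak{B}$ is equivalent to $\sup_k 1/N(k)<\infty$ and the boundedness of $\mathfrak{B}^{-1}$ to $\sup_k N(k)<\infty$. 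For case (1), $\varepsilon=-47/4$ gives $N(k)$ bounded above and away from zero, so both operators are bounded; they give a topological isomorphism, which fails to be unitary because the scaling constants $\sqrt{b_k}/a_k$ are not all equal. For case (2), $-22<\varepsilon<-47/4$ makes the exponent negative, hence $N(k)\to 0$: $\mathfrak{B}^{-1}$ is bounded (even compact) but $\mathfrak{B}$ has block norms going to infinity and cannot be extended. For case (3), $\varepsilon>-47/4$ gives the opposite, $N(k)\to\infty$: $\mathfrak{B}$ is bounded (compact) with dense image (from Theorem \ref{decomposition of L2 space of P2O}) but $\mathfrak{B}^{-1}$ is unbounded, so $\mathfrak{B}$ is not an isomorphism. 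For case (4), $\varepsilon\leq -22$, the condition of Remark \ref{Fock-like space note} shows that the integral \eqref{parameter family of inner products} diverges only for $k\leq -11-\varepsilon/2$, which is a finite set; on the complement the same asymptotic analysis as in case (2) applies (since $\varepsilon<-47/4$ here automatically), and choosing an arbitrary inner product on the finite-dimensional exceptional subspace finishes the case.

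The main technical obstacle will be the careful bookkeeping of the two sums of shift parameters in the gamma ratio, since a small arithmetic slip would shift the critical exponent away from $-47/4$ and falsify the statement. A secondary subtlety is making sure that the polynomial $\dim H_k$ is of degree exactly $23$ and not something smaller (which would occur only if the leading coefficients in \eqref{dim hk} cancelled, and they clearly do not since they all appear with positive sign). Once these two asymptotics are justified, the proof consists only of reading off the dichotomy from the sign of $2\varepsilon+47/2$ and invoking the standard extension principle for bounded operators between pre-Hilbert spaces.
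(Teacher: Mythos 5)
Your proposal is correct and follows essentially the same route as the paper: apply the multiplication formula (Lemma \ref{formula 2}) to $\Gamma(4k+44+2\varepsilon)$, use Lemma \ref{formula 1} on the resulting ratio of Gamma functions together with the degree-$23$ polynomial growth of $\dim H_k$ from \eqref{dim hk} to get $N(k)^2\sim C\,k^{2\varepsilon+47/2}$, and read off the four cases from the sign of the exponent. Your bookkeeping of the shift parameters ($44+2\varepsilon+\tfrac32$ versus $45$) agrees with the paper's computation, and your explicit remark that boundedness of $\mathfrak{B}$ and $\mathfrak{B}^{-1}$ reduces to $\inf_k N(k)>0$ and $\sup_k N(k)<\infty$ via the block-diagonal structure only makes explicit what the paper leaves implicit.
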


\begin{remark}
The result in the above theorem differs from the
original Bargmann transformation and other cases of the spheres, complex
projective spaces and quaternion projective spaces for which
the Bargmann type transformations are always isomorphisms {\em (\cite{Ba},
\cite{Ra2}, \cite{Fu1}, \cite{FY})} without a modification factor in
the weight for defining an inner product in the Fock-like space.
\end{remark}

\section{Some additional results}

\subsection{Reproducing kernel of the Fock-like space $\mathfrak{F}_{\varepsilon}$}
As an application of the explicit determination of the constant $b_{k}$
we show our Fock-like space $\mathfrak{F}_{\varepsilon}$ has the reproducing kernel.

Since the operator $\mathcal{A}_{k}$ is an isomorphism from $H_{k}$ to
$\mathcal{P}_{k}[\mathbb{X}_{\mathbb{O}}]$ and the
operator $\mathcal{B}_{k}\circ \mathcal{A}_{k}\equiv b_{k}$, 
the composition $\mathcal{A}_{k}\circ \mathcal{B}_{k}\equiv
b_{k}$ too. The kernel function (we put it as $R_{k}(A,B)$,~
$(A,B)\in\mathbb{X}_{\mathbb{O}}\times \mathbb{X}_{\mathbb{O}}$) 
of the composition 
\[
\frac{\mathcal{A}_{k}\circ \mathcal{B}_{k}}{b_{k}},
\] 
which is the
identity operator on $\mathcal{P}_{k}[\mathbb{X}_{\mathbb{O}}]$, 
is expressed as
\begin{align*}
&R_{k}(A,B)\\
&=\frac{\int_{P^{2}\mathbb{O}}\,(\text{tr}\,X\circ A)^{k}(\text{tr}\,X\circ
\overline{B})^{k}dv_{P^{2}\mathbb{O}}\cdot
e^{-2\sqrt{2}\pi(||A||^{1/2}+||B||^{1/2})}(||A||\cdot||B||)^{14+\varepsilon}}{b_{k}}\\
&=\frac{\int_{P^{2}\mathbb{O}}\,(\text{tr}\,(X\circ A/||A||))^{k}
(\text{tr}\,(X\circ\overline{B}/||B||))^{k}\cdot dv_{P^{2}\mathbb{O}}
\cdot e^{-2\sqrt{2}\pi(||A||^{1/2}+||B||^{1/2})}(||A||\cdot||B||)^{k+14+\varepsilon}}{b_{k}}.
\end{align*}
Hence the sum
\begin{align*}
&R(A,B)):=\sum_{k=0}^{\infty}\,R_{k}(A,B)\\
&\sum_{k=0}^{\infty}
\frac{\int_{P^{2}\mathbb{O}}\,(\text{tr}\,X\circ A/||A||)^{k}(\text{tr}\,X\circ\overline{B}/||B||)^{k}dv_{P^{2}\mathbb{O}}\cdot e^{-2\sqrt{2}\pi(||A||^{1/2}+||B||^{1/2})}(||A||\cdot||B||)^{k+14+\varepsilon}}{b_{k}}.
\end{align*}
is estimated as
\begin{align*}
&\left|
\sum_{k=0}^{\infty}
\frac{\int_{P^{2}\mathbb{O}}\,(\text{tr}\,X\circ A/||A||)^{k}
(\text{tr}\,X\circ\overline{B}/||B||)^{k}\cdot dv_{P^{2}\mathbb{O}}
\cdot e^{-2\sqrt{2}\pi(||A||^{1/2}+||B||^{1/2})}(||A||\cdot||B||)^{k+14+\varepsilon}}{b_{k}}\right|\\
&\leq\frac{V\hspace{-0.12cm}{ol(P^2\mathbb{O})}2^{40+3\varepsilon\pi^{44+2\varepsilon}}}{V\hspace{-0.12cm}{ol(S(P^2\mathbb{O}))}}
\cdot e^{-2\sqrt{2}\pi(||A||^{1/2}+||B||^{1/2})}(||A||\cdot||B||)^{14+\varepsilon}\cdot
\sum\,\frac{2^{8k}\pi^{4k}\cdot(||A||||B||)^{k}}{\Gamma(4k+44+2\varepsilon)}\times\\
&\qquad\qquad\times\left(\frac{\Gamma(24+k)}{\Gamma(k+1)}
+2\frac{\Gamma(23+k)}{\Gamma(k)}+2\frac{\Gamma(22+k)}{\Gamma(k-1)}+\frac{\Gamma(21+k)}{\Gamma(k-2)}
\right)
\end{align*}

This inequality implies that the series 
converges locally uniformly on the space
$\mathbb{X}_{\mathbb{O}}\times\mathbb{X}_{\mathbb{O}}$ and 
the function $R(A,\overline{B})$ is holomorphic there.
So $R(A,{B})$ is the reproducing kernel of the Hilbert space
$\mathfrak{F}_{\varepsilon}$ ($\varepsilon > -22$).

\subsection{Geodesic flow and eigenspaces of Laplacian on $P^{2}\mathbb{O}$}
Let $\phi_{t}$ ($t\in\mathbb{R}$) be an action on $\mathbb{X}_{\mathbb{O}}$ defined
by
\[
\mathbb{X}_{\mathbb{O}}\ni A\longmapsto \phi_{t}(A)= e^{2\sqrt{-1}t}\cdot A.
\]
Then this is an interpretation of the geodesic flow action onto the space
$\mathbb{X}_{\mathbb{O}}$ through the map $\tau_{\mathbb{O}}$.

Let $p\in \mathcal{P}_{k}[\mathbb{X}_{\mathbb{O}}]$. Then
\begin{equation}\label{geodesic flow action on degree k polynomial}
{\phi_{t}}^{*}(p\cdot {\bf
  t}_{0}\otimes\Omega_{\mathbb{O}})(A)=e^{2\sqrt{-1}t(11+k)}\cdot
p(A)\cdot{\bf t}_{0}(A)
\otimes\Omega_{\mathbb{O}}(A).
\end{equation}

Let $p\in\mathcal{P}_{k}[\mathbb{X}_{\mathbb{O}}]$ and $q \in\mathcal{P}_{\ell}[\mathbb{X}_{\mathbb{O}}]$
with $k\not=\ell$, then
\begin{lemma}
\[
\boldsymbol{(}p,,\,q\boldsymbol{)}_{\varepsilon}
=\int_{\mathbb{X}_{\mathbb{O}}}\,p\cdot \overline{q}\cdot
{g_{0}}^2\cdot ||A||^{\varepsilon}\cdot\Omega_{\mathbb{O}}\wedge\overline{\Omega_{\mathbb{O}}}=0.
\]
\end{lemma}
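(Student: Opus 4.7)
My plan is to exploit the $S^1$-action $\phi_t(A)=e^{2\sqrt{-1}t}\cdot A$ and show that the integrand in the definition of $\boldsymbol{(}p,q\boldsymbol{)}_\varepsilon$ picks up a nontrivial character $e^{2\sqrt{-1}t(k-\ell)}$ under pull-back by $\phi_t$, while the integral is $\phi_t$-invariant because $\phi_t$ is a diffeomorphism of $\mathbb{X}_{\mathbb{O}}$ preserving top-degree integrals. Since $k\neq\ell$, this forces the integral to vanish.

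Concretely, I would proceed in the following steps. First, I would verify that $\phi_t$ really does preserve $\mathbb{X}_{\mathbb{O}}$: the defining condition $A^2=0$ is homogeneous of degree $2$, so $(e^{2\sqrt{-1}t}A)^2=0$, and $\phi_t$ is a holomorphic diffeomorphism of $\mathbb{X}_{\mathbb{O}}$. Second, I would record the transformation laws of each factor in the integrand: clearly $p(\phi_t(A))=e^{2\sqrt{-1}tk}p(A)$ and $\overline{q(\phi_t(A))}=e^{-2\sqrt{-1}t\ell}\overline{q(A)}$, while $\|\phi_t(A)\|=\|A\|$ (as $|e^{2\sqrt{-1}t}|=1$), so both $\|A\|^\varepsilon$ and $g_0=e^{-\sqrt{2}\pi\|A\|^{1/2}}$ are $\phi_t$-invariant.

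Third, and this is the key computational point, I would read off from \eqref{geodesic flow action on degree k polynomial} the pull-back of $\Omega_{\mathbb{O}}$ itself. Since that identity reads $\phi_t^{*}(p\cdot{\bf t}_0\otimes\Omega_{\mathbb{O}})=e^{2\sqrt{-1}t(11+k)}\,p\cdot{\bf t}_0\otimes\Omega_{\mathbb{O}}$ and $p$ contributes exactly the factor $e^{2\sqrt{-1}tk}$ while ${\bf t}_0$ (being intrinsically attached to $\mathbb{L}$, whose connection form involves only $\|A\|^{1/2}$) is $\phi_t$-invariant, one obtains
\begin{equation*}
\phi_t^{*}\Omega_{\mathbb{O}}=e^{22\sqrt{-1}t}\,\Omega_{\mathbb{O}},\qquad
\phi_t^{*}\overline{\Omega_{\mathbb{O}}}=e^{-22\sqrt{-1}t}\,\overline{\Omega_{\mathbb{O}}}.
\end{equation*}
(If one prefers, the same scaling $e^{22\sqrt{-1}t}$ can be checked directly from the local expression $\Omega_{\mathbb{O}}=z_1^{-5}\,dz_1\wedge\cdots\wedge d\xi_2$ on $O_{z_1}$: the prefactor contributes $e^{-10\sqrt{-1}t}$ and the sixteen holomorphic differentials contribute $e^{32\sqrt{-1}t}$.) Consequently the full integrand transforms by the character $e^{2\sqrt{-1}t(k-\ell)}$.

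Fourth, since $\phi_t:\mathbb{X}_{\mathbb{O}}\to\mathbb{X}_{\mathbb{O}}$ is an orientation-preserving diffeomorphism, change of variables gives
\begin{equation*}
\int_{\mathbb{X}_{\mathbb{O}}}p\,\overline{q}\,{g_0}^2\,\|A\|^{\varepsilon}\,\Omega_{\mathbb{O}}\wedge\overline{\Omega_{\mathbb{O}}}
=\int_{\mathbb{X}_{\mathbb{O}}}\phi_t^{*}\!\bigl(p\,\overline{q}\,{g_0}^2\,\|A\|^{\varepsilon}\,\Omega_{\mathbb{O}}\wedge\overline{\Omega_{\mathbb{O}}}\bigr)
=e^{2\sqrt{-1}t(k-\ell)}\,\boldsymbol{(}p,q\boldsymbol{)}_\varepsilon,
\end{equation*}
for every $t\in\mathbb{R}$. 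Choosing any $t$ with $e^{2\sqrt{-1}t(k-\ell)}\neq 1$, which is possible since $k\neq\ell$, forces $\boldsymbol{(}p,q\boldsymbol{)}_\varepsilon=0$. The only delicate point in the plan is the weight identification $\phi_t^{*}\Omega_{\mathbb{O}}=e^{22\sqrt{-1}t}\Omega_{\mathbb{O}}$; once that is in hand, and once one observes that $\|A\|$ (hence $g_0$ and $\|A\|^\varepsilon$) is phase-invariant, the rest is a one-line averaging argument.
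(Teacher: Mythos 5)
Your proposal is correct and is essentially the paper's own argument: the paper likewise pulls the integrand back by the geodesic-flow action $\phi_t$, observes that it transforms by the character $e^{2\sqrt{-1}(k-\ell)t}$ while the integral itself is $\phi_t$-invariant (phrased there as unitarity of $\phi_t^{*}$, which is your change-of-variables step), and concludes vanishing for $k\neq\ell$. Your explicit verification that $\phi_t^{*}\Omega_{\mathbb{O}}=e^{22\sqrt{-1}t}\Omega_{\mathbb{O}}$ is consistent with \eqref{geodesic flow action on degree k polynomial} and with the local expression on $O_{z_1}$, and merely makes explicit what the paper leaves implicit.
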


\begin{proof}
The transformation ${\phi_t}^{*}$ on
$\Gamma_{\mathcal{G}}(\mathbb{L}\otimes K^{\mathcal{G}},\mathbb{X}_{\mathbb{O}})$
is unitary, hence
\[
\boldsymbol{(}{\phi_{t}}^{*}(p),{\phi_{t}}^{*}(q)\boldsymbol{)}_{\varepsilon}\equiv
\boldsymbol{(}p,q\boldsymbol{)}_{\varepsilon}~\text{for any $t\in\mathbb{R}$}.
\]
On the other hand
\[
{\phi_{t}}^{*}\big(p\cdot \overline{q}\cdot 
<{\bf t}_{0},{\bf t}_{0}>^{\mathbb{L}}\Omega_{\mathbb{O}}\wedge\overline{\Omega_{\mathbb{O}}}\big) 
= e^{2\sqrt{-1}(k-\ell)t}\cdot \big(p\cdot \overline{q}\cdot<{\bf
  t}_{0},{\bf t}_{0}>^{\mathbb{L}} \Omega_{\mathbb{O}}\wedge\overline{\Omega_{\mathbb{O}}}\big).
\]
Hence $\boldsymbol{(}p,q\boldsymbol{)}_{\varepsilon}=0$.
\end{proof}

Let $\Delta^{P^2\mathbb{O}}$ be the Laplacian on $P^2\mathbb{O}$. Then
\begin{proposition}
The geodesic flow action on $\mathbb{X}_{\mathbb{O}}$ and the action 
given by the one parameter group $\{e^{2\sqrt{-1}\,t\,\sqrt{\Delta^{P^2\mathbb{O}}+11}}\}$
of unitary transformations consisting of the Fourier integral operators
commute through the Bargmann type transformation.
\end{proposition}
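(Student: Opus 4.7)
The plan is to reduce the intertwining identity to a weight computation on each graded piece $\mathcal{P}_{k}[\mathbb{X}_{\mathbb{O}}]$, invoke density of $\sum_{k}{H_{k}}_{|P^{2}\mathbb{O}}$ in $L_{2}(P^{2}\mathbb{O})$ from Theorem \ref{decomposition of L2 space of P2O}, and extend by continuity. By Proposition \ref{inverse of mathfrak{B}}, the Bargmann type transformation $\mathfrak{B}$ restricts to a non-zero scalar multiple of an $F_{4}$-equivariant isomorphism $\mathcal{P}_{k}[\mathbb{X}_{\mathbb{O}}] \to {H_{k}}_{|P^{2}\mathbb{O}}$, so it suffices to check that on each $k$-th piece the two one-parameter groups act by the same scalar.

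On the cotangent side, formula \eqref{geodesic flow action on degree k polynomial} already records that $\phi_{t}^{*}$ acts on $\gamma(\mathcal{P}_{k}[\mathbb{X}_{\mathbb{O}}]) = \mathcal{P}_{k}[\mathbb{X}_{\mathbb{O}}]\cdot {\bf t}_{0}\otimes \Omega_{\mathbb{O}}$ as multiplication by $e^{2\sqrt{-1}\,t\,(11+k)}$. The shift by $11$ arises from the homogeneity of $\Omega_{\mathbb{O}}$ under dilation, computed in the proof of Proposition \ref{invariance under F4} (the factor $t^{11}$ picked up from $T_{t}^{*}\Omega_{\mathbb{O}}$ when specialized to the flow $t\mapsto e^{2\sqrt{-1}t}$), together with $\phi_{t}$-invariance of the polarized section ${\bf t}_{0}$, which holds because the K\"ahler polarization $\mathcal{G}$ is preserved by the holomorphic flow $\phi_{t}$. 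Consequently the conjugated operator $\mathfrak{B}\circ \phi_{t}^{*}\circ \mathfrak{B}^{-1}$ acts on ${H_{k}}_{|P^{2}\mathbb{O}}$ as multiplication by the same scalar $e^{2\sqrt{-1}\,t\,(11+k)}$.

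On the base side, each ${H_{k}}_{|P^{2}\mathbb{O}}$ is an eigenspace of $\Delta^{P^{2}\mathbb{O}}$ for a single eigenvalue $\lambda_{k}$ by Proposition \ref{irreducible rep}, and the desired intertwining reduces to the identity $\lambda_{k}+11=(k+11)^{2}$, i.e.\ $\lambda_{k}=k^{2}+22k+110$. To pin down this numerical value I would either invoke the Freudenthal formula $\lambda_{k}=\langle \nu_{k}+2\rho,\nu_{k}\rangle$ for the spherical highest weight $\nu_{k}$ of the unique $F_{4}$-representation realized on ${H_{k}}_{|P^{2}\mathbb{O}}$, after fixing the normalization via Proposition \ref{Killing form metric}, or else compute $\Delta^{P^{2}\mathbb{O}}$ directly as the restriction of the ambient flat Laplacian $-\Delta$ from $\S 8$ acting on the homogeneous polynomials in $H_{k}$, subject to \eqref{condition for Cayley projective plane final}. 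This eigenvalue computation in the Jordan-algebra normalization is the main obstacle; once $\lambda_{k}=(k+11)^{2}-11$ is established, the operator $\sqrt{\Delta^{P^{2}\mathbb{O}}+11}$ acts as multiplication by $11+k$ on ${H_{k}}_{|P^{2}\mathbb{O}}$, so $e^{2\sqrt{-1}t\sqrt{\Delta^{P^{2}\mathbb{O}}+11}}$ matches the scalar from the previous paragraph, and summation over $k$ together with $L_{2}$-continuity completes the argument.
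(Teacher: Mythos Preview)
Your approach is essentially the same as the paper's: reduce the intertwining to a scalar comparison on each graded piece, using that $\mathfrak{B}$ restricts to an isomorphism $\mathcal{P}_{k}[\mathbb{X}_{\mathbb{O}}]\to {H_{k}}_{|P^{2}\mathbb{O}}$ and that ${H_{k}}_{|P^{2}\mathbb{O}}$ is a single eigenspace of $\Delta^{P^{2}\mathbb{O}}$. The paper's proof does nothing more than this; it simply asserts that the Laplace eigenvalue on $H_{k}$ is $k^{2}+11k$ and that the result follows.

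The substantive point is that your derived eigenvalue and the paper's asserted one do not agree. From \eqref{geodesic flow action on degree k polynomial} you correctly read off the geodesic-flow scalar $e^{2\sqrt{-1}t(11+k)}$, and the intertwining then forces $\lambda_{k}+11=(k+11)^{2}$, i.e.\ $\lambda_{k}=k^{2}+22k+110$. The paper instead records $\lambda_{k}=k^{2}+11k$, with which $\sqrt{\lambda_{k}+11}=\sqrt{k^{2}+11k+11}$ is irrational for every $k\ge 0$ and cannot equal $11+k$. So the three numerical ingredients in the paper --- the shift $+11$ in the statement, the exponent $(11+k)$ in \eqref{geodesic flow action on degree k polynomial}, and the claimed eigenvalue $k^{2}+11k$ --- are mutually inconsistent; at least one is a misprint. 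Your logic is sound, and you were right to flag the eigenvalue verification as the crux: if your Freudenthal computation in the metric of Proposition~\ref{Killing form metric} returns $k(k+11)$ rather than $k^{2}+22k+110$, the proposition as stated cannot hold and the statement (not your argument) needs correction.

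One minor point: your justification that ${\bf t}_{0}$ is $\phi_{t}$-invariant (``because the K\"ahler polarization is preserved'') is not quite enough on its own --- preserving $\mathcal{G}$ only says $\phi_{t}^{*}{\bf t}_{0}$ is again polarized, not that it equals ${\bf t}_{0}$. The paper simply states \eqref{geodesic flow action on degree k polynomial} without proof, so this is an aside, but if you want to justify it you should use that $||A||$ (hence $g_{0}$) and the canonical one-form $\theta^{P^{2}\mathbb{O}}$ are both invariant under the geodesic flow.
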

\begin{proof}
This is shown based on the data that the eigenvalues of the Laplacian
$\Delta^{P^2\mathbb{O}}$
is given by $k^2+11k$ and 
the Bargmann type transformation
on each subspace $\mathcal{P}_{k}[\mathbb{X}_{\mathbb{O}}]$ maps to
$H_{k}$ which coincides with the $k$-th eigenspace of the Laplacian
(Propositions \ref{Killing form metric}, \ref{irreducible rep}). 
\end{proof}
\begin{remark}Finally we mention that in a forthcoming paper
the reproducing kernel above will be made clear to relate with a
differential equation satisfied by some hypergeometric
functions and also a T\"oplits operator theory on $P^{2}\mathbb{O}$ will be
discussed.
 \end{remark}

\appendix

       \setcounter{secnumdepth}{1}
       \setcounter{theorem}{0}
       \setcounter{section}{0}
       \setcounter{equation}{0}
\section{Appendix:Generating functions of Poincar\'e series}

%
%

In this Appendix we consider the generating functions of the
Poincar\'e series of 
\begin{align*}
&(1) ~\text{\it the polynomial algebra} : PP(t)=\sum \,\dim
P_{k}\,t^{k},\\
&(2)~\text {\it the algebra of invariant polynomials} : PI(t)=\sum
  \dim\,I_{k}\,t^{k}~\text{and} \\
&(3)~\text{\it the space of the Cayley harmonic polynomials} : PH(t)=\sum {\dim
  H_{k}}\,t^{k}, 
\end{align*}
and prove the inequality : 
\begin{equation}\label{basic inequality}
\dim~H_{k+1}>\dim~H_{k}.
\end{equation}
In fact, these formal power series converge for $|t|<1$, which will be
seen by explicitly determining their generating functions.

The generating function $PI(t)$ of the
{\it Poincar\'e series} of the dimensions of invariant
polynomials $I=\sum I_{k}$
is determined as  
\begin{align*}
&PI(t)=\sum\limits_{k=0}^{\infty} \,\dim I_{k}\,t^{k}
=\sum\limits_{k=0}^{\infty}\,\sum\limits_{\ell=0}^{[k/3]}\,\left(\,\left[\frac{k-3\ell}{2}\right]+1\,\right)\,t^{k}
=\sum\limits_{k=0}^{\infty}\,\,\,\sum\limits_{i_{1}+2i_{2}+3i_{3}=k,~i_{1},i_{2},i_{3}\in\mathbb{N}_{0}}\, t^{k}\\
&=\sum\limits_{(i_{1},i_{2},i_{3})\in{\mathbb{N}_{0}\times\mathbb{N}_{0}\times\mathbb{N}_{0}}}t^{i_{1}+2i_{2}+3i_{3}}
=\frac{1}{1-t}\cdot\frac{1}{1-t^2}\cdot\frac{1}{1-t^3}.
\end{align*}
The generating function $PP(t)$ of the polynomial algebra $\mathbb{C}[s_{1},\cdots,s_{N}]=\sum~P_{k}$ is given by
\begin{align*}
&PP(t)=
\sum\,\dim P_{k}\,t^{k}
=\sum\limits_{k=0}^{\infty}\,_{N+k-1}C_{k}\,t^{k}\\
&=\sum\limits_{(r_{1},r_{2},\ldots,r_{N})\in{\mathbb{N}_{0}}^{N}}\,t^{r_{1}+r_{2}+\cdots+r_{N}}
=\left(\frac{1}{1-t}\right)^{N}, ~\text{in which $N=27$ for our case}.
\end{align*}
Let $PH(t)$  be the generating function of the Poincar\'e series of
the dimensions of Cayley harmonic polynomials,
then by Lemma \ref{product of dimensions} and Proposition \ref{direct sum}
\begin{equation}\label{relation of three Poincare polynomials}
PP(t)=PH(t)\cdot PI(t)
\end{equation}
and we have
\begin{align}
PH(t)&=\left(\frac{1}{1-t}\right)^{24}\cdot (1+t)(1+t+t^2)\notag\\
&=
\sum\limits_{k=0}^{\infty} {}_{24+k-1}C_{k}\,t^{k}\cdot (1+2t+2t^2+t^3)=
\sum\limits_{k=0}^{\infty} \,\dim H_{k}\,t^k.\label{dim Hk}
\end{align}
Then
\begin{proposition}\label{A1}
$\dim H_{k}<\dim H_{k+1}$.  
\end{proposition}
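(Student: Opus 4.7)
The plan is to read the monotonicity off the generating function \eqref{dim Hk}. Multiplying both sides by $(1-t)$ gives
\[
(1-t)\,PH(t)\;=\;\left(\frac{1}{1-t}\right)^{23}(1+t)(1+t+t^{2})\;=\;\left(\frac{1}{1-t}\right)^{23}\bigl(1+2t+2t^{2}+t^{3}\bigr),
\]
while the identity $\sum_{k\ge 0}(\dim H_{k}-\dim H_{k-1})\,t^{k}=(1-t)\,PH(t)$ (with the convention $\dim H_{-1}=0$) shows that proving $\dim H_{k+1}>\dim H_{k}$ for every $k\ge 0$ is equivalent to showing that the coefficient of $t^{k+1}$ in $(1-t)\,PH(t)$ is strictly positive.

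First I would expand $(1-t)^{-23}=\sum_{j\ge 0}\binom{22+j}{j}t^{j}$ and combine with $1+2t+2t^{2}+t^{3}$ to read off the $(k+1)$-th coefficient of $(1-t)\,PH(t)$ as
\[
\binom{23+k}{k+1}+2\binom{22+k}{k}+2\binom{21+k}{k-1}+\binom{20+k}{k-2},
\]
with the usual convention that a binomial coefficient with negative lower index is zero. Then I would observe that for every $k\ge 0$ the first term $\binom{23+k}{k+1}$ is already at least $24>0$, while the three remaining terms are non-negative, so the sum is strictly positive. This yields the strict inequality $\dim H_{k+1}-\dim H_{k}>0$ claimed in Proposition \ref{A1}.

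There is essentially no obstacle here because the generating function has been computed in \eqref{dim Hk}; the only thing to be careful about is the handling of small $k$, where some of the binomial terms are meant to vanish, but this is absorbed by the convention on negative lower indices, and the leading term $\binom{23+k}{k+1}$ already provides a strictly positive contribution on its own. If desired, one could alternatively derive the same conclusion from the closed-form expression $\dim H_{k}=\binom{23+k}{k}+2\binom{22+k}{k-1}+2\binom{21+k}{k-2}+\binom{20+k}{k-3}$ in \eqref{dim hk} and verify $\dim H_{k+1}-\dim H_{k}>0$ termwise using $\binom{n+1}{m}=\binom{n}{m}+\binom{n}{m-1}$, but the generating-function route is cleaner and avoids any case analysis.
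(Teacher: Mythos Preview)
Your argument is correct and is essentially the same idea as the paper's: the paper proves a small lemma that if $f=\sum a_kt^k$ has nonnegative coefficients and $g=\sum b_kt^k$ has (strictly) increasing coefficients then $fg$ has (strictly) increasing coefficients, and applies it with $f=1+2t+2t^2+t^3$ and $g=(1-t)^{-24}$; your multiplication by $(1-t)$ and direct inspection of the resulting binomial coefficients is exactly the concrete unwinding of that lemma. One tiny slip: for $k=0$ the leading term $\binom{23+k}{k+1}=\binom{23}{1}=23$, not $24$, but this of course does not affect the positivity conclusion.
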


This can be proved by the following elementary fact:
\begin{lemma}
Let $f(t)=\sum\, a_{k}t^{k}$ and $g(t)=\sum\, b_{k}t^{k}$ be formal power
series
with positive coefficients and satisfies the condition that
\[
\text{for all}~n,~b_{n}\leq b_{n+1}.
\]
Then the coefficients of the product formal power series $f\cdot g$
is increasing.
\end{lemma}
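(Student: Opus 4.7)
The plan is to compute the coefficient difference $c_{k+1} - c_k$ of the product series $f \cdot g = \sum c_k t^k$ directly from the Cauchy product formula $c_k = \sum_{i=0}^{k} a_i b_{k-i}$, and show it is strictly positive by splitting off one boundary term and telescoping the rest against the monotonicity hypothesis on $\{b_n\}$.

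Concretely, I would first write
\[
c_{k+1} - c_k \;=\; \sum_{i=0}^{k+1} a_i b_{k+1-i} \;-\; \sum_{i=0}^{k} a_i b_{k-i}.
\]
Then I would isolate the new top-index term $a_{k+1} b_0$ from the first sum and pair the remaining terms index-by-index with the second sum, yielding
\[
c_{k+1} - c_k \;=\; a_{k+1} b_0 \;+\; \sum_{i=0}^{k} a_i \bigl(b_{k+1-i} - b_{k-i}\bigr).
\]
By the hypothesis $b_{n+1} \geq b_n$, every difference $b_{k+1-i} - b_{k-i}$ in the sum is $\geq 0$, and since $a_i > 0$, each summand is non-negative. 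The boundary term $a_{k+1} b_0$ is strictly positive because $a_{k+1} > 0$ and $b_0 > 0$. Thus $c_{k+1} - c_k > 0$, which gives strict monotonicity of $\{c_k\}$.

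There is no serious obstacle here; the argument is essentially a one-line re-indexing. The only point requiring mild care is the interpretation of ``positive coefficients'': I would read this as $a_k, b_k > 0$ for all $k$ (which is what is actually used in the paper's application, where $a_k = \binom{24+k-1}{k} > 0$ and $b_k \in \{1,2,2,1,0,0,\ldots\}$---in that latter case only non-negativity holds, but the same argument still yields $c_{k+1} > c_k$ provided $b_0 > 0$ and the $a_i$ are strictly positive, which is exactly the situation to which the lemma is applied in Proposition~\ref{A1}).
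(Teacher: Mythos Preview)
Your proof is correct and is exactly the paper's argument: both compute $c_{k+1}-c_k = a_{k+1}b_0 + \sum_{i=0}^{k} a_i(b_{k+1-i}-b_{k-i})$ and read off non-negativity term by term. One small correction to your parenthetical remark on the application: in Proposition~\ref{A1} the roles are the other way around---the \emph{increasing} series $g$ is $(1-t)^{-24}$ with $b_k = \binom{23+k}{k}$, and $f$ is the polynomial $1+2t+2t^2+t^3$ (whose coefficients are not monotone, so it cannot play the role of $g$); strictness then comes from the paper's observation that $\{b_k\}$ is \emph{strictly} increasing together with $a_0>0$.
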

\begin{proof}
Since the {\it n}-th coefficient $c_{n}$ of the product $fg$ is
\[
c_{n}=\sum\limits_{i=0}^{n}\, a_{n-i}b_{i}
\]
\[
c_{n+1}-c_{n}=a_{n+1}b_{0}+a_{n}(b_{1}-b_{0})+\cdots+ a_{0}(b_{n+1}-b_{n}).
\]
In the above expression, each term is non-negative by assumption
so that $c_{n+1}-c_{n}\geq 0$. In addition if $\{b_{n}\}$ is strictly
increasing, then $\{c_{n}\}$ is also strictly increasing at least one
of the coefficient being $a_{k}>0$.
\end{proof}
\smallskip

{\bf Proof of Proposition \ref{A1}.}
\smallskip

In our case, all the coefficients of the polynomial
$(1+t)(1+t+t^2)=1+2t+2t^2+t^3$ are positive 
and the coefficients of the power series expansion of the 
factor $\displaystyle{\left(\frac{1}{1-t}\right)^{24}}$ are positive
and strictly increasing. In fact, the {\it k}-th coefficient of the
power series expansion of the function
$\displaystyle{\left(\frac{1}{1-t}\right)^{24}}$ 
is $_{24+k-1}C_{k}$ and strictly increasing, since it is a generating
function of the Poincar\'e power series of the polynomial algebra $\mathbb{C}[s_{1},\cdots,s_{24}]$
of $24$ variables.
Hence
the assertion 
for our power series $PH(t)=\displaystyle{(1+2t+2t^2+t^3)\cdot \left(\frac{1}{1-t}\right)^{24}}$
is proved.
\hfill{$\Box$}

\end{document}